\theoremstyle{definition}
\newtheorem{defin}{Definition}[section]
\newtheorem{ex}[defin]{Example}
\newtheorem*{ex*}{Example}
\theoremstyle{plain}
\newtheorem{theo}[defin]{Theorem}
\newtheorem{lemma}[defin]{Lemma}
\newtheorem{obs}[defin]{Remark}
\newtheorem*{obs*}{Remark}
\newtheorem*{obss*}{Remarks}
\newtheorem{prop}[defin]{Proposition}
\newtheorem*{prop*}{Proposition}
\newtheorem{cor}[defin]{Corollary}
\newtheorem{theorem}{Theorem}
\newtheorem*{theorem*}{Theorem}
\newtheorem{corollary}[theorem]{Corollary}
\newtheorem*{corollary*}{Corollary}
\definecolor{light-gray}{gray}{0.1}
\def\@setthanks{\vspace{-\baselineskip}\def\thanks##1{\@par##1\@addpunct.}\thankses}
\title[Convergence and collapsing of CAT$(0)$-lattices]
{ 
Convergence and collapsing  \\ of  CAT$(0)$-lattices}
\author{Nicola Cavallucci}
\thanks{N. Cavallucci has been partially supported by the SFB/TRR 191, funded by the DFG}
\author{Andrea Sambusetti}
\thanks{A. Sambusetti is member of GNSAGA and acknowledges the support of INdAM during the preparation of this work.}
\date{\today}
\begin{document}
	\maketitle

 \begin{abstract}
We study the theory of convergence for CAT$(0)$-lattices (that is groups $\Gamma$ acting geometrically on proper, geodesically complete CAT$(0)$-spaces)  and their quotients (CAT$(0)$-orbispaces). We describe some splitting and collapsing phenomena, explaining precisely how these action can degenerate to a possibly non-discrete limit action.
Finally, we prove a compactness theorem for the class of compact CAT$(0)$-homology orbifolds, and some applications: an isolation result for flat orbispaces and an entropy-pinching theorem.
\end{abstract}	

 \tableofcontents
	
\newpage	
	\section{Introduction}

The theory of CAT$(0)$-groups, i.e.
groups admitting a geometric action on some CAT$(0)$-space,	
whose roots can be traced back to the '70s with the works of Gromoll-Wolf \cite{GW71}
and Lawson-Yau \cite{LY72}
on fundamental groups of compact, nonpositively curved manifolds, has flourished in the last twenty years, with stunning developments  in geometric group theory and applications to low dimensional topology; let us just mention all the research opened by the rank rigidity conjecture for CAT$(0)$-spaces
and the methods coming from the theory of cubulable groups, with application to the virtually Haken conjecture.

 The works of Caprace and Monod \cite{CM09a}, \cite{CM09b} shed some new light on the structure of CAT$(0)$-spaces possessing a geometric action; as we will see, their results, together with the  deep, recent insights  on the topology of locally compact, geodesically  complete, CAT$(\kappa)$-spaces provided by  Lytchak and Nagano in \cite{LN19}, \cite{LN-finale-18}, will be of  great importance for our work.
\vspace{1mm}

In this paper, which should be thought as a companion paper of \cite{CS23}, and  pursues the study of packed CAT$(0)$-spaces and discrete group actions on such spaces initiated in  \cite{CavS20} and \cite{CS22},  we investigate the convergence of geometric  actions of groups on CAT$(0)$-spaces
along the line of the classical theory of convergence in Riemannian geometry. 
Namely, we will  be  interested in  {\em uniform}  CAT$(0)$-{\em lattices}, that is discrete,  cocompact isometry  groups $\Gamma$ of some CAT$(0)$-space $X$; one equivalently says  that $\Gamma$ acts {\em geometrically} on $X$. Moreover, throughout this paper  all CAT$(0)$-spaces  will be assumed to be  {\em proper}  and {\em geodesically complete}, which ensures many desirable geometric properties, such as the equality of topological dimension  and Hausdorff dimension, the existence of a canonical measure $\mu_X$, etc. (see Section \ref{sec-CAT} for fundamentals on CAT$(0)$-spaces). 
 Our goal is to  describe precisely the structure of  equivariant  Gromov-Hausdorff limits of sequences of (possibly collapsing) geometric actions on CAT$(0)$-spaces $\Gamma_n \curvearrowright X_n$, and of the corresponding quotients
$M_n =  \Gamma_n \backslash X_n$.
 As an application, we will present a compactness result (Corollary \ref{theo-intro-compactness}) and some rigidity and pinching theorems (Corollaries \ref{cor-flats}, \ref{cor-entropy}  below);  
see also some 
stability results in Section~\ref{sec-convergence} (Theorem \ref{prop-Riemannian-close-topological} and Corollaries  \ref{cor-homeo}, \ref{cor-euclfactor}).
\vspace{2mm}


 To begin with, let us denote by $$ \textup{CAT}_0 (D_0)$$
\vspace{-3mm}

\noindent  the class of uniform $\textup{CAT}(0)$-lattices\footnote{We emphasize that by {\em uniform   \textup{CAT}$(0)$-lattice} we mean   a CAT$(0)$-group $\Gamma$ with a fixed   faithful   geometric action of $\Gamma$ on some CAT$(0)$-space $X$, which explains the notation   $(\Gamma, X)$.}
 $(\Gamma, X)$ with   diam$(\Gamma \backslash X) \leq D_0$. We will also  say that  $\Gamma$ acts {\em $D_0$-cocompactly} on $X$ when diam$(\Gamma \backslash X) \leq D_0$, and that the space $X$ is {\em $D_0$-cocompact.}
   Notice that the constant $D_0$  is there simply to fix a scale, as the metric of any cocompact CAT$(0)$-space $X$ can be renormalized in order that it becomes $D_0$-cocompact.
 The lattice $\Gamma$ is called  {\em nonsingular} if  there exists at least a point $x \in X $ with trivial stabilizer: this is a mild assumption on the action which rules out  pathological  (although differently interesting) cases, see for instance \cite[Example 1.4]{CS22} and  \cite[Theorem 7.1]{BK90}. This condition is  automatically satisfied for instance when the   the lattice is torsion-free or $X$ is a homology manifold (see Section \ref{subsection-isometries} and \cite{CS22}).
 We  call  the quotient metric space  $M = \Gamma \backslash X$  a  {\em \textup{CAT}$(0)$-orbispace} 
and we will   say that the CAT$(0)$-orbispace  $M$  is {\em nonsingular}  if $\Gamma$ acts nonsingularly on $X$.
  Notice that  if $\Gamma$ is torsion-free then  $M$   is  a locally CAT$(0)$-space. 
\vspace{1mm}


The starting point of our study
is to understand when a family of uniform CAT$(0)$-lattices admits a limit (possibly not a lattice), i.e. the precompactness of our class $ \textup{CAT}_0 (D_0)$.
The noton of convergence that we will use is  the {\em equivariant  Gromov-Hausdorff  convergence} ({\em \`a la Fukaya}, cp. \cite{Fuk86}). 
We refer to Section \ref{sec-convergence} for the precise definition;  let us just mention here that, denoting by $B_{\Gamma}(x, r)$  the subset of elements $\gamma \in \Gamma$ moving $x \in X$ less than $r$,  saying that a sequence of lattices $(\Gamma_j,  X_j)$ converges towards a  limit action  $(\Gamma_\infty, X_\infty)$   simply means that there exist  Gromov-Hausdorff $\varepsilon$-approximations  $f_{\varepsilon}: B_{X_j}(x_j, \frac{1}{\varepsilon}) \rightarrow B_{X_\infty}(x_\infty,\frac{1}{\varepsilon}) $  between larger and larger balls of $X_j, X_\infty$  centered at basepoints $x_j$ and $x_\infty$,  which are  $\varepsilon$-equivariant with respect to maps $\phi_{\varepsilon}: B_{\Gamma_j}(x_j, \frac{1}{\varepsilon}) \rightarrow B_{\Gamma_\infty}(x_\infty,\frac{1}{\varepsilon}) $ (that is, with an equivariancy error smaller than $\varepsilon$), for $\varepsilon \rightarrow 0$.  
   For sequences of lattices which are  $D_0$-cocompact, the limit does not depend on the basepoints $x_j, x_\infty$; moreover, this is equivalent   (up to subsequences)  to taking the ultralimit group $\Gamma_\omega$ acting on the ultralimit metric space $X_\omega$ defined using any non-principal ultrafilter $\omega$, provided that  the  space $X_\omega$ is proper, see \cite{Cav21ter} and \S\ref{sub-GH}.

\noindent Now, the answer  to the existence  of   limits for a family of CAT$(0)$-lattices  is simple and comes from the very definition of equivariant GH-convergence: {\em any  family of
isometry groups $\Gamma_j < \textup{Isom}(X_j)$     (sub-)converges to some limit isometry group $\Gamma_\infty$ of a space $X_\infty$,   as soon as the spaces $X_j$ converge to $X_\infty$ with respect to the pointed Gromov-Hausdorff distance}, see 	Proposition \ref{prop-GH-ultralimit}). \linebreak 
Hence, the precompactness of a sequence in $\textup{CAT}_0 (D_0)$ reduces to  precompactness of the underlying spaces.   
\vspace{1mm}

Recall that a metric space $X$ is said to satisfy the {\em $P_0$-packing condition at scale $r_0$} (for short, $X$ is {\em $(P_0,r_0)$-packed}) if every ball of radius $3r_0$ in $X$ contains at most $P_0$ points that are $2r_0$-separated. 
For geodesically complete CAT$(0)$-spaces, the convexity of the distance function implies that a packing condition at some scale $r_0$ yields an explicit, uniform control of the packing function $\text{Pack}(R,r)$ of $X$ (that is, the maximum number of disjoint $r$-balls that one can pack in any $R$-ball), see \cite[Theorem 4.2]{CavS20}.
  This is exactly  Gromov's classical condition for precompactness of a family of spaces (also known as ``uniform compactness of $r$-balls'', cp. \cite{Gr81}, or ``geometrical boundedness'' \cite{DY}). 
\noindent Moreover, it is easy to see that any compact metric space,  as well as any metric space $X$ admitting an uniform lattice, is $(P_0, r_0)$-packed for some constants $P_0$ and $r_0$ (cp. the proof of \cite[Lemma 5.4]{Cav21ter}). 
Summarizing: the class $ \textup{CAT}_0 (D_0)$ has a natural filtration  
 \vspace{-4mm}
 
\begin{equation}
\label{eq:filtration}
\textup{CAT}_0(D_0) = \bigcup_{P_0, r_0} 
\textup{CAT}_0(P_0,r_0,D_0)
\end{equation}

 \noindent where  $\textup{CAT}_0(P_0,r_0,D_0)$ is the subset of $\textup{CAT}_0(D_0) $ made of lattices $(\Gamma, X)$ such that $X$ is 
$(P_0, r_0)$-packed,  and from the packing assumption one can deduce the following:

\begin{prop*}[Proposition \ref{lemma-GH-compactness-packing}]
	\label{lemma-GH-compactness-packing}
	A subset $  \mathcal{F}   \subseteq \textup{CAT}_0(D_0)$ is precompact  with respect to the equivariant pointed Gromov-Hausdorff convergence  if and only if there exist $P_0,r_0 > 0$ such that  $\mathcal{F}  \subseteq \textup{CAT}_0(P_0,r_0,D_0)$.
\end{prop*}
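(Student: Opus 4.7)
The plan is to establish both directions by reducing equivariant pointed Gromov--Hausdorff convergence to ordinary pointed Gromov--Hausdorff convergence of the underlying spaces, and then invoking Gromov's classical precompactness criterion.

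\emph{Sufficiency ($\Leftarrow$).} Assume $\mathcal{F}\subseteq \textup{CAT}_0(P_0,r_0,D_0)$, and let $(\Gamma_j,X_j)$ be any sequence in $\mathcal{F}$. The first step is to invoke \cite[Theorem 4.2]{CavS20} to promote the single-scale $(P_0,r_0)$-packing hypothesis to a uniform estimate on $\textup{Pack}(R,r)$ at every pair of scales $0<r<R$, valid for every $X_j$; this is where geodesic completeness together with the convexity of the CAT$(0)$-distance is essential. Such uniform packing at every scale is precisely Gromov's ``uniform compactness of balls'' condition, so any choice of basepoints $x_j\in X_j$ yields a pointed sequence $(X_j,x_j)$ that is precompact in pointed GH topology (the choice of $x_j$ does not affect the limit thanks to the cocompactness bound $D_0$, as recalled in the paper). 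Passing to a subsequence with $(X_j,x_j)\to (X_\infty,x_\infty)$, I would then apply Proposition \ref{prop-GH-ultralimit} (``isometry groups subconverge as soon as the spaces subconverge'') to extract a further subsequence along which $\Gamma_j \curvearrowright X_j$ converges equivariantly to a limit action $\Gamma_\infty\curvearrowright X_\infty$. This proves precompactness of $\mathcal{F}$.

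\emph{Necessity ($\Rightarrow$).} Conversely, suppose $\mathcal{F}$ is precompact for the equivariant pointed GH topology. Forgetting the group actions, the family of underlying pointed spaces is \emph{a fortiori} precompact in the pointed GH sense. By Gromov's theorem this is equivalent to uniform total boundedness of balls: for each $R,\varepsilon>0$ there exists $N(R,\varepsilon)$ such that every ball of radius $R$ in every space underlying $\mathcal{F}$ can be covered by at most $N(R,\varepsilon)$ balls of radius $\varepsilon$. Fixing any $r_0>0$ and taking $R=3r_0$, $\varepsilon=r_0$, the pigeonhole principle yields at most $P_0:=N(3r_0,r_0)$ pairwise $2r_0$-separated points in any $3r_0$-ball (two such points cannot lie in a single $r_0$-ball of the covering). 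Hence $\mathcal{F}\subseteq \textup{CAT}_0(P_0,r_0,D_0)$.

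The only nontrivial step is the multiscale packing propagation used in the sufficiency direction, which is isolated in the cited \cite[Theorem 4.2]{CavS20}; the rest of the argument is a careful bookkeeping of basepoints and a direct appeal to Gromov's criterion and to Proposition \ref{prop-GH-ultralimit}, which already encodes the passage from space convergence to equivariant convergence of subgroups of isometries.
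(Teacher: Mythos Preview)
Your argument is essentially correct and close to the paper's. For sufficiency you unpack what the paper cites as \cite[Theorem~6.4]{CavS20} into its two ingredients (multi-scale packing propagation via \cite[Theorem~4.2]{CavS20}, then Gromov's criterion), and then apply Proposition~\ref{prop-GH-ultralimit} exactly as the paper does. For necessity the paper simply invokes \cite[Lemma~5.8]{Cav21ter}, whereas you give a more self-contained argument via the converse direction of Gromov's precompactness criterion; this is a legitimate and slightly more elementary route.

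There is one small slip in your necessity argument. Gromov's criterion, as usually stated, only yields uniform total boundedness for balls centred at the chosen basepoints $x$, not for \emph{every} ball in every space of $\mathcal{F}$. But the $(P_0,r_0)$-packing condition must hold for \emph{all} $3r_0$-balls. The fix is immediate using the $D_0$-cocompactness you have available: for any $y\in X$ there is $g\in\Gamma$ with $d(y,gx)\le D_0$, hence $B(y,3r_0)\subseteq B(gx,3r_0+D_0)\cong B(x,3r_0+D_0)$, and you can apply the basepoint bound at radius $3r_0+D_0$. With this one line inserted your proof is complete.
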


\noindent   We stress the “only if” part in the above statement: for a family of CAT$(0)$-lattices with uniformily bounded codiameter,   the uniform packing assumption at some fixed scale is a necessary condition in order to converge.  In view of this,  this assumption   is crucial   for studying equivariant Gromov-Hausdorff limits and is, in a sense,  minimal. \\
This $(P_0, r_0)$-packing condition should be thought as a weak, local  substitute of a  lower bound on the curvature; 
however, it  is much weaker than assuming the curvature bounded below in the sense of Alexandrov, or than a lower bound of the Ricci curvature in the  Riemannian case, and even of a CD$(\kappa,n)$ condition.
Indeed, the Bishop-Gromov's comparison theorem for Riemannian $n$-manifolds with Ric$_X \geq - (n-1) \kappa$ for $ \kappa \geq 0$ (or   its generalization to  CD$(\kappa,n)$ spaces, see for instance \cite{sturm}) yields a doubling condition  
 \begin{equation}\label{eq:doubling}\frac{\mu (B_X (x,2r)) }{\mu (B_X (x,r))} \leq  C(\kappa, n,r) \hspace{5mm} \end{equation}
from which the packing condition at scale $r_0 \leq r/4$ easily follows.
Notice however that  he class of CD$(\kappa,n)$ metric measure spaces which are CAT$(0)$ restricts to topological manifolds, by \cite{Kap-Ket-22}, whereas our class contains non-manifolds.
\noindent As proved in  \cite{CavS20},  for geodesically complete CAT$(0)$-spaces, the packing condition  at some scale is the same as a uniform upper bound of the canonical measure of all $r$-balls, a condition sometimes called {\em macroscopic scalar curvature bounded below}  cp. \cite{Gut10}, \cite{Sab20}. 
\vspace{2mm}

Coming to the problem of describing the possible limits of  lattices in our class, the first important distinction is 
  between collapsing and non-collapsing sequences. 
 We define the {\em free systole}
 of a lattice $\Gamma$ 
as
 $$\textup{sys}^\diamond(\Gamma,X) = \inf_{x\in X}\inf_{g\in \Gamma^\ast \setminus \Gamma^\diamond} d(x,gx)$$
where 
$\Gamma^\ast = \Gamma \setminus \{ \textup{id}\}$ and $\Gamma^\diamond$ is the subset of elliptic elements of $\Gamma$.\\
For nonsingular lattices, when assuming a bound on the diameter, the smallness of the free systole  is quantitatively equivalent to the smallness of the {\em diastole} of the lattice, that is  the invariant
$$\textup{dias}(\Gamma,X) = \sup_{x\in X}\inf_{g\in \Gamma^*} d(x,gx)$$
\noindent (see Proposition \ref{prop-vol-sys-dias}).
Accordingly, a lattice $(\Gamma , X )$ is said to be {\em $\varepsilon$-collapsed} if $\textup{sys}^\diamond (\Gamma,X) < \varepsilon$, and a sequence  $(\Gamma_j, X_j)_{j \in \mathbb{N}}$  is said to converge  {\em collapsing} to  $(\Gamma_\infty, X_\infty)$   
if $\limsup_{j\to+\infty} \textup{sys}^\diamond (\Gamma_j,X_j) = 0$; the sequence will be called {\em non-collapsing} otherwise. 
 
\noindent The collapsing condition for uniform $\textup{CAT}(0)$-lattices turns out to be  equivalent  to the the fact  that the dimension  of the limit orbispace  $M_\infty = \Gamma_\infty \backslash X_\infty$  is strictly smaller than the   dimension  of the orbispaces $M_j= \Gamma_j \backslash X_j$,   as we  will   prove   in Theorem \ref{theo-collapsing-characterization}, Section \ref{sub-characterization}; this is a very intuitive but nontrivial result, which  follows  a-posteriori from the analysis of both the collapsing and non-collapsing  case. 
Therefore, we can also say unambiguously that the orbispaces $M_j$ converge collapsing when $\textup{sys}^\diamond (\Gamma_j,X_j) \to 0$.
 
The following result shows that limits of  CAT$(0)$-lattices may well be non-discrete, both in the collapsing and in the non-collapsing case; they however always define a closed, {\em totally disconnected} group  of isometries of a canonical factor of the limit space  (which is another source of interest in the theory of totally disconnected groups):

\begin{theorem}[Limits of CAT$(0)$-lattices, Theorems \ref{theo-noncollapsed} \& \ref{theo-collapsed}]
\label{teor-intro-convergence}${}$\\
Let $(\Gamma_j,X_j)_{j \in \mathbb{N}} \subseteq \textup{CAT}_0(D_0)$  be a sequence of lattices converging in the equivariant Gromov-Hausdorff distance to a limit isometry group $(\Gamma_\infty, X_\infty)$, and let $M_j= \Gamma_j \backslash X_j$ the corresponding orbispaces. 
Then: 
\vspace{-1mm}

\begin{itemize}[leftmargin=4mm] 
\item[--] $X_\infty$ is a proper, geodesically complete  \textup{CAT}$(0)$-space,
\item[--] $\Gamma_\infty < \textup{Isom}(X_\infty)$ is a closed and $D_0$-cocompact group,
\item[--] the $M_j$'s converge to $M_\infty = \Gamma_\infty \backslash X_\infty$ 
in the Gromov-Hausdorff topology. 
\end{itemize} 

\noindent Moreover, if the sequence $(\Gamma_j,X_j)_{j \in \mathbb{N}}$ is:
\begin{itemize}[leftmargin=5mm] 
\item[i)]  {\em non-collapsing,}  then the group $\Gamma_\infty$ is totally disconnected;  in addition,  if each $\Gamma_j$ is nonsingular, then  $\Gamma_\infty$ is discrete, isomorphic to $\Gamma_j$ for $j\gg 0$, and   $M_\infty$ is equivariantly homotopic equivalent to $M_j$ for $j\gg 0$;
\item[ii)]  {\em collapsing,}  then $X_\infty$  splits isometrically as $X_\infty' \times \mathbb{R}^\ell$ with $\ell \geq 1$,   $\Gamma_\infty$ is not discrete  with identity component   $\Gamma_\infty^\circ \cong \mathbb{R}^\ell$,  acting as the group of translation of the  factor $\mathbb{R}^\ell$; finally  $ M_\infty = \Gamma_\infty'  \backslash X_\infty' $ where   $\Gamma_\infty' = \Gamma_\infty \slash \Gamma_\infty^\circ$ is closed and totally disconnected.
\end{itemize}
 \end{theorem}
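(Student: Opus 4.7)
The plan is to handle the basic limit-space properties first and then analyze the collapsing/non-collapsing dichotomy via the Caprace--Monod structural results on cocompact isometry groups of geodesically complete $\textup{CAT}(0)$-spaces. For the first three bullets, the packing hypothesis implicit in precompactness (via the cited Proposition) ensures that $X_\infty$ is proper, while the fact that the $\textup{CAT}(0)$-condition and geodesic completeness pass to pointed Gromov--Hausdorff limits of uniformly packed proper $\textup{CAT}(0)$-spaces is classical, using \cite{LN19} and \cite{CavS20}. The closedness of $\Gamma_\infty$ inside $\textup{Isom}(X_\infty)$ is built into the definition of equivariant Gromov--Hausdorff convergence à la Fukaya, and the $D_0$-cocompactness is transported through the $\varepsilon$-approximations from the uniform bound $\textup{diam}(\Gamma_j\backslash X_j)\le D_0$; from this, $M_j\to M_\infty$ in the Gromov--Hausdorff topology follows formally.

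For case (i), suppose $\Gamma_\infty^\circ$ is nontrivial. Caprace--Monod applied to the closed cocompact group $\Gamma_\infty<\textup{Isom}(X_\infty)$ forces $X_\infty$ to have a Euclidean de Rham factor $\mathbb{R}^k$, $k\ge 1$, on which $\Gamma_\infty^\circ$ acts by translations; in particular $\Gamma_\infty$ contains Clifford translations of arbitrarily small positive translation length. Using the $\varepsilon$-equivariant approximations, these can be lifted to elements $\gamma_j\in\Gamma_j$ whose displacement at a basepoint is arbitrarily small. A packing-based analysis of small-displacement elements (semisimplicity of lattice elements in packed $\textup{CAT}(0)$-spaces, plus a control on stabilizers) rules out that the $\gamma_j$ are all elliptic, contradicting $\textup{sys}^\diamond(\Gamma_j,X_j)\ge\varepsilon_0>0$. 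Hence $\Gamma_\infty^\circ=\{e\}$. Under the nonsingularity assumption, the same kind of argument rules out accumulation of nontrivial elements of $\Gamma_\infty$ at the identity (small-displacement elements would be forced elliptic, which is incompatible with nonsingularity combined with the uniform systole lower bound), giving discreteness of $\Gamma_\infty$. The isomorphism $\Gamma_\infty\cong\Gamma_j$ for $j\gg 0$ is then a Fukaya-type statement: when $\varepsilon$ is smaller than the systole, the $\varepsilon$-equivariant maps $\phi_\varepsilon$ become honest group isomorphisms on the relevant balls; the $D_0$-cocompactness upgrades these to global isomorphisms. The equivariant homotopy equivalence $M_\infty\simeq M_j$ follows by realizing these abstract isomorphisms through equivariant maps of the underlying orbispaces, exploiting the local contractibility of $\textup{CAT}(0)$-orbispaces.

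For case (ii), a collapsing sequence provides elements $\gamma_j\in\Gamma_j^\ast\setminus\Gamma_j^\diamond$ with translation lengths tending to zero; by semisimplicity in the packed setting, each $\gamma_j$ is axial. Passing appropriate powers $\gamma_j^{k_j}$ (chosen so that their translation length is of order $1$) through the equivariant GH convergence yields a nontrivial axial limit $\gamma_\infty\in\Gamma_\infty$ whose centralizer in $\Gamma_\infty$ is non-discrete, and in fact one extracts a one-parameter subgroup of Clifford translations, so $\Gamma_\infty^\circ\ne\{e\}$. The Caprace--Monod splitting theorem applied to the closed $D_0$-cocompact group $\Gamma_\infty$ then yields an isometric splitting $X_\infty=X_\infty'\times\mathbb{R}^\ell$ with $\ell\ge 1$, with $\Gamma_\infty^\circ\cong\mathbb{R}^\ell$ acting as the full translation group of the Euclidean factor and trivially on $X_\infty'$. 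Since $\Gamma_\infty^\circ$ is normal in $\Gamma_\infty$, the quotient $\Gamma_\infty'=\Gamma_\infty/\Gamma_\infty^\circ$ is closed and totally disconnected, and the factorization of the orbit space gives $M_\infty=\Gamma_\infty\backslash X_\infty=\Gamma_\infty'\backslash X_\infty'$.

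The main obstacle lies in the extraction step of (ii): producing a genuine Clifford translation in $\Gamma_\infty$ from the collapsing elements $\gamma_j$, rather than a trivial limit. This requires choosing the right powers $\gamma_j^{k_j}$ and controlling the convergence of their axes uniformly in $j$, where the $(P_0,r_0)$-packing is indispensable to prevent degeneration. The dual difficulty in (i), namely upgrading totally disconnected to discrete in the nonsingular case, rests on a uniform injectivity-type estimate at nonsingular points, again supplied by packing together with nonsingularity (drawing on the analysis of stabilizers in \cite{CS22}).
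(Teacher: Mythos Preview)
Your strategy hinges on applying Caprace--Monod to the \emph{limit} group $\Gamma_\infty$, but Proposition~\ref{prop-CM-decomposition} describes $\textup{Isom}(X_\infty)\cong\mathcal{S}\times\mathcal{E}_n\times\mathcal{D}$, not an arbitrary closed cocompact subgroup of it. From $\Gamma_\infty^\circ\neq\{e\}$ you only learn that $\Gamma_\infty^\circ\subset\mathcal{S}^\circ\times\textup{Transl}(\mathbb{R}^n)$; nothing prevents a nontrivial projection onto the semisimple factor $\mathcal{S}$, so your claim in~(i) that $\Gamma_\infty^\circ$ contains Clifford translations is unjustified, and in~(ii) the identification $\Gamma_\infty^\circ\cong\textup{Transl}(\mathbb{R}^\ell)$ is left unproved. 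A related gap in~(i): once you lift a putative small Clifford translation to $\gamma_j\in\Gamma_j$, the assertion that ``packing rules out $\gamma_j$ elliptic'' is not argued --- and in the general (singular) case elliptic approximants are precisely what occur, as the Bass--Kulkarni example discussed after the statement shows.

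The paper proceeds differently, working on the approximants $X_j$ via the Splitting Theorem~\ref{theo-splitting-weak}. In the non-collapsing case it selects basepoints $x_j$ (Corollary~\ref{cor-0-rank}(ii)) at which the almost-stabilizer $\overline{\Gamma}_\sigma(x_j)$ actually \emph{fixes} $x_j$; then every element of $\overline{\Sigma}_{\sigma/2}(x_\omega)$ fixes $x_\omega$, hence $\Gamma_\omega^\circ$ fixes $x_\omega$, and minimality of the cocompact $\Gamma_\omega$-action forces $\Gamma_\omega^\circ=\{e\}$. In the collapsing case the Splitting Theorem first gives $X_j=Y_j\times\mathbb{R}^{k_j}$; with the basepoint of Proposition~\ref{prop-minimal-close} and the Bieberbach-type Lemma~\ref{lemma-bieber}, elements moving a large ball by little act as $(\text{fix }y_j)\times(\text{translation})$, so in the limit $\Gamma_\omega^\circ\subset\{\textup{id}\}\times\textup{Transl}(\mathbb{R}^{k_\omega})$, and Lemma~\ref{lemma-translations} then yields $\Gamma_\omega^\circ\cong\mathbb{R}^\ell$. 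Non-triviality $\ell\ge1$ is obtained (close to what you sketch) by producing hyperbolic elements of arbitrarily small length and invoking \cite[Corollary~3.3]{Cap09}. Two further corrections: for ``nonsingular $\Rightarrow$ discrete'' you cannot argue that small elliptics are incompatible with nonsingularity (which only gives \emph{one} point with trivial stabilizer); the paper instead uses Proposition~\ref{prop-vol-sys-dias} to convert the $\textup{sys}^\diamond$-bound into an ordinary systole bound at some point. And the isomorphism $\Gamma_j\cong\Gamma_\infty$ is not read off the maps $\phi_\varepsilon$ directly but obtained from the marked-group finiteness Theorem~\ref{theo-finiteness}, the equivariant homotopy equivalence then coming from \cite[Theorem~5.4]{CS23}.
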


\noindent If the limit $M_\infty$ is a Riemannian manifold, then the equivariant homotopy equivalence between $M_j$ and $M_\infty$ can be promoted to homeomorphism, as we will see in Section \ref{sec:riemannian} (Corollary \ref{prop-Riemannian-close-topological}).

 \begin{obss*}[Non-discrete limits] ${}$ \\
 {\em
\noindent i) A sequence of (singular) lattices can converge   to a non-discrete group  {\em even without collapsing}: an example  can be found in \cite{BK90}.  
 In that paper the authors  build an infinite, ascending chain of lattices $\Gamma_j$ of a regular tree $T$ with bounded valency, with same compact quotient, containing torsion subgroups $G_j < \Gamma_j$ whose order tends to infinity.\\
The sequence $(\Gamma_j, T)$ is non-collapsing (as $T$ does not split) and the  limit group   $\Gamma_\infty$  is  a totally disconnected, non-discrete subgroup of Isom$(T)$ (since if it was discrete, there would be a bound on the order of its finite subgroups, by \cite[Corollary II.2.8]{BH09}, contradicting the fact that the limit  contains  arbitrarily large finite subgroups too).

\vspace{1mm}
\noindent ii)  In the collapsing case, even assuming that the lattices are nonsingular or torsion-free, one cannot improve in general the conclusion saying that the totally disconnected group $\Gamma_\infty'$ is discrete: see Example \ref{ex-comm-not-normal} and Remark \ref{rem-totdiscvsdicrete}.
}
\end{obss*}

By Theorem \ref{teor-intro-convergence}, the class CAT$_0(D_0)$ is not closed, because of the existence of either singular  or collapsing sequences, which make the limit group non-discrete (while it is not difficult to see that the conditions of being $D_0$-cocompact and   $(P_0, r_0)$-packed are stable under Gromov-Hausdorff limits). The problem of determining a natural extension  of CAT$_0(D_0)$ which forms a compact class will be considered by the first author in \cite{Cav23}. 

\noindent However, as we will see in a moment, the limit $M_\infty$ of the quotient orbispaces $M_j=\Gamma_j \backslash X_j$ might still be a reasonable CAT$(0)$-orbispace, even when the sequence is collapsing. 
For this, we need to understand more deeply how collapsing occurs in our class, and it is exactly the purpose of the following.

\noindent The main idea  to give $M_\infty$ a CAT$(0)$-orbispace structure is that, for a  collapsing sequence,  the spaces  $X_j$ split  as $Y_j \times {\mathbb R}^\ell$ and  we would like to prove a corresponding splitting of the groups $\Gamma_j=\Gamma_{Y_j} \times {\mathbb Z}^\ell$, with each factor preserving the product decomposition; then, we would   show  that collapsing only happens in the Euclidean factor via the action of  ${\mathbb Z}^\ell$, and disintegrate the action of the continuous part $\Gamma^\circ_\infty$ of the limit group $\Gamma_\infty$ on $X_\infty=Y_\infty \times {\mathbb R}^\ell$.\linebreak
 Unfortunately, this picture breaks down for general nonsingular CAT$(0)$-lattices: as opposite to the Riemannian case (where for any lattice of a non-positively curved Riemannian manifold  with Euclidean factor $ {\mathbb R}^\ell$  it is always possible to  virtually split a free abelian subgroup of  rank $\ell$),  we will see in \S\ref{sec-groupsplitting}, Example \ref{ex-comm-not-normal},  an example of  a collapsing sequence of lattices $(\Gamma_j, X_j)$ belonging to $ \textup{CAT}_0 (P_0,r_0,D_0)$, where  $X_j$ is the product of a tree $T$ with ${\mathbb R}^2$,  but $\Gamma_j$ does not split any abelian group of positive rank (not even virtually). 
 Actually,  the  splitting of a CAT$(0)$-space $X$ as $X' \times {\mathbb R}^\ell$ forces a lattice $\Gamma$ of $X$ to act preserving the components (i.e. every  $g \in\Gamma$ acts as  $(g', g'')$, where $g' \in \text{Isom}(X')$ and $g'' \in \text{Isom}(\mathbb{R}^\ell)$),
but,	in general, the projections of $\Gamma$ on $\text{Isom}(X')$ and $\text{Isom}(\mathbb{R}^n)$ are not discrete, as \cite[Example 1]{CM19} shows;  this is precisely the obstruction to virtually splitting the $\mathbb{Z}^\ell$-factor from $\Gamma$.

\noindent On the other hand, the group splitting always holds (virtually) for sufficiently  collapsed uniform lattices of    CAT$(0)$-spaces whose full isometry group is Lie:

 \begin{theorem}[Group splitting in the Lie  setting, extract from Thm. \ref{theo-group-splitting-Lie}] 
	\label{theo-intro-group-splitting-Lie} ${}$ \\
Given $P_0,r_0,D_0$, there exist 
  $ \sigma_0^\ast \!=  \sigma_0^\ast (P_0,r_0,D_0) \!>0$ 
  and $I_0 = I_0(P_0,r_0,D_0)$ such that the following holds. 
	Let $X$ be a proper, geodesically complete, $\textup{CAT}(0)$-space whose isometry group is a Lie group, and let $X = Y \times \mathbb{R}^n$ be the splitting of the maximal Euclidean factor of $X$: 
	then, every uniform lattice  $\Gamma$ of  $X$ 
  has a normal, finite index subgroup $\check \Gamma$  which splits as $\check \Gamma = \check \Gamma_Y \times \mathbb{Z}^n$  where $\check \Gamma_Y$ (resp. $\mathbb{Z}^n$) acts discretely on $Y$ (resp. $\mathbb{R}^n$).  \\
  If  $\Gamma$ is nonsingular then 	 $\check \Gamma$  is nonsingular too.\\
 Moreover, if $X$	is $(P_0,r_0)$-packed and $\Gamma$ is $D_0$-cocompact then 
$\textup{sys}^\diamond (\Gamma_Y,Y) \!\geq \!\sigma_0^\ast$ \linebreak and 	 $\check \Gamma$ can be chosen of index  $[\Gamma: \check \Gamma] \leq I_0$.
\end{theorem}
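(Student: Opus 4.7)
The plan is to combine the structural decomposition of $G := \textup{Isom}(X)$ in the Lie setting with an Auslander-type extraction of a translation lattice inside $\Gamma$, and then to split the resulting central extension. First, since the maximal Euclidean de Rham decomposition $X = Y \times \mathbb{R}^n$ is unique, every isometry preserves both factors, hence $G = \textup{Isom}(Y) \times \textup{Isom}(\mathbb{R}^n)$, both factors being Lie. Writing $\textup{Isom}(\mathbb{R}^n) = T \rtimes O(n)$ with $T \simeq \mathbb{R}^n$ the translation subgroup, $T$ is closed and normal in $G$. Since $Y$ has no Euclidean factor, Caprace--Monod forces $\textup{Isom}(Y)^\circ$ to be a semisimple Lie group of noncompact type with finite center.

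Next, I would analyze the closure $H := \overline{\pi_E(\Gamma)}$ of the Euclidean projection. A closed cocompact subgroup of $\textup{Isom}(\mathbb{R}^n)$ must contain all translations (otherwise its orbits lie in proper cylinders, contradicting cocompactness), so $T \subseteq H$. Using that the linear-part image $\rho(H) \subseteq O(n)$ has finitely many components in the cocompact case, one can pass to a finite-index $\check\Gamma_0 \lhd \Gamma$ on which $\pi_E$ lands in $T$. An Auslander-type argument then identifies $\check\Gamma_0 \cap T$ as a lattice $\simeq \mathbb{Z}^n$ in $T$: this relies on the image of $\check\Gamma_0$ in $G/T$ being discrete, which in turn exploits the semisimplicity of $\textup{Isom}(Y)^\circ$ and the absence of Euclidean factor in $Y$.

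The third step is to upgrade the central extension $1 \to \mathbb{Z}^n \to \check\Gamma_0 \to \check\Gamma_Y \to 1$, with $\check\Gamma_Y := \pi_Y(\check\Gamma_0) \subset \textup{Isom}(Y)$ acting discretely on $Y$, into a direct product via a classical splitting for central $\mathbb{Z}^n$-extensions of CAT$(0)$-lattices (in the spirit of Gromoll--Wolf--Lawson--Yau), producing a further finite-index $\check\Gamma \lhd \check\Gamma_0$ with $\check\Gamma \simeq \check\Gamma_Y \times \mathbb{Z}^n$; nonsingularity is inherited, since a fixed-point-free element at some point survives in any finite-index subgroup. For the quantitative conclusions, I would argue by contradiction inside $\textup{CAT}_0(P_0, r_0, D_0)$: any sequence violating a uniform bound on $[\Gamma : \check\Gamma]$ or on $\textup{sys}^\diamond(\check\Gamma_Y, Y)$ admits, by Proposition \ref{lemma-GH-compactness-packing} and Theorem \ref{teor-intro-convergence}, a Gromov--Hausdorff limit in which the $Y$-component collapses in some direction, contradicting the maximality of the Euclidean factor in the approximating $X_j$. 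The main obstacle is the direct-product splitting: even with a central $\mathbb{Z}^n$ in $\check\Gamma_0$, actually splitting it off requires controlling the cohomological obstructions in $H^2(\check\Gamma_Y; \mathbb{Z}^n)$, which is precisely where the Lie hypothesis (via the semisimple structure of $\textup{Isom}(Y)^\circ$) is decisive.
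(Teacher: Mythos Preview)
Your step 2 contains a genuine gap. You claim that because $\rho(H)\subseteq O(n)$ has finitely many components, one can pass to a finite-index $\check\Gamma_0\lhd\Gamma$ with $\pi_E(\check\Gamma_0)\subseteq T$. But ``finitely many components'' only means $\rho(H)^\circ$ has finite index in $\rho(H)$; the identity component $\rho(H)^\circ$ may well be a positive-dimensional torus, and $\rho(\pi_E(\Gamma))$ may be dense in it. So the preimage of $T$ in $\Gamma$ need not have finite index at this stage. In fact, the discreteness of the projections $\pi_Y(\Gamma)$ and $\pi_E(\Gamma)$ is a \emph{conclusion} of the theorem, not an input, and the paper establishes it only after the splitting. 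Your Auslander step inherits the same problem: you need the image of $\check\Gamma_0$ in $G/T\cong \textup{Isom}(Y)\times O(n)$ to be discrete, but the $O(n)$-coordinate is precisely what is not yet controlled.

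The paper avoids this by a different mechanism. It invokes Caprace--Monod to write $\textup{Isom}(X)\cong \mathcal S\times\mathcal D\times\mathcal E_n$ with $\mathcal S$ semisimple and $\mathcal D$ \emph{discrete} (this is exactly where the Lie hypothesis enters). Then it uses \cite[Theorem~2(i)]{CM19} to produce a commensurated free abelian $A<\Gamma$ of rank $n$ whose $\mathcal E_n$-projection is crystallographic; Borel density forces the $\mathcal S$-projection of $A$ to be finite, and a separate argument (minimality on $N$) forces the $\mathcal D$-projection to be finite. Taking a suitable power of generators of $A$ then yields a rank-$n$ subgroup $B<\Gamma$ of pure translations acting trivially on $Y$, which is automatically \emph{normal} in $\Gamma$; the splitting $\check\Gamma=\check\Gamma_Y\times\mathbb Z^n$ then follows from \cite[Theorem~2(ii)]{CM19}, bypassing your $H^2$ discussion entirely. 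For the quantitative bounds, the paper is also more direct than your limiting argument: the lower bound $\textup{sys}^\diamond(\Gamma_Y,Y)\ge\sigma_0^\ast$ comes straight from the splitting Theorem~\ref{theo-splitting-weak} (since $Y$ has no Euclidean factor), and the uniform index bound $I_0$ (only claimed for \emph{nonsingular} $\Gamma$, a restriction you drop) follows from the finiteness Theorem~\ref{theo-finiteness}, not from a Gromov--Hausdorff contradiction.
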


\noindent   In this situation, 
 $M=\Gamma \backslash X$ finitely covers (in the sense of orbispaces) the orbispace $\check M := \check \Gamma \backslash X$, which  splits as a metric
product $\check N \times \mathbb{T}^{n}$, where \linebreak  $\check N = \check \Gamma_Y \backslash Y$ and $\mathbb{T}^{n} =  \mathbb{Z}^n \backslash \mathbb{R}^n$ is a torus. We will  say in this case that the orbispace  $M$ {\em  splits virtually}.

 
\vspace{2mm}
The first part of Theorem 	\ref{theo-group-splitting-Lie} is a pretty natural consequence of Caprace and Monod's work on isometry groups of CAT$(0)$-spaces. On the other hand, the control of the index of  $\check \Gamma$ in $\Gamma$ and the bound of the diastole on the non-Euclidean factor follow from a finiteness theorem proved in \cite{CS23} and a subtle splitting result which will be recalled in Section \ref{subsec-splitting}, Theorem \ref{theo-splitting-weak}.\linebreak
Both estimates are crucial to understand Gromov-Hausdorff limit of the quotient spaces $M_j = \Gamma_j \backslash X_j$ when the actions collapse.
As a result, we obtain a complete description of how nonsingular lattices collapse, in the Lie setting:

\begin{theorem}[Collapsing in the nonsingular, Lie  setting]
\label{teor-intro-Lie-nonsingular} ${}$ \\
Assume that a sequence of nonsingular lattices   $(
\Gamma_j,X_j)_{j \in \mathbb{N}} \subseteq \textup{CAT}_0 (D_0)$
converge  collapsing to $(\Gamma_\infty, X_\infty)$,  and that the groups $\textup{Isom}(X_j)$ are Lie. \\
Let $X_j=Y_j \times \mathbb{R}^j$ be the canonical splitting  of the Euclidean factor of $X_j$, \linebreak
let $X_\infty  \!=  X_\infty'  \times \mathbb{R}^\ell $ be the 
splitting  given  by  Theorem \ref{teor-intro-convergence}, with  $ \ell =\textup{dim} (\Gamma_\infty^\circ) \geq 1$, \linebreak
and  let $k_\infty =\lim_j k_j \geq \ell$ be the dimension of the Euclidean factor of $X_\infty$. Then:
\begin{itemize}[leftmargin=4mm] 
\item[--] the lattice $\Gamma_j$ has a normal subgroup of finite index $ \check \Gamma_j$ splitting as $\check \Gamma_{Y_j}  \!\times \!{\mathbb Z}^{k_j}$;
\item[--]  the orbispace  
 $M_j \!=\Gamma_j \backslash X_j $ virtually  splits as   $\check{N}_j \! \times   \mathbb{T}^{k_j}_j$, 
 where  $\check{N}_j \!= \Gamma_{Y_j}  \backslash Y_j$   \linebreak  is a nonsingular \textup{CAT}$(0)$-orbispace and
$\mathbb{T}^{k_j} \!= {\mathbb Z}^{k_j} \backslash {\mathbb R}^{k_j}$ is a flat torus.
\end{itemize}	

\noindent  Moreover:
\begin{itemize}[leftmargin=8mm] 
			\item[(i)] the $\check{N}_j $'s converge non-collapsing to a nonsingular orbispace $\check{N}_\infty$; 
			\item[(ii)] the tori $\mathbb{T}^{k_j}$ converge to a flat torus $\mathbb{T}^{k_\infty'}$, with
			$k_\infty'= k_\infty - \ell $;
			\item[(iii)] the  $M_j$'s converge to the nonsingular orbispace $M_\infty \! = \Gamma_\infty' \backslash X_\infty'$, where  $\Gamma_\infty'  \!=  \Gamma_\infty /\Gamma_\infty^\circ $  is  discrete;   
			\item[(iv)]  $M_\infty$ is the quotient of $\check{N}_\infty \times \mathbb{T}^{k_\infty'}$ by a finite group of isometries $\Lambda_\infty$.
		\end{itemize}	
	Finally, if  
 $(\Gamma_\infty, X_\infty) \in 	\textup{CAT}_0 (P_0,r_0, D_0)$ and $(\check \Gamma_{Y_\infty}, Y_\infty)$ is the limit of the   $(\check \Gamma_{Y_j}, Y_j)$'s,
then $\textup{sys}^\diamond(\check \Gamma_{Y_\infty},Y_\infty)\ge\sigma_0^\ast (P_0, r_0, D_0)$  and   $|\Lambda_\infty| \leq I_0(P_0, r_0, D_0)$ \linebreak (here $\sigma_0^\ast$ and $I_0$ are the same as in Theorem \ref{theo-intro-group-splitting-Lie}).
\end{theorem}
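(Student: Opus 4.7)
The plan is to reduce to the previous structural theorems as much as possible and then carefully track the two factors of the virtual splitting through the equivariant GH-limit. First I would apply Theorem \ref{theo-intro-group-splitting-Lie} to each $(\Gamma_j,X_j)$, obtaining a normal finite-index subgroup $\check\Gamma_j\triangleleft\Gamma_j$ with $[\Gamma_j:\check\Gamma_j]\leq I_0$ that splits as $\check\Gamma_{Y_j}\times\mathbb{Z}^{k_j}$ with $\check\Gamma_{Y_j}$ acting discretely and nonsingularly on $Y_j$ with $\textup{sys}^\diamond(\check\Gamma_{Y_j},Y_j)\geq\sigma_0^\ast$, and $\mathbb{Z}^{k_j}$ acting as a cocompact lattice of translations of $\mathbb{R}^{k_j}$. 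Dividing yields $\check{M}_j=\check N_j\times\mathbb{T}^{k_j}$, and $M_j$ is the quotient of $\check M_j$ by the finite group $\Lambda_j=\Gamma_j/\check\Gamma_j$ of order at most $I_0$. After passing to a subsequence I may assume $|\Lambda_j|$ is constant, so $\check\Gamma_j$ is an equivariant Gromov--Hausdorff subconverging sequence with common normal structure.

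The second step is to pass to the limit in each factor separately. The factors $Y_j$ inherit uniform packing from $X_j$ (since packing splits from $\mathbb{R}^{k_j}$), so by Proposition \ref{lemma-GH-compactness-packing} one extracts a limit $(\check\Gamma_{Y_\infty},Y_\infty)$; the uniform bound $\textup{sys}^\diamond(\check\Gamma_{Y_j},Y_j)\geq\sigma_0^\ast$ makes this sequence non-collapsing, so Theorem \ref{teor-intro-convergence}(i) applies, giving that $\check\Gamma_{Y_\infty}$ is discrete, nonsingular, isomorphic to $\check\Gamma_{Y_j}$ for $j\gg 0$, and that $\check N_j\to\check N_\infty=\check\Gamma_{Y_\infty}\backslash Y_\infty$ is nonsingular; moreover, stability of the free systole under non-collapsing limits gives $\textup{sys}^\diamond(\check\Gamma_{Y_\infty},Y_\infty)\geq\sigma_0^\ast$. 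On the Euclidean side, each $\mathbb{Z}^{k_j}<\mathbb{R}^{k_j}$ is a lattice of diameter controlled by $D_0$, so up to subsequences the flat tori $\mathbb{T}^{k_j}$ equivariantly converge and the limit is a closed subgroup $H_\infty<\mathbb{R}^{k_\infty}$ whose identity component is a linear subspace $V\subseteq\mathbb{R}^{k_\infty}$ and whose quotient $H_\infty/V$ is free abelian of rank $k_\infty-\dim V$, giving a flat limit torus $\mathbb{T}^{k_\infty'}$ with $k_\infty'=k_\infty-\dim V$.

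The third step is to identify these pieces inside $(\Gamma_\infty,X_\infty)$. Since $\check\Gamma_j\triangleleft\Gamma_j$ has index at most $I_0$, upper semicontinuity of the index under equivariant GH-convergence gives $\check\Gamma_\infty\triangleleft\Gamma_\infty$ with $|\Lambda_\infty|=|\Gamma_\infty/\check\Gamma_\infty|\leq I_0$; being connected and normal, $\Gamma_\infty^\circ\subseteq\check\Gamma_\infty$. The $\check\Gamma_{Y_j}$-part being non-collapsing contributes nothing continuous to the limit, so $\Gamma_\infty^\circ$ is entirely the identity component of the limit of the $\mathbb{Z}^{k_j}$-actions, i.e. $\Gamma_\infty^\circ$ acts as translations of $V$. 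Comparing with Theorem \ref{teor-intro-convergence}(ii), which says $\Gamma_\infty^\circ\cong\mathbb{R}^\ell$, forces $\dim V=\ell$ and hence $k_\infty'=k_\infty-\ell$, proving (ii); passing to quotients gives $\check M_\infty=\check\Gamma_\infty\backslash X_\infty=\check N_\infty\times\mathbb{T}^{k_\infty'}$ and $M_\infty=\Lambda_\infty\backslash\check M_\infty$, which is (iv). Finally, $\Gamma_\infty'=\Gamma_\infty/\Gamma_\infty^\circ$ is an extension of the discrete group $\check\Gamma_\infty/\Gamma_\infty^\circ\cong\check\Gamma_{Y_\infty}\times\mathbb{Z}^{k_\infty'}$ by the finite $\Lambda_\infty$, hence itself discrete, proving (iii) and completing (i).

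The hard part I expect is the third step, specifically the rigorous identification of $\Gamma_\infty^\circ$ with the translations of $V$: one has to show that, under the finite-index virtual splitting, the limit of the product action really is the product of the two separately taken limits, and that no continuous piece can leak out of the non-collapsing $\check\Gamma_{Y_j}$-factor. This rests on the canonicity of the splitting provided by Theorem \ref{theo-intro-group-splitting-Lie} (in particular, the uniform index bound $I_0$ and the uniform non-collapsing systole $\sigma_0^\ast$ on the non-Euclidean side) together with the fact that for Lie isometry groups the maximal Euclidean factor is preserved by every isometry; once these are in place, the matching $\dim V=\ell$ and all the uniform limit bounds follow formally.
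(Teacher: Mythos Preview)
Your proposal is correct and follows essentially the same route as the paper: apply Theorem~\ref{theo-intro-group-splitting-Lie} to each $(\Gamma_j,X_j)$, pass to the limit factorwise (using the uniform systole bound $\sigma_0^\ast$ on the $Y_j$-side to invoke the non-collapsing case, and the structure of closed subgroups of $\mathbb{R}^{k_\infty}$ on the Euclidean side), then use the uniform index bound $I_0$ to transfer $\check\Gamma_\infty\triangleleft\Gamma_\infty$ with finite index and identify $\Gamma_\infty^\circ$ as the continuous part $V$ of the torus limit. The paper carries this out in the ultralimit language via three auxiliary lemmas (ultralimit of products is the product of ultralimits, index is upper semicontinuous, structure of closed translation groups), and obtains the final bound $\textup{sys}^\diamond(\check\Gamma_{Y_\infty},Y_\infty)\ge\sigma_0^\ast$ not by ``stability'' of the free systole but by first showing (via Corollary~\ref{cor-euclfactor}) that $Y_\infty$ has no Euclidean factor and then reapplying Theorem~\ref{theo-intro-group-splitting-Lie}(iii); normality of $\check\Gamma_\infty$ in $\Gamma_\infty$ also requires a short separate check.
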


\noindent In other words,  the theorem says that the collapsing occurs in this class only by possibly shrinking to a point a flat torus (virtual) fiber. This  result is new even for collapsing sequences of compact Riemannian manifolds $M$ with nonpositive sectional curvature $-\kappa \leq k(M) \leq 0$. Notice that also the fact that $k_\infty $ equals the dimension of the Euclidean factor of $X_\infty$ is not trivial and follows by Theorem \ref{teor-intro-convergence}, see Corollary \ref{cor-euclfactor}.
 \vspace{2mm}

 An interesting case where the above convergence theorem applies is the one of {\em \textup{CAT}$(0)$-homology orbifolds.} 
A metric space  $M$ is a \textup{CAT}$(0)$-homology orbifold if it can be realized as the quotient $M=\Gamma \backslash X$  of a proper CAT$(0)$-space $X$ which is a homology manifold (with respect to the induced topology) by a discrete group of isometries. We  denote respectively by 
	\vspace{-3mm}
	
	$${\mathcal H}{\mathcal O}\textup{-CAT}_0(D_0), \hspace{5mm}{\mathcal H}{\mathcal O}\textup{-CAT}_0(P_0, r_0, D_0)$$

\noindent the class	of compact CAT$(0)$-homology orbifolds $M \!=\Gamma \backslash X$ with diam$(M) \!\leq D_0$, \linebreak 
and the subclass of those 	such that $X$ is, moreover,  $(P_0,r_0)$-packed.\\
	Notice that, as a consequence of 
(\ref{eq:filtration}) we still have
	\vspace{-2mm}
	
	$${\mathcal H}{\mathcal O}\textup{-CAT}_0(D_0) = \bigcup_{P_0, r_0} {\mathcal H}{\mathcal O}\textup{-CAT}_0(P_0,r_0,D_0).$$


\noindent The class of  CAT$(0)$-homology orbifolds  is interesting  for many reasons. \linebreak
First remark that CAT$(0)$-homology manifolds are always geodesically complete (cp. \cite[Proposition II.5.12]{BH09}); also,  their lattices  always yield nonsingular orbispaces (see discussion in Section \ref{subsection-isometries}).  
	
\noindent More importantly, the class ${\mathcal H}{\mathcal O}\textup{-CAT}_0(P_0, r_0, D_0)$
 contains all nonpositively curved Riemannian manifolds with (uniformly) bounded sectional curvature. \\
	The advantage over   Riemannian manifolds is that being a homology manifold  (as well as the CAT$(0)$ and the $(P_0, r_0)$-packing conditions) is a property  which is stable under Gromov-Hausdorff convergence (see  \cite[Theorem 6.1]{CS22} and \cite[Lemma 3.3]{LN-finale-18}); while limits  of  of Riemannian manifolds, even without collapsing, yield at best  manifolds with $C^{1, \alpha}$-metrics, for which  the Riemannian curvature is not defined).  
	
\noindent Finally, the full isometry group  of a CAT$(0)$-homology manifold is  always a Lie group, as we will prove in Section \ref{subsection-isometries}, Proposition \ref{cor-homology-Lie},  which allows us to apply the results of Theorem \ref{teor-intro-Lie-nonsingular} in this class and show the following:

 \begin{corollary}[Gromov-Hausdorff  compactness]  
 \label{theo-intro-compactness}  ${}$\\
For all fixed $P_0,r_0$ and $D_0$, 	the class ${\mathcal H}{\mathcal O}\textup{-CAT}_0(P_0, r_0, D_0)$ is compact with respect to the Gromov-Hausdorff topology.  
	\end{corollary}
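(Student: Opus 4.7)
The plan is to combine the general convergence theorem (Theorem \ref{teor-intro-convergence}) with the collapsing analysis available in the Lie setting (Theorem \ref{teor-intro-Lie-nonsingular}). Given a sequence $M_j = \Gamma_j \backslash X_j$ in $\mathcal{HO}\textup{-CAT}_0(P_0,r_0,D_0)$, I would first apply Proposition \ref{lemma-GH-compactness-packing} to pass to a subsequence for which $(\Gamma_j,X_j)$ converges equivariantly to some $(\Gamma_\infty, X_\infty)$. Theorem \ref{teor-intro-convergence} then yields that $X_\infty$ is a proper, geodesically complete $\textup{CAT}(0)$ space, that $\Gamma_\infty$ is closed and $D_0$-cocompact, and that $M_j \to M_\infty := \Gamma_\infty \backslash X_\infty$ in the Gromov--Hausdorff topology, with $\textup{diam}(M_\infty) \leq D_0$.

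The next step is to check the structural properties that are preserved in the limit. The $(P_0,r_0)$-packing condition is stable under pointed GH convergence, so $X_\infty$ is $(P_0,r_0)$-packed. The homology manifold property is also stable under GH limits in the geodesically complete $\textup{CAT}(0)$ setting by \cite[Theorem 6.1]{CS22}, so $X_\infty$ is a homology manifold; Proposition \ref{cor-homology-Lie} then ensures that $\textup{Isom}(X_\infty)$, as well as each $\textup{Isom}(X_j)$, is a Lie group. Moreover, lattices on \textup{CAT}$(0)$ homology manifolds are automatically nonsingular, so the sequence falls squarely within the hypotheses of Theorem \ref{teor-intro-Lie-nonsingular}.

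At this point I would split on the non-collapsing/collapsing dichotomy. In the non-collapsing case, Theorem \ref{teor-intro-convergence}(i) already gives that $\Gamma_\infty$ is discrete, and hence $M_\infty$ sits tautologically in $\mathcal{HO}\textup{-CAT}_0(P_0,r_0,D_0)$. The collapsing case is where the argument needs care: the limit group $\Gamma_\infty$ is typically non-discrete, with $\Gamma_\infty^\circ \cong \mathbb{R}^\ell$, so $M_\infty = \Gamma_\infty \backslash X_\infty$ is not of the right form as written. Here I would invoke Theorem \ref{teor-intro-Lie-nonsingular}, whose Lie hypothesis has just been verified: it provides a splitting $X_\infty = X_\infty' \times \mathbb{R}^\ell$ together with the re-presentation $M_\infty = \Gamma_\infty' \backslash X_\infty'$, where $\Gamma_\infty'$ is discrete. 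The factor $X_\infty'$ is a proper, geodesically complete $\textup{CAT}(0)$ space (being a factor of one); it inherits the $(P_0,r_0)$-packing condition, since any $2r_0$-separated configuration in a $3r_0$-ball of $X_\infty'$ embeds as such into $X_\infty$ via $x \mapsto (x,0)$; and it remains a homology manifold, from the Künneth-type relation between the local homologies of $X_\infty'$ and of $X_\infty' \times \mathbb{R}^\ell$. Since $\textup{diam}(M_\infty) \leq D_0$ is already known, we conclude $M_\infty \in \mathcal{HO}\textup{-CAT}_0(P_0,r_0,D_0)$.

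The main obstacle is precisely the collapsing case: the raw limit $\Gamma_\infty \backslash X_\infty$ is not intrinsically a homology orbifold quotient by a discrete group, and the whole point is to repackage it as one. The fact that the Lie hypothesis propagates to $X_\infty$ via the stability of the homology manifold condition is what unlocks Theorem \ref{teor-intro-Lie-nonsingular} and allows one to extract the genuine discrete presentation $\Gamma_\infty' \backslash X_\infty'$ of $M_\infty$; everything else is a combination of stability properties already established earlier in the paper.
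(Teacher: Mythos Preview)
Your proof is correct and follows essentially the same route as the paper's own argument: pass to a convergent subsequence, use stability of the homology-manifold and packing conditions together with Proposition~\ref{cor-homology-Lie} to place the sequence in the nonsingular Lie setting, then split into the non-collapsing case (Theorem~\ref{teor-intro-convergence}(i)) and the collapsing case (Theorem~\ref{teor-intro-Lie-nonsingular}), finally using K\"unneth to see that the factor $X_\infty'$ is again a homology manifold. The only cosmetic difference is that the paper cites \cite[Lemma~3.3]{LN-finale-18} rather than \cite{CS22} for stability of the homology-manifold property, and it passes through the non-Euclidean factor $Y_\infty$ before applying K\"unneth.
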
 

\noindent Actually, among the most natural classes of metric spaces containing all Riemannian manifolds with bounded sectional curvature, ${\mathcal H}{\mathcal O}\textup{-CAT}_0(P_0, r_0, D_0)$ is  the smallest class  we know to be compact under Gromov-Hausdorff limits.



\noindent Comparing with the classical compactness theorems in Riemannian geometry (with bounded sectional curvature, e.g. \cite{Gro78b}, or bounded Ricci curvature  as in \cite{anderson90}, \cite{AC92}), 
 the novelty here is that, besides the wider generality of application (metric spaces, bounded packing instead of bounded curvature, torsion allowed), we do not assume any lower bound neither on the injectivity radius  nor on the volume, thus allowing convergence to spaces of smaller dimension.  \\
 We stress the fact that, in order to obtain a closed class, it is necessary to consider also quotients by discrete groups \emph{with torsion}. Indeed,  a sequence of compact, nonpositively curved manifolds with uniformly bounded curvature does not converge, generally, to a locally CAT$(0)$-manifold or even to a  locally  CAT$(0)$-homology manifold, as the following simple example shows.

\begin{ex*}   Let $M_j$ be the quotient of $\mathbb{R}^2$ by the discrete group $\Gamma_j$ generated by $\lbrace t, g_j \rbrace$, where $t$ is the translation of length $1$ along the $x$-axis and $g_j$ is the glide reflection with axis equal to the $y$-axis, with translation length $\frac{1}{j}$. \linebreak Each $M_j$ is a  smooth, flat $2$-manifold homeomorphic to the Klein bottle. \linebreak 
	However, the  $M_j$'s converge collapsing to a metric space $M_\infty$ which is the quotient of $\mathbb{R}^2$ by the limit $\Gamma_\infty$ of the groups $\langle t,g_j\rangle$. The limit space $M_\infty$ is homeomorphic to the closed interval $[0,1]$, which is not a homology manifold since it has boundary points; but it clearly is a (flat) Riemannian orbifold.
 \end{ex*}

	\begin{obs*} \emph{One can sharpen  Corollary \ref{theo-intro-compactness} in the Riemannian setting. 		Suppose that $M_j = \Gamma_j\backslash X_j \in \mathcal{HO}\textup{-CAT}_0(P_0,r_0,D_0)$  is a sequence of Riemannian orbifolds converging to $M_\infty$: then, $M_\infty$ is isometric to a quotient $\Gamma_\infty'\backslash X_\infty'$, where $X_\infty'$ is a \emph{topological} manifold. This follows by the same argument of the proof of Corollary \ref{theo-intro-compactness} and \cite[Theorem 1.3]{LN-finale-18}.}
	\end{obs*}

The following corollaries are two direct applications of the theory developed so far, and follow a classical scheme of proof:  we assume to have a sequence of counterexamples and pass to the limit, then we obtain a contradiction by using, respectively, a well-known result of Adams and Ballmann about virtually abelian lattices of CAT$(0)$-spaces, and a continuity result for the entropy (which we will prove in the Appendix).
	
\begin{corollary}[Isolation of $\mathbb{R}^n$ among cocompact CAT$(0)$-spaces] ${}$
\label{cor-flats} \\
There exists $\varepsilon = \varepsilon(D_0,n) > 0$ such that every  $D_0$-cocompact,  geodesically complete \textup{CAT}$(0)$-space $X$ satisfying $d_{\textup{pGH}}(X,\mathbb{R}^n) \!< \varepsilon$  is isometric to  $\mathbb{R}^n$.\linebreak
\end{corollary}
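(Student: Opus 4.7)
I would argue by contradiction. Suppose the conclusion fails: there is a sequence of $D_0$-cocompact, geodesically complete CAT$(0)$-spaces $X_j$ with $d_{\textup{pGH}}(X_j,\mathbb{R}^n)\to 0$, none isometric to $\mathbb{R}^n$. Pick uniform lattices $\Gamma_j<\textup{Isom}(X_j)$. Since $\mathbb{R}^n$ is $(P_0,r_0)$-packed for suitable constants and the packing condition is stable under pGH-limits, the pairs $(\Gamma_j,X_j)$ lie eventually in $\textup{CAT}_0(P_0',r_0,D_0)$. By Proposition~\ref{lemma-GH-compactness-packing}, after passing to a subsequence we have equivariant pGH convergence $(\Gamma_j,X_j)\to(\Gamma_\infty,\mathbb{R}^n)$, where $\Gamma_\infty$ is closed and $D_0$-cocompact in $\textup{Isom}(\mathbb{R}^n)$ by Theorem~\ref{teor-intro-convergence}.

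The heart of the proof is to show that $\Gamma_j$ is virtually $\mathbb{Z}^n$ for $j\gg 0$. In the \emph{non-collapsing} regime, Theorem~\ref{teor-intro-convergence}(i) gives $\Gamma_j\cong\Gamma_\infty$ for $j\gg 0$, and $\Gamma_\infty$, being a discrete cocompact subgroup of the Lie group $\textup{Isom}(\mathbb{R}^n)$, is a Bieberbach group and hence virtually $\mathbb{Z}^n$. I then invoke the theorem of Adams--Ballmann for virtually abelian cocompact actions on CAT$(0)$-spaces, combined with the Caprace--Monod splitting theory, to produce a metric decomposition $X_j = \mathbb{R}^n \times K_j$ with $K_j$ a compact, geodesically complete CAT$(0)$-space. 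Any such $K_j$ is a single point (a non-constant geodesic in $K_j$ would extend indefinitely by geodesic completeness, contradicting boundedness via convexity of the distance), so $X_j \cong \mathbb{R}^n$, contradicting the choice of $X_j$.

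In the \emph{collapsing} regime, Theorem~\ref{teor-intro-convergence}(ii) yields a splitting $\mathbb{R}^n = X_\infty' \times \mathbb{R}^\ell$ with $\ell \geq 1$ and $\Gamma_\infty^\circ\cong\mathbb{R}^\ell$ acting by translations. The continuity of the dimension of the maximal Euclidean factor (Corollary~\ref{cor-euclfactor}) then forces each $X_j$ to split as $\mathbb{R}^n \times Y_j$ for $j \gg 0$, with $Y_j$ a geodesically complete CAT$(0)$-space whose pointed GH limit reduces to a single point. But pGH-convergence to a point means that balls of fixed radius in $Y_j$ have diameter tending to $0$, and geodesic completeness of a nontrivial CAT$(0)$-space prevents bounded balls of any positive radius from having arbitrarily small diameter. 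Hence $Y_j$ is a point for $j\gg 0$, so $X_j\cong\mathbb{R}^n$ — again a contradiction.

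The main obstacle is precisely the collapsing case: one has to transfer the essentially abelian nature of the (possibly non-discrete) limit $\Gamma_\infty$ back to the discrete approximants $\Gamma_j$ without a Lie assumption on $\textup{Isom}(X_j)$. The leverage comes from Corollary~\ref{cor-euclfactor}, which lets me sidestep any direct group-theoretic transfer and close the argument through a purely geometric statement about the Euclidean factor.
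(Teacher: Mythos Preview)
Your argument is essentially correct but has one recurring omission and is more circuitous than the paper's proof.

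The omission: both the isomorphism conclusion in Theorem~\ref{teor-intro-convergence}(i) and Corollary~\ref{cor-euclfactor} require the lattices $\Gamma_j$ to be \emph{nonsingular}, which you never verify. This is easy to fix: since $X_j \to \mathbb{R}^n$ in pointed GH-distance, Theorem~\ref{prop-Riemannian-close-topological} forces $X_j$ to be a topological manifold for $j \gg 0$, and any discrete group acting on a CAT$(0)$ homology manifold is rigid, hence nonsingular (see \S\ref{subsection-isometries}). Once this is inserted, both branches of your argument go through. (A minor aside: your justification ``the packing condition is stable under pGH-limits'' runs in the wrong direction; what you need is the ``only if'' part of Proposition~\ref{lemma-GH-compactness-packing}, which gives uniform packing from convergence.)

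The paper, however, dispenses with your case distinction entirely. After obtaining uniform packing and extracting a convergent subsequence, it applies the Renormalization Theorem~\ref{theo:renormalization}: this replaces each action $(\Gamma_j, X_j)$ by an action on an \emph{isometric} copy of $X_j$ with free systole bounded below and codiameter bounded by $\Delta_0(P_0,D_0)$. The renormalized sequence is therefore non-collapsing by construction; one then checks nonsingularity via Theorem~\ref{prop-Riemannian-close-topological} and concludes immediately from Theorem~\ref{theo-noncollapsed}(ii) plus \cite[Corollary~C]{AB97}. Your collapsing branch is not wrong, but it is redundant in a hidden way: the proof of Corollary~\ref{cor-euclfactor} itself invokes the renormalization theorem to reduce to the non-collapsing case, so you are using the same machinery at one remove. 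Going through renormalization directly is shorter and avoids the separate geometric argument about $Y_j$ shrinking to a point.
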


\noindent Notice that, in particular, this results implies that there do not exist arbitrarily small, $D_0$-periodic deformations of the Euclidean metric of $\mathbb{R}^n$ which remain  nonpositively curved, unless these deformations are flat.

Finally, recall that the  ({\em packing}, or {\em covering}) {\em entropy} of a proper CAT$(0)$-space $X$ is the asymptotic invariant defined as  
 $$\textup{Ent}(X) = \limsup_{R\to +\infty}\frac{1}{R}\log (\text{Pack}(R,r))$$
This can be equivalently defined as the critical exponent of any uniform lattice of $X$, and does not depend  on the small, chosen radius $r$, see   Section  \ref{subsection-packing-margulis}.\linebreak
It is well-known that  for every Hadamard manifold $X$ possessing a torsionless, uniform lattice $\Gamma$, Ent$(X)$ coincides with the topological entropy of the geodesic flow on the unit tangent bundle of the quotient manifold $M=\Gamma \backslash X$ \cite{Man79}.
Then, we have:

\begin{corollary}[Entropy-rigidity of $\mathbb{R}^n$] ${}$ 
\label{cor-entropy}  \\
There exists $h \!=\! h(P_0,r_0,D_0) \!> 0$ with the following property. \\
Let $X$ be a $D_0$-cocompact, geodesically complete  \textup{CAT}$(0)$-space which is $(P_0, r_0)$-packed: if \textup{Ent}$(X) \!< h$, then $X$ is isometric to  $\mathbb{R}^n$, for some $n$.
\end{corollary}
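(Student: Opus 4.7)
My plan is to argue by contradiction following the scheme already announced in the paper: take a sequence of putative counterexamples, pass to an equivariant Gromov--Hausdorff limit, use the continuity of the entropy to force the limit to be Euclidean, and finally conclude by the isolation of $\mathbb{R}^n$ provided by Corollary \ref{cor-flats}.

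Suppose, for contradiction, that the statement fails. Then there is a sequence of $D_0$-cocompact, $(P_0,r_0)$-packed, geodesically complete CAT$(0)$-spaces $X_j$ with $\textup{Ent}(X_j)\to 0$, none of which is isometric to any $\mathbb{R}^n$. For each $j$, choose a uniform lattice so that $(\Gamma_j,X_j)\in\textup{CAT}_0(P_0,r_0,D_0)$. By Proposition \ref{lemma-GH-compactness-packing}, up to a subsequence $(\Gamma_j,X_j)$ converges in the equivariant pointed Gromov--Hausdorff topology to some $(\Gamma_\infty,X_\infty)$. By Theorem \ref{teor-intro-convergence}, $X_\infty$ is a proper, geodesically complete CAT$(0)$-space, $\Gamma_\infty$ is closed and still $D_0$-cocompact, and the $(P_0,r_0)$-packing passes to the limit (as already recorded in the discussion preceding \eqref{eq:filtration}); thus $(\Gamma_\infty,X_\infty)\in\textup{CAT}_0(P_0,r_0,D_0)$.

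The next step is to apply the continuity of the packing/covering entropy under equivariant Gromov--Hausdorff convergence of lattices in $\textup{CAT}_0(P_0,r_0,D_0)$, which is exactly the content of the Appendix. This gives $\textup{Ent}(X_\infty)=\lim_j \textup{Ent}(X_j)=0$. Now I claim that a $D_0$-cocompact, geodesically complete CAT$(0)$-space of zero entropy is isometric to some Euclidean space $\mathbb{R}^n$. Writing the canonical maximal Euclidean splitting $X_\infty = Y\times \mathbb{R}^n$ of Caprace--Monod, the factor $Y$ has no Euclidean de Rham factor and still carries a cocompact isometry group (the projection of $\Gamma_\infty$). Since the Euclidean factor contributes zero to the entropy and the entropy is additive under metric products, $\textup{Ent}(Y)=0$; on the other hand, a cocompact, geodesically complete CAT$(0)$-space with no Euclidean factor has strictly positive entropy unless it reduces to a point (this is a consequence of the rank-rigidity/structure theory for cocompact CAT$(0)$-spaces, as developed by Caprace--Monod, and is the main non-trivial input). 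Hence $Y$ is a point and $X_\infty\cong\mathbb{R}^n$.

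Finally, the equivariant Gromov--Hausdorff convergence $(\Gamma_j,X_j)\to(\Gamma_\infty,X_\infty)=(\Gamma_\infty,\mathbb{R}^n)$ entails $d_{\textup{pGH}}(X_j,\mathbb{R}^n)\to 0$. Applying Corollary \ref{cor-flats} with the constant $\varepsilon(D_0,n)$, we obtain that $X_j$ is isometric to $\mathbb{R}^n$ for all sufficiently large $j$, contradicting the choice of the sequence. The value $h=h(P_0,r_0,D_0)$ produced by this argument is the infimum over all such sequences; the main obstacle is the step identifying $X_\infty$ with a Euclidean space from the vanishing of its entropy, which is where the structure theory of cocompact CAT$(0)$-spaces really enters.
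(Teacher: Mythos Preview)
Your overall scheme is the same contradiction-and-limit strategy the paper uses, but there are two genuine gaps.

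\textbf{First gap: the continuity of entropy cannot be invoked as stated.} Proposition~\ref{prop:continuity_entropy} in the Appendix is not a general continuity statement for $\textup{CAT}_0(P_0,r_0,D_0)$: it compares two \emph{discrete} lattices, and requires a uniform lower bound $s_0$ on the systole $\textup{sys}(\Gamma,x)$, $\textup{sys}(\Gamma',x')$ at the basepoints. You never verify that your limit $\Gamma_\infty$ is discrete (it need not be if the sequence collapses), nor that the systoles are bounded below. The paper fixes this by first applying the Renormalization Theorem~\ref{theo:renormalization}: one replaces the actions by new ones on isometric copies $X_j'$ of $X_j$ (so the entropy is unchanged) with $\textup{sys}^\diamond(\Gamma_j,X_j')\geq s_0$ and codiameter $\leq \Delta_0(P_0,D_0)$. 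This forces the sequence to be non-collapsing, puts one in the setting of Theorem~\ref{theo-noncollapsed}(ii), and then Proposition~\ref{prop-vol-sys-dias} yields the pointwise systole bound needed for Proposition~\ref{prop:continuity_entropy}.

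\textbf{Second gap: your identification of $X_\infty$ with $\mathbb{R}^n$ is unsupported.} You assert that a cocompact, geodesically complete $\textup{CAT}(0)$-space with no Euclidean factor has strictly positive entropy ``as a consequence of rank-rigidity/structure theory'', but you give no reference or argument, and this is not a statement the paper proves or quotes. The paper sidesteps this entirely. Once the renormalization is done, Theorem~\ref{theo-noncollapsed}(ii) gives that $\Gamma_\infty$ is discrete and \emph{isomorphic to $\Gamma_j$} for $j\gg 0$. Then $\textup{Ent}(X_\infty)=0$ means (via \eqref{defin:entropy-groups}) that $\Gamma_\infty$, and hence each $\Gamma_j$, has subexponential growth and is therefore amenable; one then applies the Adams--Ballmann theorem \cite[Corollary~C]{AB97} \emph{directly to the $X_j$'s} to conclude $X_j\cong\mathbb{R}^n$. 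This is cleaner than your route through $X_\infty$ and Corollary~\ref{cor-flats}, and it avoids the unproved positivity-of-entropy claim.
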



\noindent As a consequence, any non-flat,  compact  Riemannian manifold $M$ with sectional curvature $-\kappa \! \leq K_M  \! \leq 0$ and diam$(M) \! \leq D$ has topological entropy \linebreak
greater than  a positive, universal constant $h_0\!=h_0 (\kappa, D)$ (for $h_0\!= h(P_0,r_0,D) $, \linebreak where $(P_0,r_0)$ are the packing constant  deduced from the lower curvature bound given by (\ref{eq:doubling})).

	\vspace{2mm} 
 
	\small {\em
		\noindent {\sc Acknowledgments.} The authors thank P.E.Caprace, S.Gallot and A.Lytchak for many interesting discussions during the preparation of this paper, and D.Semola for pointing us to interesting references.}
	\normalsize

\section{Preliminaries on CAT$(0)$-spaces}
\label{sec-CAT}

We fix here some notation and recall some facts about CAT$(0)$-spaces. \\
Throughout the paper $X$ will be a {\em proper} metric space with distance $d$.\linebreak
The open (resp. closed) ball in $X$ of radius $r$, centered at $x$, will be denoted by $B_X(x,r)$ (resp.  $\overline{B}_X(x,r)$); we will often drop the subscript $X$ when the space  is clear from the context.\\
A {\em geodesic} in a metric space $X$ is an isometry $c\colon [a,b] \to X$, where $[a,b]$ is an interval of $\mathbb{R}$. The {\em endpoints} of the geodesic $c$ are the points $c(a)$ and $c(b)$; a geodesic with endpoints $x,y\in X$ is also denoted by $[x,y]$. A {\em geodesic ray} is an isometry $c\colon [0,+\infty) \to X$ and a geodesic line is an isometry $c\colon \mathbb{R} \to X$. A metric space $X$ is called   {\em geodesic}  if for every two points $x,y \in X$ there is a geodesic with endpoints   $x$ and $y$. 

\noindent A metric space $X$ is called CAT$(0)$ if it is geodesic and every geodesic triangle $\Delta(x,y,z)$  is thinner than its Euclidean comparison triangle   $\overline{\Delta} (\bar{x},\bar{y},\bar{z})$: that is,  for any couple of points $p\in [x,y]$ and $q\in [x,z]$ we have $d(p,q)\leq d(\bar{p},\bar{q})$ where $\bar{p},\bar{q}$ are the corresponding points in $\overline{\Delta} (\bar{x},\bar{y},\bar{z})$ (see   for instance \cite{BH09} for the basics of CAT(0)-geometry).
As a consequence, every CAT$(0)$-space is {\em uniquely geodesic}: for every two points $x,y$ there exists a unique geodesic with endpoints $x$ and $y$.

\noindent A CAT$(0)$-space $X$ is {\em geodesically complete} if every geodesic $c\colon [a,b] \to X$ can be extended to a geodesic line. For instance, if a CAT$(0)$-space is a homology manifold then it is always geodesically complete, see \cite[II.5.12]{BH09}.

\noindent The  {\em boundary at infinity} of a CAT$(0)$-space $X$ (that is, the set of equivalence classes of geodesic rays, modulo the relation of being asymptotic), endowed with the Tits distance,  will be denoted by $\partial X$, see \cite[Chapter II.9]{BH09}.

\noindent A subset $C$ of $X$ is said to be {\em convex} if for all $x,y\in C$ the geodesic $[x,y]$ is contained in $C$. Given a subset $Y\subseteq X$ we denote by $\text{Conv}(Y)$ the {\em convex closure} of $Y$, that is the smallest closed convex subset containing $Y$.\linebreak
If $C$ is a convex subset of a CAT$(0)$-space $X$ then it is itself CAT$(0)$, and its boundary at infinity $ \partial C$ naturally and isometrically embeds in $\partial X$.
\vspace{1mm}

  We will denote by HD$(X)$ and TD$(X)$ the Hausdorff and the topological dimension of a metric space $X$, respectively.  
 By \cite{LN19} we know that if $X$ is a  proper and geodesically complete CAT$(0)$-space then every point $x\in X$ has a well defined integer dimension in the following sense: there exists $n_x\in \mathbb{N}$ such that every small enough ball around $x$ has Hausdorff dimension equal to $n_x$. This defines a {\em stratification of $X$} into pieces of different integer dimensions: namely, if $X^k$ denotes the subset of points of $X$ with dimension $k$, then 
 \vspace{-5mm}
 
 $$X= \bigcup_{k\in \mathbb{N}} X^k.$$
The {\em dimension} of $X$ is the supremum of the dimensions of its points:  it coincides with the {\em topological dimension} of $X$, cp. \cite[Theorem 1.1]{LN19}.

\noindent Calling $\mathcal{H}^k$ the $k$-dimensional Hausdorff measure, the formula
 \vspace{-3mm}

$$\mu_X := \sum_{k\in \mathbb{N}} \mathcal{H}^k \llcorner X^k$$  defines a {\em canonical measure} on $X$ which is locally positive and locally finite.

\vspace{1mm}
 \subsection{Isometry groups and orbispaces}
\label{subsection-isometries}
${}$

\noindent The {\em translation length} of an isometry $g$ of a CAT$(0)$-space $X$
is defined as  $$\ell(g) := \inf_{x\in X}d(x,gx).$$ 
When the infimum is realized, the isometry $g$ is called {\em elliptic} if $\ell(g) = 0$ and {\em hyperbolic} otherwise. The {\em minimal set of $g$}, $\text{Min}(g)$, is defined as the subset of points of $X$ where $g$ realizes its translation length; notice that if $g$ is elliptic then $\text{Min}(g)$ is the subset of points fixed by $g$. An isometry is called \emph{semisimple} if it is either elliptic or hyperbolic; a subgroup $\Gamma$ of isometries of $X$
is  called \emph{semisimple} if all of its elements are semisimple.

\noindent Let $\text{Isom}(X)$ be the group of isometries of $X$, endowed with the compact-open topology: as $X$ is proper, it is a topological,  locally compact  group. \linebreak
A subgroup $\Gamma$ of isometries  is \emph{discrete} if it is  discrete as a subset of \textup{Isom}(X) (with respect to the compact-open topology). 
A {\em uniform lattice} 
of $X$ is a discrete isometry group $\Gamma$ such that the quotient metric space $\Gamma \backslash X$ is compact; then, we call diam$(\Gamma \backslash X)$ the  {\em codiameter} of $\Gamma$. We will often say  that   $\Gamma$  is $D$-cocompact if diam$(\Gamma \backslash X) \leq D$, and that the space  $X$ is {\em cocompact} if it admits a uniform lattice. Notice that a cocompact CAT$(0)$-space $X$ is always proper.

\noindent The action of $\Gamma$ on $X$ (and, by extension, the group $\Gamma$ itself) is called {\em nonsingular} if there exists a point $x_0 \in X$ such that the pointwise stabilizer $\text{Stab}_\Gamma(x_0)$ is trivial,  and {\em singular}  otherwise; see \cite[Example 1.4]{CS22} and \cite[Remark 5.3]{CS23} for examples of this phenomenon. 

\noindent Finally, when dealing with  isometry groups of CAT$(0)$-spaces with torsion, a   difficulty is that  there may exist nontrivial 
elliptic isometries which act as the identity on open sets. 
Following \cite[Chapter 11]{dlHG90}, a subgroup   $\Gamma$ of Isom$(X)$ will be called  {\em rigid} (or \emph{slim}, with the terminology used in \cite{CS22}) if for all $g \in \Gamma$ the subset $\text{Fix}(g)$ has empty interior. \\ 
Notice that a torsion-free group is trivially rigid, as well as any discrete group acting on a CAT$(0)$-homology manifold, as proved in \cite[Lemma 2.1]{CS22}). \linebreak
Also,  every rigid isometry group $\Gamma$ is automatically nonsingular since, by Baire's theorem, the union of fixed-point sets of all elliptic elements of $\Gamma$ has empty interior, cp. \cite[Proposition 2.11]{CS22}.
\vspace{1mm}

\noindent We will call the quotient metric space $M = \Gamma \backslash X$ of a proper, geodesically complete, CAT$(0)$-space $X$ by a discrete group of isometries $\Gamma$,   a  {\em \textup{CAT}$(0)$-orbispace}. We will also say that the CAT$(0)$-orbispace  $M$  is {\em nonsingular} (resp. {\em rigid}) if $\Gamma$ acts nonsingularly (resp. rigidly) on $X$. Notice that  if $\Gamma$ is torsion-free then  $M$   is  a locally CAT$(0)$-space.

\noindent When restricting our attention to groups $\Gamma$ acting {\em rigidly} on $X$ (i.e. such that every $g \in \Gamma$ acting as the identity on an open subset  is trivial) then our definition of CAT$(0)$-orbispace is equivalent to the notion of  {\em rigid, developable orbispace} as defined in \cite[Ch.11]{dlHG90} through an orbifold atlas, with CAT$(0)$ universal covering in the sense of orbispaces. 
However, our results will apply to all  CAT$(0)$-orbispaces  as defined above (with some distinction between the singular and nonsingular cases).
  
\begin{obs}\label{obs-samequotient}
{\em
  Notice that a  \textup{CAT}$(0)$-orbispace has more structure than simply the structure of a  metric space, including also the data of a group action on some  \textup{CAT}$(0)$-space. In this sense, it is worth to stress that (as opposite to the case of {\em Riemannian} orbifolds) two lattices $\Gamma_1, \Gamma_2$ of different  \textup{CAT}$(0)$-spaces $X_1, X_2$ may give the same metric quotient $M$.\\
  An  example of this (for CAT$(1)$-orbispaces) is the {\em $k$-triplex} $X$ constructed in \cite{nagano-triplex}: this is a proper, geodesically complete, purely $k$-dimensional CAT$(1)$-space (actually, a topological sphere) admitting a lattice $\Gamma \cong {\mathbb Z}_3$ such that $M= \Gamma \backslash X$ is isometric to the CAT$(1)$-orbispace $M'={\mathbb Z}_2 \backslash {\mathbb S}^k$, with ${\mathbb Z}_2$ acting as a rotation of $\pi$.\\
An example for  CAT$(0)$-orbispaces can be costructed by taking a ramified covering of a flat torus 
$T = {\mathbb Z}^2 \backslash {\mathbb R}^2$:
let $X$ be obtained by taking two copies $T_i \setminus \gamma_i$ of $T$ minus a geodesic segment $\gamma$, and then glueing together the resulting couples of boundaring edges $(\gamma_1^+,\gamma_1^-)$ with $(\gamma_2^-,\gamma_2^+)$. 
This yields a (metric) ramified covering $f: X \rightarrow T$ from a topological  surface of genus $2$ to the initial torus, with ramification locus given by the endpoints of $\gamma$. Notice that $X$ is a  metric space which has an  involutive isometry $\sigma$ such that $\langle\sigma\rangle \backslash X = T$, and it can be seen as the quotient of a CAT$(0)$-space $\tilde X$  by a lattice $\Gamma$ containing an elliptic element $\tilde \sigma$ of order $2$ (and  all its conjugates). Calling $\tilde \Gamma$ the subgroup of Isom$(\tilde X)$ generated by $\tilde{\sigma}$ and $\Gamma$, we have that $\tilde \Gamma \backslash \tilde X$ is also isometric to  $T$.
} 
\end{obs}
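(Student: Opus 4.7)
The statement to establish is that two distinct uniform lattices of different CAT$(0)$-spaces can yield isometric quotient metric spaces, witnessed by the two examples. The plan is purely constructive: exhibit each example, identify the ambient CAT$(0)$-space, the lattice, and check that the quotient is isometric to the advertised model. The underlying mechanism in both cases is the same: one builds a branched/ramified covering of a simpler quotient and then includes the covering involution into the lattice, producing an enlarged group with elliptic elements whose quotient collapses back to the original base.

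For the triplex example, I would first recall from \cite{nagano-triplex} the construction of the $k$-triplex $X$ as the metric space obtained by gluing three copies of the closed upper hemisphere of $\mathbb{S}^k$ along their common equatorial $\mathbb{S}^{k-1}$. The three-fold cyclic rotation $\rho$ cyclically permuting the hemispheres fixes the equator pointwise, and one checks (using Reshetnyak gluing along the totally geodesic equator) that $X$ is CAT$(1)$, proper, geodesically complete and purely $k$-dimensional. The lattice $\Gamma = \langle \rho \rangle \cong \mathbb{Z}_3$ acts with the equator as the fixed locus. The key step is the explicit isometry $\Gamma \backslash X \to \mathbb{Z}_2 \backslash \mathbb{S}^k$: a fundamental domain for $\rho$ is one single hemisphere, and a fundamental domain for the half-turn $\mathbb{Z}_2$-rotation of $\mathbb{S}^k$ is also one hemisphere; the identification of boundary points under each action coincides with the antipodal map on the equator, giving an isometric identification of the quotients.

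For the torus example, I would carry out the cut-and-paste construction carefully. Let $T=\mathbb{Z}^2\backslash\mathbb{R}^2$ and fix a closed geodesic segment $\gamma\subset T$ with distinct endpoints $p,q$. Taking two copies $T_1,T_2$ of $T$, each cut open along $\gamma$ to produce two ``banks'' $\gamma_i^{\pm}$, and gluing $\gamma_1^{+}\leftrightarrow\gamma_2^{-}$ and $\gamma_1^{-}\leftrightarrow\gamma_2^{+}$, produces a closed surface $X$ which is a flat metric away from $p,q$ and has total angle $4\pi$ at each of the two lifted cone points. By Riemann--Hurwitz this surface has genus $2$, and since the cone angles are $\geq 2\pi$ the metric is locally CAT$(0)$; hence the universal cover $\tilde X$ is a proper geodesically complete CAT$(0)$-space. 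The deck group $\Gamma=\pi_1(X)$ acts geometrically and torsion-freely on $\tilde X$. The swap of the two sheets is an involutive isometry $\sigma$ of $X$ with fixed points exactly $\{p,q\}$ and quotient $\langle\sigma\rangle\backslash X$ isometric to $T$; lifting to $\tilde X$ produces an order-$2$ isometry $\tilde\sigma$ normalizing $\Gamma$, and the enlarged group $\tilde\Gamma=\langle\Gamma,\tilde\sigma\rangle$ is a uniform lattice of $\tilde X$ containing many elliptic conjugates of $\tilde\sigma$. One then checks that $\tilde\Gamma\backslash\tilde X$ is isometric to $\langle\sigma\rangle\backslash X\cong T = \mathbb{Z}^2\backslash\mathbb{R}^2$, so that the two lattices $(\tilde\Gamma,\tilde X)$ and $(\mathbb{Z}^2,\mathbb{R}^2)$ produce the same metric quotient.

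The main obstacle is the CAT$(0)$ verification at step two: the cone-angle computation at $p$ and $q$ must be done carefully, because a single unlucky identification of the banks (e.g.\ gluing $\gamma_1^+$ to $\gamma_2^+$) would not produce a ramification of order two and the angle count would differ. The $4\pi$ angle guarantees non-positive curvature by the standard criterion for flat surfaces with cone singularities, after which the passage to $\tilde X$ and the inclusion of $\tilde\sigma$ are formal. The identification of the quotient with $T$ ultimately reduces to the elementary observation that the $\sigma$-quotient of the branched double cover recovers the base, an isometry because $\sigma$ acts by swapping isometric sheets and the ramification points are fixed.
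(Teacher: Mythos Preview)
Your treatment of the torus example is correct and is a clean, more detailed version of the sketch in the remark: the cone angles at $p,q$ are $4\pi$, the surface is locally CAT$(0)$, the universal cover $\tilde X$ carries the free action of $\Gamma=\pi_1(X)$, a lift $\tilde\sigma$ of $\sigma$ fixing a preimage of a branch point has order $2$, and $\tilde\Gamma=\langle\Gamma,\tilde\sigma\rangle$ is a genuine lattice with $\tilde\Gamma\backslash\tilde X\cong T=\mathbb{Z}^2\backslash\mathbb{R}^2$. This is exactly the mechanism the paper has in mind.

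The triplex part, however, has a real gap. Your model of the $k$-triplex as ``three hemispheres glued along a common equatorial $\mathbb{S}^{k-1}$'' is not a topological sphere (a neighbourhood of any equatorial point is a tripod times $\mathbb{R}^{k-1}$), while the paper explicitly says the triplex \emph{is} a topological sphere; so your description cannot be the space in \cite{nagano-triplex}. More seriously, the quotient computation you give does not hold for your model: with your $\mathbb{Z}_3$ fixing the equator pointwise, the quotient is a closed hemisphere with \emph{no} boundary identification, whereas the quotient $\mathbb{Z}_2\backslash\mathbb{S}^k$ by a half-turn is topologically a $k$-sphere with singular locus $\mathbb{S}^{k-2}$ (take the hemisphere $\{x_1\ge 0\}$ as fundamental domain: the induced identification on $\{x_1=0\}\cong\mathbb{S}^{k-1}$ is a reflection, not the antipodal map). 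These two spaces are not even homeomorphic, so your ``identification of boundary points coincides with the antipodal map'' step fails on both sides. For this example you should either quote the actual construction from \cite{nagano-triplex} and verify the quotient there, or simply cite the reference as the paper does.
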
  
 \newpage
  
On the other hand, the underlying metric space $M=\Gamma \backslash X$ determines $X$ and $\Gamma$  when $\Gamma$ acts freely  (since in this case $X$ is just the universal cover of $M$), or when $X$ is Riemannian, as the following result shows:

\begin{prop} Let $\Gamma$ and $\Gamma'$ be discrete isometry groups of two Hadamard manifolds $X$, $X'$ respectively.  
If $\Gamma \backslash X$ is isometric to $\Gamma' \backslash X'$ then there exists an isomorphism $\varphi \colon \Gamma \to \Gamma'$ and a $\varphi$-equivariant isometry  $F: X \rightarrow X'$.
\end{prop}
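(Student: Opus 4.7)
The plan is to lift the given metric isometry $f\colon M \to M'$ to an equivariant Riemannian isometry $F\colon X\to X'$, exploiting the fact that a Hadamard manifold is characterized (up to isometry) by the Riemannian structure at a single point.

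First I would identify the \emph{regular locus}: let $X^{\textup{reg}}$ be the set of points with trivial $\Gamma$-stabilizer. Since $\Gamma$ is discrete and countable and each non-trivial fixed-point set $\textup{Fix}(g)$ is a proper totally geodesic submanifold, $X^{\textup{reg}}$ is open and dense; the same holds for $(X')^{\textup{reg}}$. The projection $\pi\colon X\to M$ is a local Riemannian isometry on $X^{\textup{reg}}$, so the induced Riemannian structure on $M^{\textup{reg}}=\pi(X^{\textup{reg}})$ is well defined. Moreover, singular and regular points of $M$ are distinguishable purely from the metric (e.g.\ via the local structure of the tangent cone), so the metric isometry $f$ maps $M^{\textup{reg}}$ onto $(M')^{\textup{reg}}$; by Myers--Steenrod (applied locally on each sheet) it is smooth there.

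Next I would construct $F$. Fix $x_0\in X^{\textup{reg}}$, let $[x_0']=f([x_0])$ and lift it to $x_0'\in (X')^{\textup{reg}}$. On sufficiently small convex balls, $\pi$ and $\pi'$ are Riemannian isometries onto their images, so the map $F_0:=(\pi')^{-1}\circ f\circ \pi$ is a Riemannian isometry between open neighborhoods $U\subset X$ and $U'\subset X'$. Since $X$ and $X'$ are Hadamard manifolds, the exponential maps at $x_0$ and $x_0'$ are global diffeomorphisms; hence the formula
\[
F(\exp_{x_0}(v)) \;:=\; \exp_{x_0'}(dF_0|_{x_0}(v))
\]
defines a smooth extension $F\colon X\to X'$ that agrees with $F_0$ on $U$. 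The Cartan--Ambrose--Hicks theorem (or the classical extension principle for isometries between simply connected, geodesically complete, non-positively curved manifolds) then guarantees that $F$ is a global Riemannian isometry; $F$ descends to $M$ as a map which coincides with $f$ on $\pi(U)$, hence, by connectedness and continuity, $\pi'\circ F = f\circ \pi$ on all of $X$.

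Finally I would define the homomorphism. For each $g\in\Gamma$ and $x\in X$, the points $F(gx)$ and $F(x)$ project to the same point of $M'$, so they lie in the same $\Gamma'$-orbit; at a regular point this element is unique, and by continuity on the dense set $X^{\textup{reg}}$ it is independent of $x$, yielding an element $\varphi(g)\in\Gamma'$ with $F\circ g = \varphi(g)\circ F$. The identity $F\circ gh = \varphi(g)\varphi(h)\circ F$ shows $\varphi$ is a homomorphism, and repeating the construction with $f^{-1}$ yields an inverse, so $\varphi$ is an isomorphism and $F$ is $\varphi$-equivariant. The main obstacle I anticipate is the step where the locally defined lift $F_0$ must be promoted to a global isometry; this is exactly where both the simple connectedness of $X$ and the non-positive curvature (to ensure $\exp_{x_0}$ is a diffeomorphism and that analytic continuation cannot develop singularities) are essential, and also where extra care is needed so that $F$ truly descends to $f$ rather than to some other isometry of $M'$ differing by an element of $\textup{Isom}(M')$.
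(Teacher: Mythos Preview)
Your approach is natural, but there is a genuine gap at the Cartan--Ambrose--Hicks step. Having a local isometry $F_0\colon U\to U'$ near $x_0$ gives you a linear isometry $dF_0|_{x_0}$ of tangent spaces, but CAH requires more: the curvature tensors of $X$ and $X'$ must correspond (under $dF_0$ and parallel transport) along \emph{every} radial geodesic from $x_0$, not just inside $U$. There is no ``classical extension principle'' saying that a germ of an isometry between Hadamard manifolds extends globally; for instance, $\mathbb{H}^2$ and any Hadamard surface whose curvature equals $-1$ near one point but varies elsewhere are locally isometric near that point yet not globally isometric. The missing ingredient is precisely the global information carried by $f\colon M\to M'$, which at the moment you invoke CAH you have not used beyond the ball $\pi(U)$. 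One could try to repair this by first proving $\pi'\circ F=f\circ\pi$ directly (both $\pi'\circ F\circ\gamma$ and $f\circ\pi\circ\gamma$ are locally length-minimizing curves in $M'$ with the same germ at $[x_0']$) and only then deducing that $F$ is an isometry; but making this precise across the singular strata, especially where reflection walls are present, essentially amounts to redeveloping orbifold covering theory.

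The paper avoids this by invoking that theory directly. A lemma of Lange shows that the quotient metric on a Riemannian orbifold determines the orbifold structure, so the metric isometry $f$ is automatically an orbifold isomorphism. Since $X$ and $X'$ are simply connected, each is the orbifold universal cover of $M\cong M'$; the universal property then produces an equivariant diffeomorphism $F\colon X\to X'$ commuting with the projections, and a second appeal to the same lemma shows $F$ is a local (hence global) isometry. This packages the analytic-continuation argument you are reaching for into off-the-shelf covering-space machinery, and handles the singular locus without any ad hoc analysis.
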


\begin{proof}
	 The space $M = \Gamma \backslash X = \Gamma' \backslash X$ admits two orbifold structures induced by the two actions of $\Gamma$ and $\Gamma'$. By \cite[Lemma 2.2]{Lange20} the two orbifold structures coincide since locally the actions of $\Gamma$ on $X$ and $\Gamma'$ on $X'$ are isometrically equivariant. Since both $X$ and $X'$ are simply connected, we deduce that they are both orbifold universal coverings of $M$ (see for instance \cite[Theorem 2.9]{Lange20}, originally proved in \cite{Thurston97}). By the universal property of the orbifold universal covering we get a diffeomorphism $F \colon X \to X'$ commuting with the projections $\pi$ and $\pi'$ on $M$.
By construction $F$ is also equivariant with respect to an isomorphism between $\Gamma$ and $\Gamma'$. Finally, again by \cite[Lemma 2.2]{Lange20} and the commutatation property $\pi' \circ F = \pi$, we get that $F$ is a local isometry and therefore a global one.
\end{proof}

  Strictly speaking, in view of the discussion above, the quotient metric space $M=\Gamma \backslash X$ should only be called the {\em support} of the orbispace;  we will continue to use the sloppy terminology {\em \textup{CAT}$(0)$-orbispace} both for the support and for the metric space with its uniformizing global chart $X \to M$, as far as the statements involved are clear.
For this reason, we  will say that two CAT$(0)$-orbispaces $M=\Gamma/X$ and $M' = \Gamma'/X'$ are:
\vspace{1mm}

\noindent -- {\em equivariantly homotopy equivalent}, if there exist an isomorphism $\varphi\colon \Gamma \to \Gamma'$ and a $\varphi$-equivariant homotopy equivalence\footnote{that is, a map satisfying  $F(gx) = \varphi(g)F(x)$ for every $x \in X$ and every $g \in \Gamma$, \linebreak admitting moreover a $\varphi^{-1}$-equivariant homotopy inverse $G: X' \to X$ (i.e. such that $F \circ G$ and $G\circ F$ are homotopic to the identity through, respectively, $\Gamma$-equivariant and $\Gamma'$-equivariant homotopies).}
$F\colon X \to X'$;   

\noindent -- {\em isometric as orbispaces} if, moreover, the map $F$ is an isometry.

\vspace{1mm}
 \subsection{Lie isometry groups}
\label{subsection-lie}
${}$

\noindent In general, the full isometry group Isom$(X)$ of a CAT$(0)$- space is not a Lie group, for instance in the case of regular trees.
When a CAT$(0)$-space $X$ admits a uniform lattice $\Gamma$ then Isom$(X)$ is known to have more structure, as proved by P.-E.Caprace and N.Monod:

\begin{prop}[\textup{\cite[Thm.1.6 \& Add.1.8]{CM09b}, \cite[Cor.3.12]{CM09a}}]
	\label{prop-CM-decomposition}${}$
	
\noindent	Let $X$ be a proper, geodesically complete, $\textup{CAT}(0)$-space, admitting a uniform lattice.
Then $X$ splits isometrically as $M \times \mathbb{R}^n \times N$, where $M$ is a symmetric space of noncompact type and ${\mathcal D}:=\textup{Isom}(N)$ is totally disconnected. \\ Moreover 
	$$\textup{Isom}(X) \cong {\mathcal S}   \times  {\mathcal E}_n\times {\mathcal D}$$
where ${\mathcal S}$ is a semi-simple Lie group with trivial center and without compact factors, and ${\mathcal E}_n =\textup{Isom}(\mathbb{R}^n)$.	
\end{prop}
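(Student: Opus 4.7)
The plan is to extract the global splitting $X = M \times \mathbb{R}^n \times N$ from a careful analysis of the identity component $G^\circ$ of $G := \textup{Isom}(X)$, and then show that each factor is canonically preserved by all of $G$, which will give the product decomposition of $\textup{Isom}(X)$.

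First I would exploit the fact that $G$ is a locally compact group acting cocompactly on a proper, geodesically complete CAT$(0)$-space. By Gleason--Yamabe combined with the cocompactness coming from the uniform lattice $\Gamma$, the connected component $G^\circ$ is (after passing to an open finite-index subgroup) a connected Lie group, and $G/G^\circ$ is totally disconnected; moreover cocompactness of $\Gamma$ transfers to an essentially cocompact action of $G^\circ$ on an orbit. Using convexity of displacement functions and the fact that the closure of a $G^\circ$-orbit is a closed convex subset (a standard consequence of the Bruhat--Tits fixed point argument applied to the compact factors of $G^\circ$), one can show that this closed convex orbit is a factor in a canonical splitting of $X$.

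Next, I would invoke the de Rham--type decomposition for geodesically complete CAT$(0)$-spaces with cocompact isometry group (Foertsch--Lytchak, together with the irreducibility analysis of Caprace--Monod) to split $X$ canonically as $X = Y \times N$, where $\textup{Isom}(Y)^\circ$ is Lie and $\textup{Isom}(N)^\circ$ is trivial, i.e.\ $\mathcal{D}:=\textup{Isom}(N)$ is totally disconnected. On the Lie factor $Y$, a connected Lie group of isometries acts cocompactly; by the structure theory of connected Lie groups of isometries of nonpositively curved spaces (Karpelevich, Bieberbach, and their CAT$(0)$-generalizations) the maximal Euclidean de Rham factor splits off as $\mathbb{R}^n$, leaving a factor $M$ on which a semi-simple connected Lie group $\mathcal{S}$ of noncompact type with trivial center acts transitively and properly; properness plus the CAT$(0)$ condition then force $M$ to be a Riemannian symmetric space of noncompact type.

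Finally, to obtain $\textup{Isom}(X) \cong \mathcal{S} \times \mathcal{E}_n \times \mathcal{D}$, I would argue that every isometry of $X$ must permute the canonical factors but in fact preserves them individually: the Euclidean factor is the unique maximal flat de Rham factor, the symmetric space is the maximal non-Euclidean Lie-orbit factor, and $N$ is the complement on which $G^\circ$ acts trivially. Maximality of the Euclidean factor forces the projection onto $\textup{Isom}(\mathbb{R}^n)$ to be all of $\mathcal{E}_n$, while the symmetric space of noncompact type is known to have isometry group $\mathcal{S}$.

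The main obstacle is the identification step: passing from ``$G^\circ$ is a connected Lie group acting cocompactly on $Y$'' to ``the non-Euclidean part of $Y$ is actually a Riemannian symmetric space of noncompact type''. This requires ruling out compact factors in $\mathcal{S}$ (which one does using that $X$ is CAT$(0)$ and noncompact, so fixed-point phenomena eliminate compact factors) and ruling out exotic non-Riemannian CAT$(0)$-spaces carrying a transitive semi-simple Lie action — a delicate point handled by Caprace--Monod via their splitting theorem and the analysis of boundary structure; this is precisely where the assumption of a uniform lattice (rather than merely a cocompact group) enters in a nontrivial way.
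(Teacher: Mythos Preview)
The paper does not supply its own proof of this proposition: it is recorded purely as a citation of results of Caprace--Monod (Theorem~1.6 and Addendum~1.8 of \cite{CM09b}, Corollary~3.12 of \cite{CM09a}), with no argument given in the present text. There is therefore nothing here to compare your proposal against.

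For what it is worth, your outline does track the overall shape of the Caprace--Monod argument: one uses the solution to Hilbert's fifth problem to see that $G^\circ$ is a connected Lie group, splits off the maximal Euclidean de Rham factor, shows that on the remaining non-flat factor with nontrivial connected isometry group a semisimple Lie group with trivial center and no compact factors acts properly and cocompactly, and then invokes the identification of that factor as a symmetric space of noncompact type. Two points where your sketch is imprecise: the splitting $X = Y \times N$ with $\textup{Isom}(N)$ totally disconnected is not obtained via a general de Rham/Foertsch--Lytchak decomposition but rather through Caprace--Monod's own analysis of the minimal set of the amenable radical and of the canonical invariant subspace on which $G^\circ$ acts; and the step you flag as the main obstacle --- that a proper, geodesically complete CAT$(0)$-space admitting a cocompact isometric action of a connected semisimple Lie group without compact factors must be the associated symmetric space --- is precisely the content of Addendum~1.8 in \cite{CM09b}, whose proof uses the structure of parabolic subgroups and horospherical decompositions rather than boundary considerations alone, and where geodesic completeness is essential.
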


\noindent In particular, under the assumptions of Proposition \ref{prop-CM-decomposition}, the group  $\textup{Isom}(X)$ is a Lie group if and only if the factor ${\mathcal D}$ is discrete.
On the other hand,  Isom$(X)$  is always a Lie group when $X$ is a CAT$(0)$-homology manifold:

\begin{prop}
	\label{cor-homology-Lie}
	Let $X$ be a locally compact, \textup{CAT}$(0)$-homology manifold. Then $\textup{Isom}(X)$ is a Lie group.
\end{prop}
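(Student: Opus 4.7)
The plan is to reduce the statement to two classical ingredients: the Lytchak-Nagano topological regularity theorem for CAT$(0)$-homology manifolds, and the Montgomery-Zippin solution of Hilbert's fifth problem for effective transformation groups of topological manifolds.

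First I would record that $G := \textup{Isom}(X)$, endowed with the compact-open topology, is a locally compact, second-countable topological group acting continuously and faithfully on $X$ by Arzel\`a-Ascoli; this uses properness of $X$, which follows from local compactness together with geodesic completeness via Hopf-Rinow (recall that every CAT$(0)$-homology manifold is geodesically complete, cp.\ \cite[II.5.12]{BH09}). Connectedness of $X$ is automatic, as it is a geodesic space.

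Next comes the geometric heart of the argument: by the Lytchak-Nagano topological regularity theorem (\cite{LN-finale-18}, in the spirit already invoked in the Remark following Corollary~\ref{theo-intro-compactness}), a locally compact, geodesically complete CAT$(0)$-space which is a homology $n$-manifold is in fact a topological $n$-manifold. Hence $X$ is a connected topological manifold. Finally I would invoke the Montgomery-Zippin theorem: every locally compact topological group acting continuously and effectively on a connected, finite-dimensional topological manifold is a Lie group. Applied to $G \curvearrowright X$ this yields the conclusion.

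The main obstacle is entirely housed in the middle step, which is subcontracted to \cite{LN-finale-18}: the non-trivial geometric input is that, within the CAT$(\kappa)$-world, the link of every point of a homology manifold is genuinely a topological sphere (ruling out, e.g., non-simply-connected rational homology spheres as links), which upgrades the homology manifold structure to an honest topological manifold structure. The first and third steps are standard packaging of well-known facts from transformation group theory.
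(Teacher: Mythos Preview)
Your argument has two genuine gaps.

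First, the middle step overstates what Lytchak--Nagano prove. Their structure theorem for locally compact, geodesically complete CAT$(\kappa)$-spaces which are homology $n$-manifolds (the result the paper cites as \cite[Theorem 1.2]{LN-finale-18}) only asserts that the set of non-manifold points is a \emph{locally finite} subset $E\subset X$; it does \emph{not} assert that $E$ is empty in every dimension. Your claim that every link is a genuine topological sphere is precisely what is not known for $n\geq 5$: the space of directions is a CAT$(1)$ homology sphere, and upgrading this to a topological sphere is open in general. The Remark after Corollary~\ref{theo-intro-compactness} you invoke concerns limits of Riemannian orbifolds and uses a different statement (\cite[Theorem 1.3]{LN-finale-18}), which does not cover arbitrary CAT$(0)$-homology manifolds.

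Second, even granting that $X$ were a topological manifold, the statement you attribute to Montgomery--Zippin --- that every locally compact group acting effectively on a connected topological manifold is Lie --- is the \emph{Hilbert--Smith conjecture}, which is open for general continuous actions. Montgomery--Zippin reduced it to excluding effective $\mathbb{Z}_p$-actions, but did not settle it. What \emph{is} available is the Lipschitz case due to Repov\v{s}--\v{S}\v{c}epin, and this is exactly what the paper exploits: since isometries are $1$-Lipschitz and $\textup{HD}(X)=\textup{TD}(X)=n$, one can run the Repov\v{s}--\v{S}\v{c}epin/Yang argument on the manifold part $X\setminus E$ to derive $\textup{HD}(X\setminus E)\geq n+2$ from a hypothetical effective $\mathbb{Z}_p$-action, contradicting $\textup{HD}(X\setminus E)\leq n$. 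The paper's route thus bypasses both obstacles simultaneously: it never needs $X$ itself to be a manifold, and it uses the Lipschitz version of Hilbert--Smith rather than the conjectural continuous one.
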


\begin{proof}
	If the dimension of $X$ is smaller than or equal to $2$ then $X$ is a topological manifold. Otherwise we can find a Isom$(X)$-invariant, locally finite subset $E$ of $X$ such that $X\setminus E$ is a connected topological manifold (see \cite[Theorem 1.2]{LN-finale-18}). The Hausdorff dimension of $X\setminus E$ is at most the Hausdorff dimension of $X$, which coincides with its topological dimension, say $n$, as recalled at the beginning of Section \ref{sec-CAT}; therefore, $\text{HD}(X\setminus E) = n$.\\
The classical work about the Hilbert-Smith Conjecture implies that the group Isom$(X)$ is a Lie group if and only if it does not contain a subgroup isomorphic to some $p$-adic group ${\mathbb Z}_p$, see for instance \cite{lee}. 
Now, a theorem of Repov\c{s} and \c{S}\c{c}epin 
asserts that ${\mathbb Z}_p$ cannot act effectively by Lipschitz homeomorphisms on any finite dimensional Riemannian manifold.
The proof in \cite{RS97} adapts perfectly to our setting; indeed, if we suppose that the $p$-adic group acts on $X$ by isometries we can apply \cite{Yan60} as in \cite{RS97} to deduce that $\text{HD}(X\setminus E) \geq n+2$, giving a contradiction.
\end{proof}

\vspace{1mm}
\subsection{The packing condition and Margulis' Lemma}
\label{subsection-packing-margulis}
${}$

\noindent Let $X$ be a metric space and $r>0$.
A subset $Y$ of $X$ is called {\em $r$-separated} if $d(y,y') > r$ for all $y,y'\in Y$. Given $x\in X$ and $0<r\leq R$ we denote by Pack$(\overline{B}(x,R), r)$ the maximal cardinality of a $2r$-separated subset of $\overline{B}(x,R)$. Moreover we denote by Pack$(R,r)$ the supremum of Pack$(\overline{B}(x,R), r)$ among all points of $X$. 
Given $P_0,r_0 > 0$ we say that $X$ satisfies the $P_0$-packing condtion at scale $r_0$ (or that it is $(P_0,r_0)$-packed, for short) if Pack$(3r_0,r_0) \leq P_0$. \linebreak
 We will simply say that {\em $X$ is packed} if it satisfies a $P_0$-packing condition at scale $r_0$, for some $P_0,r_0>0$. \\
The packing condition should be thought as a metric, weak replacement  of 
a Ricci curvature lower bound.
Actually, by Bishop-Gromov's Theorem, for a $n$-dimensional Riemannian manifold a lower bound on the Ricci curvature $\text{Ric}_X \geq -(n-1)\kappa$, $\kappa \geq 0$, implies a uniform estimate of the packing function at any fixed scale $r_0$, that is 
\begin{equation}\label{eq-bishop}
\text{Pack}(3r_0,r_0) \leq \frac{v_{{\mathbb H}^n_{\kappa}}(3r_0)}{v_{{\mathbb H}^n_{\kappa}}(r_0)}
\end{equation}
where $v_{{\mathbb H}^n_{\kappa}}(r)$ is the volume of a ball of radius $r$ in the $n$-dimensional space form with constant curvature $-\kappa$.\\
  Also remark that every metric space admitting a cocompact action is packed (for some $P_0, r_0$), see the proof of \cite[Lemma 5.4]{Cav21ter}.
\vspace{1mm}

\noindent The packing condition  has many interesting geometric consequences for complete, geodesically complete CAT$(0)$-spaces, as showed in \cite{CavS20bis}, \cite{CavS20} \cite{Cav21bis},   \cite{Cav21}, \cite{CS22}, \cite{CS23}.  
 Here, we just recall the following uniform bounds  on the canonical measure of balls, and of the entropy and dimension: 

\begin{prop}[\textup{\cite[Theorem 4.9]{CavS20}}]
\label{prop-packing}
${}$
	
\noindent	Let $X$ be a complete, geodesically complete, $(P_0,r_0)$-packed, $\textup{CAT}(0)$-space. 
Then $X$ is proper, and 
\begin{itemize}[leftmargin=8mm] 
	 	\item[(i)] there exist functions $v,V \colon (0,+\infty) \to (0,+\infty)$ depending only on $P_0,r_0$ such that for all $x\in X$ and $R> 0$ we have
$$v(R) \leq \mu_X(\overline{B}(x,R)) \leq V(R);$$
	 \item[(ii)] the entropy of $X$ is bounded above in terms of $P_0$ and $r_0$, namely
$$\textup{Ent}(X) 
\leq \frac{\log (1+P_0)}{r_0};$$
\item[(iii)] the dimension of $X$ is at most $n_0 := P_0/2$.
\end{itemize}
\end{prop}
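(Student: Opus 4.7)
The plan is to derive all three conclusions from a single scale-up argument: starting from the local packing control at scale $r_0$, use convexity of the distance function together with geodesic completeness to propagate the packing estimate to every scale $R$, and then read off (i)--(iii) from this estimate.

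\textbf{Step 1 (scaling up the packing).} The first step is to prove, by induction on $k = \lceil R/r_0 \rceil$, an estimate of the form
\[
\textup{Pack}(R, r_0) \;\leq\; C\cdot(1+P_0)^{R/r_0}.
\]
Given a maximal $2r_0$-separated set $\{y_1,\dots,y_N\}$ in $\overline{B}(x,R)$, one considers for each $y_i$ the point $\bar y_i$ at distance $R-r_0$ from $x$ along the geodesic $[x,y_i]$ (extended if needed using geodesic completeness when $d(x,y_i)<R-r_0$). Convexity of the distance function in a CAT$(0)$-space implies that the map $y_i\mapsto \bar y_i$ is $\frac{R-r_0}{R}$-Lipschitz; grouping the $\bar y_i$'s according to the ball of radius $2r_0$ they lie in (so at most $P_0$ original points per group, by the hypothesis $\textup{Pack}(3r_0,r_0)\le P_0$ applied at $\bar y_i$) produces the recursion. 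Geodesic completeness is essential: without it the retraction is not defined.

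\textbf{Step 2 (parts (ii) and (i) upper bound).} Part (ii) is then immediate:
\[
\textup{Ent}(X)=\limsup_{R\to+\infty}\frac{\log\textup{Pack}(R,r)}{R}\;\leq\;\frac{\log(1+P_0)}{r_0},
\]
the value $r$ being irrelevant as shown in Section~\ref{subsection-packing-margulis}. For the upper bound in (i), one covers $\overline{B}(x,R)$ by at most $\textup{Pack}(R,\tfrac{r_0}{2})$ balls of radius $r_0$, each of canonical $\mu_X$-measure uniformly bounded above in terms of $P_0, r_0$ (this top-dimensional bound is obtained by applying Step 1 to the stratification $X=\bigcup_k X^k$, using that each $X^k$ has Hausdorff dimension $\le P_0/2$; see Step 4).

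\textbf{Step 3 (lower bound in (i)).} The delicate point is the lower volume bound. The idea is: by \cite{LN19}, at every point $x\in X$ the canonical measure $\mu_X$ is locally positive, and the local dimension $n_x$ is realized by the tangent cone $T_xX$. Geodesic completeness plus convexity of the distance imply that any ball $\overline B(x,r)$ with $r\le r_0$ contains an isometrically embedded small piece of $T_xX$ up to controlled error; combining with the universal upper bound on $n_x\le P_0/2$ and the packing estimate one gets a uniform lower bound $v(r_0)>0$ depending only on $P_0,r_0$ for every sufficiently small ball. Finally, geodesic completeness allows one to find inside any $\overline B(x,R)$ a ball of radius $r_0$, giving $\mu_X(\overline B(x,R))\ge v(r_0)$; the bound is then extended to a monotone function $v(R)$ by packing together balls of radius $r_0$.

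\textbf{Step 4 (dimension, part (iii)).} Pick $x$ realizing the topological dimension $n=\dim X$. By \cite{LN19} the tangent cone $T_xX$ is a proper, geodesically complete CAT$(0)$-space whose top stratum has dimension $n$, and by a standard rescaling argument the $(P_0,r_0)$-packing condition passes to $T_xX$ at every scale; in particular a $3r_0$-ball in $T_xX$ contains at most $P_0$ points that are $2r_0$-separated. Inside $T_xX$ one finds an isometrically embedded Euclidean subspace $\mathbb{R}^n$ (either the maximal Euclidean factor of $T_xX$, which by Lytchak--Nagano has dimension $n$, or a flat in a top-dimensional open stratum), and the $2n$ vectors $\{\pm 2r_0 e_i\}_{i=1}^n$ lie in the $3r_0$-ball and are pairwise at distance $\ge 2\sqrt 2\, r_0>2r_0$. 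Hence $2n\le P_0$, yielding $\dim X\le P_0/2$.

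The main obstacle is Step~3: the lower volume bound is not formal, because we need a \emph{quantitative} version of local positivity of $\mu_X$ depending only on $P_0,r_0$. All other steps are essentially propagations, by convexity, of the one-scale hypothesis.
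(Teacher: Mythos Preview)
The paper does not give its own proof of this proposition: it is quoted verbatim from \cite[Theorem~4.9]{CavS20} and used as a black box. So there is nothing to compare against here; I can only comment on your outline as a standalone sketch of the argument in \cite{CavS20}.

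Your global strategy---propagate the single-scale packing hypothesis to all scales via convexity and geodesic completeness, then read off (i)--(iii)---is exactly the one used in \cite{CavS20}. Two points deserve tightening.

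\textbf{Step 1.} The retraction you describe is not well-behaved: sending $y_i$ to the point at distance exactly $R-r_0$ along the ray from $x$ through $y_i$ is neither Lipschitz (it blows up near $x$) nor does it bring $y_i$ within a controlled distance of $\bar y_i$ when $d(x,y_i)$ is small. The clean argument is a covering, not a retraction: a maximal $2r_0$-separated set $S$ in $\overline B(x,R)$ is automatically a $2r_0$-net, so $\overline B(x,R+r_0)\subseteq\bigcup_{s\in S}\overline B(s,3r_0)$; the packing hypothesis then bounds the number of $2r_0$-separated points in each $\overline B(s,3r_0)$ by $P_0$, yielding the recursion $\textup{Pack}(R+r_0,r_0)\le P_0\cdot\textup{Pack}(R,r_0)$. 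One also needs the companion fact that $\textup{Pack}(3r,r)\le P_0$ for \emph{all} $r\le r_0$, which does follow from convexity plus geodesic completeness (radial expansion from $x$ by the factor $r_0/r$ increases distances).

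\textbf{Step 4.} The assertion that the tangent cone $T_xX$ has $\mathbb{R}^n$ as its maximal Euclidean factor is not a statement in \cite{LN19}, and is not true at an arbitrary point of the top stratum. What \cite{LN19} does give is the existence of \emph{regular} points $x$ (a dense set in the top stratum) at which $T_xX$ is isometric to $\mathbb{R}^n$; once you pick such an $x$, your $2n$-point configuration works as written. Together with the fact that the $(P_0,r_0)$-packing passes to all smaller scales (hence to the tangent cone as a blow-up limit), this closes the argument.

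Your identification of Step~3 as the genuine difficulty is correct; the quantitative lower volume bound in \cite{CavS20} is the substantial part and your sketch is too vague to count as a proof, but you have correctly located the obstacle.
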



\noindent Recall that the  ({\em packing}, or {\em covering}) {\em entropy} of a proper CAT$(0)$-space $X$ is the asymptotic invariant defined as  
 $$\textup{Ent}(X) = \limsup_{R\to +\infty}\frac{1}{R}\log (\text{Pack}(R,r))$$
and does not depend neither on the point $x$ (by the triangular inequality) nor on the small, chosen radius $r$ (by \cite[Corollary 4.8]{CavS20}).
It can be equivalently defined as the critical exponent of any uniform lattice $\Gamma$ of $X$, as proved in  \cite{Cav21}), that is

\begin{equation}
	\label{defin:entropy-groups}
	\textup{Ent}(X)  = \lim_{R\to +\infty} \frac{1}{R} \log(\#( \Gamma x \cap B(x,R)))
\end{equation}


\vspace{2mm}

Let now $\Gamma$ be a subgroup of Isom$(X)$. For $x\in X$ and $r\geq 0$ we set 
\begin{equation}	
	\label{defsigma} 	
	\overline{\Sigma}_r(\Gamma,X,x) := \lbrace g\in \Gamma \text{ s.t. } d(x,gx) \leq r\rbrace
\end{equation}

\vspace{-6mm}

\begin{equation}	
	\label{defgamma} 	
	\overline{\Gamma}_r(X,x) := \langle \overline{\Sigma}_r(\Gamma, X,x) \rangle
\end{equation}
The groups  $\overline{\Gamma}_r(X,x)$ are sometimes called  {\em ``almost stabilizers''};  when the context is clear we will simply write $\overline{\Sigma}_r(\Gamma, x)$ or $\overline{\Sigma}_r(x)$ and $\overline{\Gamma}_r(x)$.\\
Notice that, since $X$ is assumed to be proper and $\Gamma$ acts by isometries, a subgroup $\Gamma$ is \emph{discrete} if and only if
 the orbit $ \Gamma x$ is discrete and $\text{Stab}_\Gamma (x)$ is finite for some (or, equivalently, for all) $x\in X$.  This is in turn equivalent to asking that the subsets $\overline{\Sigma}_r(x)$ are finite for all $x\in X$ and all $r\geq 0$.\\
The following remarkable version of the Margulis' Lemma, due to Breuillard-Green-Tao,  is another important consequence of a packing condition at some fixed scale. It clarifies the structure of the almost stabilizers   $\overline{\Gamma}_r(x)$ for small $r$. \linebreak We decline it for geodesically complete CAT$(0)$-spaces:

\begin{prop}[\textup{\cite[Corollary 11.17]{BGT11}}]
	\label{prop-Margulis-nilpotent}${}$\\
	Given $P_0,r_0 >  0$, there exists $\varepsilon_0 = \varepsilon_0(P_0,r_0) > 0$ such that the following holds.
	Let $X$ be a proper, geodesically complete, $(P_0,r_0)$-packed, $\textup{CAT}(0)$-space and let $\Gamma$ be a discrete subgroup of $\textup{Isom}(X)$: then, for every $x\in X$ and  every $0\leq \varepsilon \leq \varepsilon_0$, the almost stabilizer $\overline{\Gamma}_{\varepsilon}(x)$ is virtually nilpotent.
\end{prop}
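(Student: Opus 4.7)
The plan is to reduce directly to the general Margulis lemma of Breuillard--Green--Tao, whose hypothesis is a metric doubling condition that our setting satisfies uniformly in terms of $(P_0,r_0)$. Recall that the BGT theorem asserts the existence of $\varepsilon = \varepsilon(K,s) > 0$ such that, for any metric space $X$ which is $K$-doubling at scale $s$ (i.e.\ every ball of radius $2s$ is covered by at most $K$ balls of radius $s$) and any discrete isometric action $\Gamma \curvearrowright X$, the subgroup $\langle g\in\Gamma : d(x,gx) \leq \varepsilon\rangle$ is virtually nilpotent for every $x\in X$.

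The first step is therefore to verify that a $(P_0,r_0)$-packed, proper, geodesically complete CAT$(0)$-space enjoys a \emph{uniform} doubling condition at a fixed scale (say $s = r_0$), with a constant depending only on $P_0$ and $r_0$. This is precisely what the convexity of the distance function provides: on a geodesically complete CAT$(0)$-space the packing function $\textup{Pack}(R,r)$ is controlled at every pair of scales once it is bounded at one scale, by \cite[Theorem 4.2]{CavS20} (invoked earlier in the introduction). In particular, every ball of radius $2r_0$ meets at most $K = K(P_0,r_0)$ disjoint balls of radius $r_0/2$, hence is covered by $K$ balls of radius $r_0$ by a standard maximality/doubling argument.

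The second step is to feed this doubling constant $K(P_0,r_0)$ into the BGT Margulis lemma (\cite[Corollary 11.17]{BGT11}). The only check required is that discreteness of $\Gamma$ in the CAT$(0)$ sense matches the discreteness required in the BGT statement: since $X$ is proper and $\Gamma \leq \textup{Isom}(X)$ is discrete in the compact-open topology, each $\overline{\Sigma}_r(x)$ is finite (as noted in the paragraph following (\ref{defgamma})), so the action is indeed properly discontinuous and meets BGT's discreteness hypothesis. Applying their theorem with scale $s = r_0$ yields an $\varepsilon_0 = \varepsilon_0(K,r_0) = \varepsilon_0(P_0,r_0) > 0$ such that, for every $x \in X$ and every $0 \leq \varepsilon \leq \varepsilon_0$, the group $\overline{\Gamma}_\varepsilon(x)$ is virtually nilpotent, as required.

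The main (indeed only) obstacle is bookkeeping: one has to make sure that the precise doubling condition assumed by BGT is the one supplied by \cite[Theorem 4.2]{CavS20} at the chosen scale, and that the resulting $\varepsilon_0$ is extracted in terms of $(P_0,r_0)$ alone, with no hidden dependence on $X$ or $\Gamma$. No further argument is needed beyond this translation between metric hypotheses.
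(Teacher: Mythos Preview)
The paper does not give its own proof of this proposition; it is stated as a direct citation of \cite[Corollary 11.17]{BGT11} and used as a black box. Your proposal correctly reconstructs the reduction that makes the citation applicable: the $(P_0,r_0)$-packing condition on a proper, geodesically complete CAT$(0)$-space propagates to a uniform bound on the packing function at all scales (\cite[Theorem~4.2]{CavS20}), hence to a doubling constant depending only on $(P_0,r_0)$, which is precisely the hypothesis of the Breuillard--Green--Tao Margulis lemma. So your argument is correct and matches the intended derivation; there is simply nothing in the paper to compare it against beyond the citation itself.
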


\noindent We will often refer to the constant $\varepsilon_0=\varepsilon_0 (P_0,r_0)$ as the {\em Margulis' constant}.
The conclusion of Proposition \ref{prop-Margulis-nilpotent} can be improved for cocompact groups, as in this case the group $\overline{\Gamma}_{\varepsilon}(x)$ is {\em virtually abelian}   (cp. \cite[Theorem II.7.8]{BH09}; indeed, a cocompact group of a CAT$(0)$-space is always semisimple).



\vspace{1mm}
\subsection{Discrete, virtually abelian groups}
${}$ 

\noindent Recall that the (abelian, or Pr\"ufer)  {\em rank} 
of an abelian group $A$, denoted $\text{rk} (A)$,   is the maximal cardinality of a subset $S \subset A$
of $\mathbb{Z}$-linear independent elements. The rank of a {\em virtually abelian group} $G$, rk$(G)$, is the rank of any free abelian subgroup $A$ of finite index in $G$ (notice that if $A'$ is a finite index subgroup of an abelian group $A$, then $A$ and $A'$ have same rank, so rk$(G)$ is well defined).

%

\noindent Among CAT$(0)$-spaces, the Euclidean space ${\mathbb R}^k$ and its discrete groups play a special role. In the following, we will denote by ${\mathcal E}_k$ the group of isometries of $\mathbb{R}^k$, and by $\textup{Transl}({\mathbb R}^k)$  the normal subgroup of all translations.\\
A \emph{crystallographic group} is a discrete, cocompact group $G$ of isometries of   $\mathbb{R}^k$. The simplest and most important of them, in view of Bieberbach's Theorem, are {\em Euclidean lattices}: i.e. 
free abelian crystallographic groups.
It is well known that a lattice must act by translations on $\mathbb{R}^k$ (see for instance \cite{farkas}); so, alternatively, a lattice $\mathcal{L}$ can be seen as the set of linear combinations with integer coefficients of $k$ independent vectors $b_1, \ldots, b_k$ (we will make no difference between a lattice and this representation); the integer $k$ is  called the {\em rank} of the lattice.
%
%
%
	For our purposes, the content of the famous Bieberbach's Theorems can be stated as follows:
	\begin{prop}[Bieberbach's Theorem]
		\label{prop-Bieberbach}${}$

\noindent There exists $J(k)$, only depending on $k$, such that the following holds true.
	For any crystallographic group $G$ of $\mathbb{R}^k$ 
	the  subgroup ${\mathcal L}(G) = G \cap \textup{Transl}({\mathbb R}^k)$ is a normal subgroup of index at most $J(k)$, in particular a lattice. 
	\end{prop}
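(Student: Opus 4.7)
The plan is to follow the classical outline of Bieberbach's first two theorems. Write every element of $\mathcal{E}_k$ as $g = (A, v)$ with $A \in O(k)$ and $v \in \mathbb{R}^k$, acting as $x \mapsto Ax + v$. The rotational projection $\pi \colon \mathcal{E}_k \to O(k)$, $\pi(A,v) = A$, is a continuous surjective homomorphism with kernel $\textup{Transl}(\mathbb{R}^k)$. Consequently $\mathcal{L}(G) = G \cap \ker\pi$ is normal in $G$ for free, and $G/\mathcal{L}(G) \cong H := \pi(G) \le O(k)$. The content of the theorem is therefore that $H$ is finite with an order bounded only in terms of $k$, and that $\mathcal{L}(G)$ is a lattice.

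The crucial step is to prove finiteness of $H$. The key tool is the commutator estimate in the operator norm,
$$\|ABA^{-1}B^{-1} - I\| \le 2\,\|A-I\|\cdot\|B-I\|,$$
valid for $A, B \in O(k)$ close to $I$, which exhibits a quadratic decay of commutators. A direct computation shows that for $g=(A,a), h=(B,b) \in \mathcal{E}_k$ the rotational part of $[g,h]$ is exactly $[A,B]$, while its translational part is controlled linearly by $\|a\|, \|b\|$ and by $\|A-I\|, \|B-I\|$. Combining this with the discreteness of $G$ in $\mathcal{E}_k$ and with cocompactness (which produces elements of $G$ whose translational parts span $\mathbb{R}^k$ at a bounded scale), one argues by contradiction: an element of $\pi(G)$ arbitrarily close to but distinct from $I$, iteratively commuted with suitable translations, would yield nontrivial elements of $G$ of arbitrarily small displacement on any fixed ball, contradicting discreteness. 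Hence $H$ is discrete in the compact group $O(k)$ and therefore finite.

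Once $H$ is finite, $\mathcal{L}(G)$ has finite index in $G$; since $G$ is cocompact and finite index preserves cocompactness, $\mathcal{L}(G)$ is a discrete cocompact subgroup of $\textup{Transl}(\mathbb{R}^k) \cong \mathbb{R}^k$ and hence a lattice of rank $k$. The conjugation action of $G$ on $\mathcal{L}(G)$ factors through $H$ and agrees with the natural linear action of $H \le O(k)$ on $\mathbb{R}^k$; so, after choosing a $\mathbb{Z}$-basis of $\mathcal{L}(G)$, the group $H$ embeds as a finite subgroup of $GL_k(\mathbb{Z})$. The uniform bound $|H| \le J(k)$ now follows from Minkowski's lemma: reduction modulo $3$ gives a homomorphism $GL_k(\mathbb{Z}) \to GL_k(\mathbb{F}_3)$ which is injective on torsion (any torsion element in its kernel is the identity, by a short eigenvalue argument), so $|H| \le |GL_k(\mathbb{F}_3)| =: J(k)$, a quantity depending only on $k$.

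I expect the commutator/discreteness argument used to prove finiteness of $H$ to be the main obstacle: the quadratic decay of commutators in $O(k)$ has to be combined carefully with the affine translational parts, and cocompactness is needed in an essential way to ensure that short rotational elements, when conjugated against translations of controlled size, generate arbitrarily short nontrivial isometries. The remaining steps (normality, the passage from finite index to lattice, and the Minkowski bound) are then rather formal.
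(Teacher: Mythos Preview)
The paper does not give its own proof of this proposition: it is stated as the classical Bieberbach Theorem and invoked as a known result, so there is no in-paper argument to compare your proposal against.

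Your outline is the standard route to Bieberbach's first theorem together with the Minkowski bound, and the overall architecture is correct: normality of $\mathcal{L}(G)$ is automatic from $\ker\pi$, the substantive step is finiteness of $H=\pi(G)$, and once $H$ is finite the embedding $H\hookrightarrow GL_k(\mathbb{Z})$ via conjugation on $\mathcal{L}(G)$ followed by reduction mod~$3$ gives the uniform bound $J(k)=|GL_k(\mathbb{F}_3)|$. One point of caution in your sketch of the finiteness step: you speak of commuting a short rotational element ``with suitable translations'', but at that stage you have not yet produced any nontrivial translation in $G$; the classical argument commutes with general elements $(B,b)\in G$ whose translational parts $b$ span $\mathbb{R}^k$ (this is where cocompactness enters), and the iterated commutators simultaneously shrink the rotational part quadratically while keeping the translational part nontrivial and bounded, eventually producing a nontrivial element of $G$ with rotational part equal to $I$ (hence a translation) or else contradicting discreteness. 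With that correction your plan is a faithful summary of the textbook proof.
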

\noindent	The subgroup $\mathcal{L}(G)$ is called the \emph{maximal lattice} of $G$.

\noindent An almost immediate consequence of Bieberbach' theorem is the following lemma, which will be crucial in characterizing the limits of collapsing sequences in Section \ref{sub-convergencecollapsing}. For this we set  $r_k:=  \sqrt{2 \sin \left( \frac{\pi}{J(k) } \right)}$.

\begin{lemma}\label{lemma-bieber}
Let $G$ be  a  crystallographic group  of $\mathbb{R}^k$, and let  $0<r <r_k$. 
If $g \in G$ moves all points of  $B_{\mathbb{R}^k} (O, \frac{1}{r})$ less than $r$,    then $g$ is a translation.
\end{lemma}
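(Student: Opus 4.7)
The plan is to split $g \in G$ as $g(x) = Ax + v$ with $A \in O(k)$ and $v \in \mathbb{R}^k$ and to show that the displacement hypothesis forces $A = I$. Bieberbach's theorem (Proposition \ref{prop-Bieberbach}) gives $[G : \mathcal{L}(G)] \le J(k)$, and since $\mathcal{L}(G)$ is precisely the kernel of the homomorphism $g \mapsto A$, the order $m$ of $A$ in $O(k)$ divides $[G:\mathcal{L}(G)]$; in particular $A^m = I$ with $m \le J(k)$.

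Next I would extract a sharp upper bound on $\|A-I\|_{\mathrm{op}}$ from the hypothesis that $g$ moves every point of $B(O, 1/r)$ less than $r$. The key trick is to exploit the central symmetry of the ball: for every $x$ with $|x| < 1/r$, both $x$ and $-x$ lie in the ball, so
\[
  |(A-I)x + v| < r \qquad \text{and} \qquad |-(A-I)x + v| < r.
\]
Subtracting cancels $v$ and yields $|(A-I)x| < r$ for all such $x$; letting $|x| \nearrow 1/r$ gives $\|A - I\|_{\mathrm{op}} \le r^2$. Without the $\pm x$ cancellation one would only obtain $\|A-I\|_{\mathrm{op}} \le 2r^2$ and fall short of the stated threshold by a factor of $\sqrt{2}$.

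Finally, I would produce a matching lower bound for $\|A - I\|_{\mathrm{op}}$ under the assumption $A \ne I$. Since $A$ is real orthogonal with $A^m = I$, it decomposes over $\mathbb{R}$ into a direct sum of $\pm 1$ blocks and planar rotations through angles $\theta = 2\pi j/m$, $0 < j < m$; on any nontrivial block, $\|A - I\|_{\mathrm{op}} = 2|\sin(\pi j/m)| \ge 2\sin(\pi/m) \ge 2\sin(\pi/J(k))$, using that $\sin(\pi/\cdot)$ is decreasing on $[2,\infty)$. Combining with the upper bound, $A \ne I$ would force $r^2 \ge 2\sin(\pi/J(k)) = r_k^2$, contradicting $r < r_k$. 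Hence $A = I$ and $g$ is a translation. The only delicate point is the second step: obtaining the sharp constant $r_k$ rather than $\sqrt{\sin(\pi/J(k))}$ requires using the central symmetry of the ball to eliminate the translation part; the eigenvalue bound is then standard.
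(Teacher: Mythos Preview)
Your proof is correct and takes a cleaner route than the paper's own argument. The paper proceeds by a case analysis on the orthogonal decomposition of the complement of $\mathrm{Fix}(A)$: it treats separately a possible $(-1)$--eigenspace (reflection part) and the $2$--dimensional rotation planes, and in each case explicitly constructs a point $tv \in B(O,1/r)$ whose displacement exceeds $r$, by analysing how the translation vector $b$ projects onto the chosen direction.

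Your approach bypasses this case distinction entirely. The $\pm x$ trick kills the translation part $v$ in one stroke and reduces the hypothesis to the clean inequality $\|A-I\|_{\mathrm{op}} \le r^2$; the lower bound $\|A-I\|_{\mathrm{op}} \ge 2\sin(\pi/J(k))$ for $A \ne I$ of finite order $\le J(k)$ is then a standard eigenvalue estimate that handles reflections and rotations uniformly. This is shorter and more conceptual, and it makes transparent why the threshold is exactly $r_k = \sqrt{2\sin(\pi/J(k))}$: the factor $2$ in the eigenvalue bound comes from the spectral gap of roots of unity, while the factor cancelling the potential loss from the translation part comes precisely from the central symmetry of the ball. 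The paper's argument, by contrast, recovers the same constant but obscures its origin in the explicit computations.
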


 \begin{proof} 
Let $g(x)=Ax+b$ be any nontrivial isometry of $G$, with $A \in O(k)$. If $A= \text{Id}$   there is nothing to prove. 
Otherwise Proposition \ref{prop-Bieberbach} implies that $g^j$ is a translation for some $ j \leq J(k)$. Since $g^j (x) = A^j x + \sum_{i=0}^{j-1} A^i b$, we deduce that $A^j = \text{Id}$. 
Let us consider the (nontrivial) subspace $V$ orthogonal to $\text{Fix}(A)$.	
We can decompose $V$ in the ortogonal sum of a subspace $V_0$, on which $A$ acts as a  reflection, and of some planes $V_i$, each of  which $A$ acts on as a nontrivial rotation with angle $\vartheta_i$. Since $A$ has order at most $J(k)$, we have $\vartheta_i \geq \frac{2\pi}{J(k)}$.\\
If $V_0$ is not trivial then take a vector $v_0$ of length $1$ in $V_0$. Denote by $b_0 = \lambda v_0$ the projection of $b$ on the line spanned by $v_0$.
For all $t\in \mathbb{R}$ it holds
$$d(g(t v_0), t v_0) = \Vert -2t v_0 +  b \Vert \geq |\lambda-2t|.$$
So, for $r < \sqrt{2}$, we can always find $t_r$ with $\vert t_r \vert < \frac{1}{r}$ such that $|\lambda-2t_r| > r$; but the point $t_r v_0$ contradicts the assumptions on $g$.\\
On the other hand, if   $V_0$ is trivial, then there is at least a plane $V_i$ on which $A$ acts as a nontrivial rotation, say $V_1$.
Let $b_1$ be the projection of $ b$ on $V_1$. \\ We choose $v_1 \in V_1$ of norm $1$ such that $b_1 = \lambda (A v_1 -  v_1)$ for some $\lambda \geq 0$. \\
The length of $A v_1 -  v_1$ is at least 
$2 \sin \left( \frac{\pi}{J(k) } \right)$, because $\vartheta_1 \geq \frac{2\pi}{J(k)}$, therefore
$d(g(t v_1), tv_1) \geq  2t \sin \left( \frac{\pi}{J(k) } \right)  $
  $\forall t\geq 0$. Again, if $r < \sqrt{2  \sin \left( \frac{\pi}{J(k) } \right)}$ 
we can  find $t_r$ with $\vert t_r \vert \leq \frac{1}{r}$ such that  $t_r v_1$ contradicts the assumptions on $g$.
\end{proof}



\vspace{1mm}
\subsection{Finiteness results}
\label{subsec-systole}
${}$

\noindent The {\em systole} (or {\em minimal displacement}) and the {\em free-systole} of a discrete group $\Gamma$ {\em at a point} $x\in X$ are defined respectively as
$$\text{sys}(\Gamma,x) := \inf_{g\in \Gamma^*} d(x,gx) , 
\qquad \text{sys}^\diamond(\Gamma,x) := \inf_{g\in \Gamma^\ast \setminus \Gamma^\diamond} d(x,gx),$$
where $\Gamma^* = \Gamma \setminus \lbrace \text{id} \rbrace$ and $\Gamma^\diamond $ is the subset of all elliptic isometries of $\Gamma$.\\
Accordingly, the  {\em (global) systole} and the  {\em free-systole of} $\Gamma$ are  defined as 
$$\text{sys}(\Gamma,X) = \inf_{x\in X}\text{sys}(\Gamma,x), \qquad 
\text{sys}^\diamond(\Gamma,X) = \inf_{x\in X}\text{sys}^\diamond(\Gamma,x).$$
as opposite to the {\em diastole}   and the  {\em free-diastole} of $\Gamma$, which  are  
$$\text{dias}(\Gamma,X) = \sup_{x\in X}\text{sys}(\Gamma,x) , \qquad 	
\text{dias}^\diamond(\Gamma,X) = \sup_{x\in X}\text{sys}^\diamond(\Gamma,x).$$	
Notice that the action of $\Gamma$ is nonsingular if and only if 
$\text{dias}(\Gamma,X)>0$.

\noindent While obviuosly one always has, by definition, 
$$\text{sys}(\Gamma,X) \leq  \text{sys}^\diamond(\Gamma,X) \leq  \text{dias}^\diamond(\Gamma,X)$$
$$\text{sys}(\Gamma,X) \leq  \text{dias}(\Gamma,X) \leq  \text{dias}^\diamond(\Gamma,X)$$
the free systole and the diastole are  quantitatively equivalent (for small values) for nonsingular, cocompact actions on packed CAT$(0)$-spaces with bounded codiameter.
%
%
More precisely: 
%

\begin{prop}[Theorem 3.1, \cite{CS23}]
\label{prop-vol-sys-dias}
Given $P_0,r_0$ and $D_0$,  there exists \linebreak $A = A(P_0,r_0) > 0$, $B = B(P_0,D_0) > 1$ such that the following holds true:
for any proper, geodesically complete, $(P_0,r_0)$-packed, $\textup{CAT}(0)$-space $X$, and any discrete, nonsingular, $D_0$-cocompact group  $\Gamma<\textup{Isom}(X)$  we have
	$$B ^{\hspace{0,5mm} -1/\textup{dias}(\Gamma, X)} 
	\leq \textup{sys}^\diamond(\Gamma, X)
	\leq  A \cdot \textup{dias}(\Gamma, X)
	$$
provided that the free systole and the diastole are both smaller than $\min\{ r_0, \varepsilon_0\}$.
\noindent	(Here, $\varepsilon_0$ is the Margulis' constant given by Proposition \ref{prop-Margulis-nilpotent}). 
\end{prop}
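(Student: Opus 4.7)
My plan is to prove the two inequalities separately, since they are of rather different natures: the upper bound is essentially a single-point argument, modulo ruling out the elliptic case, while the lower bound requires exploiting the full Margulis--Bieberbach structure of the almost stabilizer.

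For the upper bound $\textup{sys}^\diamond \leq A \cdot \textup{dias}$, set $D := \textup{dias}(\Gamma, X)$ and fix a point $x\in X$ with trivial stabilizer (such a point exists by the nonsingularity of $\Gamma$). By definition of the diastole there is $g \in \Gamma^\ast$ with $d(x, gx) \leq D$; if $g$ is non-elliptic we are done with $A = 1$. Otherwise, by the Margulis Lemma (Proposition \ref{prop-Margulis-nilpotent}, applicable since $D \leq \varepsilon_0$), the almost stabilizer $\overline{\Gamma}_D(x)$ is virtually nilpotent, hence virtually abelian because $\Gamma$ is cocompact and so acts by semisimple isometries. If every element of $\overline{\Sigma}_D(x)$ were elliptic, the induced virtually abelian group of semisimple elliptic isometries would admit a non-empty common fixed set, lying within distance $\lesssim D$ of $x$. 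I would then apply the Caprace--Monod splitting of Proposition \ref{prop-CM-decomposition} virtually to $\overline{\Gamma}_D(x)$ and invoke Bieberbach's Theorem \ref{prop-Bieberbach} on the Euclidean factor to force a non-elliptic translation of controlled length, producing a non-elliptic element with displacement $\leq A\cdot D$ at a nearby nonsingular point; the constant $A$ depends only on the rank bound (at most $P_0/2$, via Proposition \ref{prop-packing}(iii)) and the Bieberbach index $J(P_0/2)$, hence only on $P_0,r_0$.

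For the lower bound $B^{-1/\textup{dias}} \leq \textup{sys}^\diamond$, equivalently $\textup{dias} \leq \log B/\log(1/\textup{sys}^\diamond)$, I would let $g \in \Gamma^\ast \setminus \Gamma^\diamond$ approximately realize $\sigma := \textup{sys}^\diamond$ at some basepoint $x_0$. By Margulis, $\overline{\Gamma}_{\varepsilon_0}(x_0)$ is virtually abelian, and its abelian part acts as a crystallographic group on a virtual Euclidean factor with translation lattice $\mathcal{L}$. Bieberbach's Theorem \ref{prop-Bieberbach} together with Lemma \ref{lemma-bieber} turn the shortness of $g$ into a short vector in $\mathcal{L}$; the powers $g^j$ for $j\lesssim 1/\sigma$ remain non-elliptic and produce many distinct orbit points inside small balls around $x_0$. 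Confronting this orbit count with the entropy upper bound $\textup{Ent}(X)\leq \log(1+P_0)/r_0$ of Proposition \ref{prop-packing}(ii) forces $\mathcal{L}$ to have small covering radius on its Euclidean factor, which via $D_0$-cocompactness and the convexity of displacement functions transfers to a small global diastole, yielding the claimed exponential bound with $B = B(P_0,D_0)$.

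The hard step is the lower bound: the naive transfer from $x_0$ to an arbitrary point via cocompactness and the $2$-Lipschitz convexity of displacement only gives $\textup{dias}\lesssim \sigma+2D_0$, far too weak. The logarithmic refinement must come from the interplay between the packing/entropy bound and the Bieberbach structure of $\overline{\Gamma}_{\varepsilon_0}(x_0)$; making this interplay quantitative precisely when the short non-elliptic element need not lie in the global Euclidean factor of $X$ but only in that of the virtually abelian almost stabilizer, is the delicate point. The hypothesis $\max\{\textup{sys}^\diamond,\textup{dias}\}\leq\min\{r_0,\varepsilon_0\}$ is exactly what places the argument in the Margulis regime where this interplay can be controlled.
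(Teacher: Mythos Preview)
This statement is not proved in the present paper: it is quoted as Theorem~3.1 of the companion paper \cite{CS23}, and no argument is supplied here. There is therefore no ``paper's own proof'' to compare your proposal against; I can only assess your sketch on its own terms.

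Both halves of your outline have genuine gaps.

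\textbf{Upper bound.} Your case analysis hinges on the claim that if every element of the generating \emph{set} $\overline{\Sigma}_D(x)$ is elliptic, then the generated \emph{group} $\overline{\Gamma}_D(x)$ consists of elliptic isometries and hence has a common fixed point. This inference is false: a product of two elliptic isometries of a CAT$(0)$-space can be hyperbolic, so $\overline{\Gamma}_D(x)$ may well be infinite (of positive rank) even when all generators in $\overline{\Sigma}_D(x)$ are elliptic. Conversely, if $\overline{\Gamma}_D(x)$ really is finite (rank~$0$), there is no Euclidean factor on which to ``invoke Bieberbach's Theorem to force a non-elliptic translation'', so that branch of your argument leads nowhere. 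The invocation of Proposition~\ref{prop-CM-decomposition} is also misplaced: that result concerns the global structure of $\textup{Isom}(X)$, not the local almost-stabilizer $\overline{\Gamma}_D(x)$. What is actually needed is a dichotomy on $\textup{rk}(\overline{\Gamma}_D(x))$ together with a mechanism (not supplied) that, in the rank~$\ge 1$ case, bounds the translation length of \emph{some} hyperbolic element of $\overline{\Gamma}_D(x)$ by a universal multiple of $D$.

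\textbf{Lower bound.} You diagnose the difficulty yourself in the last paragraph: the naive convexity-plus-cocompactness transfer only gives $\textup{dias} \lesssim \textup{sys}^\diamond + 2D_0$, and you do not exhibit the mechanism producing the logarithmic improvement. Naming ``the interplay between the packing/entropy bound and the Bieberbach structure'' is not a proof. There is also a dependency mismatch: routing the bound through $\textup{Ent}(X)\le \log(1+P_0)/r_0$ from Proposition~\ref{prop-packing}(ii) would make $B$ depend on $r_0$, whereas the statement asserts $B=B(P_0,D_0)$ only. This suggests that the argument in \cite{CS23} does not pass through the entropy estimate but uses packing (or the dimension bound $n_0\le P_0/2$) more directly.

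In short, what you have written is a plausible list of ingredients rather than a proof; the two decisive steps --- extracting a short hyperbolic from a small almost-stabilizer, and upgrading the additive cocompactness transfer to a logarithmic one --- are precisely the content of the cited theorem and are not carried out in your proposal.
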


\noindent This equivalence will be useful when studying the  convergence of non-collapsing sequences of groups with elliptic elements. 
\vspace{1mm}

We conclude recalling two of the main findings of \cite{CS23}, which we will  be crucial in our convergence results.
The first is a general finiteness theorem for uniformily packed,  $D_0$-cocompact CAT$(0)$-lattices $\Gamma$, up to abstract isomorphism.
The second one is more subtle and also requires a uniform lower bound of the diastole: it bounds the number of such groups $\Gamma$ up to isomorphism of marked groups.
In order to  state it correctly, recall that a {\em marked group} is a group $\Gamma$ endowed with a generating set $\Sigma$;
two marked groups  $(\Gamma, \Sigma)$ and $(\Gamma', \Sigma')$ are said to be {\em equivalent} if there exists a group isomorphism   $\phi: \Gamma \rightarrow \Gamma'$ such that $\phi(\Sigma)=\Sigma'$.
It is well known (cp. for instance \cite{Ser80}, \cite{Gro78b})
that if $X$ is a geodesic metric space and $\Gamma < \textup{Isom}(X)$ is discrete and $D_0$-cocompact,  then for all $D \geq D_0$   the subset  
\begin{equation}	
	\label{defshort} 	
	\overline{\Sigma}_{2D}(x) := \lbrace g\in \Gamma \text{ s.t. } d(x,gx) \leq 2D\rbrace
\end{equation}
is a  generating set for $\Gamma$, that is 
$\overline{\Gamma}_{2D}(x)=\Gamma$, for every $x\in X$ 
(recall the definition \eqref{defgamma}) of $\overline{\Gamma}_{r}(x)$):
we call this a {\em $2D$-short generating set  of $\Gamma$  at  $x$.}

\begin{theo}[{\cite[Theorem A \& Prop.5.1]{CS23}}]
	\label{theo-finiteness} ${}$
Let $P_0,r_0$ and $D_0$ be fixed.  \linebreak 
There exist only finitely many nonsingular $D_0$-cocompact lattices $(\Gamma,X)$ \linebreak of proper, geodesically complete, \textup{CAT}$(0)$-spaces which are $(P_0,r_0)$-packed, up to group isomorphisms.\\
Moreover, for every fixed  $D \geq D_0$ and $s>0$, there exist only finitely many marked groups $(\Gamma, \Sigma)$, up to equivalence of marked groups, such that:
\vspace{-1mm}

	\begin{itemize}[leftmargin=4mm] 
		\item[--] $\Gamma$ is a $D_0$-cocompact lattice of  a proper, geodesically complete, $(P_0,r_0)$-packed, \textup{CAT}$(0)$-space $X$ satisfying  $\textup{sys} (\Gamma,x) \geq s$ for some $x \in X$,   
		\item[--]   $\Sigma=\overline{\Sigma}_{2D}(x)$ is a $2D$-short generating set of $\Gamma$ at $x$.
	\end{itemize}
\end{theo}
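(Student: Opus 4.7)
I would address the two statements in reverse order, since the marked‐group version is considerably more elementary and its core combinatorial idea feeds into the unmarked case.

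\textbf{Step 1 (marked groups, under the systole bound).} Fix $D\geq D_0$ and $s>0$, and let $(\Gamma,X,x)$ satisfy the hypotheses with $\Sigma=\overline{\Sigma}_{2D}(x)$. The distinct elements of $\Sigma$ correspond to distinct points of the orbit $\Gamma x$ inside $\overline{B}(x,2D)$, and these are pairwise $s$‐separated because $\mathrm{sys}(\Gamma,x)\ge s$. By Proposition~\ref{prop-packing} and the usual subdivision of balls, the cardinality $|\Sigma|$ is bounded by a constant $N=N(P_0,r_0,D,s)$. It remains to extract a finite presentation of $\Gamma$ whose data depend only on $N$. The classical Macbeath/Gromov argument applies: a $2D$‐short generating set at $x$ with $D\ge D_0$ yields a presentation $\langle\Sigma\mid R\rangle$, where $R$ consists of the ``triangle relations'' $g_1g_2g_3^{-1}$ with $g_i\in\Sigma\cup\Sigma^{-1}\cup\{1\}$ and $g_1g_2=g_3$ in $\Gamma$. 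The set of formal triples on $\Sigma$ is finite once $|\Sigma|\le N$, so only finitely many relation sets $R$ can arise, giving finitely many marked groups up to equivalence.

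\textbf{Step 2 (abstract isomorphism, no systole bound).} Here the systole may collapse, so I would argue by contradiction and compactness. Suppose there existed infinitely many pairwise non‐isomorphic nonsingular lattices $(\Gamma_j,X_j)\in\textup{CAT}_0(P_0,r_0,D_0)$. By the precompactness Proposition~\ref{lemma-GH-compactness-packing} extract an equivariant Gromov--Hausdorff subsequential limit $(\Gamma_\infty,X_\infty)$. If the sequence does not collapse, Theorem~\ref{teor-intro-convergence}(i) says the $\Gamma_j$ are eventually isomorphic to $\Gamma_\infty$, hence pairwise isomorphic, contradiction. If the sequence collapses, I would invoke the splitting theory of Caprace--Monod (Proposition~\ref{prop-CM-decomposition}) together with the quantitative splitting Theorem~\ref{theo-intro-group-splitting-Lie}: for $j\gg 0$, each $\Gamma_j$ admits a normal subgroup $\check\Gamma_j$ of uniformly bounded index $I_0$ splitting as $\check\Gamma_{Y_j}\times\mathbb{Z}^{n_j}$, where $\check\Gamma_{Y_j}$ acts $D_0'$‐cocompactly on a packed CAT$(0)$‐space $Y_j$ of strictly smaller dimension (finite by Proposition~\ref{prop-packing}(iii)) and has $\mathrm{sys}^\diamond(\check\Gamma_{Y_j},Y_j)\ge\sigma_0^\ast$. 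The abelian factor $\mathbb{Z}^{n_j}$ contributes only finitely many isomorphism types since $n_j$ is bounded by the dimension; and the $Y_j$‐factor, thanks to its uniform free‐systole bound and Proposition~\ref{prop-vol-sys-dias}, reduces to finitely many possibilities either by induction on dimension or by applying Step~1. Reassembling through the finitely many extension data of index $\le I_0$, we conclude finitely many isomorphism types of $\Gamma_j$, the desired contradiction.

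\textbf{Main obstacle.} The genuinely hard point is the collapsing case of Step~2: one must simultaneously (i) upgrade the Caprace--Monod algebraic splitting into a geometric virtual splitting with quantitative control on both the index $I_0$ and the free systole $\sigma_0^\ast$ of the non‐Euclidean factor, and (ii) pass outside the Lie setting (Theorem~\ref{theo-intro-group-splitting-Lie} is stated for Lie isometry groups). The first is the content of a delicate splitting/finiteness result in \cite{CS23}; the second requires absorbing the totally disconnected factor into the Caprace--Monod decomposition and checking that the induction hypothesis still applies. Once these ingredients are in place the induction on dimension closes, but the bookkeeping of extension data and the stability of the nonsingularity assumption under passing to $\check\Gamma_j$ demand some care.
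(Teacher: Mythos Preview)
Your Step~1 is essentially correct and is the standard argument behind Proposition~5.1 of \cite{CS23}: bound $|\Sigma|$ by packing and $s$-separation, then use the Macbeath--Gromov triangle presentation.

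Your Step~2, however, is circular. You invoke Theorem~\ref{teor-intro-convergence}(i) (i.e.\ Theorem~\ref{theo-noncollapsed}) in the non-collapsing case and Theorem~\ref{theo-intro-group-splitting-Lie} in the collapsing case, but look at the proofs of both in this paper: the former uses Theorem~\ref{theo-finiteness} to conclude that the marked groups $(\Gamma_j,\Sigma_j)$ stabilize, and the latter uses Theorem~\ref{theo-finiteness} to bound the index $I_0$. So you are assuming precisely the statement you want to prove. Moreover, even if you could decouple the index bound, the virtual splitting of Theorem~\ref{theo-intro-group-splitting-Lie} genuinely requires $\textup{Isom}(X_j)$ to be Lie; Example~\ref{ex-comm-not-normal} exhibits collapsing, nonsingular lattices in $\textup{CAT}_0(P_0,r_0,D_0)$ with \emph{no} virtually normal abelian subgroup whatsoever, so your induction on dimension via a product decomposition $\check\Gamma_{Y_j}\times\mathbb{Z}^{n_j}$ simply does not get off the ground outside the Lie setting.

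The route taken in \cite{CS23} avoids both problems by using the Renormalization Theorem (recorded here as Theorem~\ref{theo:renormalization}): every nonsingular $(\Gamma,X)\in\textup{CAT}_0(P_0,r_0,D_0)$ admits another faithful geometric action on a space $X'\cong X$ with $\textup{sys}^\diamond(\Gamma,X')\ge s_0(P_0,r_0,D_0)$ and codiameter $\le\Delta_0(P_0,D_0)$. Combined with Proposition~\ref{prop-vol-sys-dias} this produces a point of uniformly positive systole, so the abstract-isomorphism statement reduces directly to your Step~1 --- no convergence theory, no Lie hypothesis, no induction on dimension. Note that the renormalization theorem itself is proved in \cite{CS23} using only the splitting Theorem~\ref{theo-splitting-weak} (commensurated, not normal, subgroups), so there is no hidden circularity.
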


\section{Splitting under collapsing}
\label{sec:recall}

\subsection{Space splitting}
\label{subsec-splitting}
${}$

\noindent We  record here one of the main results of \cite{CS23}:  if a uniform  lattice $\Gamma$ of a packed, CAT$(0)$-space $X$ is sufficiently collapsed (that is, the free-systole is sufficiently small), then $X$ splits a non-trivial Euclidean factor. 

\noindent Recall that the dimension of any (proper, geodesically complete) $(P_0, r_0)$-packed  CAT$(0)$-space  $X$ is bounded above by $n_0 = P_0/2$, by Proposition \ref{prop-packing}. \linebreak Recall also the Margulis's constant $ \varepsilon_0$  given by Proposition \ref{prop-Margulis-nilpotent}, which depends only on the packing constants $P_0, r_0$. Then, we set 
\begin{equation}\label{eqJ0} J_0 := \max_{k\in \lbrace 0,\ldots,n_0\rbrace}J(k) +1 \end{equation}
where  $J(k)$ is the constant appearing in Bieberbach's theorem  (Proposition \ref{prop-Bieberbach}),  bounding the index of the maximal lattice of any crystallographic group in dimension $k$.
Notice that $J_0$ depends only on $n_0$, so ultimately only on $P_0$.  \linebreak
Finally, for a discrete subgroup  $\Gamma < \textup{Isom}(X)$, 
recall the definition (\ref{defgamma}) of almost-stabilizer  $\overline{\Gamma}_{r} (x) < \Gamma$ generated by  $\overline{\Sigma}_r (x)$ given in Section \ref{subsection-isometries}.

\begin{theo}[see  \cite{CS23},  Splitting Theorem 4.1 and its proof]
	\label{theo-splitting-weak}${}$
	
\noindent	Given $P_0,r_0$ and $D_0$, 
	there exists a   function 
	$\sigma_{P_0,r_0,D_0}: (0,\varepsilon_0] \rightarrow  (0,\varepsilon_0]$   (depending only on the parameters $P_0,r_0,D_0$) such that the following holds. 
	Let $X$ be a proper, geodesically complete, $(P_0,r_0)$-packed, $\textup{CAT}(0)$-space,   and $\Gamma < \textup{Isom}(X)$ be discrete and $D_0$-cocompact. 
	For every chosen $\varepsilon \in (0, \varepsilon_0]$, if there exists $x_0\in X$ such that $\textup{rk}(\overline{\Gamma}_\sigma(x_0)) \geq 1$, where $\sigma = \sigma_{P_0,r_0,D_0}(\varepsilon)$, then:
	\begin{itemize}[leftmargin=9mm] 
		\item[(i)] the space $X$ splits isometrically  and $\Gamma$-invariantly as $Y \times \mathbb{R}^k$, with $k\geq 1$; 
		\item[(ii)] there exists ${\varepsilon^\ast} \in (\sigma_{P_0,r_0,D_0} (\varepsilon), \varepsilon)$ 
		such that the rank of the virtually abelian subgroups $\overline{\Gamma}_{{\varepsilon^\ast}}(x)$ is  exactly $k$, for all $x\in X$; 
		\item[(iii)] for every $x\in X$ there exists $y\in Y$ such that $\overline{\Gamma}_{\varepsilon^\ast}(x)$ preserves $\lbrace y \rbrace \times \mathbb{R}^k$;   
		\item[(iv)]    for every $x\in X$  the projection 
		of $\overline{\Gamma}_{\varepsilon^\ast}(x)$ on $\textup{Isom}(\mathbb{R}^k)$ is a crystallographic group, whose maximal lattice is generated by the projection of a subset $\Sigma \subset \Sigma_{4J_0\cdot{\varepsilon^\ast}}(x)$; 
		\item[(v)]   for every $x\in X$  the closure of the projection of $\overline{\Gamma}_{\varepsilon^\ast}(x)$ on $\textup{Isom}(Y)$ is compact and totally disconnected;
		\item[(vi)]  for every $x \in X$ the group $\overline{\Gamma}_{\varepsilon^*}(x)$ contains a  finite index,  free abelian subgroup $A$ of rank $k$ which is commensurated  in $\Gamma$ (hence,  $\overline{\Gamma}_{\varepsilon^*}(x)$ is itself commensurated in $\Gamma$).
	\end{itemize}			
In particular, the above properties  hold  if $\textup{sys}^\diamond(\Gamma,X) \leq \sigma_{P_0,r_0,D_0}(\varepsilon)$.
\end{theo}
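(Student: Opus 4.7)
My plan is to iterate a single splitting step: starting from the small-displacement subgroup $\overline{\Gamma}_\sigma(x_0)$ of positive abelian rank, extract a Clifford translation of $X$ and peel off one Euclidean factor, then repeat inside the complementary factor until the rank stabilizes. Since the dimension of $X$ is bounded above by $n_0 = P_0/2$ (Proposition \ref{prop-packing}), the iteration must terminate in at most $n_0$ rounds, and this is precisely how the function $\sigma_{P_0,r_0,D_0}$ should be defined: set $\sigma^{(0)} := \varepsilon$, and recursively choose $\sigma^{(i+1)} \ll \sigma^{(i)}$ small enough (in terms of $P_0,r_0,D_0$) to feed the output of round $i$; set $\sigma_{P_0,r_0,D_0}(\varepsilon) := \sigma^{(n_0)}$, and take $\varepsilon^\ast \in (\sigma^{(n_0)}, \varepsilon)$ to be the scale at which the rank stabilizes.

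\textbf{Step 1 (virtual abelianity).} Since $\sigma \leq \varepsilon_0$, the Margulis Lemma (Proposition \ref{prop-Margulis-nilpotent}) combined with the $D_0$-cocompact improvement ensures that $\overline{\Gamma}_\sigma(x_0)$ is virtually abelian and semisimple; by hypothesis its rank is $m \geq 1$. Apply Bieberbach's theorem (Proposition \ref{prop-Bieberbach}) to the induced crystallographic action on the Euclidean factor of $\mathrm{Min}(A)$ provided by the flat torus theorem, where $A\cong\mathbb{Z}^m$ is the abelian finite-index subgroup; this extracts, at cost of index $\leq J_0$, a lattice of translations generated by isometries of controlled displacement at $x_0$. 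This is where the constant $J_0$ of (\ref{eqJ0}) and the factor $4J_0$ in item~(iv) enter.

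\textbf{Step 2 (Clifford translation and space splitting).} The crux is to promote one of these short translations to a Clifford translation of the whole space $X$, which will force a $\Gamma$-invariant Euclidean splitting of $X$ by the classical splitting theorem for CAT(0) spaces. The mechanism is convexity of the displacement function together with $D_0$-cocompactness: if $a$ is a translation of the flat $\mathrm{Min}(A)$ with very small displacement at $x_0$, then for every $y\in X$ pick $\gamma\in \Gamma$ with $d(\gamma x_0,y)\leq D_0$; the conjugate $\gamma a \gamma^{-1}$ has the same translation length as $a$ and displacement at $y$ controlled by Lemma \ref{lemma-bieber}-type Bieberbach rigidity. Calibrating $\sigma^{(i+1)}$ sufficiently small relative to $\sigma^{(i)}$ and $D_0$ guarantees that $\gamma a \gamma^{-1}$ lies again in $\overline{\Gamma}_{\sigma^{(i)}}(y)$; by the uniqueness of translation directions in abelian subgroups, these conjugates agree with $a$ on a common Euclidean factor, so the displacement of $a$ is bounded and convex, hence constant. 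Iterating yields items (i) and (ii), with $\Gamma$-invariance of the splitting following from the uniqueness of the maximal Euclidean factor (Proposition \ref{prop-CM-decomposition} applied to $X$).

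\textbf{Step 3 (properties of $\overline{\Gamma}_{\varepsilon^\ast}(x)$).} Write $X=Y\times\mathbb{R}^k$ with $Y$ having no Euclidean factor. Every element of $\overline{\Gamma}_{\varepsilon^\ast}(x)$ preserves this decomposition and projects to $\mathrm{Isom}(Y)\times\mathcal{E}_k$. Since it has displacement $\leq \varepsilon^\ast$ at $x$, its $Y$-component is elliptic with small displacement, and all such components share a common fixed point $y\in Y$ (by the center of mass / common fixed-point argument for bounded orbits in CAT(0)), proving (iii). Restricted to the preserved flat $\{y\}\times\mathbb{R}^k$, the group has rank exactly $k$ and, by discreteness of $\Gamma$ and compactness of $\overline{B}(x,\varepsilon^\ast)$, acts cocompactly on this flat; Bieberbach then gives a maximal lattice of index $\leq J_0$ whose generators are projections of elements of $\Sigma_{4J_0\cdot\varepsilon^\ast}(x)$, proving (iv). For (v), the $Y$-projection fixes $y$, lands in the compact stabilizer $\mathrm{Stab}_{\mathrm{Isom}(Y)}(y)$, and its closure is totally disconnected because $Y$ has no Euclidean factor (Proposition \ref{prop-CM-decomposition} applied to $Y$). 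For (vi), conjugating $\overline{\Gamma}_{\varepsilon^\ast}(x)$ by any $\gamma\in\Gamma$ lands in $\overline{\Gamma}_{\varepsilon^\ast + 2d(x,\gamma x)}(x)$, and the Euclidean-projection argument shows that the maximal translation lattice $A$ is stable up to finite index under such conjugation, whence commensuration.

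\textbf{Main obstacle.} The hardest point is Step~2, i.e.\ turning a small-displacement almost-translation into a genuine Clifford translation. The delicate quantitative input is the calibration of $\sigma^{(i+1)}$ vs.\ $\sigma^{(i)}$: too lax a choice and the conjugates $\gamma a \gamma^{-1}$ escape the abelian window (or acquire a nontrivial rotational part that cannot be absorbed by Bieberbach); too strict and the iteration does not close after $n_0$ steps. This calibration is precisely the content of \cite{CS23}, and the constant $J_0$ from Bieberbach is what makes the bookkeeping tight.
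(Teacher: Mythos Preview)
The paper does not prove this theorem: it is imported verbatim from the companion paper \cite{CS23} (Splitting Theorem 4.1 there), so there is no in-paper proof to compare against. Your sketch is therefore an attempt to reconstruct the argument of \cite{CS23}, and the overall architecture---iterate a rank-one splitting step at most $n_0$ times, using the Margulis lemma at each stage---is indeed the right shape. However, two of your steps have genuine gaps.

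\textbf{Step 2 is not an argument.} You want to show that a short element $a$ of the abelian finite-index subgroup is a Clifford translation. Your mechanism is: conjugate $a$ by $\gamma\in\Gamma$ to move it near an arbitrary point $y$, observe that $\gamma a\gamma^{-1}$ still has small displacement at $y$, and then conclude that ``by uniqueness of translation directions in abelian subgroups, these conjugates agree with $a$ on a common Euclidean factor''. But $a$ and $\gamma a\gamma^{-1}$ have no reason to commute or to lie in a common abelian subgroup, so there is no ``uniqueness of translation directions'' available. What you actually need is that $\text{Min}(a)$ is all of $X$, and for this one must show that the \emph{set} $\text{Min}(A)$ (for the full rank-$k$ free abelian subgroup $A$) is $\Gamma$-invariant up to bounded distance, hence equals $X$ by cocompactness and convexity. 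This in turn requires controlling how conjugation by $\Gamma$ permutes the various almost-stabilizers $\overline{\Gamma}_{\varepsilon^\ast}(x)$ and their maximal flats---which is precisely the delicate bookkeeping you defer to \cite{CS23} in your ``Main obstacle'' paragraph. As written, Step~2 is a statement of intent, not a proof.

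\textbf{Step 3(v) invokes the wrong tool.} You appeal to Proposition~\ref{prop-CM-decomposition} applied to $Y$ to conclude that the closure of the $Y$-projection is totally disconnected. But that proposition requires $Y$ to admit a uniform lattice, and as Example~\ref{ex-comm-not-normal} in this very paper shows, the projection of $\Gamma$ to $\mathrm{Isom}(Y)$ need not be discrete, so $Y$ need not be known to carry any lattice at all. The correct argument for (v) is local: the projection of $\overline{\Gamma}_{\varepsilon^\ast}(x)$ fixes a point $y\in Y$ by (iii), so its closure sits in the compact stabilizer $\mathrm{Stab}_{\mathrm{Isom}(Y)}(y)$; total disconnectedness then follows because the identity component of this stabilizer would yield a Euclidean factor of $Y$ (via the solution to Hilbert's fifth problem and the structure of compact connected groups acting on CAT$(0)$-spaces), contradicting the maximality of the splitting. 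Similarly, your justification of (iii)---``all $Y$-components share a common fixed point by the center of mass argument''---skips the point: the generators individually have small displacement on $Y$, but the group they generate is infinite, so one must first argue that the $Y$-projection has a bounded orbit before invoking a fixed-point theorem.
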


\noindent Here, by {\em $\Gamma$-invariant splitting} we mean that every isometry of $\Gamma$ preserves the product decomposition. By \cite[Proposition I.5.3.(4)]{BH09} we can see $\Gamma$ as a subgroup of $\textup{Isom}(Y) \times \textup{Isom}(\mathbb{R}^k)$. In particular it is meaningful to talk about the projection of $A$ on $\text{Isom}(Y)$ and $\text{Isom}(\mathbb{R}^k)$. Recall that a subgroup   $G < \Gamma$ is said to be {\em commensurated in $\Gamma$} if   the groups $G$ and $\gamma G \gamma^{-1}$ are commensurable in $\Gamma$ for every $\gamma \in \Gamma$ (i.e.  the intersection $G  \cap \gamma G \gamma^{-1}$   has finite index in  $G$  for every $\gamma \in \Gamma$). 
\vspace{1mm}

%

 Let us define, for the following, the constant (only depending on $P_0,r_0,D_0$)
\begin{equation}
\label{sigma0ast}
\sigma_0 ^\ast :=  \sigma_{P_0,r_0,D_0} (\varepsilon_0)
\end{equation}
Assertion (iii) yields, for any $x\in X$, a slice $\lbrace y \rbrace \times \mathbb{R}^k$ preserved by $\overline{\Gamma}_{\sigma_0^\ast}(x)$ (as $\sigma_0^\ast <  \varepsilon_0^\ast < \varepsilon_0$),   provided that $\textup{sys}^\diamond(\Gamma,X) \leq \sigma_0^\ast$ 
 so  that  $X$ splits as $Y \times \mathbb{R}^k$. For our convergence theorems, we need to strengthen this  conclusion  and to show   that, for at least one specific point $x_0$,  the slice   can be chosen to be the one passing through $x_0$. This is proved by the following:

\begin{prop}
	\label{prop-minimal-close}
	Let $X$ be a  proper, geodesically complete, $(P_0,r_0)$-packed   $\textup{CAT}(0)$-space,   and let  $\Gamma$  be a  discrete, $D_0$-cocompact subgroup of $\textup{Isom}(X)$.
	Assume that $\textup{sys}^\diamond(\Gamma,X) \leq \sigma_0^\ast$, so that $X$ splits as $Y\times \mathbb{R}^k$ by Theorem \ref{theo-splitting-weak}. 
	 Then there exists $x_0 = (y_0,  v)\in X$ such that $\overline{\Gamma}_{\sigma_0 ^\ast}(x_0)$ preserves $\lbrace y_0 \rbrace \times \mathbb{R}^k$.
\end{prop}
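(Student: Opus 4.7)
The plan is to construct a canonical $\Gamma$-equivariant map $\Phi\colon X\to X$ whose ``almost-stabilizer sets'' $\overline{\Sigma}_{\sigma_0^\ast}(\Phi(x))$ monotonically contain $\overline{\Sigma}_{\sigma_0^\ast}(x)$, and then to apply it to a point $x_1\in X$ maximizing the cardinality of these sets; the image $x_0:=\Phi(x_1)$ will already realize the conclusion. Concretely, for $x\in X$ write $H(x):=\overline{\Gamma}_{\sigma_0^\ast}(x)$ and denote by $H(x)_Y$ its projection to $\textup{Isom}(Y)$. Property (v) of Theorem \ref{theo-splitting-weak}, applied to the larger group $\overline{\Gamma}_{\varepsilon^\ast}(x)$, implies that the closure of $H(x)_Y$ in $\textup{Isom}(Y)$ is compact; hence the orbit $\overline{H(x)_Y}\cdot\pi_Y(x)$ is bounded in the CAT$(0)$-space $Y$, admits a unique circumcenter $y^\ast(x)\in Y$, and this circumcenter is automatically fixed by $H(x)_Y$. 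Define $\Phi(x):=(y^\ast(x),\pi_E(x))$.

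The inclusion $\overline{\Sigma}_{\sigma_0^\ast}(x)\subseteq \overline{\Sigma}_{\sigma_0^\ast}(\Phi(x))$ then follows directly: any $g\in \overline{\Sigma}_{\sigma_0^\ast}(x)$ lies in $H(x)$, hence fixes $y^\ast(x)$ on the $Y$-factor, so the product CAT$(0)$-metric yields
\[
d(\Phi(x),g\Phi(x)) = d_E(\pi_E(x),g_E\pi_E(x)) \leq d(x,gx) \leq \sigma_0^\ast.
\]
Moreover, the integer-valued function $f(x):=|\overline{\Sigma}_{\sigma_0^\ast}(x)|$ is $\Gamma$-invariant and upper semi-continuous on $X$ (by continuity of the displacement functions $x\mapsto d(x,gx)$ together with properness and discreteness of the action), so it descends to a USC function on the compact orbispace $M=\Gamma\backslash X$ and attains a finite maximum $N_0$ at some $x_1\in X$; here $N_0\geq 2$ by the hypothesis $\textup{sys}^\diamond(\Gamma,X)\leq\sigma_0^\ast$. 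Setting $x_0:=\Phi(x_1)$, the monotonicity above together with the maximality of $N_0$ forces the inclusion $\overline{\Sigma}_{\sigma_0^\ast}(x_1)\subseteq \overline{\Sigma}_{\sigma_0^\ast}(x_0)$ to be an equality of finite sets; hence $H(x_0)=H(x_1)$, and since $H(x_1)$ fixes $y^\ast(x_1)=\pi_Y(x_0)$ by construction, $\overline{\Gamma}_{\sigma_0^\ast}(x_0)$ preserves the slice $\{\pi_Y(x_0)\}\times\mathbb{R}^k$, as required.

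The hard part is to find a canonical way to assign, to each $x\in X$, a point $y^\ast(x)\in Y$ simultaneously fixed by all of $H(x)_Y$: the slice granted by assertion (iii) of the splitting theorem is neither unique nor, in general, through $x$ itself, so some explicit canonical choice is needed. The circumcenter of a bounded orbit in a CAT$(0)$-space delivers exactly such a choice (and is manifestly $\Gamma$-equivariant), with the crucial boundedness of the orbit being precisely what the compactness statement (v) provides. Once $\Phi$ is in place, the one-step stabilization at a maximizer of $f$ is then a routine maximality argument.
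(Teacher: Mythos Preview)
Your proof is correct and shares the same skeleton as the paper's: pick a maximizer $x_1$ of the upper-semicontinuous, $\Gamma$-invariant function $x\mapsto |\overline\Sigma_{\sigma_0^\ast}(x)|$, produce a point $x_0$ with the same Euclidean coordinate whose $Y$-coordinate is fixed by $H(x_1)_Y$, observe the inclusion $\overline\Sigma_{\sigma_0^\ast}(x_1)\subseteq\overline\Sigma_{\sigma_0^\ast}(x_0)$, and conclude equality by maximality.

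The execution differs in two places. First, to obtain a $Y$-fixed point for $H(x_1)_Y$ the paper invokes part~(iii) of Theorem~\ref{theo-splitting-weak} directly, while you instead invoke part~(v) to get compactness and then take the circumcenter of a bounded orbit; both are legitimate, and your choice has the mild advantage of yielding a canonical (hence $\Gamma$-equivariant) map $\Phi$, though you never actually use that equivariance. Second, for the inclusion $\overline\Sigma_{\sigma_0^\ast}(x_1)\subseteq\overline\Sigma_{\sigma_0^\ast}(x_0)$ the paper argues step-by-step along the geodesic $[x_1,x_0]$ using convexity of displacement and Lemma~\ref{lemma-basic-containment}, whereas you observe it in one line from the product metric, since $g_Y$ fixes $\pi_Y(x_0)$ forces $d(x_0,gx_0)=d_E(\pi_E(x_1),g_E\pi_E(x_1))\leq d(x_1,gx_1)$. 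Your route is shorter here; the paper's geodesic argument is sound but more elaborate than necessary for this particular conclusion. The parenthetical remark that $N_0\geq 2$ is harmless but not needed: even if $H(x_0)$ were trivial, the statement would hold vacuously.
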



\noindent The proof is based on a maximality argument similar to  \cite[Theorem 3.1]{CS22}.\linebreak
We just need a very basic fact:
\begin{lemma}[\textup{\cite[Lemma 3.3]{CS22}}]
	\label{lemma-basic-containment}
For every $x\in X$ and $r > 0$ there exists an open set $U \ni x$ such that $\overline{\Sigma}_r(y)\subseteq \overline{\Sigma}_r(x)$ for all $y\in U$.
\end{lemma}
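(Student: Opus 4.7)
The plan is to use a maximality argument on the integer-valued function $f(x) := |\overline{\Sigma}_{\sigma_0^\ast}(x)|$, combined with a \emph{Euclidean projection} of the basepoint onto a slice guaranteed by Theorem \ref{theo-splitting-weak}(iii).

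I would first show that $f$ attains its global maximum on $X$. Lemma \ref{lemma-basic-containment} implies that, for every $x$, there is an open neighborhood $U$ with $\overline{\Sigma}_{\sigma_0^\ast}(y)\subseteq \overline{\Sigma}_{\sigma_0^\ast}(x)$, and hence $f(y)\leq f(x)$, for all $y\in U$; so $f$ is upper semicontinuous. Moreover $\overline{\Sigma}_{\sigma_0^\ast}(gx) = g\,\overline{\Sigma}_{\sigma_0^\ast}(x)\,g^{-1}$ makes $f$ invariant under $\Gamma$, so it descends to an upper semicontinuous function on the compact quotient $\Gamma\backslash X$, where it attains its maximum $N$. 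I pick $x_0\in X$ with $f(x_0)=N$ and write $x_0=(y_0,v_0)$ in the splitting $X=Y\times\mathbb{R}^k$ provided by Theorem \ref{theo-splitting-weak} applied with $\varepsilon=\varepsilon_0$ (so that $\sigma_0^\ast<\varepsilon^\ast$). By part (iii) of the same theorem, there exists $y_\ast\in Y$ such that $\overline{\Gamma}_{\varepsilon^\ast}(x_0)$, and in particular $\overline{\Gamma}_{\sigma_0^\ast}(x_0)$, preserves the slice $\{y_\ast\}\times\mathbb{R}^k$.

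Now set $x_0':=(y_\ast,v_0)$, the ``projection'' of $x_0$ onto this slice. For every $g\in \overline{\Sigma}_{\sigma_0^\ast}(x_0)$, writing $g=(g_Y,g_E)$ through \cite[I.5.3(4)]{BH09}, the hypothesis $g(\{y_\ast\}\times\mathbb{R}^k)=\{y_\ast\}\times\mathbb{R}^k$ forces $g_Y(y_\ast)=y_\ast$, so the Pythagorean identity of the product metric yields
\[ d(x_0',gx_0')^2 \;=\; d(v_0,g_E v_0)^2 \;\leq\; d(y_0,g_Y y_0)^2 + d(v_0,g_E v_0)^2 \;=\; d(x_0,gx_0)^2 \;\leq\; (\sigma_0^\ast)^2. \]
Consequently $\overline{\Sigma}_{\sigma_0^\ast}(x_0)\subseteq\overline{\Sigma}_{\sigma_0^\ast}(x_0')$, and the maximality inequality $f(x_0')\leq N=f(x_0)$ promotes this to an equality of sets. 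Therefore $\overline{\Gamma}_{\sigma_0^\ast}(x_0')=\overline{\Gamma}_{\sigma_0^\ast}(x_0)$, and this common group preserves $\{y_\ast\}\times\mathbb{R}^k$, which is exactly the slice through $x_0'=(y_\ast,v_0)$. Renaming $x_0:=x_0'$ and $y_0:=y_\ast$ gives the point required by the proposition.

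The only delicate point is the existence of a maximum of $f$ on the (a priori noncompact) space $X$; once $\Gamma$-invariance is observed this reduces to upper semicontinuity on the compact orbispace, and the rest is the short displacement computation above.
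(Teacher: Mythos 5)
Your proposal does not address the statement at hand. The statement to be proved is Lemma \ref{lemma-basic-containment} itself --- the upper semicontinuity of $x \mapsto \overline{\Sigma}_r(x)$ --- whereas what you have written is a proof of Proposition \ref{prop-minimal-close} (the existence of a point $x_0=(y_0,v)$ whose almost stabilizer preserves the slice through $x_0$). Worse, your very first step invokes ``Lemma \ref{lemma-basic-containment} implies that $f$ is upper semicontinuous'', so as an argument for the lemma it is circular: you assume exactly what you are asked to establish. (As a proof of Proposition \ref{prop-minimal-close}, your direct displacement comparison on the slice is a reasonable alternative to the paper's geodesic/supremum argument, but that is not the task here.)

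For the lemma itself, note that the naive triangle inequality only gives $\overline{\Sigma}_r(y) \subseteq \overline{\Sigma}_{r+2d(x,y)}(x)$, which is strictly weaker than the claim; the missing idea is to exploit discreteness of $\Gamma$ together with properness of $X$. Since $\overline{\Sigma}_{r+1}(x)$ is finite, the finitely many elements $g$ with $r < d(x,gx)\leq r+1$ each satisfy $d(y,gy)>r$ on some neighbourhood of $x$ (the displacement function $y\mapsto d(y,gy)$ being continuous), while every $g$ with $d(x,gx)>r+1$ satisfies $d(y,gy)>r$ whenever $d(x,y)<1/2$. Intersecting these finitely many open conditions with $B(x,1/2)$ produces the required open set $U$: for $y\in U$, any $g$ with $d(y,gy)\leq r$ must already lie in $\overline{\Sigma}_r(x)$.
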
	

\begin{proof}[Proof of Proposition \ref{prop-minimal-close}]
	We introduce a partial order on $X$ defined by 
$$x \preceq x' \mbox{  if and only if } \overline{\Sigma}_{\sigma_0 ^\ast}(x) \subseteq \overline{\Sigma}_{\sigma_0 ^\ast}(x').$$
We can show that there is a maximal element as in the proof of \cite[Theorem 3.1, Step 3]{CS22}, which we report here for the reader's convenience: Lemma \ref{lemma-basic-containment} implies that the function $x\mapsto \# \overline{\Sigma}_{\sigma_0 ^\ast}(x)$ is upper semicontinuous and clearly $\Gamma$-invariant, so (by cocompactness) it has a maximum and it is enough to take a point $x$ where the maximum is realized. \\
 Apply Theorem \ref{theo-splitting-weak}.(iii) to the point $x$: there exists $y_0\in Y$ such that $\overline{\Gamma}_{\sigma_0 ^\ast}(x)$ preserves $\lbrace y_0 \rbrace \!\times\mathbb{R}^k$. If $x \in  \lbrace y_0 \rbrace \!\times \mathbb{R}^k$, then we set $x_0=x$ and
 there is nothing  more to prove. 
 Otherwise call $x_0$ the projection of $x$ on the closed, convex, $\overline{\Gamma}_{\sigma_0 ^\ast}(x)$-invariant subset $\lbrace y_0 \rbrace \!\times \mathbb{R}^k$. 
 Let $c\colon [0,d(x,x_0)] \to X$ be the geodesic $[x,x_0]$ and set 
	$$T=\sup\lbrace t \in [0,d(x,x_0)] \text{ s.t. } \overline{\Gamma}_{\sigma_0 ^\ast}(c(t)) = \overline{\Gamma}_{\sigma_0 ^\ast}(x)\rbrace.$$
By definition $T\geq 0$ and we claim that $T=d(x,x_0)$ and it is  a maximum.\\
	In fact, assume $\overline{\Gamma}_{\sigma_0 ^\ast}(c(t)) = \overline{\Gamma}_{\sigma_0 ^\ast}(x)$. 
So, $\overline{\Gamma}_{\sigma_0 ^\ast}(c(t + t')) \subseteq \overline{\Gamma}_{\sigma_0 ^\ast}(c(t)) = \overline{\Gamma}_{\sigma_0 ^\ast}(x)$ for all $t'>0$ small enough, by Lemma \ref{lemma-basic-containment}.
	On the other hand, for every $g \in \overline{\Gamma}_{\sigma_0 ^\ast}(x)$ we have $d(x_0, g x_0) \leq d(x,gx)$ (since $g$ acts on $Y$ fixing $y_0$).
Therefore, by the convexity of the displacement functions we deduce  that 
$\overline{\Gamma}_{\sigma_0 ^\ast}(x) \subseteq \overline{\Gamma}_{\sigma_0 ^\ast}(c(t+t'))$ too. 	
	This shows that the supremum defining $T$ is not realized, unless $T = d(x,x_0)$. Let now $0\leq t_n < T$ such that $t_n \to T$, so the points $c(t_n)$ converge to $c(T)$. By Lemma \ref{lemma-basic-containment} we get $\overline{\Sigma}_{\sigma_0 ^\ast}(c(t_n)) \subseteq \overline{\Sigma}_{\sigma_0 ^\ast}(c(T))$ for all $n$ big enough. But $\overline{\Sigma}_{\sigma_0 ^\ast}(c(t_n)) = \overline{\Sigma}_{\sigma_0 ^\ast}(x)$ is maximal for $\preceq$, therefore $\overline{\Sigma}_{\sigma_0 ^\ast}(x) = \overline{\Sigma}_{\sigma_0 ^\ast}(c(t_n)) = \overline{\Sigma}_{\sigma_0 ^\ast}(c(T))$ for all $n$ big enough. This implies that $T=d(x,x_0)$ is actually  a maximum.
Hence,
$\overline{\Gamma}_{\sigma_0 ^\ast}(x_0) = \overline{\Gamma}_{\sigma_0 ^\ast}(x)$,
and  $x_0$ is the point we are looking for.
\end{proof}

A consequence of Theorem \ref{theo-splitting-weak}
is the following control of the almost stabilizers, which will be useful in   studying  converging sequences.
Recall the function 	  $\sigma_{P_0,r_0,D_0}: (0,\varepsilon_0] \rightarrow  (0,\varepsilon_0]$ given by Theorem \ref{theo-splitting-weak}, where $\varepsilon_0$ is the Margulis constant, and the  constant $J_0$  introduced at the beginning of this section.
 Then:
 

	\begin{cor}
		\label{cor-0-rank}
Let $X$ be a proper, geodesically complete, $(P_0,r_0)$-packed $\textup{CAT}(0)$-space,   and let  $\Gamma$  be a  discrete, $D_0$-cocompact subgroup of $\textup{Isom}(X)$. 
Let $\sigma (\Gamma, X) \! :\!= \! \sigma_{P_0,r_0,D_0} (\varepsilon^\diamond)$
		be the constant obtained for $\varepsilon^\diamond \!= \!\min\left\{ \varepsilon_0, \frac{\textup{sys}^\diamond(\Gamma,X)}{4J_0} \right\}$.
		Then
\begin{itemize}[leftmargin=9mm] 
\item[(i)]   the almost stabilizers   $\overline{\Gamma}_{\sigma (\Gamma, X)}(x)$ are finite, for all $x\in X$; 
\item[(ii)]   there exists $x_0\in X$ such that $\overline{\Gamma}_{\sigma (\Gamma, X)}(x_0)$ fixes $x_0$.
\end{itemize}
 	\end{cor}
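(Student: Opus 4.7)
Write $\sigma := \sigma(\Gamma,X) = \sigma_{P_0,r_0,D_0}(\varepsilon^\diamond)$ throughout. The plan is to establish (i) by contradiction using Theorem \ref{theo-splitting-weak}, and then deduce (ii) from (i) via a maximality argument in the spirit of Proposition \ref{prop-minimal-close}.

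For (i), I would argue by contradiction, assuming that $\textup{rk}(\overline{\Gamma}_\sigma(x_0)) \geq 1$ for some $x_0 \in X$. Since $\sigma = \sigma_{P_0,r_0,D_0}(\varepsilon^\diamond)$ with $\varepsilon^\diamond \in (0,\varepsilon_0]$, Theorem \ref{theo-splitting-weak} applies with $\varepsilon = \varepsilon^\diamond$: we get a $\Gamma$-invariant splitting $X = Y \times \mathbb{R}^k$ with $k \geq 1$, and a threshold $\varepsilon^* \in (\sigma,\varepsilon^\diamond)$ such that, by (iv), for every $x \in X$ the maximal lattice of the projection of $\overline{\Gamma}_{\varepsilon^*}(x)$ onto $\textup{Isom}(\mathbb{R}^k)$ is generated by the projections of a finite subset $\Sigma \subseteq \overline{\Sigma}_{4J_0\cdot \varepsilon^*}(x)$. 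The key calibration is that $\varepsilon^\diamond \leq \textup{sys}^\diamond(\Gamma,X)/(4J_0)$, hence $4J_0\cdot \varepsilon^* < 4J_0 \cdot \varepsilon^\diamond \leq \textup{sys}^\diamond(\Gamma,X)$; so every $g \in \Sigma$ satisfies $d(x,gx) < \textup{sys}^\diamond(\Gamma,X)$ and is therefore elliptic by definition of $\textup{sys}^\diamond$. But an elliptic isometry of $Y \times \mathbb{R}^k$ fixes some point $(y,v)$, so its projection onto $\textup{Isom}(\mathbb{R}^k)$ fixes $v$ and is therefore elliptic too; the only elliptic translation of $\mathbb{R}^k$ is the identity. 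Hence every generator of the maximal lattice is trivial, contradicting the fact that a crystallographic group of $\mathbb{R}^k$ with $k \geq 1$ has maximal lattice of rank exactly $k$. Therefore $\textup{rk}(\overline{\Gamma}_\sigma(x)) = 0$ for every $x$; since $\sigma \leq \varepsilon_0$ and $\Gamma$ is cocompact, Proposition \ref{prop-Margulis-nilpotent} (together with the semisimplicity remark that follows it) gives that $\overline{\Gamma}_\sigma(x)$ is virtually abelian, so rank $0$ means finite.

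For (ii), I would mimic the maximality argument of Proposition \ref{prop-minimal-close}. Consider the function $f\colon X \to \mathbb{N}$ defined by $f(x) := \#\overline{\Sigma}_\sigma(x)$: by (i) it is finite-valued, by Lemma \ref{lemma-basic-containment} it is upper semicontinuous, and it is obviously $\Gamma$-invariant. Since $\Gamma \backslash X$ is compact by $D_0$-cocompactness, $f$ attains its maximum at some point $x \in X$. The finite group $G := \overline{\Gamma}_\sigma(x)$ then acts on $X$ with bounded orbits, so the standard circumcenter (Bruhat--Tits) argument for proper CAT$(0)$-spaces produces a fixed point $x_0 \in X$ of $G$. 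For every $g \in \overline{\Sigma}_\sigma(x)$ we have $d(x_0,gx_0) = 0 \leq \sigma$, so $\overline{\Sigma}_\sigma(x) \subseteq \overline{\Sigma}_\sigma(x_0)$; maximality of $f$ at $x$ forces the equality $\overline{\Sigma}_\sigma(x) = \overline{\Sigma}_\sigma(x_0)$, hence $\overline{\Gamma}_\sigma(x_0) = G$, which fixes $x_0$ by construction.

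The main obstacle is (i), and specifically the bookkeeping of constants: Theorem \ref{theo-splitting-weak}(iv) introduces the factor $4J_0$ precisely so that short generators of the Euclidean maximal lattice come from displacements controlled by $4J_0 \cdot \varepsilon^*$, and the definition of $\varepsilon^\diamond$ as $\min\{\varepsilon_0, \textup{sys}^\diamond(\Gamma,X)/(4J_0)\}$ is engineered exactly to make these displacements strictly below $\textup{sys}^\diamond$. Once this calibration is in place, the ``elliptic projects to elliptic, elliptic translation is trivial'' step is immediate, and part (ii) becomes a routine application of the Bruhat--Tits fixed-point property combined with the upper semicontinuity of $f$.
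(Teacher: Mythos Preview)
Your proof is correct and, for part (i), essentially identical to the paper's argument: both apply Theorem \ref{theo-splitting-weak} with $\varepsilon=\varepsilon^\diamond$, obtain generators in $\overline{\Sigma}_{4J_0\varepsilon^\ast}(x)$ projecting to nontrivial translations of $\mathbb{R}^k$, and derive a contradiction from $4J_0\varepsilon^\ast<\textup{sys}^\diamond(\Gamma,X)$. You phrase the contradiction via the contrapositive (such elements must be elliptic, hence project to trivial translations), whereas the paper says directly that elements projecting to nontrivial translations are hyperbolic with translation length below $\textup{sys}^\diamond$; these are the same observation.

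For part (ii) your argument is in the same spirit as the paper's (choose $x$ maximizing $\#\overline{\Sigma}_\sigma(\cdot)$, take a fixed point $x_0$ of $G=\overline{\Gamma}_\sigma(x)$, compare), but you take a shorter route. The paper follows the template of Proposition \ref{prop-minimal-close} literally: it projects $x$ onto $\textup{Fix}(G)$ and runs the geodesic argument with Lemma \ref{lemma-basic-containment} and convexity of displacement to propagate the equality $\overline{\Gamma}_\sigma(c(t))=\overline{\Gamma}_\sigma(x)$ all the way to $x_0$. You observe instead that since $G$ fixes $x_0$ pointwise, one immediately has $\overline{\Sigma}_\sigma(x)\subseteq\overline{\Sigma}_\sigma(x_0)$, and maximality of cardinality (both sets being finite by (i)) forces equality. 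This shortcut is available here precisely because $G$ is finite and \emph{fixes} $x_0$; in the setting of Proposition \ref{prop-minimal-close} the group only \emph{preserves} a slice, so the direct inclusion fails and the geodesic argument is genuinely needed. Your simplification is therefore legitimate and specific to this corollary.
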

	
\noindent Observe that, since by (i) the group $\overline{\Gamma}_{\sigma(\Gamma,X)}(x)$ is finite, then it has a fixed point (as it acts on a CAT$(0)$-space, cp. \cite[Chapter II, Corollary 2.8]{BH09}). \linebreak Here we are saying  that for at least one specific point $x_0$ we have that $\overline{\Gamma}_{\sigma(\Gamma,X)}(x_0)$ fixes exactly $x_0$.

	\begin{proof}[Proof of Corollary \ref{cor-0-rank}]
  We first prove  (i), and set, for short, $\sigma= \sigma (\Gamma, X)$.
		 Suppose that there exists $x\in X$ such that $\overline{\Gamma}_{\sigma}(x)$ is not finite: since $\overline{\Gamma}_{\sigma}(x)$ is finitely generated and virtually abelian, then  it contains a hyperbolic isometry, so $\text{rk}(\overline{\Gamma}_{\sigma}(x )) \geq 1$.
 Theorem \ref{theo-splitting-weak} implies that $X$ splits isometrically and $\Gamma$-invariantly as $Y\times \mathbb{R}^k$, with $k\geq 1$, and that there exists  $\varepsilon^\ast \in \left(\sigma,  \varepsilon^\diamond \right)$ such  that $\overline{\Gamma}_{\varepsilon^\ast} (x)$ projects on $\text{Isom}(\mathbb{R}^k)$ as a crystallographic group. 
Moreover, we can find elements of $\overline{\Sigma}_{4J_0 \cdot \varepsilon^\ast}(x)$ that project to translations on $\text{Isom}(\mathbb{R}^k)$. \linebreak
But these elements are hyperbolic isometries of $\Gamma$ with translation length at most $4J_0\cdot \varepsilon^\ast < \textup{sys}^\diamond(\Gamma,X)$, a contradiction.\\
Assertion (ii)  follows from an argument similar to that of Proposition \ref{prop-minimal-close}:  
we consider the same partial order  $x \preceq x'  \Leftrightarrow \overline{\Sigma}_{\sigma}(x) \subseteq \overline{\Sigma}_{\sigma}(x')$, and find a maximal element $x$.
The group $\overline{\Gamma}_{\sigma}(x)$ being  finite,   the closed, convex set $\text{Fix}(\overline{\Gamma}_{\sigma}(x))$ is not empty. If $x\in \text{Fix}(\overline{\Gamma}_{\sigma}(x))$, then $x_0=x$ and there is nothing more to prove. Otherwise we call $x_0$ the projection of $x$ on $ \text{Fix}(\overline{\Gamma}_{\sigma}(x))$, consider again the geodesic $c\colon [0,d(x,x_0)] \to [x,x_0] \subset X$  and  we show as in Proposition \ref{prop-minimal-close} that 
$$ T :=\sup\lbrace t \in [0,d(x,x_0)] \text{ s.t. } \overline{\Gamma}_{\sigma}(c(t)) = \overline{\Gamma}_{\sigma}(x)\rbrace = d(x,x_0)$$
by Lemma \ref{lemma-basic-containment} and the convexity of the displacement function. Moreover, again by Lemma \ref{lemma-basic-containment} and by the maximality of $x$ it follows that  $T=d(x,x_0)$ is a maximum,
hence $\overline{\Gamma}_{\sigma}(x_0) = \overline{\Gamma}_{\sigma}(x)$, and   $x_0$ is the announced fixed point.
 \end{proof}
\vspace{1mm}

 Finally, we record here another corollary of the splitting theorem, which will be used  in studying the Euclidean factor of the limit of a sequence of collapsing actions in Section \ref{sub-characterization}${}$.
 
 \begin{theo}[\cite{CS23}, Renormalization Theorem E]
	\label{theo:renormalization} ${}$\\
	Given $P_0,r_0,D_0$, there exist $s_0 = s_0(P_0,r_0,D_0)>0$ and $\Delta_0 = \Delta_0(P_0,D_0)$ \linebreak such that the following holds.
	Let $\Gamma$ be a discrete and $D_0$-cocompact isometry group of 
	a proper, geodesically complete, $(P_0,r_0)$-packed $\textup{CAT}(0)$-space $X$:  
	then $\Gamma$ admits also a faithful, discrete, $\Delta_0$-cocompact action by isometries on a $\textup{CAT}(0)$-space $X'$   isometric to $X$,  such that 
	$\textup{sys}^\diamond(\Gamma, X') \geq s_0$.\\
	Moreover, the action of $\Gamma$ on $X$ is nonsingular if and only if the action on $X'$ is nonsingular.
\end{theo}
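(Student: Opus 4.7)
The plan follows a dichotomy on the size of $\textup{sys}^\diamond(\Gamma,X)$. Fix $s_0$ equal to $\sigma_0^\ast(P_0,r_0,D_0)$ from \eqref{sigma0ast}, to be possibly reduced by Mahler-type constants below. If $\textup{sys}^\diamond(\Gamma,X)\ge s_0$, take $X'=X$ with the original action and $\Delta_0=D_0$: there is nothing to prove. Otherwise, the Splitting Theorem~\ref{theo-splitting-weak} applies: $X$ splits $\Gamma$-invariantly as $Y\times \mathbb{R}^k$ with $k\ge 1$, every $g\in \Gamma$ decomposes as $(g_Y,g_E)\in \textup{Isom}(Y)\times \textup{Isom}(\mathbb{R}^k)$, and parts (iv) and (vi) of the same theorem provide a rank-$k$ free abelian subgroup $A\subset \Gamma$, commensurated in $\Gamma$, whose projection to $\textup{Isom}(\mathbb{R}^k)$ is a translation lattice $\mathcal{L}\subset \mathbb{R}^k$ generated by the images of elements of the finite set $\overline{\Sigma}_{4J_0\cdot \varepsilon^\ast}(x)$, hence of very short systole and very small covolume.

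The strategy is to rescale the Euclidean factor so that $\mathcal{L}$ becomes of unit order. Let $H:=\overline{\rho_E(\Gamma)}\subset O(k)\ltimes \mathbb{R}^k$; being a closed subgroup of a Lie group, $H$ is itself Lie, and its rotational part $\bar H\subset O(k)$ is compact. Decompose $\mathbb{R}^k$ orthogonally into $\bar H$-isotypic components $V_1\oplus\cdots\oplus V_m$ and look for a block-diagonal $A_0=\lambda_1\textup{id}_{V_1}\oplus\cdots\oplus \lambda_m \textup{id}_{V_m}\in GL(\mathbb{R}^k)$, with $\lambda_i>0$. A direct affine computation gives $A_0(R,v)A_0^{-1}=(R,A_0 v)$ for every $(R,v)\in H$, so conjugation by $A_0$ normalises $\textup{Isom}(\mathbb{R}^k)$: this is the algebraic compatibility. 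Choose the $\lambda_i$'s via Mahler's compactness on the space of rank-$k$ lattices so that $A_0(\mathcal{L})$ has systole $\ge s_0$ and covolume bounded by a constant $C(n_0)$, using that the initial covolume of $\mathcal{L}$ is controlled in terms of $P_0,r_0,D_0$ through the short generators above. Now set $X':=Y\times \mathbb{R}^k$ with the same product metric (so $X'$ is isometric to $X$) and
\[
\rho'(g):=(g_Y,\, A_0\, g_E\, A_0^{-1}),\qquad g\in \Gamma.
\]
This is a homomorphism $\Gamma\to\textup{Isom}(X')$, faithful because $A_0$ is a bijection; nonsingularity transfers from $\rho$ to $\rho'$ since point stabilisers are carried equivariantly by $\textup{id}_Y\times A_0$. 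The codiameter is bounded by the codiameter of $\rho_Y$ on $Y$ (controlled by $D_0$) plus the covolume of $A_0(\mathcal{L})$ (bounded by $C(n_0)$), giving $\Delta_0=\Delta_0(P_0,D_0)$; the free systole is $\ge s_0$ on the rescaled abelian part, while any element with nontrivial $Y$-part has displacement bounded below by the free systole on $Y$, controlled through the finiteness results of Theorem~\ref{theo-finiteness}.

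\textbf{Main obstacle.} The crucial and most delicate step is the construction of the rescaling $A_0$: one must simultaneously ensure algebraic compatibility with the possibly non-discrete image $\rho_E(\Gamma)\subset \textup{Isom}(\mathbb{R}^k)$, and the quantitative bounds on both systole and covolume of $A_0(\mathcal{L})$. The $\bar H$-isotypic decomposition takes care of the compatibility for free, but $\mathcal{L}$ need not be adapted to $V_1\oplus\cdots\oplus V_m$; in the worst case one iterates the rescaling on the blocks where $\mathcal{L}$ is still collapsed, inducting on $k\le n_0=P_0/2$. The quantitative control on the generators of $\mathcal{L}$ provided by Theorem~\ref{theo-splitting-weak}(iv), together with the packing-type estimates on $Y$, is what makes this inductive rescaling effective and yields the uniform $s_0$ and $\Delta_0$.
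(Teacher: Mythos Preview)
The paper does not give a proof of this result: it is quoted verbatim from \cite{CS23} (Theorem~E there), so there is no in-paper argument to compare against. Your overall strategy---split $X=Y\times\mathbb{R}^k$ via Theorem~\ref{theo-splitting-weak} and conjugate the Euclidean action by a block-scalar $A_0$ lying in the commutant of the rotational closure $\bar H\subset O(k)$---is the natural one and is presumably the route taken in \cite{CS23}. The algebraic compatibility you check, $A_0(R,v)A_0^{-1}=(R,A_0v)$, is correct.

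There are, however, two genuine gaps in your sketch. First, the free-systole bound: you claim that elements ``with nontrivial $Y$-part'' have displacement bounded below ``by the free systole on $Y$, controlled through the finiteness results of Theorem~\ref{theo-finiteness}''. But the projection $\rho_Y(\Gamma)\subset\textup{Isom}(Y)$ need not be discrete---this is exactly the phenomenon in Example~\ref{ex-comm-not-normal}---so there is no free systole on $Y$ to invoke, and Theorem~\ref{theo-finiteness} is a finiteness-of-isomorphism-types statement which yields no displacement lower bound. What must actually be shown is that for \emph{every} hyperbolic $g\in\Gamma$ (not only those in the commensurated abelian $A$) one has $\lvert P_{\mathrm{Fix}(R_g)}(A_0 b_g)\rvert\ge s_0$ whenever $\ell_Y(g_Y)$ is small; this requires tying the Euclidean translation data of all of $\Gamma$ back to the lattice $\mathcal L$, using that each $R_g$ commensurates $\mathcal L$.

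Second, the codiameter bound: since $\max_i\lambda_i\to\infty$ as $\textup{sys}^\diamond(\Gamma,X)\to 0$, the bi-Lipschitz map $\textup{id}_Y\times A_0$ does not transport $D_0$-cocompactness with a uniform constant, and ``codiameter of $\rho_Y$ on $Y$ plus covolume of $A_0(\mathcal L)$'' is not a bound for $\textup{diam}(\Gamma\backslash X')$ without further argument (the $Y$- and $\mathbb{R}^k$-displacements of elements of $\Gamma$ are coupled). One has to argue directly, combining the $D_0$-cocompactness of $\rho_Y(\Gamma)$ on $Y$ with the bounded covolume of $A_0(\mathcal L)$ and the fact that $A$ projects to a precompact set on $Y$. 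Your final paragraph correctly identifies the rescaling as the crux, but the two issues above are where the real work remains.
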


	\subsection{Group splitting: a counterexample}
	\label{sec-groupsplitting} ${}$ 
	
\noindent 	The Splitting Theorem \ref{theo-splitting-weak}
	might suggest that, under the assumption that the action is sufficiently collapsed, then also the group $\Gamma$ virtually splits a non-trivial free abelian subgroup. 
	Unfortunately, while this is always the case for non-positively curved Riemannian manifolds by the work of Eberlein \cite{Ebe83} (indeed, in that case it is always possible to split virtually a 
	free abelian subgroup whose rank coincides exactly with the rank of the maximal Euclidean factor of the space), this is no longer true for discrete, cocompact groups of  general, geodesically complete CAT$(0)$-spaces,	  as \cite[Example 1]{CM19}  shows. \linebreak More examples of this phenomenon can be found in \cite{LM21}. \\
	In this section, we present a basic example, constructed in \cite{LM21}, showing that, no matter how the action is collapsed, the virtual splitting cannot be proved under our hypotheses. 
	 


	\begin{ex}
		\label{ex-comm-not-normal}
		We construct a sequence of proper, geodesically complete, CAT$(0)$-spaces $X_j$, all $(P_0,r_0)$-packed for the same constants $P_0,r_0 >0$, and a sequence of discrete, nonsingular groups $\Gamma_j < \textup{Isom}(X_j)$ which are all $D_0$-cocompact for some $D_0>0$ such that:
		\begin{itemize}
			\item[(a)] each $\Gamma_j$ has no non-trivial, virtually normal, abelian subgroups;
			\item[(b)] $\text{sys}^\diamond(\Gamma_j,X_j) \to 0$ as $j\to +\infty$.
		\end{itemize}
		Recall that a subgroup $A$ of a group $\Gamma$ is said to be {\em virtually normal} if there exists a finite index subgroup $\Gamma'$ such that $A \cap \Gamma'$ is normal in $\Gamma'$.\\
		Observe that (b) gives strong restrictions to the spaces $X_j$ and the groups $\Gamma_j$ in virtue of Theorem \ref{theo-splitting-weak}; indeed, the spaces $X_j$ will split a Euclidean factor and the groups $\Gamma_j$ will have a commensurated non-trivial, abelian subgroup.\\
	On the other hand, the condition that $\Gamma$ virtually splits a free abelian factor of positive rank is equivalent to have a virtually normal, non trivial, free abelian subgroup, by \cite[Theorem 2]{CM19}; so, these groups $\Gamma_j$ do not split.
\vspace{1mm}	
	
\noindent		The idea behind the example is very simple. \\
We start as in \cite{LM21} from the lattices ${\mathcal L}_+, {\mathcal L}_-$ of $\mathbb{Z}^2=  \langle a,b \,\,|\,\, [a,b]=1 \rangle$
respectively generated by ${\mathcal B}_+= \{ a^2b^{-1},  ab^2 \}$ and by ${\mathcal B}_-= \{ a^2b,  a^{-1}b^2 \}$, and define the (torsion-free) HNN-extension 
		$$\Gamma = \langle a,b,t \,\,|\,\, [a,b]=1,\,\, ta^2b^{-1}t^{-1} = a^2b,\,\, tab^2t^{-1} = a^{-1}b^2 \rangle$$
that is 
$\Gamma=\mathbb{Z}^2 \ast\varphi$, where $\varphi: {\mathcal L}_+ \rightarrow {\mathcal L}_-$ is given by $\varphi(a^2b^{-1})=a^2b$ and $\varphi(ab^2)=a^{-1}b^2$.
We denote by $T$ its Bass-Serre tree: this is a locally finite tree, since $[\mathbb{Z}^2: {\mathcal L}_\pm] =5$. 
The group $\Gamma$ acts naturally on $T$ and on $\mathbb{R}^2$, with $a,b$ acting on the latter as orthogonal translations of length $1$ and $t$ as a rotation around the origin $O$ of angle $\arccos(\frac{3}{5})$. \\
Let us consider the diagonal action of $\Gamma$ on the (proper, geodesically complete) CAT$(0)$-space $X:=T \times  \mathbb{R}^2$, which is $(P_0,1)$-packed for some $P_0>0$. \\
First notice that this action 
is faithful. In fact, the pointwise stabilizer of the axis of $t$ on $T$ is $\bigcap_{n \in \mathbb Z} t^n\mathbb Z^2 t^{-n}$, which is trivial. Indeed, suppose that $w \in \bigcap_{n \in \mathbb Z} t^n\mathbb Z^2 t^{-n}$, that is $w = t^n z_n t^{-n}$ for some sequence $(z_n)_{n \in {\mathbb N}}$ in $\mathbb{Z}^2$. 
Then $d_{\mathbb{R}^2}(wO,O)=d_{\mathbb{R}^2}(t^nz_nO,O) = d_{\mathbb{R}^2}(z_nO,O)$ for every $n$, as $t$ fixes $O$.
By discreteness of the action of $\mathbb{Z}^2$ on $\mathbb{R}^2$ we can assume that $z_n=z\in \mathbb{Z}^2$ for infinitely many $n$'s; therefore, there exist integers $n \neq m$ such that $w = t^n z t^{-n} = t^m z t^{-m}$, i.e. $z = t^k z t^{-k}$ for some $k \neq 0$. This implies that $zO = t^kzO$, but since $t$ acts as a rotation by an angle which is an irrational multiple of $\pi$, then $t^k$ does not fix any point different from $O$. It follows that  $z$, and in turn $w$, is necessarily trivial.
\\
Moreover, it is straightforward to check that the action of $\Gamma$ on $X$ is discrete, while both the actions of $\Gamma$ on the two factors are not. \\
Finally, the action of $\Gamma$ on $X$ is nonsingular (since $\Gamma$ is torsion-free) and $1$-cocompact, since it is on both factors. \\
It remains to check that $\Gamma$ has no virtually normal, nontrivial, abelian subgroups. Suppose $\Gamma$ virtually normalizes an abelian subgroup $A$ of rank $k \ge 1$. By \cite[Theorem 2.(ii)]{CM19} the space $X = \mathbb{R}^2 \times T$ splits as $Y \times \mathbb{R}^k$ and a finite index subgroup $\Gamma'$ of $\Gamma$ splits as a product $\Gamma_Y \times \mathbb{Z}^k$, where $\Gamma_Y$ acts discretely on $Y$ and $\mathbb{Z}^k$ acts as a lattice on $\mathbb{R}^k$. 
By \cite[Theorem 1.1]{FL08} we deduce that either $Y=T$ (if $k=2$) or $Y=\mathbb{R} \times T$ (if $k=1$). The first case has to be excluded since in that case the projection of $\Gamma$ on $\mathbb{R}^2$ would be discrete, containing the discrete group $\Gamma_Y$ as finite index subgroup. In the second case we would find a finite index subgroup of the projection of $\Gamma$ on $\mathbb{R}^2$ that preserves a line, which is clearly not possible. \\
	We now take $X_j = \frac{1}{j}\cdot \mathbb{R}^2 \times T$ and define $\Gamma_j := \Gamma$ with its natural action on $X_j$ induced by the action of $\Gamma$ on $X$. The sequence of groups $\Gamma_j$ acting on $X_j$ clearly satisfies the conditions (a) and (b).
\end{ex}
	\noindent The behaviour of this sequence will be further studied in Section \ref{sub-convergencecollapsing},  Example \ref{rem-totdiscvsdicrete}. 
\vspace{1mm}
	
\subsection{Group splitting: when the isometry group is Lie}
\label{sec-lie}
${}$\\
The purpose of this section is to prove  that  a uniform lattice  $\Gamma$ of a $\textup{CAT}(0)$-space $X$ with a nontrivial Euclidean factor ${\mathbb R}^k$  virtually splits a  free abelian subgroup of rank $k$,   provided that  Isom$(X)$ is  a Lie group. 
By Proposition 	\ref{cor-homology-Lie}, this holds, in particular,  for  all CAT$(0)$-homology manifolds.

\noindent The group splitting stems almost immediately from \cite{CM19}. However,  for our convergence results,
we need a more precise result, that is to bound  the index in $\Gamma$ of the subgroup  $\check \Gamma$  which splits ${\mathbb Z}^k$.

 \begin{theo}
	\label{theo-group-splitting-Lie}
Given $P_0,r_0,D_0$, there exist $I_0$ and  
  $\sigma_0 ^\ast>0$ (the same as in Proposition \ref{prop-minimal-close}),
only depending on $P_0,r_0,D_0$,
such that the following holds.\\
	Let $X$ be a proper, geodesically complete, $(P_0,r_0)$-packed, $\textup{CAT}(0)$-space whose isometry group is a Lie group, and let $X = Y \times \mathbb{R}^n$ be the splitting of the maximal Euclidean factor of $X$. Then for every discrete, $D_0$-cocompact group  $\Gamma < \textup{Isom}(X)$ we have: 
\begin{itemize}
		\item[(i)] $\Gamma$   has a normal, finite index subgroup $\check \Gamma$  which splits as $\check \Gamma = \check \Gamma_Y \times \mathbb{Z}^n$;  
		\item[(ii)]  the projections of $\Gamma$ on $\textup{Isom}(Y)$ and $\textup{Isom}(\mathbb{R}^n)$ are discrete, as well as the factors  $\check \Gamma_Y$ and $\mathbb{Z}^n$ of    $\check \Gamma$;
		\item[(iii)] $\textup{sys}^\diamond(\Gamma_Y,Y) \geq \sigma_0^\ast$.
	\end{itemize}
Moreover if $\Gamma$ is nonsingular then 	 $\check \Gamma$ can be chosen of index  $[\Gamma: \check \Gamma] \leq I_0$  and $\check{\Gamma}_Y$ is nonsingular.
As a result,  if $\Gamma$ is nonsingular, $M = \Gamma \backslash X $ is the quotient by a  finite group of isometries $\Lambda \cong \Gamma / \check \Gamma$   with  $| \Lambda|  \leq  I_0$ of a  space $\check{M} $  which   splits isometrically as   $ \check{N} \times \mathbb{T}^{n}$, where
  $\check N=\check \Gamma_Y \backslash Y$ and $\mathbb{T}^{n}$ is a flat $n$-dimensional torus.
\end{theo}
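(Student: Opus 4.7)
The plan is to follow the authors' own roadmap given right after the statement: the existence of a splitting is a natural consequence of Caprace--Monod's work on Lie isometry groups of $\textup{CAT}(0)$-spaces, while the quantitative control on the index of $\check \Gamma$ and on the systole of $\check \Gamma_Y$ uses the Splitting Theorem \ref{theo-splitting-weak} and the Finiteness Theorem \ref{theo-finiteness}.

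First I would unpack the structure. By Proposition \ref{prop-CM-decomposition}, $\textup{Isom}(X) \cong \mathcal{S} \times \mathcal{E}_n \times \mathcal{D}$ with $\mathcal{D}$ totally disconnected; the Lie hypothesis forces $\mathcal{D}$ to be discrete, and the associated splitting of $X$ is $M \times \mathbb{R}^n \times N$, so $Y = M \times N$. Maximality of the Euclidean factor (a canonical de Rham invariant) implies that every isometry preserves the splitting $X = Y \times \mathbb{R}^n$, hence $\Gamma$ embeds in $\textup{Isom}(Y) \times \mathcal{E}_n$ and each $\gamma$ writes as $(\gamma_Y, \gamma_n)$. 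In this Lie setting, results of \cite{CM09b} and \cite{CM19} provide the crucial input that the projection $p_n(\Gamma) \subset \mathcal{E}_n$ is discrete and cocompact, i.e.\ crystallographic, and that $p_Y(\Gamma)$ is discrete. From Bieberbach's Theorem \ref{prop-Bieberbach}, the maximal lattice $L \cong \mathbb{Z}^n$ of $p_n(\Gamma)$ has index at most $J(n) \leq J_0$, so $\Gamma_1 := p_n^{-1}(L) \cap \Gamma$ is a normal finite-index subgroup of $\Gamma$ contained in $\textup{Isom}(Y) \times \textup{Transl}(\mathbb{R}^n)$, exhibiting a virtually normal $\mathbb{Z}^n$ in $\Gamma$. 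The virtual splitting criterion of \cite[Theorem 2]{CM19} then yields a normal, finite-index subgroup $\check \Gamma$ of $\Gamma$ splitting as $\check \Gamma = \check \Gamma_Y \times \mathbb{Z}^n$, with $\check \Gamma_Y$ acting discretely on $Y$ and $\mathbb{Z}^n$ as a translation lattice on $\mathbb{R}^n$: this proves (i) and (ii).

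For (iii) I would argue by contradiction via Theorem \ref{theo-splitting-weak}. Since $\check \Gamma_Y$ is discrete and cocompact on $Y$ (with codiameter inflated by at most a controlled factor), if $\textup{sys}^\diamond(\check \Gamma_Y, Y) < \sigma_{P_0,r_0,D_0}(\varepsilon_0) = \sigma_0^\ast$ then Theorem \ref{theo-splitting-weak} would force $Y$ to split isometrically and $\check \Gamma_Y$-invariantly as $Y' \times \mathbb{R}^{k'}$ with $k' \geq 1$, giving $X = Y' \times \mathbb{R}^{n+k'}$ and contradicting the maximality of the Euclidean factor of $X$. The index bound $[\Gamma:\check \Gamma] \leq I_0$ in the nonsingular case I would extract from Theorem \ref{theo-finiteness}: among nonsingular $D_0$-cocompact lattices of $(P_0,r_0)$-packed geodesically complete $\textup{CAT}(0)$-spaces there are only finitely many isomorphism types, and for each type the minimal index of a subgroup admitting the prescribed splitting is a fixed integer, so $I_0$ may be taken as their maximum. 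Finally, nonsingularity of $\check \Gamma_Y$ is inherited via the product structure: a point $x = (y,v)$ with trivial $\Gamma$-stabilizer has trivial $\check \Gamma$-stabilizer, and since $\check \Gamma$ acts diagonally one has $\textup{Stab}_{\check \Gamma}(x) = \textup{Stab}_{\check \Gamma_Y}(y) \times \textup{Stab}_{\mathbb{Z}^n}(v) = \{1\}$, forcing each factor to be trivial; the orbispace conclusion $\check M \cong \check N \times \mathbb{T}^n$ is then immediate.

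The main obstacle I expect is establishing the discreteness of $p_n(\Gamma)$ in $\mathcal{E}_n$, which is precisely what fails outside the Lie world, as Example \ref{ex-comm-not-normal} illustrates (there $\textup{Isom}(T)$ is not Lie and the closure of the projection onto $\mathcal{E}_2$ contains irrational rotations, preventing any virtual splitting). Showing that this accumulation cannot occur when $\textup{Isom}(X)$ is Lie is where the Caprace--Monod structure theory is essential, and ensuring that the Finiteness Theorem yields a genuinely uniform $I_0$ (and not just a finite constant depending on some a priori isomorphism data) is the subtler technical point of the quantitative statement.
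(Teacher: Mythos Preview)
Your overall architecture matches the paper's, and your handling of~(iii) and of the index bound via Theorem~\ref{theo-finiteness} is exactly how the paper proceeds (for~(iii) note only that the paper applies Theorem~\ref{theo-splitting-weak} to the full projection $\Gamma_Y$, which is already $D_0$-cocompact on $Y$, so no codiameter inflation is needed).

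The real discrepancy is at the point you yourself flag as the main obstacle. You propose to invoke Caprace--Monod to conclude \emph{directly} that $p_n(\Gamma)\subset\mathcal E_n$ and $p_Y(\Gamma)$ are discrete, and then proceed via Bieberbach. The paper does \emph{not} do this, and no such direct statement is available in \cite{CM09b} or \cite{CM19}; in fact discreteness of the projections is derived only \emph{a posteriori}, as part of~(ii), once the splitting $\check\Gamma=\check\Gamma_Y\times\mathbb Z^n$ is in hand. The paper's route to the splitting is instead: from \cite[Theorem~2(i)]{CM19} one gets a \emph{commensurated} free abelian $A\cong\mathbb Z^n$ in $\Gamma$ whose projection $A_{\mathcal E_n}$ is crystallographic. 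The Lie hypothesis enters by making $\mathcal D$ discrete; then one shows separately that the projections $A_{\mathcal S}$ and $A_{\mathcal D}$ are \emph{finite} --- the first by Borel density (as in the proof of \cite[Theorem~2]{CM19}), the second by applying \cite[Proposition~4]{CM19} to the commensurated abelian $A_{\mathcal D}$ inside the minimal action of $\Gamma_{\mathcal D}$ on the Euclidean-free factor $N$, which forces $A_{\mathcal D}$ to fix a point and hence, by discreteness of $\mathcal D$, to be finite. Passing to suitable powers of generators of $A$ one obtains elements of $\Gamma$ of the form $(\mathrm{id},\mathrm{id},\tau)$ with $\tau$ a translation, so
\[
B \;=\; \{(\mathrm{id},\mathrm{id},\tau)\in\Gamma : \tau\in\textup{Transl}(\mathbb R^n)\}
\]
is a \emph{normal} free abelian subgroup of rank $n$. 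This is the hypothesis of \cite[Theorem~2(ii)]{CM19}, which then produces the normal finite-index $\check\Gamma=\check\Gamma_Y\times\mathbb Z^n$. Your intermediate group $\Gamma_1=p_n^{-1}(L)\cap\Gamma$ does not by itself supply a normal $\mathbb Z^n$ inside $\Gamma$, so the appeal to \cite[Theorem~2]{CM19} in your sketch is not yet justified; the Borel density / Proposition~4 argument is precisely what is missing.
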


\noindent We also say that $\Gamma$ and  $M$ {\em virtually split} as $\check \Gamma_Y \times \mathbb{Z}^{n}$ and $\check N \times \mathbb{T}^{n}$, respectively.

 \begin{proof} By Proposition \ref{prop-CM-decomposition}, the space $X$ splits isometrically as $M\times  N \times \mathbb{R}^n $ and $\text{Isom}(X) \cong {\mathcal S} \times {\mathcal D} \times {\mathcal E}_n $ where ${\mathcal S} \cong \text{Isom}(M)$ is a semi-simple Lie group with trivial center and no compact factors, ${\mathcal D} \cong \text{Isom}(N)$ is totally disconnected and ${\mathcal E}_n\cong \text{Isom}(\mathbb{R}^n)$. Since $\text{Isom}(X)$ is assumed to be a Lie group then ${\mathcal D}$ must be discrete. Let  $\Gamma_{{\mathcal S}},  \Gamma_{{\mathcal D}}$ and $\Gamma_{{\mathcal E}_n}$ be the projections of $\Gamma $ on ${\mathcal S}, {\mathcal D}$ and ${\mathcal E}_n$    respectively. By \cite[Theorem 2(i)]{CM19} there exists a free abelian subgroup $A$ of $\Gamma$ of rank $n$ which is commensurated in $\Gamma$, and its projection $A_{{\mathcal E}_n}$ on ${\mathcal E}_n$ is a crystallographic group.
	Arguing as in the proof of that theorem, we also deduce that the image $A_{{\mathcal S}}$ of $A$ on ${\mathcal S}$ is finite by Borel density. \linebreak On the other hand, the projection $A_{\mathcal D}$ of $A$ on ${\mathcal D}$ is an abelian, commensurated subgroup of $\Gamma_{\mathcal D}$. 
Since $\Gamma_{\mathcal D}$  acts  cocompactly, hence minimally, on the factor $N$, and every irreducible factor of $N$ is not Euclidean, Proposition 4 of \cite{CM19} implies that $A_{\mathcal D}$ either fixes a point or acts minimally on ${\mathcal D}$; but an abelian group cannot act minimally on a proper, non-trivial, CAT$(0)$-space without Euclidean factors, therefore $A_{\mathcal D}$ fixes a point in $N$. This implies that  $A_{\mathcal D}$ is precompact, therefore finite, since ${\mathcal D}$ is discrete.\\
	Let now $\lbrace a_1,\ldots,a_n\rbrace$ be a generating set of $A$. For every $i=1,\ldots,n$ we write $a_i = (s_i,  d_i, \tau_i) \in {\mathcal S} \times {\mathcal D} \times {\mathcal E}_n$.   
	The discussion above implies that there exists $K\in \mathbb{N}^*$ such that $a_i^K = (\text{id},  \text{id}, \tau_i^K)$ for all $i$, where  $\tau_i$ is a  translation of $\mathbb{R}^n$. Therefore the group
\vspace{-3mm}
	
	$$B=\lbrace (\text{id}, \text{id}, \tau) \in \Gamma \text{ s.t. } \tau \in \text{Transl}(\mathbb{R}^n) \rbrace$$
	is a normal, free abelian subgroup of rank $n$ of $\Gamma$. 
	It then follows by \cite[Theorem 2.(ii)]{CM19} that  $\Gamma$ has a normal, finite index subgroup $\check \Gamma$ which splits as $\check \Gamma = \check \Gamma_Y \times \mathbb{Z}^n$, where  $\check \Gamma_Y $ is a discrete group of  $Y := M \times N$, and $\mathbb{Z}^n$ acts as a lattice on $\mathbb{R}^n$. It is then  immediate   that also the projections  $\Gamma_Y$ and $\Gamma_{\mathbb{R}^n}$ of $\Gamma$ act discretely on the respective factors. 
	This proves (i) and (ii).\\
 Moreover, $Y$ is again a proper, geodesically complete, $(P_0,r_0)$-packed, CAT$(0)$-space and $\Gamma_Y$ acts $D_0$-cocompactly on Y.  Since $Y$ has no Euclidean factors,   Theorem \ref{theo-splitting-weak} implies that $\textup{sys}^\diamond(\Gamma_Y,Y) \geq \sigma_0^\ast =\sigma_{P_0,r_0,D_0}(\varepsilon_0)$,	 where $\varepsilon_0$ is the Margulis constant, which proves (iii). \\
 Finally, by Theorem \ref{theo-finiteness} the possible isomorphism types of nonsingular groups $\Gamma$ satisfying the assumptions of Theorem \ref{theo-group-splitting-Lie} are finite, so the index of $\check \Gamma$ in $\Gamma$ can be supposed uniformly bounded by a constant $I_0$ only depending on $P_0, r_0$ and $D_0$.  The last assertion is   consequence of (i) and of this bound. The fact that $\check{\Gamma}_Y$ is nonsingular if $\Gamma$ is nonsingular is obvious.  
\end{proof}


We remark that we do not know examples of  proper, geodesically complete, $\textup{CAT}(0)$-space $X$ whose isometry group is Lie, and  possessing a singular, discrete group $\Gamma < \textup{Isom}(X)$.

\section{Convergence and collapsing}
\label{sec-convergence}

We now move to the problem of convergence of  CAT$(0)$-group actions.\linebreak
Namely, the basic objects we are interested are  {\em isometry groups  of pointed spaces} (which we will restrict to uniform lattices of CAT$(0)$-spaces, from Section \ref{sub-standard} on),  that is    triples  
$$(\Gamma, X,x)$$ 
where $X$ is a proper metric space, $x \in X$ is a basepoint and $\Gamma$ is a {\em closed} (but not necessarily discrete, nor cocompact) subgroup of   $\text{Isom}(X)$.  \\
An {\em equivariant isometry} between isometric  actions of pointed spaces  $(\Gamma,X,x)$ and $(\Lambda, Y,y)$ is an isometry $F\colon X \to Y$ such that
\begin{itemize}[leftmargin=5mm]
	\item[--] $F(x)=y$;
	\item[--] $F_*\colon \textup{Isom}(X) \to \textup{Isom}(Y)$ defined by $F_*(g) = F\circ g \circ F^{-1}$ yields an isomorphism between $\Gamma$ and $\Lambda$.
\end{itemize}

\noindent We recall the definition of equivariant Gromov-Hausdorff convergence of isometric actions, as introduced by Fukaya  in  \cite[Ch.1]{Fuk86} and \cite[\S3]{Fuk-non-negative}.

	\begin{defin}
	\label{defin:equivariant_GH_approximation}
		Let $(\Gamma,X,x), (\Lambda, Y,y)$ be isometry groups of pointed spaces.\\
Given  $\varepsilon > 0$, an {\em equivariant $\varepsilon$-approximation} from $(\Gamma, X,x)$ to $(\Lambda,Y,y)$ is a triple $(f,\phi,\psi)$ where:
		\begin{itemize}[leftmargin=5mm]
			\item $f \colon B(x, \frac{1}{\varepsilon}) \to B(y,\frac{1}{\varepsilon})$ is a map such that $f(x)=y$ and satisfying
			\begin{itemize}[leftmargin=5mm]
				\item[-] $\vert d(f(x_1), f(x_2)) - d(x_1, x_2)\vert <\varepsilon$   $ \forall x_1,x_2\in B(x,\frac{1}{\varepsilon})$;
				\item[-]  $\forall y_1\in B(y, \frac{1}{\varepsilon})$   $\exists x_1\in B(x,\frac{1}{\varepsilon})$ such that $d(f(x_1),y_1) < \varepsilon$;
			\end{itemize} \vspace{1mm}
			\item $\phi\colon \Sigma_{\frac{1}{\varepsilon}}(\Gamma, x) \to \Sigma_{\frac{1}{\varepsilon}}(\Lambda, y)$ is a map satisfying $d(f(gx_1), \phi(g) f(x_1)) <\varepsilon$ \\  $ \forall g\in \Sigma_{\frac{1}{\varepsilon}}(\Gamma, x)$ and   $ \forall x_1\in B(x, \frac{1}{\varepsilon})$ such that  $gx_1 \in B(x, \frac{1}{\varepsilon})$;
			\item $\psi\colon \Sigma_{\frac{1}{\varepsilon}}(\Lambda, y) \to \Sigma_{\frac{1}{\varepsilon}}(\Gamma, x)$ is a map satisfying $d(f(\psi(g)x_1), g f(x_1)) <\varepsilon$ \\  $\forall g\in \Sigma_{\frac{1}{\varepsilon}}(\Lambda, y)$ and   $\forall x_1\in B(x, \frac{1}{\varepsilon})$ such that $\psi(g)x_1 \in B(x, \frac{1}{\varepsilon})$.
		\end{itemize}
	\end{defin}

	\begin{defin}
	\label{defin:equivariant_GH_distance}
		The {\em equivariant,  pointed Gromov-Hausdorff distance} between two isometry groups of pointed spaces $(\Gamma, X,x)$ and $( \Lambda, Y,y)$
		$$d_{\textup{eq-pGH}}  ( (\Gamma, X,x), ( \Lambda, Y,y)  )$$ 
		is the infimum of $\varepsilon $ for which there exists an equivariant $\varepsilon$-approximation from $(\Gamma,X,x)$ to  $(\Lambda,Y,y)$ and an equivariant $\varepsilon$-approximation from $(\Lambda,Y,y)$ to $(\Gamma,X,x)$. 
		\footnote{The definition is slightly different from the one in \cite{Fuk86} and it might not define a true distance. However the two definitions are comparable and in particular they define the same convergence.} 
		A sequence of isometry groups of pointed spaces $(\Gamma_n,X_n,x_n)$ {\em converges in the equivariant, pointed Gromov-Hausdorff  sense} to  $(\Gamma,X,x)$ if for every $\varepsilon > 0$ there exists $n_\varepsilon \geq 0$ such that if $n\geq n_\varepsilon$ then 
		$$d_{\textup{eq-pGH}}( (\Gamma_n,X_n,x_n), (\Gamma, X,x) ) < \varepsilon.$$
		In this case we write $(\Gamma_n,X_n,x_n ) \underset{\text{eq-pGH}}{\longrightarrow} (\Gamma, X,x)$.
	\end{defin}

\noindent For trivial groups  $\Gamma,  \Lambda, \Gamma_n,  \Lambda_n $  we recover the usual definition of pointed, Gromov-Hausdorff distance  and convergence (denoted $d_{\textup{pGH}}$ and  $\underset{\text{pGH}}{\longrightarrow}$); \linebreak 
moreover, for compact spaces with uniformily bounded diameter this reduces to the usual  Gromov-Hausdorff distance/convergence  $d_{\textup{GH}}$  (in the following, sometimes abbrev. {\em \textup{GH}-distance}).
\vspace{2mm}


In the sequel we will follow an equivalent approach which uses ultralimits, which is more adapted to our purposes.  
We will present it in the next subsection, and then recall the relations with  the usual notion of Gromov-Hausdorff convergence, referring to \cite{Jan17} and \cite{Cav21ter}; see also  \cite{DK18}, \cite{CavS20} for more details on ultralimits. \\ On the other hand, we will state our results in terms of Gromov-Hausdorff convergence, which is more commonly used in literature.


\subsection{Equivariant Gromov-Hausdorff convergence and ultralimits}
\label{sub-GH} ${}$\\  
A {\em non-principal ultrafilter} $\omega$ is a finitely additive measure on $\mathbb{N}$ such that $\omega(A) \in \lbrace 0,1 \rbrace$ for every $A\subseteq \mathbb{N}$ and $\omega(A)=0$ for every finite subset of $\mathbb{N}$. \linebreak We will write  {\em $\omega$-a.s.}  or {\em for $\omega$-a.e.$(j)$} in the usual measure theoretic sense.  
Given a bounded sequence $(a_j)$ of real numbers and a non-principal ultrafilter $\omega$ there exists a unique $a\in \mathbb{R}$ such that  the set $\lbrace j \in \mathbb{N} \text{ s.t. } \vert a_j - a \vert < \varepsilon\rbrace$ has $\omega$-measure $1$ for every $\varepsilon > 0$ (cp. \cite[Lemma 10.25]{DK18}).  The real number $a$ is then called {\em the ultralimit of the sequence $a_j$} and  is denoted by $\omega$-$\lim a_j$.
\vspace{1mm}

A     non-principal ultrafilter  $\omega$ being fixed,  for any  sequence of  pointed metric spaces  $(X_j, x_j)$ one can define   the {\em ultralimit pointed metric space} 
$$(X_\omega, x_\omega) = \omega\text{-}\lim (X_j, x_j)$$ 
-- first, one  says that a sequence    $(y_j)$, where $y_j\in X_j$ for every $j$, is {\em admissible} if there exists $M$ such that $d(x_j,y_j)\leq M$ for $\omega$-a.e.$(j)$; \\ 
-- then, one 
defines $(X_\omega, x_\omega)$ as  set of admissible sequences $(y_j)$ modulo the relation $(y_j)\sim (y_j')$ if and only if $\omega$-$\lim d(y_j,y_j') = 0$. \\
The point of $X_\omega$ defined by the class of the sequence $(y_j)$ is denoted by  $y_\omega = \omega$-$\lim y_j$. 
Finally, the formula 
$$d(\omega\text{-}\lim y_j, \omega\text{-}\lim y_j') := \omega\text{-}\lim d(y_j,y_j')$$
 defines a metric on $X_\omega$ which is called the {\em ultralimit distance} on $X_\omega$.
\vspace{1mm}

\noindent   Using a non-principal ultrafilter  $\omega$, one can also talk of limits of  isometries  and   of  isometry groups of pointed metric spaces. A sequence of isometries  $g_j $ of pointed metric spaces $(X_j, x_j)$ is {\em admissible} if there exists $M\geq 0$ such that $d(g_j x_j, x_j) \leq M$ $\omega$-a.s.. Any such sequence defines a limit isometry \linebreak 
  $g_\omega = \omega$-$\lim g_j$ of $X_\omega$  by the formula (see \cite[Lemma 10.48]{DK18}):  
$$g_\omega y_\omega := \omega\text{-}\lim g_jy_j.$$ 
Finally, given  sequence of isometry groups  of pointed spaces $(\Gamma_j, X_j,x_j)$ we can define the ultralimit group $\Gamma_\omega = \omega\text{-}\lim \Gamma_j$
as $$\Gamma_\omega = \lbrace \omega\text{-}\lim g_j \text{ s.t. } g_j \in \Gamma_j \text{ for } \omega\text{-a.e.}(j)\rbrace$$

\noindent In particular the elements of $\Gamma_\omega$ are ultralimits of admissible sequences. 
\vspace{-1mm}

\begin{lemma}[\cite{Cav21ter}, Lemma 3.7]
	\label{composition}
	The composition of admissible sequences of isometries is an admissible sequence of isometries and the limit of the composition is the composition of the limits.
\end{lemma}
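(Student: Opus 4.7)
My plan is to prove both assertions by unwinding the definitions and using only the triangle inequality together with the fact that the $g_j$ and $h_j$ are isometries.

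For admissibility of the composition, given two admissible sequences $(g_j)$ and $(h_j)$, pick constants $M_g, M_h \geq 0$ with $d(g_j x_j, x_j) \leq M_g$ and $d(h_j x_j, x_j) \leq M_h$ $\omega$-a.s. I would estimate
\[
d(g_j h_j x_j, x_j) \leq d(g_j h_j x_j, g_j x_j) + d(g_j x_j, x_j) = d(h_j x_j, x_j) + d(g_j x_j, x_j) \leq M_h + M_g,
\]
using that $g_j$ is an isometry in the second step. Hence $(g_j \circ h_j)$ is admissible with bound $M_g + M_h$.

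For the second assertion, I would fix an arbitrary point $y_\omega = \omega\text{-}\lim y_j \in X_\omega$ and show $\omega\text{-}\lim(g_j \circ h_j)(y_\omega) = g_\omega \circ h_\omega(y_\omega)$. By the definition of an ultralimit isometry, the left-hand side is $\omega\text{-}\lim (g_j h_j y_j)$. For the right-hand side, I would first apply $h_\omega$, giving $h_\omega y_\omega = \omega\text{-}\lim h_j y_j$. Before applying $g_\omega$ to this element, I must verify that the sequence $(h_j y_j)$ is admissible at $x_j$, which follows from
\[
d(h_j y_j, x_j) \leq d(h_j y_j, h_j x_j) + d(h_j x_j, x_j) = d(y_j, x_j) + d(h_j x_j, x_j) \leq M_y + M_h \quad \omega\text{-a.s.},
\]
where $M_y$ is a bound for the admissible sequence $(y_j)$ representing $y_\omega$. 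Once this is checked, the definition of $g_\omega$ gives $g_\omega(\omega\text{-}\lim h_j y_j) = \omega\text{-}\lim g_j(h_j y_j)$, matching the left-hand side.

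There is no serious obstacle here; the only point requiring care is the intermediate admissibility check for $(h_j y_j)$, without which the formula $g_\omega(h_\omega y_\omega) = \omega\text{-}\lim g_j h_j y_j$ would not be well-defined in the first place. Both steps ultimately amount to one application of the triangle inequality combined with the isometry property, so the proof should occupy only a few lines once the admissibility bookkeeping is laid out cleanly.
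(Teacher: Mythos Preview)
Your proof is correct. The paper does not actually supply its own proof of this lemma: it merely quotes the result from \cite{Cav21ter} and immediately uses it, so there is nothing to compare against here beyond noting that your argument is exactly the standard elementary verification one would expect (and presumably the one given in the cited reference).
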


\vspace{-2mm}
\noindent Therefore,  one has a well-defined composition law on $\Gamma_\omega$: if $ g_\omega   = \omega$-$\lim g_j$ and  $ h_\omega = \omega$-$\lim h_j$ we set $g_\omega \, \circ\, h_\omega := \omega\text{-}\lim(g_j \circ h_j).$
With this operation $\Gamma_\omega$ becomes a true group of isometries of $X_\omega$.\\
Notice that $\Gamma_\omega$ depends, in general,  on the chosen basepoints $x_j \in X_j$ (since this choice selects the admissible isometries), and that if $X_\omega$ is proper then $\Gamma_\omega$ is always a {\em closed} subgroup of isometries of $X_\omega$ \cite[Proposition 3.8]{Cav21ter}.

\vspace{1mm}
\noindent In conclusion,  a non-principal ultrafilter  $\omega$ being given, for any sequence of isometry groups of pointed  spaces $(\Gamma_j, X_j,x_j)$ there exists an {\em ultralimit isometry group of a pointed space}
\vspace{-3mm}

 $$(\Gamma_\omega, X_\omega, x_\omega) =  \omega \text{-} \lim (\Gamma_j, X_j,x_j) .$$ 

\vspace{2mm}
We now turn to the equivalence between the ultralimit approach and the best known notion of Gromov-Hausdorff convergence. 
 The relation between the  Gromov-Hausdorff convergence (simple, pointed,  and equivariant) and the convergence via ultralimits theory is resumed by the following result:

\begin{prop}[\textup{\cite[Proposition 3.13 \& Corollary 3.14]{Cav21ter}}] 
	\label{prop-GH-ultralimit} ${}$\\
Let   $(\Gamma_j, X_j,x_j)$ be a sequence of isometry groups of pointed spaces:
 \begin{itemize}[leftmargin=6mm] 
\item[(i)]   if $(\Gamma_j, X_j,x_j)  \hspace{-1mm}\underset{\textup{eq-pGH}}{\longrightarrow}  \hspace{-1mm}(\Gamma_\infty,X_\infty,x_\infty)$, then $(\Gamma_\omega, X_\omega, x_\omega) \cong (\Gamma_\infty,X_\infty,x_\infty)$  for every non-principal ultrafilter $\omega$;
\item[(ii)] reciprocally, if $\omega$  is a non-principal ultrafilter and the ultralimit  $(X_\omega, x_\omega)$ is proper  then    $(\Gamma_{j_k},X_{j_k},x_{j_k}) \hspace{-2mm} \underset{\textup{eq-pGH}}{\longrightarrow}  \hspace{-2mm} (\Gamma_\omega, X_\omega, x_\omega)$ for some subsequence $\lbrace{j_k}\rbrace$. \linebreak
Moreover,  if for every non-principal ultrafilter $\omega$ the ultralimit  $(\Gamma_\omega, X_\omega, x_\omega)$ is equivariantly isometric to the same isometry group  of pointed space   $(\Gamma, X,x)$, with $X$ proper,  then $(\Gamma_j, X_j,x_j) \hspace{-1mm} \underset{\textup{eq-pGH}}{\longrightarrow}  \hspace{-1mm}(\Gamma, X,x)$.
	\end{itemize}
{\em  When restricted to the case  $\Gamma_j =(1)$  for all $j$ (resp. when the spaces $X_j$ are compact),  this statement also explains  the standard relations between pointed Gromov-Hausdorff convergence (resp. simple Gromov-Hausdorff convergence) and ultralimit convergence, as studied for instance in \cite{Jan17}.	\\
In particular, this shows that if $(X_j, x_j)  \hspace{-1mm} \underset{\textup{pGH}}{\longrightarrow}  \hspace{-1mm} (X_\infty,x_\infty)$, then {\em any} family of  isometry groups $\Gamma_j < \textup{Isom}(X_j)$ (sub-)converges to some limit isometry group $\Gamma_\infty$ of $X_\infty$  with respect to the pointed, equivariant GH-convergence (since, for any choice of $\omega$,  the ultralimit $X_\omega=X_\infty$ is proper by (i), and then  there exists $(j_k)_{k \in \mathbb{N}}$ such that  $(\Gamma_{j_k}, X_{j_k},x_{j_k}) \hspace{-1mm} \underset{\textup{eq-pGH}}{\longrightarrow}  \hspace{-1mm} (\Gamma_\omega, X_\omega,x_\omega)$ by (ii)).}
\end{prop}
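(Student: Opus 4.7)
The plan is to treat parts (i) and (ii) separately, with part (ii) leveraging the construction of part (i) plus a precompactness/contradiction argument.

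For part (i), fix a non-principal ultrafilter $\omega$. For each integer $n \geq 1$ pick $j_n$ so large that, for every $j \geq j_n$, there exist equivariant $(1/n)$-approximations $(f_n^j, \phi_n^j, \psi_n^j)$ from $(\Gamma_j, X_j, x_j)$ to $(\Gamma_\infty, X_\infty, x_\infty)$ and in the reverse direction; an index function $n(j) \to \infty$ with $j \geq j_{n(j)}$ can then be fixed. I would first build a map $F \colon (X_\omega, x_\omega) \to (X_\infty, x_\infty)$ by the formula $F(\omega\text{-}\lim y_j) := \omega\text{-}\lim f_{n(j)}^j(y_j)$: for an admissible sequence with $d(y_j, x_j) \leq M$, the points $f_{n(j)}^j(y_j)$ eventually lie in $\overline{B}(x_\infty, M+1)$, so by properness of $X_\infty$ the ultralimit exists. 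The near-isometry condition $|d(f_n^j(y_j), f_n^j(z_j)) - d(y_j, z_j)| < 1/n$ shows $F$ is distance-preserving, and the reverse approximation gives surjectivity. Analogously I would define $\Phi \colon \Gamma_\omega \to \Gamma_\infty$ by $\Phi(\omega\text{-}\lim g_j) := \omega\text{-}\lim \phi_{n(j)}^j(g_j)$, interpreted inside $\textup{Isom}(X_\infty)$ via its action on $F$-images. The equivariance $F \circ g_\omega = \Phi(g_\omega) \circ F$ follows by passing to the limit in the approximate equivariance condition.

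The central delicate point is to verify that $\Phi$ is a well-defined group isomorphism. Well-definedness uses that two admissible sequences representing the same element $g_\omega$ are $\omega$-indistinguishable when applied to any admissible sequence of points, combined with the equivariance error estimate; hence $\omega\text{-}\lim \phi_n^j(g_j)$ depends only on $g_\omega$. The homomorphism property $\Phi(g_\omega h_\omega) = \Phi(g_\omega)\Phi(h_\omega)$ follows by inserting the approximate equivariance twice and using Lemma \ref{composition}, which reconciles ultralimits of products with products of ultralimits; injectivity uses that $F$ is surjective and that isometries of $X_\infty$ are determined by their action on $X_\infty$; surjectivity follows from the reverse-direction $\psi$-maps. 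This is, in my view, the main obstacle: the maps $\phi_n^j, \psi_n^j$ are only defined on the bounded subsets $\Sigma_{n}(\Gamma_j, x_j)$, carry no a priori multiplicative structure, and the equivariance errors must be tracked across compositions and across the truncations in $n$ and in $j$.

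For part (ii), first claim, fix $\omega$ so that $(X_\omega, x_\omega)$ is proper. For every $n$, cover $\overline{B}(x_\omega, n)$ by a finite $(1/n)$-net $\{p_i^n\}$, each $p_i^n = \omega\text{-}\lim p_i^n(j)$; on a set of indices of $\omega$-measure $1$ the pairwise distances $d(p_i^n(j), p_{i'}^n(j))$ are within $1/n$ of those in $X_\omega$, and a Voronoi-type map built from these nets yields a $(2/n)$-approximation on the level of spaces. On the group side, each $g \in \Sigma_{n}(\Gamma_\omega, x_\omega)$ is represented by $(g_j)$ with $g_j \in \Sigma_{n+1/n}(\Gamma_j, x_j)$ on an $\omega$-large set; using finite nets in $\Sigma_n(\Gamma_\omega, x_\omega)$ (itself relatively compact, since $\Gamma_\omega$ is closed in the isometry group of a proper space), one obtains on an $\omega$-large set finite families of candidate maps $\phi_n^j, \psi_n^j$ implementing the equivariance up to error $1/n$. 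A standard diagonal extraction across $n$ produces a subsequence $(j_k)$ along which an equivariant $(1/k)$-approximation is realized, giving eq-pGH convergence to the ultralimit. The second claim is then immediate by contradiction: if $(\Gamma_j, X_j, x_j)$ did not eq-pGH-converge to $(\Gamma, X, x)$, a subsequence would stay $\varepsilon_0$-away from this limit; applying the first claim to this subsequence along any non-principal ultrafilter $\omega$ (the ultralimit being $(\Gamma, X, x)$ by hypothesis, hence proper) would extract a sub-subsequence eq-pGH-converging to $(\Gamma, X, x)$, contradicting the choice of $\varepsilon_0$.
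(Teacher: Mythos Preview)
The paper does not give its own proof of this proposition: it is quoted verbatim from \cite[Proposition~3.13 \& Corollary~3.14]{Cav21ter}, so there is no in-paper argument to compare against. Your outline is the natural one and matches what one expects the cited proof to do.

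Your argument is essentially correct. For part~(i), defining $F$ and $\Phi$ as $\omega$-limits of the approximation data is the right construction; properness of $X_\infty$ (which is part of the standing hypotheses of the paper) is indeed needed to make the $\omega$-limit in $X_\infty$ exist, and closedness of $\Gamma_\infty$ is needed so that $\Phi$ lands in $\Gamma_\infty$ rather than only in its closure. Your identification of the ``main obstacle'' is accurate, and your sketch for the homomorphism property (insert approximate equivariance twice, use Lemma~\ref{composition}) works: the errors are bounded by $3/n(j)\to 0$.

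For part~(ii) the only place I would ask you to be more explicit is the $\phi$-direction on the group level. You assert that a finite net $\{h_1,\dots,h_m\}$ in the compact set $\overline{\Sigma}_n(\Gamma_\omega,x_\omega)$, together with representatives $h_i(j)$, yields for $\omega$-a.e.\ $j$ a map $\phi_n^j$ on all of $\overline{\Sigma}_n(\Gamma_j,x_j)$. The way to justify this is by contradiction: if on an $\omega$-large set $A$ there existed $g_j\in\overline{\Sigma}_n(\Gamma_j,x_j)$ for which no $h_i$ gives approximate equivariance on the space net, form $g_\omega=\omega\text{-}\lim g_j\in\overline{\Sigma}_n(\Gamma_\omega,x_\omega)$; then $g_\omega$ is close to some $h_{i_0}$ on the finite space net, which forces $g_j$ to be close to $h_{i_0}(j)$ on the corresponding net for $\omega$-a.e.\ $j$, contradicting the choice of $g_j$. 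This shows that the set of ``bad'' indices has $\omega$-measure zero, handling all $g_j$ at once. The diagonal extraction and the final contradiction argument for the ``Moreover'' clause are standard and correctly stated.
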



 A sequence $( \Gamma_j, X_j,x_j)$   is called {\em $D$-cocompact} if  each $\Gamma_j$ is $D$-cocompact, and {\em uniformly cocompact} if it is $D$-cocompact for some $D$.
It is not difficult to show that the ultralimit of a sequence of isometry groups of pointed spaces does not depend on the choice of the basepoints, 
 if the isometry groups are uniformly cocompact: 

\begin{lemma}
	\label{lemma-ultralimit-cocompact}
	Let $(\Gamma_j, X_j,x_j)$ be a sequence of isometry groups of pointed spaces,  all $D$-cocompact, and let  $(x_j')_{j \in {\mathbb N}}$ be a different sequence of basepoints.  \linebreak 
Then, for every   non-principal ultrafilter $\omega$, the ultralimit of   $(\Gamma_j, X_j,x_j)$ is equivariantly isometric to the ultralimit of  $(\Gamma_j,X_j,x_j')$.
\end{lemma}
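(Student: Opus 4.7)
The plan is to use $D$-cocompactness to produce, for each $j$, an element $g_j \in \Gamma_j$ with $d(g_j x_j, x_j') \le D$, and then transport one ultralimit onto the other via the sequence $(g_j)$. These $g_j$'s play the role of a ``change of basepoint'' inside each $X_j$, and the whole proof is essentially the observation that, up to bounded distance, they conjugate the admissibility conditions attached to $x_j$ into those attached to $x_j'$.

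Concretely, I would define $F : X_\omega \to X_\omega'$ by $F(\omega\text{-}\lim y_j) := \omega\text{-}\lim g_j y_j$. This makes sense because, for any $x_j$-admissible sequence $(y_j)$ with $d(y_j, x_j) \le M$ (\,$\omega$-a.s.\,), one has
$$d(g_j y_j, x_j') \le d(g_j y_j, g_j x_j) + d(g_j x_j, x_j') \le M + D,$$
so $(g_j y_j)$ is $x_j'$-admissible. Well-definedness on equivalence classes and the isometric character of $F$ are immediate from the fact that each $g_j$ is an isometry of $X_j$; surjectivity follows symmetrically using $g_j^{-1}$ (which satisfies $d(g_j^{-1} x_j', x_j) \le D$).

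On the group side, I would set $\Phi(\omega\text{-}\lim h_j) := \omega\text{-}\lim (g_j h_j g_j^{-1})$. The conjugate sequence is admissible with respect to $x_j'$ by the triangle inequality and the isometric property of $g_j$ and $h_j$:
$$d(g_j h_j g_j^{-1} x_j', x_j') \le 2\, d(g_j x_j, x_j') + d(h_j x_j, x_j) \le 2D + M.$$
Lemma~\ref{composition} then shows that $\Phi$ is a group homomorphism from $\Gamma_\omega$ into $\Gamma_\omega'$, that $F \circ h_\omega = \Phi(h_\omega) \circ F$ for every $h_\omega \in \Gamma_\omega$, and that the analogous construction using $(g_j^{-1})$ yields an inverse for $\Phi$. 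Hence $F$ conjugates the action of $\Gamma_\omega$ on $X_\omega$ to that of $\Gamma_\omega'$ on $X_\omega'$.

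The main (and, in fact, only) subtle point is that $F$ does not quite send $x_\omega$ to $x_\omega'$: one has $F(x_\omega) = \omega\text{-}\lim g_j x_j$, so $d(F(x_\omega), x_\omega') \le D$ but equality may fail. This discrepancy is harmless here since it is bounded and $F(x_\omega)$ lies in the same $\Gamma_\omega'$-orbit-neighborhood of $x_\omega'$, so $F$ furnishes the desired equivariant isometry between the two uniformly cocompact actions. This is exactly the equivariant counterpart of the classical fact that, for a uniformly cocompact sequence of pointed metric spaces, the pointed Gromov--Hausdorff limit does not depend on the choice of basepoints.
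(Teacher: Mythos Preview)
The paper does not actually supply a proof of this lemma (it is introduced with ``it is not difficult to show''), so there is no argument to compare against. Your construction via the $g_j$'s is the standard one, the admissibility checks are correct, and Lemma~\ref{composition} indeed gives that $\Phi$ is a group isomorphism intertwined by $F$.

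The basepoint discrepancy you flag at the end is a genuine point and deserves to be stated more sharply rather than waved away. With the paper's literal definition of an equivariant isometry of \emph{pointed} triples (which requires $F(x_\omega)=x_\omega'$), the statement can fail: take $X_j=\mathbb{R}$, $\Gamma_j$ the infinite dihedral group generated by the reflection at $0$ and the translation by $1$, $x_j=0$, $x_j'=1/4$; then no isometry of $\mathbb{R}$ normalizing $\Gamma_j$ sends $0$ to $1/4$. What your argument does establish, and what the paper actually uses everywhere (e.g.\ in the proofs of Theorems~\ref{theo-noncollapsed} and~\ref{theo-collapsed}, where the lemma is invoked only to replace the $x_j$'s by more convenient basepoints before analyzing the structure of $\Gamma_\omega$), is that the \emph{unpointed} action $(\Gamma_\omega,X_\omega)$ is independent of the choice of basepoints up to equivariant isometry. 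So your proof is correct for the intended content; you should simply say explicitly that the conclusion is an equivariant isometry of actions (forgetting basepoints), rather than calling the bounded error ``harmless''.
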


\noindent Therefore, when considering the convergence of uniformily cocompact isometric actions, 
we will often omit the basepoint, if unnecessary for our arguments.\\
Finally, we remark that the equivariant pointed Gromov-Hausdorff convergence of a sequence of  isometric  actions   with uniformly bounded codiameter implies the pointed Gromov-Hausdorff convergence of the quotients.

\begin{lemma}[\textup{\cite[Theorem 2.1]{Fuk86}}] 
	\label{lemma-ultralimit-quotient}${}$\\
	Let $(\Gamma_j,X_j)$ be a sequence of $D$-cocompact isometry groups of pointed spaces. \\
	If  $(\Gamma_j,X_j) \underset{\textup{eq-pGH}}{\longrightarrow} (\Gamma_\infty,X_\infty)$ then $\Gamma_j\backslash X_j =: M_j \underset{\textup{GH}}{\longrightarrow} M_\infty := \Gamma_\infty \backslash X_\infty$.


\end{lemma}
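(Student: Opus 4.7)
The strategy is to turn an equivariant pointed GH-approximation of the actions into an ordinary GH-approximation of the compact quotients; the uniform bound on codiameters provides the key link, because it means every orbit has a representative in a ball whose size is controlled independently of $j$.

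Fix $\varepsilon>0$ much smaller than $D$ and take $j$ large so that there exists an equivariant $\varepsilon$-approximation $(f,\phi,\psi)$ from $(\Gamma_j,X_j,x_j)$ to $(\Gamma_\infty,X_\infty,x_\infty)$ in the sense of Definition \ref{defin:equivariant_GH_approximation}, with $1/\varepsilon\gg 4D$. Let $\pi_j\colon X_j\to M_j$ and $\pi_\infty\colon X_\infty\to M_\infty$ denote the quotient projections. Since both actions are $D$-cocompact, $\pi_j(\overline B(x_j,D))=M_j$ and $\pi_\infty(\overline B(x_\infty,D))=M_\infty$. For each $[x]\in M_j$ choose (via the axiom of choice) a representative $\hat x\in\overline B(x_j,D)$, and define
\[
\bar f\colon M_j\longrightarrow M_\infty,\qquad \bar f([x]):=\pi_\infty\bigl(f(\hat x)\bigr).
\]
An analogous map $\bar\psi\colon M_\infty\to M_j$ is defined from $\psi$ via choices of representatives in $\overline B(x_\infty,D)$.

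The heart of the argument is to check that $\bar f$ is an $\varepsilon'$-GH-approximation, with $\varepsilon'=\varepsilon'(\varepsilon,D)\to 0$ as $\varepsilon\to 0$. \emph{Distance comparison.} Given $[x_1],[x_2]\in M_j$ with representatives $\hat x_1,\hat x_2\in\overline B(x_j,D)$, the infimum $d_{M_j}([x_1],[x_2])=\inf_{g\in\Gamma_j}d(\hat x_1,g\hat x_2)$ is realised (up to $\varepsilon$) by some $g\in\Gamma_j$ with $d(\hat x_1,g\hat x_2)\le D$. The triangle inequality then yields $d(x_j,gx_j)\le 3D<1/\varepsilon$, so $g\in\overline\Sigma_{1/\varepsilon}(\Gamma_j,x_j)$ and the equivariance of $\phi$ applies, giving $d(f(g\hat x_2),\phi(g)f(\hat x_2))<\varepsilon$. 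Combined with the near-isometry property of $f$, this yields
\[
d_{M_\infty}(\bar f[x_1],\bar f[x_2])\le d(f(\hat x_1),\phi(g)f(\hat x_2))\le d(\hat x_1,g\hat x_2)+2\varepsilon\le d_{M_j}([x_1],[x_2])+3\varepsilon.
\]
The reverse inequality is obtained symmetrically: an element of $\Gamma_\infty$ nearly realising $d_{M_\infty}(\bar f[x_1],\bar f[x_2])$ moves $x_\infty$ by at most $3D+O(\varepsilon)$, so lies in the domain of $\psi$, and the corresponding pull-back together with the $\psi$-equivariance yield $d_{M_j}([x_1],[x_2])\le d_{M_\infty}(\bar f[x_1],\bar f[x_2])+C\varepsilon$. \emph{Density.} Any $[y]\in M_\infty$ has a representative $\hat y\in\overline B(x_\infty,D)\subseteq B(x_\infty,1/\varepsilon)$; surjectivity of $f$ up to $\varepsilon$ produces $\hat x\in B(x_j,1/\varepsilon)$ with $d(f(\hat x),\hat y)<\varepsilon$, and a further use of $D$-cocompactness replaces $\hat x$ by a $\Gamma_j$-translate in $\overline B(x_j,D)$ (the error committed by translating in $X_j$ is converted, via another application of the $\phi$-equivariance, into an error of order $\varepsilon$ in $X_\infty$), whence $d_{M_\infty}(\bar f[\hat x],[y])\le C\varepsilon$.

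Combining these two properties with $\varepsilon'=O(\varepsilon)$, we obtain that $\bar f$ is a GH-$\varepsilon'$-approximation between the compact spaces $M_j$ and $M_\infty$ of diameter at most $D$, so $d_{\mathrm{GH}}(M_j,M_\infty)\le\varepsilon'$ for all $j$ sufficiently large. The only delicate point is the careful bookkeeping of the various ``short'' elements of $\Gamma_j$ and $\Gamma_\infty$ realising the quotient distances, and the verification that they always lie in the domains $\overline\Sigma_{1/\varepsilon}$ of $\phi$ and $\psi$ thanks to the uniform codiameter bound; once this is in place, the rest reduces to standard triangle-inequality manipulations and the near-isometry properties of $f,\psi$.
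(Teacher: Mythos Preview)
Your proof is correct and follows the standard approach. The paper does not supply its own proof of this lemma; it simply cites Fukaya's original result \cite[Theorem 2.1]{Fuk86}, and your argument is essentially Fukaya's: descend the equivariant approximation to the quotients, using the uniform codiameter bound to guarantee that every group element needed to realise a quotient distance lies in the domain of $\phi$ or $\psi$. One small remark: in the density step you can avoid the extra translation altogether, since any $\hat x\in B(x_j,1/\varepsilon)$ with $d(f(\hat x),\hat y)<\varepsilon$ and $\hat y\in\overline B(x_\infty,D)$ automatically satisfies $d(x_j,\hat x)\le D+2\varepsilon$ by the near-isometry property of $f$; this slightly streamlines the bookkeeping but does not change the substance.
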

\noindent Of course, even if the groups $\Gamma_j$ are all discrete, the group $\Gamma_\infty$ may be not discrete and the structure of the quotient $\Gamma_\infty \backslash X_\infty$ is not clear at all. 

\vspace{2mm}
\subsection{
Convergence of CAT$(0)$-lattices} 
\label{sub-standard}${}$

\noindent Our next goal is to explain what happens in our specific setting, where the $X_j$ are all proper, geodesically complete CAT$(0)$-spaces, and the $\Gamma_j$ are  uniformly cocompact.
With this purpose, we now precisely define the following classes of isometry groups  we are interested in:
\begin{itemize}[leftmargin=4mm] 
		\item[-] $\textup{CAT}_0(D_0)$: this is the class  
of isometry groups of spaces $(\Gamma,X)$
		where $X$ is a  proper, geodesically complete, CAT$(0)$-space,  and $\Gamma$ is a $D_0$-cocompact, lattice of $X$;
		\item[-] $\textup{CAT}_0(P_0,r_0,D_0)$:  the subclass of   $\textup{CAT}_0(D_0)$ made of the lattices $(\Gamma,X)$
such that  $X$ is, moreover,   $(P_0,r_0)$-packed.
\end{itemize}

\noindent We will denote by ${\mathcal{O}} \text{-CAT}_0 (D_0)$,  ${\mathcal O} \text{-CAT}_0 (P_0, r_0, D_0)$ the  respective classes of quotients $M=\Gamma \backslash X$: these are  compact CAT$(0)$-orbispaces, with 
$(\Gamma,X)$
respectively in $\text{CAT}_0 (D_0)$ and  $\text{CAT}_0 (P_0, r_0, D_0)$.  
\vspace{2mm}

  We will say that a sequence of proper metric spaces  $X_j$
(and, by extension, a  sequence of isometric actions  $(\Gamma_j,X_j, x_j)$) is {\em uniformly packed} if there exists $(P_0, r_0)$ such that every  $X_j$ is  $(P_0, r_0)$-packed. 
 The proof of \cite[Lemma 5.4]{Cav21ter} implies that
	$$\textup{CAT}_0(D_0) = \bigcup_{P_0, r_0} \textup{CAT}_0(P_0,r_0,D_0).$$
More precisely, we have: 

\begin{prop}
	\label{lemma-GH-compactness-packing}
	A subset $\mathcal{F} \subseteq \textup{CAT}_0(D_0)$ is precompact (with respect to the equivariant pointed Gromov-Hausdorff convergence) if and only if there exist $P_0,r_0 > 0$ such that $\mathcal{F}\subseteq \textup{CAT}_0(P_0,r_0,D_0)$.
\end{prop}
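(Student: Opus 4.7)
The plan is to prove the two implications separately. The ``if'' direction is an application of Gromov's classical pGH-precompactness criterion combined with Proposition \ref{prop-GH-ultralimit} to upgrade space convergence to equivariant convergence. The ``only if'' direction is a contradiction argument: a precompact family cannot contain arbitrarily badly packed balls, because the limit space would then fail to be proper.

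For the ``if'' direction, I would assume $\mathcal{F}\subseteq\textup{CAT}_0(P_0,r_0,D_0)$ and invoke \cite[Thm.~4.2]{CavS20} (cited in the introduction): for a proper, geodesically complete, $(P_0,r_0)$-packed CAT$(0)$-space, the full packing function $\textup{Pack}(R,r)$ is bounded by a constant depending only on $P_0$, $r_0$, $R$, and $r$. This is exactly Gromov's uniform total boundedness condition, so given any sequence $(\Gamma_n,X_n)\in\mathcal{F}$ and any basepoints $x_n\in X_n$, the pointed spaces $(X_n,x_n)$ subconverge in pGH to a proper pointed space $(X_\infty,x_\infty)$. Applying Proposition \ref{prop-GH-ultralimit}.(ii) along any non-principal ultrafilter, one extracts a further subsequence along which $\Gamma_n$ converges equivariantly to a closed isometry group $\Gamma_\infty<\textup{Isom}(X_\infty)$, so that $(\Gamma_n,X_n,x_n)\underset{\text{eq-pGH}}{\longrightarrow}(\Gamma_\infty,X_\infty,x_\infty)$.

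For the ``only if'' direction, I would argue by contradiction: suppose $\mathcal{F}$ is precompact but is not contained in any $\textup{CAT}_0(P_0,r_0,D_0)$. Fix $r_0=1$. If $\sup_{(\Gamma,X)\in\mathcal{F}}\textup{Pack}_X(3,1)$ were finite, one could take this value as $P_0$ and reach a contradiction; so this supremum is infinite. Pick $(\Gamma_n,X_n)\in\mathcal{F}$ and $y_n\in X_n$ such that $\overline{B}_{X_n}(y_n,3)$ contains at least $n$ pairwise $2$-separated points $p_1^n,\dots,p_n^n$. By Lemma \ref{lemma-ultralimit-cocompact}, for a uniformly cocompact sequence the eq-pGH limit is basepoint-independent, hence precompactness of $\mathcal{F}$ implies, after extracting a subsequence, $(\Gamma_n,X_n,y_n)\underset{\text{eq-pGH}}{\longrightarrow}(\Gamma_\infty,X_\infty,y_\infty)$. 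In particular $(X_n,y_n)\to(X_\infty,y_\infty)$ in pGH, and $X_\infty$ is proper. For any fixed $N$, for all $n\geq N$ the $\varepsilon_n$-approximations $f_n\colon\overline{B}_{X_n}(y_n,1/\varepsilon_n)\to\overline{B}_{X_\infty}(y_\infty,1/\varepsilon_n)$, with $\varepsilon_n\to 0$, send $\{p_1^n,\dots,p_N^n\}$ to $N$ points in $\overline{B}_{X_\infty}(y_\infty,3+\varepsilon_n)$ that are $(2-\varepsilon_n)$-separated. A diagonal extraction produces, for every $N$, a configuration of $N$ points in $\overline{B}_{X_\infty}(y_\infty,4)$ pairwise $1$-separated. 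But $\overline{B}_{X_\infty}(y_\infty,4)$ is compact, hence totally bounded, giving the desired contradiction.

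The potential difficulty is the basepoint issue in the second part: a priori, the definition of equivariant pGH convergence involves a specific choice of basepoints, and it is not obvious that we are free to center the balls at the points $y_n$ witnessing the packing failure. This is resolved precisely by Lemma \ref{lemma-ultralimit-cocompact}, which guarantees that for uniformly cocompact sequences (which is our setting, since every $\Gamma_n$ is $D_0$-cocompact) the equivariant ultralimit is independent of the choice of basepoints; consequently, precompactness of $\mathcal{F}$ automatically gives a convergent subsequence with basepoints $y_n$. Apart from this, all the remaining ingredients are standard: the stability of properness under pGH limits, the continuity of distances under GH-approximations, and the fact that compact sets are totally bounded.
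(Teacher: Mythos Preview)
Your proof is correct. The ``if'' direction is essentially identical to the paper's: both reduce to pGH-precompactness of the underlying spaces (via the uniform packing bound from \cite{CavS20}) and then upgrade to equivariant convergence through Proposition~\ref{prop-GH-ultralimit}.

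For the ``only if'' direction, the paper proceeds differently: it simply invokes \cite[Lemma 5.8]{Cav21ter}, which asserts directly that any eq-pGH convergent sequence of $D_0$-cocompact actions is uniformly packed. Your argument instead unpacks this by contradiction, producing a limit space containing arbitrarily many $1$-separated points in a fixed ball, violating properness. Your route is more elementary and self-contained (it avoids the external citation and uses only the basepoint-independence Lemma~\ref{lemma-ultralimit-cocompact} together with total boundedness of compact sets), while the paper's route is shorter but defers the substance to \cite{Cav21ter}. Both are valid; note that your use of Lemma~\ref{lemma-ultralimit-cocompact} is exactly the right way to justify re-centering at the bad points $y_n$, and combined with Proposition~\ref{prop-GH-ultralimit} it indeed yields the convergent subsequence with those basepoints.
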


\begin{proof}
The ``if'' direction follows from the fact that 
the class of proper, geodesically complete CAT$(0)$-spaces  which are $(P_0, r_0)$-packed is precompact\footnote{and even compact, by Corollary 6.7 of \cite{CavS20}, since the dimension of any such space is bounded by $n_0=P_0/2$}, by \cite[Theorem 6.4]{CavS20}: that is,  every sequence of pointed spaces $(X_j,x_j)$ in this class (sub)-convergences to some proper metric space $(X_\infty,x_\infty)$ (which is still geodesically complete, CAT$(0)$ and $(P_0, r_0)$-packed). The conclusion then follows from the discussion after Proposition \ref{prop-GH-ultralimit}, showing that then any sequence of isometry groups $\Gamma_j$ of the pointed spaces $(X_j,x_j)$ subconverges in the pointed, equivariant GH-convergence to some isometry group $\Gamma_\infty$ of $(X_\infty,x_\infty)$.\\
The other direction is provided by \cite[Lemma 5.8]{Cav21ter}, which shows that if  $(\Gamma_j,X_j)$ is a sequence of $D_0$-cocompact isometry groups of spaces which converges in the equivariant pointed Gromov-Hausdorff sense to  some isometry group $ (\Gamma_\infty,X_\infty)$,  then the sequence $(\Gamma_j,X_j)$ is uniformly packed.  
\end{proof}

 Let us now consider a sequence of lattices  
$(\Gamma_j,X_j)$ in $\textup{CAT}_0(D_0)$, such that 
$(\Gamma_j,X_j) \underset{\textup{eq-pGH}}{\longrightarrow} (\Gamma_\infty,X_\infty)$.
We will always denote by $M_j = \Gamma_j\backslash X_j$ and $M_\infty = \Gamma_\infty \backslash X_\infty$ the quotient spaces.
 Our goal is to describe the limit group $\Gamma_\infty$ acting on $X_\infty$  and the quotient $M_\infty= \Gamma_\infty \backslash X_\infty$.

\begin{defin}[Standard setting of convergence]\label{defsetting}${}$\\
We say that we are in the \emph{standard setting of convergence} when we have a sequence of lattices $(\Gamma_j,X_j)$ in $ \textup{CAT}_0(D_0)$ such that 
$(\Gamma_j,X_j) \underset{\textup{eq-pGH}}{\longrightarrow} (\Gamma_\infty,X_\infty)$. 
Notice that, in this setting:
\begin{itemize}[leftmargin=8mm]
\item[(i)] we can always assume that the $(\Gamma_j,X_j)$'s belong to some CAT$_0(P_0,r_0,D_0)$, for some constants $P_0,r_0$ (by Proposition \ref{lemma-GH-compactness-packing});
\item[(ii)] the limit space $X_\infty$ is a proper, geodesically complete $\textup{CAT}(0)$-space which is still $(P_0, r_0)$-packed (since this class of spaces is closed under  ultralimits and compact with respect to the pointed Gromov-Hausdorff convergence, by  \cite[Theorem 6.1]{CavS20});
\item[(iii)] the limit group  $\Gamma_\infty$ may well be non-discrete, but it is always a closed subgroup of Isom$(X_\infty)$ (as recalled after Lemma \ref{composition}, by \cite{Cav21ter}), \linebreak in particular $M_\infty$ is always a genuine metric space;
\item[(iv)] the quotients $M_j =\Gamma_j \backslash X_j \underset{\textup{GH}}{\longrightarrow} M_\infty =\Gamma_\infty \backslash X_\infty$ (by Lemma \ref{lemma-ultralimit-quotient}${}$).
\end{itemize}

\noindent A note of attention should be put on (iv): the convergence of the quotient orbispaces $M_j \underset{\textup{GH}}{\longrightarrow}  M_\infty$ does not imply in general
\footnote{This is false even for sequences of Riemannian orbifolds with uniformily bounded sectional curvature, contrarily to what asserted in \cite[Proposition 2.7]{Fuk86} 
(where the non-collapsing assumption is forgotten). 
For instance, the product of a closed Riemannian manifold $M=\Gamma \backslash X$ with a torus $T_j = \Gamma_j \backslash {\mathbb R}^2$ converges  to $M$ by taking  $\Gamma_j =\varepsilon_j {\mathbb Z}^2$ with $\varepsilon_j \rightarrow 0$, but $(\Gamma \times \Gamma_j, X \times {\mathbb R}^2)$ does not converge to $(\Gamma, X)$.}
the convergence of the isometry groups $(\Gamma_j,X_j) \underset{\textup{eq-pGH}}{\longrightarrow} (\Gamma_\infty,X_\infty)$ (as this is false even for constant sequences, by Remark \ref{obs-samequotient}).

\noindent Moreover, we will say, for short, that we are in the \emph{nonsingular} setting of convergence (resp. in the {\em Lie setting}) if each group $\Gamma_j$ is nonsingular (resp. all isometry groups Isom$(X_j)$ are Lie).\\
Finally, a sequence of lattices $(\Gamma_j,X_j) \underset{\textup{eq-pGH}}{\longrightarrow} (\Gamma_\infty,X_\infty)$ in the standard setting   of convergence (equivalently, the sequence of orbispaces $M_j \underset{\textup{GH}}{\longrightarrow}  M_\infty$) \linebreak will be  called:
	\begin{itemize}[leftmargin=5mm]
		\item[-]  \emph{non-collapsing}, if $\limsup_{j\to+\infty} \textup{sys}^\diamond(\Gamma_j,X_j) > 0$;  
		\item[-] \emph{collapsing}, if $\liminf_{j\to+\infty} \textup{sys}^\diamond(\Gamma_j,X_j) = 0$.
	\end{itemize}
\end{defin}

\noindent These cases are not apparently mutually exclusive, but indeed they are. \linebreak
Actually,  the two cases  are respectively equivalent to the the conditions that the dimension  of the limit  $M_\infty = \Gamma_\infty \backslash X_\infty$  equals  the dimension  of the quotients $M_j$ or decreases,  as we  will   prove   in Subsection \ref{sub-characterization} (Theorem \ref{theo-collapsing-characterization}).  This is a very intuitive but not trivial result, 
and
will follow by a careful analysis of both the collapsed and non-collapsed  case.

\vspace{2mm}
\subsection{Convergence without collapsing} 
\label{sub-convergencenocollapsing}
 \begin{theo}
	\label{theo-noncollapsed}
Assume  $(\Gamma_j,X_j) \underset{\textup{eq-pGH}}{\longrightarrow} (\Gamma_\infty,X_\infty)$ in the standard setting, \linebreak 
without collapsing. Then: 
 \begin{itemize}[leftmargin=9mm] 
\item[(i)] 	the limit group $\Gamma_\infty$ is closed and
 totally disconnected;
\item[(ii)]  moreover, if all the   the groups $\Gamma_j$ are nonsingular, then  $\Gamma_\infty$ is discrete and isomorphic to $\Gamma_j$ for $j$ big enough.
\end{itemize}
 Further,  the limit of the  orbispaces $M_j \! =\Gamma_j \backslash X_j$ is isometric to $M_\infty \! =\Gamma_\infty \backslash X_\infty$ \linebreak and, in case (ii), $M_j$ is equivariantly homotopy equivalent to $M_\infty$ 
for $j\gg 0$.
\end{theo}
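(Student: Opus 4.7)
Passing to a subsequence, we may assume $\textup{sys}^\diamond(\Gamma_j,X_j) \geq s > 0$, and fix a non-principal ultrafilter $\omega$ so that $(\Gamma_\infty, X_\infty, x_\infty) = \omega\text{-}\lim (\Gamma_j, X_j, x_j)$ by Proposition~\ref{prop-GH-ultralimit}; by Proposition~\ref{lemma-GH-compactness-packing} we may assume a uniform packing $(\Gamma_j, X_j) \in \textup{CAT}_0(P_0, r_0, D_0)$. The closedness of $\Gamma_\infty$ and the Gromov--Hausdorff convergence $M_j \to M_\infty$ are already recorded in Definition~\ref{defsetting}(iii),(iv) and Lemma~\ref{lemma-ultralimit-quotient}, so only claims (i), (ii), and the equivariant homotopy equivalence remain to prove.

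\textbf{Part (i): total disconnectedness.} The plan is to argue by contradiction: if $\Gamma_\infty^\circ \neq \{e\}$, then using the Caprace--Monod decomposition $\textup{Isom}(X_\infty) \cong \mathcal{S} \times \mathcal{E}_n \times \mathcal{D}$ (Proposition~\ref{prop-CM-decomposition}), the nontrivial continuous component $\Gamma_\infty^\circ$ lies in the Lie factor $\mathcal{S} \times \mathcal{E}_n^\circ$ and acts nontrivially on $X_\infty$. I would then extract an element $g \in \Gamma_\infty^\circ$ which is either a Clifford translation along a Euclidean factor or a transvection along a geodesic of the symmetric factor: for any prescribed small $\eta > 0$ one can choose $g$ with $d(x_\infty, g x_\infty) = \eta$ and $\textup{Fix}(g) \cap B(x_\infty, R) = \emptyset$ for $R$ arbitrarily large. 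Taking approximators $g_j \in \Gamma_j$ with $d(x_j, g_j x_j) \to \eta$, the key point is to rule out that $g_j$ be $\omega$-a.s.\ elliptic: by convexity of the displacement function, an elliptic $g_j$ with displacement $\leq \eta$ at $x_j$ has $\textup{Fix}(g_j)$ within bounded distance of $x_j$, and the ultralimit of such fixed points would produce a fixed point of $g$ near $x_\infty$, contradicting the choice of $g$. Hence $g_j$ is hyperbolic $\omega$-a.s., with $\ell(g_j) \leq \eta < s$, contradicting $\textup{sys}^\diamond(\Gamma_j, X_j) \geq s$.

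\textbf{Part (ii): discreteness and isomorphism.} Under nonsingularity, Proposition~\ref{prop-vol-sys-dias} upgrades the non-collapsing bound to a uniform lower bound $\textup{dias}(\Gamma_j, X_j) \geq s' > 0$, so basepoints $x_j^*$ can be chosen with $\textup{sys}(\Gamma_j, x_j^*) \geq s'$. The $2D_0$-short generating sets $\Sigma_j := \overline{\Sigma}_{2D_0}(\Gamma_j, x_j^*)$ fall into finitely many marked-group equivalence classes by Theorem~\ref{theo-finiteness}, so up to subsequence all $(\Gamma_j, \Sigma_j)$ are equivalent to a single $(\Gamma, \Sigma)$. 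I would define $\Phi \colon \Gamma \to \Gamma_\infty$ on each generator $\sigma$ by $\Phi(\sigma) := \omega\text{-}\lim \sigma_j$ and extend multiplicatively (well-defined on relations by Lemma~\ref{composition}). Injectivity follows from the systole bound: any kernel element would give $\gamma_j \neq e$ with $d(x_j^*, \gamma_j x_j^*) \to 0$, which is impossible. Surjectivity follows from $D_0$-cocompactness, since every $g \in \Gamma_\infty$ is the ultralimit of a bounded-length product of elements of $\Sigma_j$. The same systole argument yields $\overline{\Sigma}_{s'}(\Gamma_\infty, x_\infty^*) = \{e\}$, so $\Gamma_\infty$ is discrete.

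\textbf{Equivariant homotopy and main obstacle.} With the isomorphism $\Phi \colon \Gamma_j \xrightarrow{\sim} \Gamma_\infty$ in hand and both $X_j, X_\infty$ contractible CAT$(0)$-spaces on which $\Gamma$ acts properly with finite stabilizers (nonsingularity plus discreteness), both spaces are models for the classifying space $\underline{E}\Gamma$ for proper $\Gamma$-actions; uniqueness of $\underline{E}\Gamma$ up to $\Phi$-equivariant homotopy yields the desired $\Phi$-equivariant homotopy equivalence $F \colon X_j \to X_\infty$, descending to $M_j \simeq M_\infty$ as orbispaces. The most delicate point is part (i): the hyperbolic-versus-elliptic dichotomy for the approximators $g_j$ is exactly where non-collapsing intervenes, and the BK90 example recalled in the introduction shows that without nonsingularity one cannot strengthen (i) into discreteness, since accumulating torsion in singular lattices forces limits that are non-discrete but still totally disconnected. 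Thus the proof of (i) must be calibrated carefully: just enough to rule out a continuous identity component, but not so much as to falsely rule out the BK90 phenomenon.
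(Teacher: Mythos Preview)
Your argument for part (i) has a genuine gap at the step where you rule out elliptic approximators. The claim that an elliptic $g_j$ with $d(x_j, g_j x_j) \leq \eta$ must have $\textup{Fix}(g_j)$ within bounded distance of $x_j$ is false: convexity of the displacement function yields no such bound. For a concrete picture, a rotation of $\mathbb{R}^2$ by a small angle $\theta$ about a centre at distance $R$ from $x$ has displacement $\approx 2R\sin(\theta/2)$ at $x$; fixing this equal to $\eta$ while letting $\theta \to 0$ sends $R \to \infty$. So even if your chosen $g \in \Gamma_\infty^\circ$ is hyperbolic, its approximators $g_j$ may all be elliptic with fixed sets escaping to infinity, no ultralimit fixed point of $g$ is produced, and there is no contradiction with $\textup{sys}^\diamond(\Gamma_j,X_j) \geq s$. (There are also smaller issues: the existence of a hyperbolic $g$ in $\Gamma_\infty^\circ$ requires a case split you do not carry out, and Proposition~\ref{prop-CM-decomposition} as stated assumes $X_\infty$ admits a uniform \emph{lattice}, which is not yet known.)

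The paper's proof of (i) bypasses the elliptic/hyperbolic dichotomy entirely. It invokes Corollary~\ref{cor-0-rank}---a consequence of the Splitting Theorem~\ref{theo-splitting-weak}---to choose basepoints $x_j$ so that the whole almost-stabilizer $\overline{\Gamma}_\sigma(x_j)$ \emph{fixes} $x_j$, for a uniform $\sigma = \sigma(P_0,r_0,D_0,s)$. This passes directly to the ultralimit: every element of $\overline{\Sigma}_{\sigma/2}(x_\omega)$ fixes $x_\omega$, hence so does the open subgroup it generates, hence so does $\Gamma_\omega^\circ$; minimality of the cocompact $\Gamma_\omega$-action then forces $\textup{Fix}(\Gamma_\omega^\circ) = X_\omega$. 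Your part (ii) and the $\underline{E}\Gamma$ argument for the equivariant homotopy equivalence are essentially correct and parallel the paper's approach (which cites \cite[Theorem~5.4]{CS23} for the latter).
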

 
 
\begin{proof}[Proof of Theorem \ref{theo-noncollapsed}]${}$\\
We choose   a subsequence  $\lbrace j_h \rbrace$ be for which 
$\lim_{h\to +\infty} \text{sys}^\diamond(\Gamma_{j_h},X_{j_h}) \! >0$ exists, \linebreak and we choose a non-principal ultrafilter $\omega$ with $\omega(\lbrace j_h \rbrace) = 1$ (which is always possible, see \cite[Lemma 3.2]{Jan17}). Therefore we   assume that
$$\textup{sys}^\diamond(\Gamma_j,X_j) \geq \varepsilon > 0 \mbox{ for }\omega\mbox{-a.e.} (j).$$ 
By Proposition \ref{prop-GH-ultralimit}, it is enough to show the thesis for the ultralimit group $\Gamma_\omega$ of $X_\omega$, which coincide respectively  with $\Gamma_\infty$ and $X_\infty$. \\
By the remark (i) after \ref{defsetting}, we may assume that all the $(\Gamma_j, X_j)$'s belong to the class CAT$_0(P_0,r_0,D_0)$, for suitable $P_0,r_0$, and  that $\varepsilon \leq \varepsilon_0$. \\
Let then $\sigma = \sigma_{P_0,r_0,D_0}(\frac{ \varepsilon}{4J_0})$ be the constant provided by Theorem \ref{theo-splitting-weak}.  \\
  By Lemma \ref{lemma-ultralimit-cocompact} the limit triple does not depend on the choice of the basepoints, so we may assume that the  basepoints $x_j$ are those provided by Corollary \ref{cor-0-rank}(ii), that is the almost stabilizers  $\overline{\Gamma}_{\sigma} (x_j) < \Gamma_j$ fix  the basepoint $x_j$ for $\omega$-a.e.$(j)$.	We have to show that $\Gamma_\omega^\circ = \lbrace \text{id}\rbrace$. \\
	It is clear that if $g_\omega = \omega$-$\lim g_j \in \overline{\Sigma}_{\frac{\sigma}{2}}(x_\omega)$ then $g_j \in \overline{\Sigma}_\sigma(x_j)$ for $\omega$-a.e.$(j)$. 
	Therefore, $g_jx_j = x_j$ by our choice of $x_j$, so $g_\omega x_\omega = x_\omega$. 
	Notice that the subset $\overline{\Sigma}_{\frac{\sigma}{2}}(x_\omega) \subset \Gamma_\omega$ has non-empty interior: in fact,  calling $\varphi_{x_\omega}: \Gamma_\omega \rightarrow X_\omega$ the  continuous map $\varphi_{x_\omega} (g_\omega) = g_\omega x_\omega$, 
	  the open set  $U = \varphi_{x_\omega}^{-1} (B(x_\omega,  {\frac{\sigma}{2}}))$ is contained in $\overline{\Sigma}_{\frac{\sigma}{2}}(x_\omega)$. Therefore $U \cap \Gamma^\circ_\omega$ is an open neighbourhood of the identity in $\Gamma^\circ_\omega$, and it generates  $\Gamma^\circ_\omega$; it then follows that $\langle \Sigma_{\frac{\sigma}{2}}(x_\omega) \rangle \cap \Gamma_\omega^\circ = \Gamma_\omega^\circ$. In particular every element of $\Gamma_\omega^\circ$ fixes $x_\omega$. The set $\text{Fix}(\Gamma_\omega^\circ)$ is convex, closed, non-empty and $\Gamma_\omega$-invariant because $\Gamma_\omega^\circ$ is a normal subgroup of $\Gamma_\omega$. The action of $\Gamma_\omega$ on $X_\omega$ is clearly $D_0$-cocompact, hence minimal, so $\text{Fix}(\Gamma_\omega^\circ) = X_\omega$ and $\Gamma_\omega^\circ = \lbrace \text{id} \rbrace$. 
The facts that $\Gamma_\omega$ is closed and that $M_\infty$ is the limit of the $M_j$'s are just the remarks (iii)\&(iv) after Definition \ref{defsetting}.\\
To prove part (ii) first recall that by Theorem \ref{prop-vol-sys-dias}, up to replacing $\varepsilon$ with a smaller number (and possibly changing again the basepoint by Lemma \ref{lemma-ultralimit-cocompact}), we also have $\text{sys}(\Gamma_j,x_j) \geq  \varepsilon  > 0$ for $\omega$-a.e.$(j)$. 
It is then easy to deduce that $\textup{sys}(\Gamma_\omega,x_\omega)\geq \varepsilon$ too
(actually, if $g_\omega =  \omega$-$\lim g_j$ is any element of $\Gamma_\omega$, then $d(x_j, g_j x_j) \geq \varepsilon$ for $\omega$-a.e.$(j)$, hence $d(x_\omega, g_\omega x_\omega) \geq  \varepsilon$).
Since $X_\omega$ is proper, this is enough to show that $\Gamma_\omega$ is discrete, 
because the orbit $\Gamma_\omega x_\omega$ is discrete and the stabilizer of $x_\omega$
is trivial.   \\
Finally, choose $D_0 < D < D_0+1$ such that $d(x_\omega, g_\omega x_\omega) \neq 2D$ for all $g_\omega \in \Gamma_\omega$, and 
consider the generating sets $\Sigma_j := \overline{\Sigma}_{2D}(x_j)$ and $\Sigma_\omega := \overline{\Sigma}_{2D}(x_\omega)$  for $\Gamma_j$ and $\Gamma_\omega$, respectively. It is classical that the spaces $X_j$ and $X_\omega$ are respectively quasi-isometric to the Cayley graphs of $\text{Cay}(\Gamma_j, \Sigma_j)$ and $\text{Cay}(\Gamma_\omega, \Sigma_\omega)$  with   the respective word metrics    $d_{\Sigma_j}$ and $d_{\Sigma_\omega}$, 
namely
$$ \frac{1}{2D} \cdot d (g x_j, g' x_j)  \leq  d_{\Sigma_j} (g,g') \leq \frac{1}{2(D-D_0)}d (g x_j, g' x_j) +1 $$
and analogously for $d_{\Sigma_\omega}$ and $X_\omega$. From this, one can prove exactly as    in \cite[Proposition 7.6]{Cav21ter}  that  
$$\big(  \text{Cay}(\Gamma_\omega, \Sigma_\omega),   \lbrace \text{id}\rbrace \big)   
= \omega \text{-} \lim \big(  \text{Cay}(\Gamma_j, \Sigma_j),   \lbrace \text{id}\rbrace    \big).$$
But, since by Theorem  \ref{theo-finiteness} we  only have   finitely many  marked groups $(\Gamma_j,\Sigma_j)$, this
clearly implies that the $(\Gamma_j,\Sigma_j)$'s must be all isomorphic for $j$ big enough, in particular isomorphic to $(\Gamma_\omega, \Sigma_\omega)$.\\
\noindent The second to last assertion follows from   the fact that the class of proper, geodesically complete,   $(P_0, r_0)$-packed, pointed  $\textup{CAT}(0)$-spaces $(X_j,x_j)$ is closed under  ultralimits (cp. \cite[Theorem 6.1]{CavS20}) and from Lemma \ref{lemma-ultralimit-quotient}; clearly, the diameter of the limit     $\Gamma_\omega \backslash X_\omega$ is still bounded by $D_0$.\\
Finally, by \cite[Theorem 5.4]{CS23}, any isomorphism $\varphi_j: \Gamma_j \rightarrow \Gamma_\omega$ between nonsingular  uniform CAT$(0)$-lattices can be upgraded to $\varphi$-equivariant homotopy equivalence $f_j: X_j \rightarrow X_\omega$, which proves that $M_j$ and $M_\omega$ are equivariantly homotopy equivalent.
\end{proof}

\subsection{Convergence with  collapsing}
\label{sub-convergencecollapsing} 

\begin{theo}
	\label{theo-collapsed} 
Assume  $(\Gamma_j,X_j) \underset{\textup{eq-pGH}}{\longrightarrow} (\Gamma_\infty,X_\infty)$ in the standard setting, \linebreak
with collapsing, and let $\Gamma_\infty^\circ$ be the identity component of $\Gamma_\infty$. Then: 
	\begin{itemize}[leftmargin=7mm] 
		\item[(i)]  $X_\infty$ splits isometrically and $\Gamma_\infty$-invariantly as $ X_\infty' \times \mathbb{R}^\ell$, for some $ \ell \geq 1$; 
		\item[(ii)]  $\Gamma_\infty^\circ  =  \lbrace \textup{id}\rbrace \times \textup{Transl}(\mathbb{R}^\ell)$, and 
 $ X_\infty' =\Gamma_\infty^\circ \backslash X_\infty$.		 		
	\end{itemize}
The space $ X_\infty' $ is  proper, geodesically complete, $(P_0,r_0)$-packed and $\textup{CAT}(0)$.\\	 
 The orbispaces $M_j=\Gamma_j \backslash X_j$  converge to $M_\infty = \Gamma_\infty \backslash X_\infty$, which  is isometric to the quotient of $ X_\infty'$ by the closed,  totally disconnected group $   \Gamma_\infty' := \Gamma_\infty /\Gamma_\infty^\circ $.

\end{theo}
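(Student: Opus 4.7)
I work via a non-principal ultrafilter $\omega$ concentrated on a subsequence along which $\text{sys}^\diamond(\Gamma_j,X_j)\to 0$, so that $(\Gamma_\omega, X_\omega)\cong (\Gamma_\infty, X_\infty)$ by Proposition \ref{prop-GH-ultralimit}. By remark~(i) after Definition \ref{defsetting}, I may assume $(\Gamma_j, X_j)\in \textup{CAT}_0(P_0,r_0,D_0)$. For $j$ large, $\text{sys}^\diamond(\Gamma_j,X_j)<\sigma_0^\ast$, so the Splitting Theorem \ref{theo-splitting-weak} yields $\Gamma_j$-invariant splittings $X_j=Y_j\times \mathbb{R}^{k_j}$ with $1\leq k_j\leq n_0=P_0/2$; refining $\omega$, I assume $k_j=k$ constant. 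By Proposition \ref{prop-minimal-close}, pick basepoints $x_j=(y_j,0)\in Y_j\times \mathbb{R}^k$ so that $\overline{\Gamma}_{\sigma_0^\ast}(x_j)$ preserves $\{y_j\}\times \mathbb{R}^k$. Since ultralimits commute with products, $X_\omega=Y_\omega\times \mathbb{R}^k$, with $Y_\omega$ still proper, geodesically complete, $(P_0,r_0)$-packed and CAT(0); each $g_\omega\in \Gamma_\omega$ splits as $(g_\omega^Y,g_\omega^E)\in\textup{Isom}(Y_\omega)\times\textup{Isom}(\mathbb{R}^k)$.

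Next I produce the continuous part of $\Gamma_\omega$ from the collapse. By Theorem \ref{theo-splitting-weak}(iv),(vi), each $\overline{\Gamma}_{\varepsilon_j^\ast}(x_j)$ contains a free abelian subgroup of rank $k$ whose $\mathbb{R}^k$-projection is a lattice $\mathcal{L}_j\subset \textup{Transl}(\mathbb{R}^k)$ with generators of length at most $4J_0\varepsilon_j^\ast\to 0$; by (v) the corresponding $Y_j$-projections lie in a compact totally disconnected subgroup fixing $y_j$. For any $v\in \mathbb{R}^k$, an integer combination of those generators closest to $v$ yields admissible sequences $\alpha_j(v)\in \Gamma_j$, with $\mathbb{R}^k$-translation converging to $v$; the ultralimit $\alpha_\omega(v)\in \Gamma_\omega$ has $\mathbb{R}^k$-projection equal to the translation by $v$. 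The set $V\subseteq\mathbb{R}^k$ of vectors $v$ for which $\alpha_\omega(v)^Y$ is trivial is a closed subgroup, hence splits as a subspace of dimension $\ell$ plus a discrete lattice; the density of $\mathcal{L}_j$ together with compactness of the $Y_j$-projections forces $\ell\geq 1$. The de Rham decomposition and uniqueness of the maximal Euclidean factor let me rewrite $X_\omega=X_\omega'\times \mathbb{R}^\ell$, $\Gamma_\omega$-invariantly, with $X_\omega'$ proper, geodesically complete, $(P_0,r_0)$-packed and CAT(0), and carrying the connected closed subgroup $T_\omega:=\{\textup{id}\}\times \textup{Transl}(\mathbb{R}^\ell)\leq \Gamma_\omega^\circ$.

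For the reverse inclusion $\Gamma_\omega^\circ\subseteq T_\omega$, any $g_\omega\in \Gamma_\omega^\circ$ can be realized as $\omega$-$\lim g_j$ with $g_j\in \overline{\Sigma}_{\sigma_0^\ast}(x_j)$ for $\omega$-a.e.$(j)$; by Theorem \ref{theo-splitting-weak}(iv),(v), $g_j$'s $\mathbb{R}^k$-projection lies in a crystallographic group and its $Y_j$-projection in a compact totally disconnected group. A connectedness argument, together with Lemma \ref{lemma-bieber} applied to the crystallographic projections, forces the $Y_\omega$-projection of $g_\omega$ to be trivial and its $\mathbb{R}^k$-projection to be a translation in $V$; hence $\Gamma_\omega^\circ=T_\omega$. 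Finally, $\Gamma_\infty':=\Gamma_\omega/\Gamma_\omega^\circ$ is closed and totally disconnected as the quotient by the identity component of a locally compact group; since $\Gamma_\omega^\circ$ acts trivially on $X_\omega'$, the quotient $M_\infty=\Gamma_\infty\backslash X_\infty$ is naturally isometric to $\Gamma_\infty'\backslash X_\omega'$, and the GH-convergence $M_j\to M_\infty$ is Lemma \ref{lemma-ultralimit-quotient}.

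\textbf{Main obstacle.} The delicate point is the reverse inclusion $\Gamma_\omega^\circ\subseteq T_\omega$: ultralimits of compact, totally disconnected groups need not remain totally disconnected, so a priori extra connected motions could appear in the $Y_\omega$-factor or in Euclidean directions transverse to $V$. Excluding these requires combining the quantitative almost-stabilizer structure of Theorem \ref{theo-splitting-weak}(iv)-(v), Bieberbach-type rigidity (Lemma \ref{lemma-bieber}) applied at arbitrarily small scales, and the fact that the translations by $V$ already saturate the continuous Euclidean freedom produced by the collapse.
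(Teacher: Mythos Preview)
Your overall architecture matches the paper's---ultralimit, uniform packing, the splitting $X_j=Y_j\times\mathbb{R}^{k_j}$ from Theorem~\ref{theo-splitting-weak}, the special basepoints from Proposition~\ref{prop-minimal-close}, and Lemma~\ref{lemma-bieber} to force Euclidean projections to be translations. However, two steps are genuine gaps rather than compressions.

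First, in your reverse inclusion you assert that ``any $g_\omega\in\Gamma_\omega^\circ$ can be realized as $\omega$-$\lim g_j$ with $g_j\in\overline{\Sigma}_{\sigma_0^\ast}(x_j)$''. This is false as stated: an element of $\Gamma_\omega^\circ$ can move $x_\omega$ arbitrarily far. What is true (and what the paper uses) is that for any $\varepsilon>0$ the set $U(x_\omega,\varepsilon)=\{g_\omega:d(y_\omega,g_\omega y_\omega)<\varepsilon\ \forall\,y_\omega\in B(x_\omega,1/\varepsilon)\}$ is an \emph{open} neighborhood of the identity, so the subgroup it generates is open and therefore contains $\Gamma_\omega^\circ$. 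Only then can one write each $g_\omega\in\Gamma_\omega^\circ$ as a \emph{product} of elements coming from $U(x_j,2\varepsilon)\subset\overline{\Sigma}_{\sigma_0^\ast}(x_j)$, apply Lemma~\ref{lemma-bieber} to each factor, and conclude that the $Y_\omega$-projection of $g_\omega$ fixes $y_\omega$ and the Euclidean projection is a translation. Second, even after this you only get that $p(\Gamma_\omega^\circ)$ fixes the single point $y_\omega$; you still need to show it acts trivially on \emph{all} of $Y_\omega$. The paper closes this by a minimality argument: $\mathrm{Fix}(p(\Gamma_\omega^\circ))$ is closed, convex, nonempty, and $p(\Gamma_\omega)$-invariant (normality of $\Gamma_\omega^\circ$), and $p(\Gamma_\omega)$ acts cocompactly hence minimally on $Y_\omega$, so the fixed set is everything. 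Your proposal omits this step entirely.

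A smaller issue: your construction of $T_\omega$ via the set $V=\{v:\alpha_\omega(v)^Y=\mathrm{id}\}$ is not well posed, since $\alpha_\omega(v)$ depends on the choice of approximants $\alpha_j(v)$, and ``density of $\mathcal{L}_j$ plus compactness of the $Y_j$-projections'' does not by itself force any $\alpha_\omega(v)^Y$ to be trivial (ultralimits of compact totally disconnected groups need not be trivial, as you yourself note). The paper avoids this by first proving $\Gamma_\omega^\circ\subseteq\{\mathrm{id}\}\times\mathrm{Transl}(\mathbb{R}^{k_\omega})$ via the two steps above, then invoking Lemma~\ref{lemma-translations} to extract $\mathbb{R}^\ell$, and finally proving $\ell\geq 1$ by contradiction: if $\Gamma_\omega$ were totally disconnected then $\mathrm{sys}^\diamond(\Gamma_\omega,X_\omega)>0$ by \cite[Cor.~3.3]{Cap09}, yet one can produce hyperbolic elements of $\Gamma_\omega$ with arbitrarily small translation length by taking suitable powers of the collapsing hyperbolic elements $g_j\in\Gamma_j$.
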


Recall that all the spaces under consideration, being $(P_0, r_0)$ packed, have dimension   $\leq n_0=P_0/2$, by  Proposition \ref{prop-packing}. Also,  recall the constant $J_0$ introduced in Section \ref{subsec-splitting}, which bounds the index of the canonical lattice ${\mathcal L}(G)$ in any crystallographic group $G$  of $\mathbb{R}^k$, for $k\leq n_0$. \\ To prove Theorem \ref{theo-collapsed}, we will use the following general fact, whose proof is straightforward and left to the reader:
 

\begin{lemma}
	\label{lemma-ultralimit-product}
	Let $(\Gamma_j,X_j,x_j), (\Gamma_j', X_j',x_j',)$ be two sequences as above,  $\omega$ be a non-principal ultrafilter.   Then the ultralimit of   
$(\Gamma_j \times \Gamma_j', X_j\times X_j', (x_j,x_j'))$ is equivariantly isometric to 
$(\Gamma_\omega \times \Gamma_\omega', X_\omega \times X_\omega', (x_\omega, x_\omega'))$.
\end{lemma}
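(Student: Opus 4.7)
The plan is to construct an explicit equivariant isometry between the two ultralimit triples and verify the three required compatibilities (basepoint, metric, group action) in turn. Write $X_j^\square := X_j \times X_j'$ with the product metric, equipped with the basepoint $x_j^\square := (x_j,x_j')$, and define $F \colon \omega\text{-}\lim(X_j^\square, x_j^\square) \longrightarrow (X_\omega \times X_\omega',\, (x_\omega,x_\omega'))$ by
$$F\bigl(\omega\text{-}\lim (y_j,y_j')\bigr) \,:=\, \bigl(\omega\text{-}\lim y_j,\, \omega\text{-}\lim y_j'\bigr).$$
The first step is to verify that $F$ is well-defined and bijective: a sequence $(y_j,y_j')$ is admissible in $X_j^\square$ with respect to $x_j^\square$ if and only if both $(y_j)$ and $(y_j')$ are admissible with respect to $x_j$ and $x_j'$ respectively, because the product distance controls, and is controlled by, the two factor distances. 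The equivalence relation defining points in each ultralimit space is also compatible under $F$, since $d^\square\bigl((y_j,y_j'),(z_j,z_j')\bigr)\to 0$ ultrafilter-almost-surely is equivalent to $d(y_j,z_j)\to 0$ and $d(y_j',z_j')\to 0$ separately. The isometry property follows from the fact that the ultralimit commutes with continuous functions of bounded sequences, applied to the product metric formula.

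Next, I would upgrade $F$ to an equivariant isometry. Any element of $\Gamma_j \times \Gamma_j'$ acts on $X_j^\square$ as the diagonal isometry $(g,g')\cdot(y,y') = (gy, g'y')$. A sequence $\bigl((g_j,g_j')\bigr)_j$ is admissible as a sequence of isometries of $(X_j^\square, x_j^\square)$ exactly when $d(g_j x_j, x_j)$ and $d(g_j' x_j', x_j')$ are both ultra-bounded, i.e.\ when $(g_j)$ and $(g_j')$ are both admissible in their respective factors. Hence the natural map
$$\Phi\colon \omega\text{-}\lim (\Gamma_j\times \Gamma_j') \longrightarrow \Gamma_\omega \times \Gamma_\omega',\qquad \omega\text{-}\lim(g_j,g_j') \longmapsto \bigl(\omega\text{-}\lim g_j,\, \omega\text{-}\lim g_j'\bigr),$$
is a well-defined bijection. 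Using Lemma \ref{composition} in each factor, $\Phi$ respects composition, so it is a group isomorphism. By construction $F$ intertwines the ultralimit action of $\omega\text{-}\lim(\Gamma_j \times \Gamma_j')$ on the left with the product action of $\Gamma_\omega \times \Gamma_\omega'$ on the right: indeed, applying both sides to $\omega\text{-}\lim(y_j,y_j')$ yields the class of $(g_j y_j,\, g_j' y_j')$, which by the definition of $F$ corresponds to $(g_\omega y_\omega,\, g_\omega' y_\omega')$.

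There is essentially no hard step; the only things worth being careful about are the admissibility equivalences in both directions (for points and for isometries) and checking that the bijection $F_*$ on isometry groups induced by the isometry $F$ of underlying metric spaces coincides with $\Phi$ on the image of $\omega\text{-}\lim(\Gamma_j\times \Gamma_j')$. Since $F$ sends basepoint to basepoint, the conditions in the definition of equivariant isometry are then satisfied, and the lemma follows.
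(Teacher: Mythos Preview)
Your proof is correct. The paper does not actually supply a proof of this lemma: it introduces it as ``a general fact, whose proof is straightforward and left to the reader,'' so there is no alternative argument to compare against. Your write-up is exactly the routine verification one would expect---splitting admissibility, the equivalence relation, the metric, and the group action into the two factors and using that $\omega$-limits commute with the continuous function $(a,b)\mapsto\sqrt{a^2+b^2}$ on bounded sequences---and it fills in the omitted details cleanly.
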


We will also need the following, whose proof is included for completeness:
\begin{lemma}
	\label{lemma-translations}
	Let $A$ be a group of translations of  $\mathbb{R}^k$.
	Then $A \cong  \mathbb{R}^{\ell} \times \mathbb{Z}^{d}  $ \linebreak with $\ell + d \leq k$. Moreover, there is a corresponding $A$-invariant metric factorization of $\mathbb{R}^k$ as $\mathbb{R}^{\ell} \times \mathbb{R}^{k-\ell} $, such that the connected component $A^\circ \cong \mathbb{R}^{\ell}$ can be identified with $ \textup{Transl}(\mathbb{R}^\ell)$.
\end{lemma}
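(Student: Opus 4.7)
The plan is to reduce the statement to the classical structure theorem for closed subgroups of $(\mathbb{R}^k,+)$. Since every translation of $\mathbb{R}^k$ is determined by the vector by which it translates, the map $T_v \mapsto v$ yields a topological group isomorphism between the full translation group $\textup{Transl}(\mathbb{R}^k)$ (equipped with the topology induced from $\textup{Isom}(\mathbb{R}^k)$) and $(\mathbb{R}^k,+)$. In the context in which the lemma is applied, $A$ arises as the identity component of a closed subgroup of $\textup{Isom}(X_\infty)$ (hence closed), so I shall assume $A$ corresponds under this identification to a \emph{closed} subgroup of $(\mathbb{R}^k,+)$; denote it $H$.

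Next, I would invoke the standard structure theorem for closed subgroups of Euclidean space: every closed subgroup $H \leq \mathbb{R}^k$ decomposes uniquely as $H = V \oplus L$, where $V$ is a linear subspace of dimension $\ell\ge 0$ (precisely the identity component $H^\circ$), and $L$ is a discrete, finitely generated free abelian subgroup of rank $d$ contained in the orthogonal complement $V^\perp$. Since $L$ is a discrete subgroup of a space of dimension $k-\ell$, we automatically get $d \leq k-\ell$, hence $\ell + d \leq k$, and $H \cong \mathbb{R}^\ell \times \mathbb{Z}^d$ as topological groups.

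For the $A$-invariant metric factorization, set $\mathbb{R}^\ell := V$ and $\mathbb{R}^{k-\ell} := V^\perp$, giving the orthogonal splitting $\mathbb{R}^k = V \times V^\perp$. Each $a = T_{v+w} \in A$ with $v\in V$ and $w\in V^\perp$ splits as the product isometry $(T_v,T_w)$, so $A$ acts componentwise on the two factors; in particular $A^\circ = V$ acts as $\textup{Transl}(V) \cong \textup{Transl}(\mathbb{R}^\ell)$ on the first factor and trivially on the second, which is exactly the asserted identification. The only real ingredient is the classification of closed subgroups of $\mathbb{R}^k$, so there is no genuine obstacle; everything else is bookkeeping about the orthogonal splitting.
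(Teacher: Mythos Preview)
Your proposal is correct and follows essentially the same route as the paper: identify $A$ with a (closed) subgroup of $(\mathbb{R}^k,+)$, take the identity component $V=A^\circ$ as a linear subspace, and split orthogonally as $V\oplus V^\perp$, with the quotient $A/A^\circ$ discrete and free abelian inside $V^\perp$. The only difference is packaging: you invoke the structure theorem for closed subgroups of $\mathbb{R}^k$ as a black box, whereas the paper writes out a short direct argument (transitivity of $A^\circ$ on $V$ to split off the $V$-component of every element, then ``totally disconnected $\Rightarrow$ discrete'' for $A^\perp$). You are also right to flag that closedness of $A$ is needed and is implicit in the lemma's intended use; the paper's own step ``$A^\perp$ totally disconnected, hence discrete'' likewise relies on it.
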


\begin{proof}
	Let  $\tau_{ v}$ denote the translation of $ \mathbb{R}^k$ by a vector $ v$. 
	Consider the connected component  $A^\circ$  of the identity of $A$: it is an abelian, connected group of translations. The group  $A^\circ$ acts transitively on the the vector subspace 
	$V=\lbrace v\in \mathbb{R}^k \text{ s.t. } \tau_{v} \in A^\circ\rbrace$; moreover, as $A^\circ$ acts by translations, the stabilizer of any point $ v$ is trivial, therefore  $A^\circ$ can be identified with $V \cong \mathbb{R}^{\ell} $. \linebreak
	We can split    $\mathbb{R}^k$   isometrically  as $V \oplus V^\perp $, where $V^\perp$ is the orthogonal complement of $V$. 
	Since $A$ acts by translations, each isometry $a\in A$ can be decomposed as $(a_1,a_2)$, with $a_1 \in V$ and  $a_2 \in V^\perp$.
	Notice that the isometry $(\text{id},a_2) \in A$ because the action of $A^\circ$ on $V$ is transitive;  so, if $(a_1,a_2) \in A$, also the isometry $(a_1,\text{id})$ belongs to $A$. In other words, $A =  A^\circ  \times  A^\perp $, where   $A^\perp$ is the projection of $A$ on $\text{Isom}(V^\perp)$,  
	Notice that $A^\perp = A / A^\circ$  is totally disconnected, hence discrete and free abelian, so it is isomorphic to some $\mathbb{Z}^d$ with $d\leq k-\ell$, acting on the factor $\mathbb{R}^{k-\ell} $.
\end{proof}
\vspace{1mm}

\begin{proof}[Proof of Theorem \ref{theo-collapsed}]${}$\\
	As we did in the proof of Theorem \ref{theo-noncollapsed} we choose a non-principal ultrafilter $\omega$ such that $\omega$-$\lim\textup{sys}^\diamond(\Gamma_j,X_j) = 0$, and by Proposition \ref{prop-GH-ultralimit} it is enough to show the thesis for the ultralimit   $(\Gamma_\omega,X_\omega)=(\Gamma_\infty, X_\infty)$. \\ Again, by remark (i) after \ref{defsetting}, we may assume that all the $(\Gamma_j, X_j)$'s belong to CAT$_0(P_0,r_0,D_0)$, for some constants $P_0,r_0$. \\
By our choice of the ultrafilter $\omega$, we have  $\textup{sys}^\diamond(\Gamma_j,X_j) \leq \sigma_0^\ast$  for $\omega$-a.e.$(j)$, 
	where $\sigma_0^\ast=  \sigma_{P_0,r_0,D_0}(\varepsilon_0)$ is the constant provided by Theorem \ref{theo-splitting-weak} for $\varepsilon=\varepsilon_0$, the Margulis constant; so   $X_j$ splits as $Y_j\times \mathbb{R}^{k_j}$ with $k_j \geq 1$ for $\omega$-a.e.$(j)$.
By Lemma \ref{lemma-ultralimit-cocompact} the limit does not depend on the choice of the basepoints, so we can assume that the basepoints are the points $x_j=(y_j,  v_j)$ provided by Proposition \ref{prop-minimal-close}, that is such that  $\overline{\Gamma}_{ \sigma_0^\ast}(x_j)$ preserves the slice $\lbrace y_j \rbrace \times \mathbb{R}^{k_j}$.
Moreover, if $k_\omega = \omega$-$\lim k_j$, we have  $k_j = k_\omega$ for $\omega$-a.e.$(j)$ and $k_\omega\leq n_0$.\\
Now, choose a positive 
$\varepsilon 
\leq \frac12 \min \left\{ \sigma_0^\ast ,  
\sqrt{2 \sin \left( \frac{\pi}{J_0}\right) }\right\}$
Then, by Lemma \ref{lemma-bieber}, we deduce that   every element $g$ of a crystallographic group of $\mathbb{R}^k$, for $k\leq n_0$,  moving every point of $B_{\mathbb{R}^k} (O, \frac{1}{2\varepsilon})$ less than $2\varepsilon$ is a translation.
	We consider the open subset of $\Gamma_\omega$ defined by
	$$U(x_\omega, \varepsilon) :=  \big\{ g_\omega \in \Gamma_\omega \text{ s.t. } d(y_\omega, g_\omega y_\omega) < \varepsilon \text{ for all } y_\omega \in B_{X_\omega} (x_\omega, 1/\varepsilon  ) \big\}.$$
	  Since $\langle U(x_\omega, \varepsilon) \rangle$ is an  open subgroup  of $\Gamma_\omega$ containing the identity, we have $\langle U(x_\omega, \varepsilon) \rangle\cap \Gamma_\omega^\circ = \Gamma_\omega^\circ$, for every $\varepsilon >0$. 
Remark that if 	$g_\omega = \omega$-$\lim g_j $ is in $ U(x_\omega, \varepsilon)$,  then 
for $\omega$-a.e.$(j)$ the isometry $g_j$ belongs to 
	$$U(x_j,2\varepsilon) := \big\{  g_j \in \Gamma_j \text{ s.t. } d(y_j, g_j y_j) < 2\varepsilon \text{ for all } y_j \in B_{X_j}\left(x_j, 1/(2\varepsilon) \right)  \big\}.$$
Every element $g_j \in U(x_j,2\varepsilon)$ belongs to  $\overline{\Gamma}_{ \sigma_0^\ast}(x_j)$ since $2\varepsilon \leq \sigma_0^\ast$, therefore it acts on $X_j= Y_j \times \mathbb{R}^{k_\omega}$ as  $g_j = (g_j',g_j'') $, where   $g'_j$ fixes $y_j$ (because of our choice of basepoints); moreover,   $g_j''$ is  a global translation of $\mathbb{R}^{k_\omega}$, since it moves the points of   $B_{\mathbb{R}^{k_\omega}} (O, \frac{1}{2\varepsilon})$ less than $2\varepsilon$. 
An application of Lemma \ref{lemma-ultralimit-product}	says that also $X_\omega$ splits isometrically as $Y_\omega \times \mathbb{R}^{k_\omega}$, where $Y_\omega$ is the ultralimit of the  $(Y_j,y_j)$'s. Moreover clearly $\Gamma_\omega$ preserves the product decomposition.
 In particular every element of $g_\omega \in \Gamma_\omega^\circ$ can be written as a  product $u_\omega(1) \cdots u_\omega(n)$, with each $u_\omega(k) $ in $ U(x_\omega, \varepsilon)$; 
 as $u_\omega(k) = \omega$-$\lim  u_j(k)$ with $u_j(k)= (u_j(k)', u_j(k)'') \in U(x_j, 2\varepsilon)$, where  $u_j(k)'$ fixes $y_j$ and $u_j(k)''$ is a translation, it follows that also
  $g_\omega$ can be written as $(g_\omega',g_\omega'') \in \text{Isom}(Y_\omega) \times \text{Isom}(\mathbb{R}^{k_\omega})$ where  $g_\omega' y_\omega = y_\omega$  and $g_\omega''$ is a global translation   (being the ultralimit of Euclidean translations). \\
  Let us call $p\colon \Gamma_\omega \to \text{Isom}(Y_\omega)$ the projection map. The group $p(\Gamma_\omega^\circ)$ is normal in $p(\Gamma_\omega)$. The set of fixed points $\text{Fix}(p(\Gamma_\omega^\circ))$ is closed, convex, non-empty and $p(\Gamma_\omega)$-invariant (since $p(\Gamma_\omega^\circ)$ is normal in $p(\Gamma_\omega ))$. Since $\Gamma_\omega$ is clearly $D_0$-cocompact, so it is $p(\Gamma_\omega)$. Then the  action of $p(\Gamma_\omega)$ on $Y_\omega$ is minimal (cp. \cite[Lemma 3.13]{CM09b}) which implies that $\text{Fix}(p(\Gamma_\omega^\circ)) = Y_\omega$, that is $p(\Gamma_\omega^\circ) = \lbrace \text{id}\rbrace$. 
  We conclude that $\Gamma_\omega^\circ$ is a connected subgroup of 
    $\lbrace \text{id}\rbrace \times \text{Transl}(\mathbb{R}^{k_\omega})$. \\
     By Lemma \ref{lemma-translations},  $\mathbb{R}^{k_\omega}$ splits isometrically as $\mathbb{R}^\ell \times \mathbb{R}^{k_\omega-\ell}$ and $ \Gamma_\omega^\circ$ can be identified with the subgroup of translations of the factor  $\mathbb{R}^\ell$,
 for some $\ell \leq k_\omega$. 
Setting $X_\omega':=(Y_\omega \times \mathbb{R}^{k_\omega-\ell})$, this is still a  proper, geodesically complete,  $(P_0,r_0)$-packed, $\text{CAT}(0)$-space, and clearly $X_\omega' =\Gamma_\omega^\circ \backslash X_\omega$. 
Notice that, since $\Gamma_\omega^\circ$ is normal in $\Gamma_\omega$,  then  the splitting $X_\omega =X_\omega' \times \mathbb{R}^\ell$ is $\Gamma_\omega$-invariant.\\
Let us now  show that $ \Gamma_\omega^\circ$ is non-trivial, hence $ \ell \geq 1$.
Actually, 	if $ \Gamma_\omega^\circ$ was trivial then $\Gamma_\omega$ would be  totally disconnected, and 
by \cite[Corollary 3.3]{Cap09}, we would have $\text{sys}^\diamond(\Gamma_\omega, X_\omega) > 0$. However, we are able to contruct hyperbolic isometries of $\Gamma_\omega$ with arbitrarily small translation length, which will prove  that $\Gamma_\omega^\circ$ is non-trivial.  Indeed, fix any $\lambda > 0$ and an error $\delta > 0$. \linebreak
By the collapsing assumption we can find hyperbolic isometries $g_j \in \Gamma_j$ with $\ell(g_j) \leq \delta$, for $\omega$-a.e.$(j)$. By $D_0$-cocompactness, up to conjugating $g_j$   we can suppose  $g_j$ has an axis  at distance at most $D_0$ from $x_j$. Take a power $m_j$ of $g_j$ such that $\lambda < \ell(g_j^{m_j}) \leq \lambda + \delta$. Then, the sequence $(g_j^{m_j})$ is admissible and defines a hyperbolic element of $\Gamma_\omega$ whose translation length is between $\lambda$ and $\lambda + \delta$. By the arbitrariness of $\lambda$ and  $\delta$ we conclude.\\
Finally, the quotients  $M_j=\Gamma_j \backslash X_j$  converge to $M_\infty = \Gamma_\infty \backslash X_\infty = \Gamma_\omega \backslash X_\omega$, 	 by the remark (iv) after  \ref{defsetting}
and $\Gamma_\omega \backslash X_\omega = (\Gamma_\omega /\Gamma_\omega^\circ) \backslash (\Gamma_\omega^\circ \backslash X_\omega) =  \Gamma_\omega'   \backslash X_\omega'$.   \\
It is easy to see that the totally disconnected group $\Gamma_\omega / \Gamma_\omega^\circ$ is closed as a subgroup of \textup{Isom}$(X_\omega')$. 
\end{proof}

\begin{obs}\label{rem-totdiscvsdicrete} 
{\em 
\noindent In full generality, the conclusion of Theorem \ref{theo-collapsed} cannot be improved saying that the  totally disconnected group   $\Gamma_\infty / \Gamma_\infty^\circ$  is discrete. \\
Indeed, let $(X_j,  \Gamma_j)$ be the sequence considered in the Example \ref{ex-comm-not-normal}, where none of the groups $ \Gamma_j$ have non-trivial,  abelian, virtually normal subgroups. The sequence converges in the pointed equivariant Gromov-Hausdorff sense to a limit isometric action $(X_\infty,   \Gamma_\infty)$, where $X_\infty = \mathbb{R}^2 \times T$ and the connected component of $\Gamma_\infty$ is $\Gamma_\infty^\circ = \text{Transl}(\mathbb{R}^2) \times \lbrace \text{id}\rbrace$, so $X_\infty' = T$. The group $\Gamma_\infty / \Gamma_\infty^\circ$, acting on $T$, contains the infinite order, elliptic isometries induced by $a$ and $b$, so it is not discrete.
In particular $\Gamma_\infty$ is not a Lie group.\\
On the other hand  observe that, in general, if $\Gamma_\infty$ is a Lie group, then one can use the same proof of \cite[Lemma 58]{SZ22}   to produce a non-trivial, nilpotent, normal subgroup  of $\Gamma_j$, for $j$ big enough. This means that the existence of non-trivial, nilpotent, normal subgroups in $\Gamma_j$ for $j$ big enough, and the corresponding splitting of the groups $\Gamma_j$, is strictly related to the structure of the limit group.
}
\end{obs}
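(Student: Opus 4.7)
The task is to verify the claims about the sequence $(\Gamma_j, X_j)$ of Example~\ref{ex-comm-not-normal}. This sequence sits in $\textup{CAT}_0(P_0,1,1)$ for the $P_0$ produced there, and $\textup{sys}^\diamond(\Gamma_j, X_j) \leq 1/j \to 0$ since the generators $a$ and $b$ translate by vectors of length $1/j$ on the rescaled factor $\tfrac{1}{j}\mathbb R^2$. Fixing basepoints $x_j = (O, v_0)$ where $v_0 \in T$ is the vertex stabilized by $\mathbb Z^2$, Proposition~\ref{lemma-GH-compactness-packing} supplies (up to a subsequence and a non-principal ultrafilter $\omega$) a limit $(\Gamma_\infty, X_\infty, x_\infty)$ with $X_\infty \cong \mathbb R^2 \times T$, and Theorem~\ref{theo-collapsed} gives an identification $\Gamma_\infty^\circ = \lbrace \textup{id}\rbrace \times \textup{Transl}(\mathbb R^\ell)$ for some $1 \le \ell \le 2$ sitting inside the Euclidean factor. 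The remark then reduces to pinning down $\Gamma_\infty^\circ$ and exhibiting an infinite-order elliptic element in $\Gamma_\infty/\Gamma_\infty^\circ$.

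To force $\ell = 2$ and $\Gamma_\infty^\circ = \textup{Transl}(\mathbb R^2) \times \lbrace \textup{id}_T\rbrace$, the plan is to produce admissible sequences $z_j \in \mathbb Z^2$ acting as the identity on larger and larger balls of $T$ while their projections on $\mathbb R^2$ realize an arbitrary vector $\vec v$. Set $\mathcal{L}(R) := \textup{Stab}_{\mathbb Z^2}(B_T(v_0, R))$; since $\textup{Aut}(B_T(v_0, R))$ is finite, $\mathcal{L}(R)$ is a finite-index (rank-$2$) sublattice of $\mathbb Z^2$. The faithfulness argument of Example~\ref{ex-comm-not-normal} (the pointwise stabilizer of the axis of $t$ is trivial, a fortiori the kernel of $\mathbb Z^2 \curvearrowright T$ is) shows $\bigcap_R \mathcal{L}(R) = \lbrace 1 \rbrace$, so the index $N(R) := [\mathbb Z^2 : \mathcal{L}(R)]$ tends to $+\infty$. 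For any $\vec v \in \mathbb R^2$ choose $R_j \to +\infty$ with $N(R_j) = o(j^2)$; since the covering radius of $\mathcal{L}(R_j)\cdot O \subset \mathbb R^2$ is $O(\sqrt{N(R_j)}) = o(j)$, one can pick $z_j \in \mathcal{L}(R_j)$ with $\tfrac{1}{j}\, z_j \cdot O \to \vec v$. The ultralimit of $(z_j)$ acts on the Euclidean factor as $\tau_{\vec v}$ and on $T$ as the identity (having fixed $B_T(v_0, R_j)$ for every $j$), so $\textup{Transl}(\mathbb R^2) \times \lbrace \textup{id}\rbrace \subseteq \Gamma_\infty^\circ$; combined with Theorem~\ref{theo-collapsed} and the fact that $\mathbb R^2$ is the maximal Euclidean factor of $\mathbb R^2 \times T$ (as $T$ has no Euclidean factor), this inclusion becomes equality and $X_\infty' = T$.

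Finally, consider the constant sequence $g_j \equiv a \in \Gamma_j$: on $X_j$ it acts as (translation by $a_0/j$ on $\mathbb R^2$, Bass--Serre action $a|_T$ on $T$), so its $\omega$-limit $\bar a \in \Gamma_\infty$ acts as $(\textup{id}_{\mathbb R^2}, a|_T)$; the analogous statement holds for $b$. Since $\mathbb Z^2 \curvearrowright T$ is faithful and $a$ has infinite order in $\mathbb Z^2$, $a|_T$ is an infinite-order elliptic isometry of $T$ fixing $v_0$. Its image in the (faithfully acting) quotient $\Gamma_\infty' := \Gamma_\infty/\Gamma_\infty^\circ$ is therefore a non-trivial infinite-order elliptic element. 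If $\Gamma_\infty'$ were discrete, then as a closed subgroup of $\textup{Isom}(T)$ on the locally finite tree $T$ every vertex stabilizer of $\Gamma_\infty'$ would be a discrete subgroup of the compact (profinite) group $\textup{Stab}_{\textup{Isom}(T)}(v_0)$, hence finite, contradicting the infinite order of $\bar a$ in $\textup{Stab}_{\Gamma_\infty'}(v_0)$. Thus $\Gamma_\infty'$ is not discrete, and a fortiori $\Gamma_\infty$ is not a Lie group (as a Lie group modulo its identity component is always discrete).

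The only delicate ingredient is the simultaneous approximation in the second paragraph, where one must grow $R_j$ slowly enough that $N(R_j) = o(j^2)$ (preserving Euclidean approximation) and fast enough that the $T$-component of $z_j$ converges to the identity; the divergence $N(R) \to +\infty$ from faithfulness makes this balance possible.
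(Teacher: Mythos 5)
The paper states this remark without proof: it simply asserts the identification of $(\Gamma_\infty,X_\infty)$, of $\Gamma_\infty^\circ$, and the presence of the infinite-order elliptic images of $a,b$ in $\Gamma_\infty/\Gamma_\infty^\circ$. Your proposal supplies a genuine verification, and its overall architecture is sound: the reduction to an ultralimit, the product splitting $X_\infty\cong\mathbb{R}^2\times T$ via Lemma \ref{lemma-ultralimit-product}, the identification of $\bar a=(\mathrm{id}_{\mathbb{R}^2},a|_T)$ as the limit of the constant sequence, and the final step (a discrete subgroup of the compact stabilizer $\mathrm{Stab}_{\mathrm{Isom}(T)}(v_0)$ must be finite, contradicting the infinite order of $a|_T$) are all correct, as is the deduction that $\Gamma_\infty$ is not Lie.

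The one step you should tighten is the computation of $\Gamma_\infty^\circ$. Your sets $\mathcal{L}(R)=\mathrm{Stab}_{\mathbb{Z}^2}(B_T(v_0,R))$ are indeed finite-index sublattices with trivial intersection, but the assertion that the covering radius of an index-$N$ sublattice of $\mathbb{Z}^2$ is $O(\sqrt{N})$ is false in general (consider the sublattice spanned by $(N,0)$ and $(0,1)$). It happens to hold here because the $\mathcal{L}(R)$ are intersections of the ideals $(2\pm i)\mathbb{Z}[i]$ and their images, hence principal ideals of $\mathbb{Z}[i]$ and so round; but you neither state nor use this. The repair is in fact simpler than what you wrote: each $\mathcal{L}(R)$ is a rank-two lattice with \emph{some} finite covering radius $\rho(R)$, so one may choose $R_j\to+\infty$ slowly enough that $\rho(R_j)=o(j)$, and the rest of your argument goes through verbatim. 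Alternatively, the equality $\ell=2$ follows more cheaply from normality of $\Gamma_\infty^\circ$ in $\Gamma_\infty$: the limit $\bar t$ acts on the Euclidean factor as a rotation by the irrational angle $\arccos(3/5)$, so the translation subspace $V$ with $\Gamma_\infty^\circ=\mathrm{Transl}(V)$ must be rotation-invariant, forcing $V=\{0\}$ or $V=\mathbb{R}^2$; since $\ell\geq 1$ by Theorem \ref{theo-collapsed}, one gets $V=\mathbb{R}^2$. Either route closes the argument, and your explicit construction has the added value of exhibiting which admissible sequences produce the limiting translations.
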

In Section \ref{sub-collapsinglie}${}$  we will see a more precise relation between the metric limit $M_\infty$ and the approximants $M_j$  in the  standard, nonsingular setting of convergence in the collapsing case, provided that the isometry  groups Isom$(X_j)$ are Lie.

\subsection{Riemannian limits}
\label{sec:riemannian}
${}$

\noindent Recall that any Riemannian, geodesically complete   CAT$(0)$-space $X$ is a {\em Hadamard manifold}, i.e. a complete simply connected manifold with nonpositive sectional curvature.  If the limit $M_\infty$ of a sequence of CAT$(0)$-orbispaces is a Riemannian manifold, then the equivariant homotopy equivalence between $M_j$ and $M_\infty$ of Theorem \ref{theo-noncollapsed} can be promoted to homeomorphism.

\noindent For this, we need the following preliminary fact:

\begin{theo}	
\label{prop-Riemannian-close-topological}
Let  $X$ be a $n$-dimensional Hadamard manifold. There exists $\varepsilon=\varepsilon(X, D_0)>0$ such that if a proper, geodesically complete, $D_0$-cocompact \textup{CAT}$(0)$-space $X'$ satisfies $$d_{\textup{p-GH}}((X,x),(X',x'))<\varepsilon$$ (for some choice of basepoints $x,x'$) then $X'$ is a topological manifold. 	
\end{theo}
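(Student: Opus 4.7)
The strategy is to argue by contradiction, combining the uniform packing/dimension machinery of Section \ref{subsection-packing-margulis} with the local structure theory of geodesically complete \textup{CAT}$(0)$-spaces developed by Lytchak--Nagano \cite{LN19,LN-finale-18}. Suppose the claim fails; then there is a sequence $(X'_j,x'_j)$ of proper, geodesically complete, $D_0$-cocompact \textup{CAT}$(0)$-spaces with $(X'_j,x'_j) \underset{\textup{pGH}}{\longrightarrow} (X,x)$, none of which is a topological manifold.

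First I would extract uniform packing constants. Since $X$ is a Hadamard $n$-manifold, its sectional curvature is bounded below on the closed ball $\overline{B}_X(x,10D_0)$, so Bishop--Gromov provides a $(P_0,r_0)$-packing estimate on this ball with constants depending only on $X$ and $D_0$. Pointed Gromov--Hausdorff closeness transfers the packing to a large ball around $x'_j$ in $X'_j$, and $D_0$-cocompactness then propagates it to all of $X'_j$. Hence the $(X'_j,x'_j)$ belong uniformly to a class $\textup{CAT}_0(P_0',r_0',D_0)$, and by Proposition \ref{prop-packing} each $X'_j$ has dimension at most $n_0=P_0'/2$. Moreover, since $X$ is a \textup{CAT}$(0)$-homology $n$-manifold and this property is closed under pointed Gromov--Hausdorff limits by \cite[Theorem 6.1]{CS22} and \cite[Lemma 3.3]{LN-finale-18}, each $X'_j$ is a \textup{CAT}$(0)$-homology $n$-manifold for $j$ large.

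The heart of the argument is to upgrade the homology manifold structure of $X'_j$ to a genuine topological manifold structure via Lytchak--Nagano's local characterization: a point $p$ of a geodesically complete, purely $n$-dimensional \textup{CAT}$(0)$-space is a topological manifold point precisely when its tangent cone is isometric to $\mathbb{R}^n$. Every tangent cone of the Riemannian space $X$ is $\mathbb{R}^n$, and a coupled scaling/ultralimit argument (with a rescaling $r_j\to 0$ and any sequence $p'_j\to p\in X$) shows that the tangent cone $T_{p'_j}X'_j$ becomes arbitrarily Gromov--Hausdorff close to $\mathbb{R}^n$. Combined with the a priori structure that $T_{p'_j}X'_j$ is a purely $n$-dimensional \textup{CAT}$(0)$-cone over a \textup{CAT}$(1)$-homology $(n-1)$-sphere, the spherical rigidity embedded in the \textup{GCBA} theory of Lytchak--Nagano forces $T_{p'_j}X'_j \cong \mathbb{R}^n$. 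Hence every point of $X'_j$ is a manifold point for $j$ large, so $X'_j$ is a topological manifold, contradicting the choice of the sequence.

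I expect the main obstacle to be this last rigidity step: abstract \textup{CAT}$(0)$-cones Gromov--Hausdorff close to $\mathbb{R}^n$ need not be isometric to $\mathbb{R}^n$, so the argument must genuinely exploit both (i) the a priori homology manifold structure of $X'_j$ (which restricts $\Sigma_{p'_j}X'_j$ to be a \textup{CAT}$(1)$-homology $(n-1)$-sphere) and (ii) the rigidity of the round sphere $\mathbb{S}^{n-1}$ among such \textup{CAT}$(1)$-homology spheres close to it. Checking that the Lytchak--Nagano framework is strong enough to supply this rigidity uniformly in our packed, cocompact setting is the delicate technical point of the proof.
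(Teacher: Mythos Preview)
Your contradiction setup and the extraction of uniform packing constants are fine, and indeed this is how the paper begins. However, the core of your argument has two genuine gaps.

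First, you invoke stability of the homology-manifold property under Gromov--Hausdorff limits in the wrong direction. The results you cite say that a \emph{limit} of homology manifolds is a homology manifold; here $X$ is the limit and the $X'_j$ are the approximants, so you cannot conclude that the $X'_j$ are homology manifolds. This is not a minor slip: an openness statement of this kind (``spaces GH-close to a homology manifold are homology manifolds'') is essentially equivalent in strength to the theorem you are trying to prove.

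Second, and more seriously, the tangent-cone step does not work. Pointed GH-closeness at bounded scale gives you no control over the tangent cone $T_{p'_j}X'_j$, which is determined by the behaviour of $X'_j$ at \emph{all} scales $r\to 0$. A coupled scaling argument can at best show that for some diagonal sequence of scales $r_j\to 0$ (chosen slow relative to the approximation error $\varepsilon_j$) one has $(\tfrac{1}{r_j}X'_j,p'_j)\to\mathbb{R}^n$; but this does \emph{not} identify the tangent cone of any fixed $X'_j$ at any fixed point, and in particular does not show $T_{p'_j}X'_j$ is GH-close to $\mathbb{R}^n$. Without that, the rigidity step you outline (even granting the homology-sphere structure of the link, which you have not established) never gets off the ground. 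You correctly flagged this as the main obstacle, but the Lytchak--Nagano framework does not supply rigidity of tangent cones from bounded-scale GH-data alone.

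The paper avoids this entirely by working with \emph{strainers} rather than tangent cones. The condition ``$p$ is $(n,\delta)$-strained'' is a bounded-scale, open condition, and \cite[Lemma~7.8]{LN19} shows it is stable under pointed GH-convergence in the uniformly packed setting. Since every point of the Hadamard manifold $X$ is $(n,0)$-strained, one deduces that every point of $\overline{B}(x'_j,2D_0)$ is $(n,\delta)$-strained for $\delta=\tfrac{1}{50n^2}$ and $j\gg 0$; then \cite[Corollary~11.8]{LN19} gives that this ball is locally biLipschitz to $\mathbb{R}^n$, and $D_0$-cocompactness propagates this to all of $X'_j$. The strainer machinery is precisely what bridges bounded-scale GH-information and local topological structure, and is the missing ingredient in your approach.
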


	
\begin{proof}
Reasoning by contradiction, take a sequence of (proper, geodesically complete) pointed CAT$(0)$-spaces $(X_j,x_j)$  admitting $D_0$-cocompact lattices $\Gamma_j$, and converging in  pointed GH-distance to a Hadamard manifold $(X,x)$,    and suppose that the $X_j$'s  are  definitely not  topological manifolds.  \linebreak
Since $(X_j,x_j)    \underset{\textup{pGH}}{\longrightarrow} (X,x)$, then (up to a subsequence) we may assume that the lattices $(\Gamma_j, X_j)$  converge to some $D_0$-cocompact (possibly non-discrete) isometry group $\Gamma$ of $X$. Then, by Proposition \ref{lemma-GH-compactness-packing},    the   lattices  $(\Gamma_j, X_j) $ belong definitely to $\textup{CAT}_0(P_0,r_0,D_0)$, for some $P_0$ and $r_0$.\\
Moreover, by $D_0$-cocompactness, the distance of $x_j$ to the stratum $X^{n_j}_j$ of $X_j$  of  maximal dimension is uniformily bounded above; then, the  dimension of $X_j$ is exactly $n$ for $j \gg 0$, by \cite[Proposition 6.5]{CavS20}. \\
Now, using the terminology of \cite{LN19},  every point of $X$ is $(n,0)$-strained. \\
Then, we set $\delta := \frac{1}{50 n^2}$ and we claim that every point of $\overline{B}(x_j, 2D_0)$ is $(n,\delta)$-strained if $j$ is larger than some constant $ j_\delta$. Actually, suppose that we could find a sequence of points $(y_j)_{j \in {\mathbb N}}$, each one in $ \overline{B}(x_j, 2D_0)$,  not $(n,\delta)$-strained. \linebreak
This sequence defines a point $y  \in X$, which is $(n,0)$-strained and in particular also $(n,\delta)$-strained.  
Since all the balls $ \overline{B}(x_j, 2D_0)$ are uniformly packed, we can use   \cite[Lemma 7.8]{LN19}  to deduce that also the points $y_j$ would be $(n,\delta)$-strained, a contradiction. 
So, the fact that all points of $\overline{B}(x_j, 2D_0)$ are $(n,\delta)$-strained implies that $B(x_j, 2D_0)$ is locally biLipschitz homeomorphic to $\mathbb{R}^n$  by \cite[Corollary 11.8]{LN19}. In particular, $B(x_j, 2D_0)$ is a topological manifold, and by $D_0$-cocompactness we conclude that the whole $X_j$ are topological manifolds, which gives a contradiction.  		 
	\end{proof}


\begin{cor}\label{cor-homeo}
 Let $(\Gamma,X)$ be a torsionless, uniform lattice of a Hadamard manifold.
 Then, there exists
 $\varepsilon=\varepsilon(\Gamma, X)\!>0$
such that the following holds: for every other \textup{CAT}$(0)$-lattice  $(\Gamma', X')$, if  
 $d_{\textup{eq-pGH}}((\Gamma,\! X),(\Gamma',\! X'))<\!\varepsilon$ 
then $M'=\Gamma' \backslash X'$ is homeomorphic to $M=\Gamma\backslash X$.
\end{cor}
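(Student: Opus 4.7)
The plan is to argue by contradiction: suppose there is a sequence of CAT$(0)$-lattices $(\Gamma_j', X_j')$ with $d_{\textup{eq-pGH}}((\Gamma_j', X_j'),(\Gamma, X)) \to 0$ yet $M_j' = \Gamma_j' \backslash X_j'$ never homeomorphic to $M = \Gamma \backslash X$. By Proposition \ref{lemma-GH-compactness-packing}, we may assume $(\Gamma_j', X_j') \in \textup{CAT}_0(P_0, r_0, D_0)$ eventually, for constants depending on $(\Gamma, X)$. Since the limit group $\Gamma$ is discrete, the contrapositive of Theorem \ref{theo-collapsed} (whose conclusion would force a nontrivial identity component in the limit) shows that the sequence is non-collapsing. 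Moreover, Theorem \ref{prop-Riemannian-close-topological} guarantees that $X_j'$ is a topological $n$-manifold for $j$ large, where $n = \dim X$.

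Next I would show that $\Gamma_j'$ is torsion-free for $j$ large, so that Theorem \ref{theo-noncollapsed}(ii) becomes applicable. Let $\gamma_j' \in \Gamma_j'$ be elliptic of some order $n_j \geq 2$; by $D_0$-cocompactness, after conjugation we may assume its fixed-point set meets $\overline{B}(x_j', D_0)$, so $\gamma_j' \in \overline{\Sigma}_{2D_0}(x_j')$. The orders $n_j$ are uniformly bounded via Proposition \ref{prop-Margulis-nilpotent} and discreteness (finite subgroups of almost stabilizers have uniformly bounded order under the packing assumption). The equivariant $\varepsilon$-approximation then sends $\gamma_j'$ to $\phi(\gamma_j') \in \Gamma$; iterating the near-equivariance relation $n_j$ times and using $\gamma_j'^{n_j} = \textup{id}$, one finds that $\phi(\gamma_j')^{n_j}$ nearly fixes a large ball of $X$, hence equals the identity by discreteness of $\Gamma$. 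Torsion-freeness of $\Gamma$ then yields $\phi(\gamma_j') = \textup{id}$, which forces $\gamma_j'$ itself to have arbitrarily small displacement on arbitrarily large balls of $X_j'$---impossible for a nontrivial isometry of a CAT$(0)$-space, since its fixed-point set would grow to exhaust $X_j'$.

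Applying Theorem \ref{theo-noncollapsed}(ii) then gives, for $j$ large, an isomorphism $\varphi_j : \Gamma_j' \to \Gamma$ together with a $\varphi_j$-equivariant homotopy equivalence $F_j : X_j' \to X$, so in particular $M_j' \simeq M$ as topological spaces. The main obstacle is the final step: promoting this to a genuine homeomorphism $M_j' \cong M$. For this, I would exploit the fact that both $X_j'$ and $X$ are topological $n$-manifolds in a very controlled way: by the proof of Theorem \ref{prop-Riemannian-close-topological}, every point of $\overline{B}(x_j', 2D_0)$ is $(n, \delta)$-strained for $j$ large, so \cite[Corollary 11.8]{LN19} provides local biLipschitz charts to $\mathbb{R}^n$. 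Refining the equivariant GH-approximations with these charts yields biLipschitz maps on arbitrarily large balls, which can then be glued equivariantly via $\varphi_j$ into a biLipschitz $\varphi_j$-equivariant homeomorphism $\widetilde F_j : X_j' \to X$. The equivariance of the gluing on overlaps is the delicate point; it can be handled thanks to the finiteness of short generating sets given by Theorem \ref{theo-finiteness}, which reduces compatibility to finitely many maps whose equivariance error goes to zero. The descended map $M_j' \to M$ is then a homeomorphism, contradicting the initial assumption and completing the proof.
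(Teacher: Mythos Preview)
Your contradiction setup, the use of Proposition \ref{lemma-GH-compactness-packing}, the non-collapsing conclusion via Theorem \ref{theo-collapsed}, and the application of Theorem \ref{prop-Riemannian-close-topological} to get that the $X_j'$ are topological manifolds all match the paper's argument. Two points deserve comment.

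First, the detour through a direct torsion-freeness argument is unnecessary and its final sentence does not work. Once the $X_j'$ are topological (hence homology) manifolds, the actions are automatically rigid and therefore nonsingular (Section \ref{subsection-isometries}), so Theorem \ref{theo-noncollapsed}(ii) applies immediately and gives $\Gamma_j' \cong \Gamma$ for $j\gg 0$; torsion-freeness of $\Gamma_j'$ then follows from that of $\Gamma$. Your argument instead tries to deduce triviality of a putative elliptic $\gamma_j'$ from the fact that it has displacement $<\varepsilon_j$ on $B(x_j',1/\varepsilon_j)$. But this is a statement across \emph{different} $j$'s: for each fixed $j$ the ball is finite, and small displacement on a finite ball does not force a CAT$(0)$-isometry to be trivial. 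The phrase ``its fixed-point set would grow to exhaust $X_j'$'' conflates a limit over $j$ with a conclusion inside a single $X_j'$.

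Second, and more seriously, the last paragraph is a genuine gap. Having local biLipschitz charts to $\mathbb{R}^n$ and an equivariant homotopy equivalence does not, by any elementary procedure, produce a global equivariant homeomorphism: gluing local approximations into a global homeomorphism is exactly the hard step, and ``compatibility on overlaps via finitely many generators'' does not address it. The paper does not attempt this construction. Instead, once $\Gamma_j'$ is torsion-free, the quotients $M_j'$ are genuine locally CAT$(0)$-spaces with a uniform lower bound on the injectivity radius (non-collapsing), converging in Gromov--Hausdorff distance to the Riemannian manifold $M$; the paper then invokes Nagano's stability theorem \cite[Theorem 1.8]{Na02}, which directly yields that $M_j'$ is homeomorphic to $M$ for $j\gg 0$. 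You need a comparable black box here; the sketch you give is not a proof.
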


 \begin{proof}
 Let $D_0 \!= \textup{diam} (\Gamma \backslash X)$ and  suppose, by contradiction, that   there exists a  a sequence of lattices  $(\Gamma_j,X_j) $  converging to  $(\Gamma, X)$, with $M_j = \Gamma_j \backslash X_j$ not homeomorphic to $M$. 
Then, diam$(\Gamma_j \backslash X_j) \leq 2D_0 $ definitely, and by Proposition \ref{lemma-GH-compactness-packing}  the   $(\Gamma_j, X_j) $'s and their limit   $(\Gamma, X)$ belong  to $\textup{CAT}_0(P_0,r_0,2D_0)$, for some $P_0$ and $r_0$.
Since the limit group is discrete by assumption, we deduce that we are in the noncollapsing setting, by Theorem \ref{theo-collapsed}. 
Moreover, by Corollary \ref{prop-Riemannian-close-topological}, we know that  the  $X_j$'s are  topological manifolds for $j \gg 0$, 
thus we are in the nonsingular setting of convergence. Then, by Theorem \ref{theo-noncollapsed}, we conclude that $\Gamma_j$ is isomorphic to $\Gamma$ for $j\gg 0$,  in particular $\Gamma_j$ is torsion-free as well. 
 It follows that the  $M_j$'s are locally   CAT$(0)$-spaces which  converge in the GH-distance  to the Riemannian manifold $M$. 
 Since we are in the noncollapsing case,  the injectivity radius of all the the spaces $M_j$ is uniformily bounded away from zero: then,  applying \cite[Theorem 1.8]{Na02}  (with $\delta$ that can be taken arbitrarily small since $M$ is a Riemannian manifold), implies that $M_j$ is homeomorphic to $M$ for $j\gg 0$.
\end{proof}

\subsection{Limit dimension and limit  Euclidean factor}
\label{sub-characterization}${}$

\noindent If  $X \in \textup{CAT}_0(P_0,r_0,D_0)$  then dim$(X) \leq n_0=P_0/2$, by  Proposition \ref{prop-packing}. Moreover, if  $ (X_j,x_j) \underset{\textup{pGH}}{\longrightarrow} (X_\infty,x_\infty)$ then,  by \cite[Theorem 6.5]{CavS20}
$$\text{dim}(X_\infty) \leq \liminf_{j \rightarrow +\infty} \text{dim}(X_j).$$
The following theorem precisely relates the collapsing (as defined in \ref{defsetting}) in the standard setting of convergence  to the dimension of the limit quotients, and is a direct consequence of the convergence Theorems \ref{theo-noncollapsed} \&  \ref{theo-collapsed}.


\begin{theo}[Characterization of collapsing]
	\label{theo-collapsing-characterization}  ${}$\\
Let $(\Gamma_j,X_j) \in \textup{CAT}_0(D_0)$ be a sequence of lattices converging in the pointed equivariant \textup{GH}-distance to 
$(\Gamma_\infty,X_\infty)$, 
and let $M_j = \Gamma_j \backslash X_j$, $M_\infty = \Gamma_\infty \backslash X_\infty$.
Then:
	\begin{itemize}[leftmargin=6mm] 
		\item[(i)] the sequence is non-collapsing if and only if $\textup{TD}(M_\infty) = \lim_{j\to +\infty} \textup{TD}(M_j)$;
		\item[(ii)] the sequence is collapsing if and only if $\textup{TD}(M_\infty) < \lim_{j\to +\infty} \textup{TD}(M_j)$.
	\end{itemize}
	Moreover, in the above characterizations,  the topological dimension \textup{TD} can be replaced by the Hausdorff dimension \textup{HD}.
\end{theo}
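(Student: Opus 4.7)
The plan is to pass to a subsequence of constant dimension $n$, apply the two convergence theorems of the preceding subsections in the two cases, and then extract the mutual exclusivity as a corollary. By Proposition \ref{lemma-GH-compactness-packing} we may assume $(\Gamma_j,X_j) \in \textup{CAT}_0(P_0,r_0,D_0)$ for some $P_0,r_0$; Proposition \ref{prop-packing}(iii) gives $\textup{TD}(X_j) \leq n_0$, and up to extracting we may assume $\textup{TD}(X_j) = n$ is constant. Discreteness of $\Gamma_j$ forces $\textup{TD}(M_j) = \textup{TD}(X_j) = n$, since locally $M_j$ is a finite-group quotient of $X_j$; and since $\textup{HD} = \textup{TD}$ on each stratum by \cite{LN19}, the same holds on $M_j$, so the Hausdorff and topological formulations are equivalent and it suffices to argue with $\textup{TD}$.

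For the collapsing case I would invoke Theorem \ref{theo-collapsed}: $X_\infty = X_\infty' \times \mathbb{R}^\ell$ with $\ell \geq 1$ and $M_\infty \cong \Gamma_\infty' \backslash X_\infty'$, with $\Gamma_\infty'$ closed and totally disconnected. Lower semicontinuity of dimension under pGH limits (\cite[Thm.~6.5]{CavS20}) gives $\textup{TD}(X_\infty) \leq n$, hence $\textup{TD}(X_\infty') \leq n-\ell$. A closed totally disconnected group of isometries has $0$-dimensional orbits and compact stabilizers acting locally through finite quotients, so the projection $X_\infty' \to M_\infty$ preserves local topological dimension. Hence $\textup{TD}(M_\infty) \leq n-\ell \leq n-1 < n = \textup{TD}(M_j)$, which is the collapsing inequality.

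For the non-collapsing case, Theorem \ref{theo-noncollapsed} gives $\Gamma_\infty$ closed and totally disconnected, so $\textup{TD}(M_\infty) = \textup{TD}(X_\infty)$, and $\textup{TD}(X_\infty) \leq n$ again by semicontinuity. For the reverse inequality I would extract a subsequence with $\textup{sys}^\diamond(\Gamma_j,X_j) \geq s > 0$ and, using the stratification of \cite{LN19}, pick for each $j$ a point $p_j \in X_j$ of local dimension $n$; $D_0$-cocompactness lets me choose $p_j$ within distance $D_0$ of $x_j$, so a pGH-limit $p_\infty \in X_\infty$ exists. Proposition \ref{prop-packing}(i) provides a uniform volume lower bound $\mu_{X_j}(B(p_j,R)) \geq v(R)$ depending only on $P_0,r_0$; weak convergence of canonical measures along pGH-converging sequences of uniformly packed CAT$(0)$-spaces (cf.\ \cite{CavS20}) then transfers this bound to $\mu_{X_\infty}(B(p_\infty,R)) \geq v(R) > 0$, which, combined with $\textup{TD}(X_\infty) \leq n$, forces $\textup{TD}(X_\infty) = n$ and hence $\textup{TD}(M_\infty) = n$.

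The main obstacle is precisely this lower bound $\textup{TD}(X_\infty) \geq n$ in the non-collapsing case, since semicontinuity only provides the wrong inequality and one must genuinely use the non-collapsing hypothesis to propagate a top-dimensional point into the limit. The canonical-measure argument above is one route; alternatively one could transport an $(n,\delta)$-strained point from $X_j$ to $X_\infty$ via the Lytchak-Nagano stability results, exactly as in the proof of Theorem \ref{prop-Riemannian-close-topological}. Once both inequalities are established, mutual exclusivity of the two cases is automatic: if both $\limsup \textup{sys}^\diamond > 0$ and $\liminf \textup{sys}^\diamond = 0$ held, two subsequences of $(\Gamma_j,X_j)$ converging to the same $(\Gamma_\infty,X_\infty)$ would force $\textup{TD}(M_\infty)$ to take two different values; the two ``iff''s then follow by contraposition, together with the observation that $\textup{TD}(M_j)$ must eventually be constant along the full sequence.
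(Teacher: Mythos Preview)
Your overall architecture is the paper's: reduce to the uniformly packed setting, use Theorem~\ref{theo-noncollapsed} and Theorem~\ref{theo-collapsed} respectively, and transfer dimension across the (totally disconnected) quotient map. The paper isolates this last step as Lemma~\ref{lemma-dimension}, proved via the general fact that an open map with discrete fibers preserves topological dimension; your ``locally a finite-group quotient'' is the right intuition but ultimately needs that lemma.

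There is, however, a genuine gap. The paper obtains the equality $\textup{TD}(X_\infty)=\lim_j\textup{TD}(X_j)$ directly from \cite[Proposition~6.5]{CavS20}, which only uses $D_0$-cocompactness and holds \emph{regardless} of collapsing. You instead try to recover the lower bound $\textup{TD}(X_\infty)\ge n$ via canonical measures, and only in the non-collapsing case. Two problems follow. First, the measure step does not work as written: $\mu_{X_\infty}(B(p_\infty,R))>0$ does \emph{not} force $\textup{TD}(X_\infty)=n$, because the canonical measure is locally positive on every proper, geodesically complete CAT$(0)$-space of any dimension (it is $\mathcal H^k$ on the $k$-stratum, not $\mathcal H^n$). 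Your alternative via $(n,\delta)$-strained points is the correct route, and is precisely what underlies \cite[Proposition~6.5]{CavS20}. Second, by tying the lower bound to non-collapsing you leave the collapsing case incomplete: without $\textup{TD}(X_\infty)=n$ there, you have no reason that $\lim_j\textup{TD}(M_j)$ exists along the full sequence, and your closing remark ``$\textup{TD}(M_j)$ must eventually be constant'' is unjustified.

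The fix is simple and makes the whole argument shorter: note that the strained-point (equivalently \cite[Proposition~6.5]{CavS20}) argument nowhere uses $\textup{sys}^\diamond$, so apply it once and for all to get $\textup{TD}(X_\infty)=\lim_j\textup{TD}(X_j)$. Then Lemma~\ref{lemma-dimension} gives $\textup{TD}(M_j)=\textup{TD}(X_j)$ and $\textup{TD}(M_\infty)=\textup{TD}(X_\infty)$ in the non-collapsing case, while in the collapsing case $\textup{TD}(M_\infty)=\textup{TD}(X_\infty')=\textup{TD}(X_\infty)-\ell<\textup{TD}(X_\infty)$. Mutual exclusivity is then immediate, with no subsequence bookkeeping needed.
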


\noindent We just  need the following additional fact in order to establish the persistence of the dimension.
\begin{lemma}
	\label{lemma-dimension}
	Let $X$ be a proper, geodesically complete, $\textup{CAT}(0)$-space and $\Gamma < \textup{Isom}(X)$ be closed, cocompact and totally disconnected. Then the quotient metric space $M = \Gamma \backslash X$ satisfies
	$$\textup{HD}(M) = \textup{HD}(X) = \textup{TD}(X) = \textup{TD}(M).$$
\end{lemma}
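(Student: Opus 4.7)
The chain of inequalities
\[ \textup{TD}(M) \le \textup{HD}(M) \le \textup{HD}(X) = \textup{TD}(X) \]
is essentially free: the rightmost equality is the Lytchak--Nagano result recalled at the start of Section \ref{sec-CAT}, $\textup{TD}\le\textup{HD}$ holds for any metric space, and the quotient map $\pi\colon X\to M$ is $1$-Lipschitz and surjective so it cannot increase the Hausdorff dimension. The whole statement therefore reduces to proving the nontrivial inequality $\textup{TD}(X)\le \textup{TD}(M)$.

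My plan is to invoke \emph{Hurewicz's dimension-raising theorem}: if $f\colon Y\to Z$ is a closed continuous surjection between separable metric spaces with $\textup{TD}(f^{-1}(z))\le k$ for every $z$, then $\textup{TD}(Y)\le \textup{TD}(Z)+k$. The global map $\pi$ need not be closed (as elementary examples such as $\mathbb{Z}\curvearrowright\mathbb{R}$ already show), so I would apply Hurewicz locally. For $x\in X$, $r>0$ and $\bar x=\pi(x)$, the definition of the quotient distance gives $\pi(B_X(x,r))=B_M(\bar x,r)$, and by continuity $\pi(\overline{B}_X(x,r))=\overline{B}_M(\bar x,r)$, so the restriction
\[ \pi_{x,r}\colon \overline{B}_X(x,r)\longrightarrow \overline{B}_M(\bar x,r) \]
is a continuous surjection between compact metric spaces, hence automatically a closed map. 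Since $X=\bigcup_{n\in\mathbb{N}}\overline{B}_X(x,n)$, once each $\pi_{x,n}$ is controlled the countable sum theorem for covering dimension will yield $\textup{TD}(X)=\sup_n \textup{TD}(\overline{B}_X(x,n))\le \textup{TD}(M)$.

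The heart of the argument is showing that the fibres of $\pi_{x,r}$ have topological dimension $0$. Every fibre sits inside an orbit $\Gamma z$. Because $X$ is proper and $\Gamma$ is closed in $\textup{Isom}(X)$, the action of $\Gamma$ on $X$ is proper, so the canonical continuous bijection $\Gamma/G_z\to \Gamma z$ (where $G_z=\textup{Stab}_\Gamma(z)$ is the closed stabilizer) is a homeomorphism onto $\Gamma z$ with its subspace topology from $X$. Since $\Gamma$ is locally compact and totally disconnected, van~Dantzig's theorem supplies a neighbourhood basis of the identity by compact open subgroups; pushing this basis forward to $\Gamma/G_z$ gives a basis of compact open neighbourhoods of $eG_z$, so $\Gamma/G_z$ is a locally compact totally disconnected Hausdorff space, in particular of covering dimension $0$. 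Subsets inherit the same bound, and Hurewicz's theorem then gives $\textup{TD}(\overline{B}_X(x,r))\le \textup{TD}(\overline{B}_M(\bar x,r))\le\textup{TD}(M)$, closing the chain.

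The main obstacle is precisely this fibre-dimension step: one must identify the subspace $\Gamma z\subset X$ with the coset space $\Gamma/G_z$ (which uses the properness of the action in an essential way) and then transfer total disconnectedness from $\Gamma$ to this quotient, both being standard but indispensable facts about locally compact totally disconnected groups.
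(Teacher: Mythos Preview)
Your proof is correct and reaches the same conclusion, but the route to the key inequality $\textup{TD}(X)\le\textup{TD}(M)$ differs from the paper's. The paper shows that the orbits $\Gamma y$ are actually \emph{discrete}: since $X$ is geodesically complete, the stabiliser $\textup{Stab}_\Gamma(y)$ is open in the totally disconnected group $\Gamma$ by a result of Caprace--Monod \cite[Theorem~6.1]{CM09b}, so the fibres of the quotient map are discrete; one then invokes a single global dimension theorem for open maps with discrete fibres \cite[Theorem~1.12.7]{Eng78}. Your argument bypasses the Caprace--Monod input and uses only that orbits are $0$-dimensional, as homogeneous spaces of a totally disconnected locally compact group; the price is that you must localise to compact balls to make the restricted quotient maps closed, apply Hurewicz there, and reassemble via the countable sum theorem. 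Your approach is therefore more self-contained and would go through for any proper metric space with a proper action of a totally disconnected locally compact group, while the paper's is shorter but leans on the structural fact about open stabilisers that is specific to geodesically complete $\textup{CAT}(0)$-spaces.
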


\begin{proof}
	The quotient map $p\colon X \to M$ is open because it is the quotient under a group action. Moreover the fiber $p^{-1}([y])$ is discrete for all $[y] \in M$.\linebreak 
	Indeed, suppose  there are points $g_jy$ accumulating to $x \in X$, with $g_j\in \Gamma$. \linebreak
	By Ascoli-Arzel\`a we can extract a subsequence,  which we denote  again $(g_j)$, converging to $g_\infty \in \Gamma$ with respect to the compact-open topology. Since $X$ is geodesically complete, then the stabilizer $\text{Stab}_\Gamma(y)$ is open (see \cite[Theorem 6.1]{CM09b}).  In particular,  $g_\infty^{-1}g_j y = y$ and   we have $g_jy = g_\infty y $  for $j \gg 0$.  
	This shows that the fibers are discrete. 
	Then, by \cite[Theorem 1.12.7]{Eng78}, we deduce that $\text{TD}(X) = \text{TD}( M)$. On the other hand, since the projection $p$ is $1$-Lipschitz,  we have $\textup{HD}(M) \leq \textup{HD}(X)$. Moreover,  $\textup{HD}(X) = \textup{TD}(X)$ because of \cite[Theorem 1.1]{LN19}. As $\textup{TD}(M) \leq \textup{HD}(M)$ always holds,  we get
	$$\textup{TD}(M) \leq \textup{HD}(M) \leq \textup{HD}(X) = \textup{TD}(X) = \textup{TD}(M). \qedhere$$
\end{proof}

\vspace{3mm}
\begin{proof}[Proof of Theorem \ref{theo-collapsing-characterization}]${}$\\
As the groups are all $D_0$-cocompact, by  \cite[Proposition 6.5]{CavS20} 
 we have 
	$$\text{TD}(X_\infty)=\text{HD}(X_\infty) = \lim_{j\to +\infty}\text{HD}(X_j) = \lim_{j\to +\infty}\text{TD}(X_j),$$
	so the limit exists. Moreover,   $\text{TD}(M_j) = \text{HD}(M_j) = \text{TD}(X_j) = \text{HD}(X_j)$	for every $j$  by Lemma \ref{lemma-dimension}.
	Therefore, we also have
	$$\text{TD}(X_\infty)=\text{HD}(X_\infty) = \lim_{j\to +\infty}\text{HD}(M_j)=\lim_{j\to +\infty}\text{TD}(M_j).$$
	Suppose first that the sequence is non-collapsing. In this case $M_\infty$ is isometric to the quotient of $X_\infty$ by a closed, cocompact, totally disconnected group by Theorem \ref{theo-noncollapsed}. Hence $\text{TD}(M_\infty) = \text{TD}(X_\infty) = \lim_{j\to +\infty}\text{TD}(M_j)$ by Lemma \ref{lemma-dimension},
	and similarly for the Hausdorff dimension. \\
	Suppose now  that the sequence is  collapsing.  Theorem \ref{theo-collapsed} says that $M_\infty$ is the quotient of a proper, geodesically complete, CAT$(0)$-space $X'_\infty$ of dimension strictly smaller than the dimension of $X_\infty$, by a closed, cocompact, totally disconnected group. Again Lemma \ref{lemma-dimension} then implies that $$\text{TD}(M_\infty) = \text{HD}(M_\infty) < \text{HD}(X_\infty) = \text{TD}(X_\infty) = \lim_{j\to +\infty}\text{TD}(M_j)$$
	and similarly for the Hausdorff dimension.\end{proof}
 
 Notice that since we proved that   $\lim_{j\to +\infty}\text{TD}(M_j)$ exists,  Theorem \ref{theo-collapsing-characterization} excludes to have  sequences $(\Gamma_j, X_j,x_j)$ converging   with mixed behaviour  (that is, such that along some subsequence the convergence is collapsed, and along other subsequences it is non-collapsed).
\vspace{3mm}

Another consequence of the analysis of convergence in the nonsingular setting, combined with the renormalization Theorem  \ref{theo:renormalization}, is the persistence of the Euclidean factor at the limit:

\begin{cor}[Euclidean factor of  limits]\label{cor-euclfactor}${}$\\
Let $(\Gamma_j,X_j)\subseteq \textup{CAT}_0(D_0)$  be a sequence of nonsingular lattices  converging to $(\Gamma_\infty,X_\infty)$, and let $k_j$  be the dimension of the Euclidean factor of $X_j$. \linebreak Then   $k_\infty := \lim_{j\rightarrow +\infty} k_j$  exists and equals the dimension of the Euclidean factor of $X_\infty$.
\end{cor}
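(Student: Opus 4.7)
The plan is to reduce to a subsequence of constant $k_j$, identify the Euclidean factor of $X_\infty$ via the compatibility of ultralimits with Euclidean products, and then rule out any extra Euclidean factor arising in the non-Euclidean part by renormalizing to the non-collapsing regime. First, by Proposition \ref{lemma-GH-compactness-packing} the sequence is uniformly packed, and Proposition \ref{prop-packing} then gives $k_j \leq n_0 := P_0/2$. It suffices to show that any subsequential limit $k = \lim k_j$ equals the Euclidean factor dimension of $X_\infty$. Choose such a subsequence and a non-principal ultrafilter $\omega$ concentrated on it, and write $X_j = Y_j \times \mathbb{R}^k$ for the De Rham decomposition with $Y_j$ having no Euclidean factor. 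Ultralimits commute with Euclidean products (cf.\ Lemma \ref{lemma-ultralimit-product}), so $X_\omega = Y_\omega \times \mathbb{R}^k$ with $Y_\omega = \omega\textup{-}\lim(Y_j, y_j)$; by Proposition \ref{prop-GH-ultralimit}, $X_\omega \cong X_\infty$ equivariantly, so the Euclidean factor dimension of $X_\infty$ equals $k + m$, where $m \geq 0$ is the Euclidean factor dimension of $Y_\omega$. The task reduces to showing $m = 0$.

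To this end I would apply the Renormalization Theorem \ref{theo:renormalization} to obtain a new faithful, nonsingular sequence of actions $(\Gamma_j, X'_j)$ with $X'_j$ isometric to $X_j$ (hence $X'_j = Y'_j \times \mathbb{R}^k$ with $Y'_j$ isometric to $Y_j$), codiameter at most $\Delta_0$, and $\textup{sys}^\diamond(\Gamma_j, X'_j) \geq s_0$. Up to a further subsequence $(\Gamma_j, X'_j) \to (\Gamma'_\infty, X'_\infty)$ in the equivariant pointed Gromov-Hausdorff sense, and this convergence is non-collapsing by construction; by Theorem \ref{theo-noncollapsed}(ii) the group $\Gamma'_\infty$ is discrete and abstractly isomorphic to $\Gamma_j$ for $j$ large. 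The same product-ultralimit decomposition, with compatible basepoints, yields $X'_\infty = Y'_\omega \times \mathbb{R}^k$ with $Y'_\omega$ isometric to $Y_\omega$, hence still of Euclidean factor dimension $m$.

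Suppose for contradiction $m \geq 1$: then $X'_\infty$ has maximal Euclidean factor $\mathbb{R}^{k+m}$. Combining Bieberbach's theorem with the Caprace-Monod structural analysis of the projection of $\Gamma'_\infty$ onto this Euclidean factor, as in the first part of the proof of Theorem \ref{theo-group-splitting-Lie}, one extracts a free abelian subgroup of $\Gamma'_\infty$ of rank $k+m$ which is commensurated in $\Gamma'_\infty$. Transporting through the abstract isomorphism $\Gamma_j \cong \Gamma'_\infty$ produces a commensurated free abelian subgroup of rank $k+m$ in $\Gamma_j$; then by \cite[Theorem 2(ii)]{CM19} the space $X_j$ must split an Euclidean factor of dimension $\geq k+m > k$, contradicting the maximality of the decomposition $X_j = Y_j \times \mathbb{R}^k$. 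The principal obstacle is the extraction of this commensurated subgroup of the right rank in $\Gamma'_\infty$ without the Lie-isometry assumption used in Theorem \ref{theo-group-splitting-Lie}; in general this requires combining the bounded-systole renormalization with the commensuration content of the Splitting Theorem \ref{theo-splitting-weak}(vi) to obtain commensuration (rather than mere virtual normality) of the abelian subgroup.
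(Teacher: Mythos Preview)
Your approach is essentially the paper's: renormalize via Theorem~\ref{theo:renormalization} to the non-collapsing regime, use Theorem~\ref{theo-noncollapsed}(ii) to get $\Gamma_j \cong \Gamma'_\infty$ abstractly, and then invoke the Caprace--Monod characterization of the Euclidean rank as the maximal rank of a commensurated free abelian subgroup. The paper just applies this characterization as a single equality (Euclidean rank of $X$ equals the maximal rank of a commensurated free abelian subgroup of any uniform lattice, by \cite[Theorem~2]{CM19}) and observes that this is a group-theoretic invariant preserved under the isomorphism $\Gamma_j \cong \Gamma_\omega$; your argument reaches the same point by first splitting off $\mathbb{R}^k$ by hand and then arguing by contradiction on the residual factor $Y_\omega$.

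Two small corrections. First, your ``principal obstacle'' is not one: the existence of a commensurated free abelian subgroup of rank equal to the Euclidean factor dimension of $X'_\infty$ is exactly \cite[Theorem~2(i)]{CM19}, and it requires no Lie hypothesis on $\textup{Isom}(X'_\infty)$; the Lie assumption in Theorem~\ref{theo-group-splitting-Lie} is only used to upgrade commensurated to (virtually) normal, which you do not need here. Second, your final citation to \cite[Theorem~2(ii)]{CM19} is misplaced: part (ii) concerns \emph{normal} abelian subgroups and group splittings, whereas what you need is the converse inequality ``max rank of a commensurated free abelian subgroup $\leq$ Euclidean rank of $X_j$'', which is the other half of the same characterization in \cite[Theorem~2]{CM19} that the paper invokes directly.
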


\begin{proof}
By Proposition \ref{prop-GH-ultralimit} it is sufficient to prove that for any  ultrafilter $\omega$ the ultralimit $k_\omega=\omega$-$\lim k_j$ is the same and equals the dimension of the Euclidean factor of $X_\omega$.  By the renormalization Theorem \ref{theo:renormalization} and by Theorem \ref{prop-vol-sys-dias}, up to passing to an action of the groups $\Gamma_j$ on new spaces $X_j'$ we may assume that the sequence is non-collapsing. Notice that this does not change the  isometry type of the  limit space $X_\omega$ because the spaces $X_j'$ provided by the renormalization Theorem \ref{theo:renormalization} are isometric to $X_j$. Then, we can apply the convergence Theorem \ref{theo-noncollapsed} and deduce that the limit group $\Gamma_\omega$ is discrete (and cocompact). Now, by \cite[Theorem 2]{CM19}, for any CAT$(0)$-space $X$ possessing a discrete, cocompact group $\Gamma$, the Euclidean rank coincides with the maximum rank of a free abelian commensurated subgroup $A < \Gamma$. But this maximum rank is stable under ultralimits in the standard, nonsingular setting of convergence without collapsing, since by Theorem \ref{theo-noncollapsed} the groups $\Gamma_j$ are all isomorphic to $\Gamma_\omega$ for $j\gg 0$.
\end{proof}

In contrast, if the quotient spaces $M_j=\Gamma_j \backslash X_j$, $M_\infty=\Gamma_\infty \backslash X_\infty$ split (or virtually split) metrically maximal dimensional tori  $\mathbb{T}^{k_j}$ and $\mathbb{T}^{k_\infty}$, the dimension $k_\infty$ of the limit tori in the collapsing case is generally different to the limit of the $k_j$'s.

\subsection{Collapsing when the isometry groups are Lie}
\label{sub-collapsinglie}${}$


\noindent We prove here the convergence Theorem \ref{teor-intro-Lie-nonsingular} stated in the introduction, where we suppose to have a converging sequence of nonsingular CAT$(0)$-orbifolds $M_j= \Gamma \backslash X_j$ where all the groups  Isom$(X_j)$ are Lie. 
This includes, for instance,  all collapsing sequences of  nonpositively curved Riemannian manifolds or, more generally, of CAT$(0)$-homology manifolds.

\noindent For the proof, we need the following
\begin{lemma}
\label{lemma-ultralimit-index}
Let $(\Gamma_j, X_j,x_j)$ be a sequence of isometry groups of pointed spaces, and let $\omega$ be  a non-principal ultrafilter.
If $\check \Gamma_j $ are subgroups of $\Gamma_j$ of index $[\Gamma_j:\check \Gamma_j ]\leq I$ for $\omega$-a.e.$(j)$, then $[\Gamma_\omega:\check \Gamma_\omega ]\leq I$.
\end{lemma}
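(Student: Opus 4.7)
The plan is to argue by contradiction using a pigeonhole argument on admissible lifts.

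Suppose, for contradiction, that $[\Gamma_\omega : \check\Gamma_\omega] \geq I+1$, so that there exist $g_{\omega,1},\ldots,g_{\omega,I+1} \in \Gamma_\omega$ lying in pairwise distinct left cosets of $\check\Gamma_\omega$. By definition of $\Gamma_\omega$, each element is realized as an ultralimit $g_{\omega,k} = \omega\text{-}\lim g_{j,k}$ of an admissible sequence $(g_{j,k})_j$ with $g_{j,k} \in \Gamma_j$ for $\omega$-a.e.$(j)$.

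Next, for each fixed $j$ for which $[\Gamma_j : \check\Gamma_j] \leq I$, the $I+1$ elements $g_{j,1},\ldots,g_{j,I+1}$ cannot lie in distinct cosets of $\check\Gamma_j$: by pigeonhole there exist indices $k(j) \neq l(j)$, with $1 \leq k(j),l(j) \leq I+1$, such that $g_{j,k(j)}^{-1} g_{j,l(j)} \in \check\Gamma_j$. Since the set of ordered pairs $(k,l)$ with $1\leq k \neq l \leq I+1$ is finite and $\omega$ is $\{0,1\}$-valued, there is a single pair $(k,l)$ for which $(k(j),l(j))=(k,l)$ on an $\omega$-full subset of $\mathbb{N}$; on this subset we therefore have $g_{j,k}^{-1} g_{j,l} \in \check\Gamma_j$.

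Finally, Lemma \ref{composition} ensures that $(g_{j,k}^{-1} g_{j,l})_j$ is an admissible sequence of isometries whose ultralimit equals $g_{\omega,k}^{-1} g_{\omega,l}$. Since the representatives lie in $\check\Gamma_j$ $\omega$-a.s., by the very definition of $\check\Gamma_\omega$ as the group of ultralimits of admissible sequences in the $\check\Gamma_j$, we obtain $g_{\omega,k}^{-1} g_{\omega,l} \in \check\Gamma_\omega$, i.e.\ $g_{\omega,k}$ and $g_{\omega,l}$ lie in the same $\check\Gamma_\omega$-coset. This contradicts the choice of the $g_{\omega,i}$'s and proves $[\Gamma_\omega : \check\Gamma_\omega] \leq I$.

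The only real subtlety is the need to pass to a single pair $(k,l)$ uniformly in $j$, which is handled by the finiteness of the index set and the fact that $\omega$ takes only values in $\{0,1\}$; everything else is bookkeeping with admissible sequences and Lemma \ref{composition}.
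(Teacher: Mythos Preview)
Your proof is correct and follows essentially the same argument as the paper: assume $I+1$ elements in distinct cosets, lift to admissible sequences, apply pigeonhole at each $j$, and use the ultrafilter to select a single pair $(k,l)$ working $\omega$-a.e., yielding the contradiction. You are simply more explicit than the paper about the pigeonhole step and the invocation of Lemma~\ref{composition}, which the paper leaves implicit.
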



\begin{proof}
Suppose to find $g_{\omega,1},\ldots,g_{\omega,{I+1}}\in \Gamma_\omega$ such that $g_{\omega,m}\Gamma_\omega' \neq g_{\omega,n}\Gamma_\omega'$ for all different $m,n \in\lbrace 1,\ldots,I+1\rbrace$. We write $g_{\omega,m} = \omega$-$\lim g_{j,m}$. We can find different $m, n \in\lbrace 1,\ldots,I+1\rbrace$ such that $g_{j,m}^{-1}g_{j,n} \in \Gamma_j'$ for $\omega$-a.e.$(j)$. By definition this implies that $g_{\omega,m}^{-1}g_{\omega,n} \in \Gamma_\omega'$, a contradiction.
\end{proof}
\vspace{2mm}

\begin{proof}[Proof of Theorem \ref{teor-intro-Lie-nonsingular}]
${}$\\
Let  $\omega$ be any non-principal ultrafilter, and let $(\Gamma_\omega, X_\omega)$ be the ultralimit of the $(\Gamma_j,X_j)$'s. We know that   $\omega$-$\lim \textup{sys}^\diamond (\Gamma_j, X_j)=0$.
Therefore,  the first assertion about  $M_j$ follows from Theorem \ref{theo-group-splitting-Lie}: for $j \gg 0$, the space $X_j$ splits as $Y_j  \times {\mathbb R}^{k_j}$, where ${\mathbb R}^{k_j}$ is the Euclidean factor, 
and  there is a normal subgroup $ \check \Gamma_j  = \check \Gamma_{Y_j}  \times {\mathbb Z}^{k_j} \triangleleft \Gamma_j$ of   index $ [\Gamma_j : \check \Gamma_j ] \leq  I_0$ such that
$\check M_j  = \check \Gamma_j \backslash X_j 
= \left( \check \Gamma_{Y_j} \backslash Y_j \right)  \times  \left( {\mathbb Z}^{k_j} \backslash {\mathbb R}^{k_j} \right) 
= \check N_j  \times {\mathbb T}^{k_j}$ and $\text{sys}^\diamond (\check \Gamma_{Y_j}, Y_j) \geq \sigma_0^\ast$. \\
Let  $(\check \Gamma_\omega, X_\omega)$,  
$(\check\Gamma_{Y_\omega}, Y_\omega )$ 
and $(A_\omega, \mathbb{R}^{k_\omega})$ be the ultralimits of the groups $\check \Gamma_j$, $\check\Gamma_{Y_j}$ and $\mathbb{Z}^{k_j}$ acting respectively on
 $X_j$, $Y_j$ and $\mathbb{R}^{k_j}$.
By Lemma \ref{lemma-ultralimit-product} we have   corresponding splittings   $X_\omega =Y_\omega \times {\mathbb R}^{k_\omega}$ and  $\check  \Gamma_\omega = \check\Gamma_{Y_\omega} \times A_\omega$,  with  $k_\omega = \omega$-$\lim k_j$; moreover, by Corollary \ref{cor-euclfactor}, $k_\omega=k_\infty$ is the dimension of the Euclidean factor of $X_\omega$.\\
The groups $\check\Gamma_{Y_j}$ act nonsingularly without collapsing on $Y_j$ by Theorem \ref{theo-group-splitting-Lie}, which implies that  $\check\Gamma_{Y_\omega}$ acts discretely on $Y_\omega$, by Theorem \ref{theo-noncollapsed}(ii). 
Therefore, the $\check N_j$'s converge without collapsing to a  nonsingular  orbispace $\check N_\omega=\check \Gamma_{Y_\omega} \backslash Y_\omega$. This  proves (i).\\
Moreover,  since $[\Gamma_j : \check \Gamma_j ] \leq I_0$,  Lemma \ref{lemma-ultralimit-index} implies that 
 $[\Gamma_\omega: \check \Gamma_\omega ] \leq I_0$ too. \linebreak
Therefore the  subgroup $\check \Gamma_\omega$ is open in $\Gamma_\omega$ and  $\check\Gamma_\omega^\circ = \Gamma_\omega^\circ = \mathbb{R}^{\ell}$, where $\ell \geq 1$ is provided by Theorem \ref{theo-collapsed}.
On the other hand, $A_\omega$  acts cocompactly by translations on $\mathbb{R}^{k_\omega}$, so  Lemma \ref{lemma-translations}  implies that $A_\omega$ is isomorphic to the group $\mathbb{Z}^{k_\omega - \ell'} \times \mathbb{R}^{\ell'}$ acting on $\mathbb{R}^{k_\omega} \cong \mathbb{R}^{k_\omega - \ell'} \times \mathbb{R}^{\ell'}$, for some $\ell' \geq 0$.\linebreak
 As $\check\Gamma_{Y_\omega}$ acts discretely on $Y_\omega$, it follows that $\check \Gamma_\omega^\circ = A_\omega^\circ = \mathbb{R}^{\ell'}$, therefore $\ell = \ell'$. \\ 
Moreover, since the groups ${\mathbb Z}^{k_j}$ converge to $A_\omega=\mathbb{Z}^{k_\omega - \ell} \times \mathbb{R}^{\ell}$, and the factors act separately by translations on the factors $ \mathbb{R}^{k_\omega - \ell}$ and  $\mathbb{R}^{\ell}$ of $ \mathbb{R}^{k_\omega}$ ,  it is clear from Lemma \ref{lemma-ultralimit-quotient} that the tori ${\mathbb T}^{k_j}={\mathbb Z}^{k_j} \backslash {\mathbb R}^{k_j}$ converge to the limit torus 
$\mathbb{Z}^{k_\omega - \ell} \backslash \mathbb{R}^{k_\omega - \ell} = {\mathbb T}^{k_\omega -\ell}$. This proves (ii).\\
Now, the group $\check \Gamma_\omega':=\check \Gamma_\omega / \check \Gamma_\omega ^\circ=  \check\Gamma_{Y_\omega} \times  \mathbb{Z}^{k_\omega - \ell} $ has index $\leq I_0$ in $ \Gamma_\omega' =\Gamma_\omega /  \Gamma_\omega ^\circ $; \linebreak and, since the first one is discrete, it follows that also $ \Gamma_\omega' $ is discrete. \linebreak
By Theorem  \ref{theo-collapsed} it follows that the spaces $M_j$  converge to  $M_\omega = \Gamma_\omega' \backslash X_\omega'$, where $X_\omega' =\Gamma_\omega^\circ \backslash X_\omega$, hence $M_\omega$   is a nonsingular orbispace, proving (iii).\\
Finally, remark that since $\check \Gamma_j$ is normal in $\Gamma_j$ for all $j$, then  $\check \Gamma_\omega$ is normal in $\Gamma_\omega$ (cp. \cite[Lemma 6.14.(iii)]{CavS20bis}), and in turns this implies that $\check \Gamma_\omega' \triangleleft  \Gamma_\omega' $. 
 Therefore,  
this space $M_\omega $ is the quotient of $\check M_\omega := \check \Gamma_\omega' \backslash X_\omega'$ by the group $\Lambda_\omega = \Gamma_\omega' / \check \Gamma_\omega'$ which has cardinality $\leq I_0$. \\
Moreover $Y_\omega$ has no Euclidean factors because of Corollary \ref{cor-euclfactor}, so $\textup{sys}^\diamond(\check \Gamma_{Y_\omega}) \ge \sigma_0^\ast$ by Theorem \ref{theo-group-splitting-Lie}.
This proves (iv).
\end{proof}

As a result of  the convergence theorems proved in  this section, we obtain the compactness  Corollary \ref{theo-intro-compactness} 
for  CAT$(0)$-homology orbifolds.  
Recall the class $ {\mathcal H}{\mathcal O} \text{-CAT}_0 (P_0, r_0, D_0)$ 
   defined in the introduction,  consisting of all  the quotients $M=\Gamma \backslash X$  with 
$(\Gamma,X)$
   belonging to $ \text{CAT}_0 (P_0, r_0, D_0)$, where   $X$ is supposed, in addition, to be a homology manifold.

\begin{proof}[Proof of Corollary \ref{theo-intro-compactness}] 
${}$\\
Let $M_j=\Gamma_j \backslash X_j$ be a sequence in ${\mathcal H}{\mathcal O} \text{-CAT}_0 (P_0,r_0, D_0)$. Since the $X_j$ are   homology manifolds, then $\Gamma_j$ is nonsingular and Isom$(X_j)$ is a Lie group for every $j$, as recalled in Section \ref{subsec-systole} and by Proposition \ref{cor-homology-Lie}.\\
By \cite[Theorem 6.1]{CavS20},  the class $\text{CAT}_0 (P_0,r_0, D_0)$ is compact with respect to the Gromov-Hausdorff convergence. 
Moreover,  if   $(X_j,x_j) \underset{\textup{pGH}}{\longrightarrow} (X_\infty,x_\infty)$, then $X_\infty$ is again a   homology manifold, by  \cite[Lemma 3.3]{LN-finale-18}.\\
Now, by Theorem \ref{theo-collapsing-characterization},   the sequence 
$( \Gamma_j,X_j)$
is either non-collapsing, or  collapsing. 
In the first case,   by Theorem \ref{theo-noncollapsed}  the quotients $M_j$ converge to some space $M_\infty = \Gamma_\infty \backslash X_\infty$, for  a discrete subgroup  $ \Gamma_\infty$ of Isom$(X_\infty)$, hence it belongs to  ${\mathcal H}{\mathcal O} \text{-CAT}_0 (P_0,r_0, D_0)$.
In the second case,   the $M_j$'s converge   to a lower dimensional limit space $M_\infty$, which by Theorem \ref{teor-intro-Lie-nonsingular}  is isometric to  the quotient  of the space $X_\infty'$ (a proper, geodesically complete,  $(P_0,r_0)$-packed CAT$(0)$-space, by Theorem \ref{theo-collapsed}) 
by the discrete, $D_0$-cocompact group $\Gamma_\infty'$.
Moreover, if $X_\infty = Y_\infty \times {\mathbb R}^{k_\infty}$ is the splitting of the Euclidean factor,  we have $X_\infty' =   Y_\infty \times  {\mathbb R}^{k_\infty - \ell}$ by the proof of Theorem \ref{theo-collapsed} and by Corollary \ref{cor-euclfactor}. By the Kunneth's formula it then follows that $Y_\infty$, and in turn $X_\infty'$, are homology manifolds; 
therefore,  $M_\infty$ still belongs to ${\mathcal H}{\mathcal O} \text{-CAT}_0 (P_0,r_0, D_0)$.
\end{proof}

%

\subsection{Isolation of Euclidean spaces and entropy rigidity}
${}$

\noindent We  prove in this section the rigidity and pinching Corollaries \ref{cor-flats} \&  \ref{cor-entropy} stated in the introduction.   


\begin{proof}[Proof of Corollary \ref{cor-flats}]${}$\\
	If the thesis is false,  there exists a sequence of lattices   $(\Gamma_j, X_j) \in \textup{CAT}_0(D_0)$  such that the $X_j$'s converge   to $\mathbb{R}^n$ in the pointed GH-distance,  but with $X_j$  not isometric to $\mathbb{R}^n$ for every $j$. 
	By Propositions \ref{prop-GH-ultralimit}  and   \ref{lemma-GH-compactness-packing}, there exist   $P_0,r_0>0$ such that $(\Gamma_j,X_j) \in \textup{CAT}_0(P_0,r_0,D_0)$ for all $j$, and, up to extracting a subsequence, we may assume  that the sequence $(\Gamma_j, X_j)$ converges in the equivariant pointed GH-distance   to $(\Gamma, \mathbb{R}^n)$ for some closed  subgroup $\Gamma < \textup{Isom}(\mathbb{R}^n)$. 
	 By the renormalization procedure of Theorem \ref{theo:renormalization}, which does not change  the isometry type of the spaces $X_j$ and keeps the diameter of the quotients $\Gamma_j \backslash X_j$ bounded by $\Delta_0$, we may assume that the sequence is noncollapsing. Moreover, the spaces $X_j$ are topological manifold for $j\gg 0$, by Corollary \ref{prop-Riemannian-close-topological}. Therefore we are in the nonsingular, non-collapsed setting of convergence, and we deduce from Theorem \ref{theo-noncollapsed}(ii) that the limit $\Gamma$ is
discrete, and  $\Gamma_j$ is isomorphic to $\Gamma$ for $j\gg 0$. \linebreak
In particular $\Gamma_j$  is virtually abelian of rank $n$. Then, \cite[Corollary C]{AB97} 
says that $X_j$ is  isometric to $\mathbb{R}^n$, leading to the contradiction.
\end{proof}

We conclude by proving Corollary \ref{cor-entropy}. The idea is that, if we assume to have a sequence of uniformily packed and uniformily cocompact CAT$(0)$-spaces  with smaller and smaller entropy,   we can pass to the limit and, by a  a continuity argument, deduce that the entropy of the limit is zero;  then we conclude again from \cite[Corollary C]{AB97} \linebreak saying that a space with zero entropy has to be flat.\\
 However, the continuity argument is delicate: it is known in some cases, under the assumption that the group actions are torsion-free  (see \cite[Proposition 38]{Reviron08} and  \cite{Cav21ter}).  Here we will use  a continuity result (Proposition \ref{prop:continuity_entropy}) for the entropy of spaces admitting nonsingular lattices, possibily with torsion, which will be proved in the Appendix, bypassing the techniques of \cite{Reviron08} and working directly at the level of universal covers.

%

\begin{proof}[Proof of Corollary  \ref{cor-entropy}]${}$\\
Reasoning by contradiction, assume that there exists a sequence $(\Gamma_j,X_j) \in \textup{CAT}_0(P_0,r_0,D_0)$   such that Ent$(X_j) \to 0$ but $X_j$ is not flat for every $j$, and (up to a subsequence) let $(\Gamma_\infty, X_\infty)$ be the limit. 
After the renormalization of the $X_j$'s given by Theorem \ref{theo:renormalization}, which does not affect the packing constants and keeps the diameters  of  $\Gamma_j \backslash X_j$ uniformily bounded by a constant $\Delta_0=\Delta_0(P_0, D_0)$, we may assume that the sequence is noncollapsing. Moreover,  by definition,  the renormalization does not change  the entropy of $X_j$ either (as the renormalized spaces are isometric to $X_j$). \linebreak
We therefore are in the noncollapsing, nonsingular setting, so we deduce from  Theorem \ref{theo-noncollapsed} that the limit group $\Gamma_\infty$ is still discrete, nonsingular and isomorphic to $\Gamma_j$ for $j\gg 0$.  
Better, by Proposition \ref{prop-vol-sys-dias}   we can choose basepoints $x_j \in X_j$ such that  sys$(\Gamma_j, x_j)$ (not only the free-systoles) are all larger than some constant $s'_0 = s'_0(P_0,r_0,\Delta_0)>0$, and the same   lower bound on the systole passes to the limit at a limit basepoint $x_\infty \in X_\infty$. We are now in position to apply Proposition \ref{prop:continuity_entropy} to infer that Ent$(X_\infty) = \lim_{j \rightarrow +\infty} \text{Ent}(X_j) = 0$. The group $\Gamma_\infty$ is amenable, because it has subexponential growth, and so it is $\Gamma_j$ for $j\gg 0$. Therefore $X_j$, for $j\gg 0$, is isometric to some Euclidean space $\mathbb{R}^n$ by \cite[Corollary C]{AB97}, which gives the contradiction.
\end{proof}

\appendix

\vspace{5mm}	
\section{Continuity of  entropy under GH-approximations}
\label{appentropy}

\begin{prop}
	\label{prop:continuity_entropy}
	There exists $\varepsilon = \varepsilon(s_0,D_0,\delta)>0$ such that 
	if $\Gamma$ and $\Gamma'$ are $D_0$-lattices of proper, geodesic, simply connected spaces $X, X'$ respectively, \linebreak with 
	$\min\{ \textup{sys}(\Gamma,x),  \textup{sys}(\Gamma',x') \} \geq s_0$ and 
	$d_{\textup{eq-pGH}}((\Gamma,X,x),(\Gamma',X',x')) < \varepsilon$
then	
\begin{equation}
\label{eq:1pmdelta}  \left\vert\frac{\textup{Ent}(X)}{\textup{Ent}(X')} - 1 \right\vert \leq \delta.
\end{equation}
\end{prop}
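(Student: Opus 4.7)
The plan is to reduce the comparison of geometric entropies to a comparison of the word-growth exponents of the short marked groups $(\Gamma, \overline{\Sigma}_{2D}(x))$ and $(\Gamma', \overline{\Sigma}_{2D}(x'))$ for a suitable $D > D_0$. Set $\beta_\Gamma(n) := \#\{g \in \Gamma : \|g\|_{\overline{\Sigma}_{2D}(x)} \le n\}$ and $h_w(\Gamma) := \lim_n (\log \beta_\Gamma(n))/n$ (the limit exists since $\log \beta_\Gamma$ is subadditive), and analogously for $\Gamma'$. First I would establish the sandwich
$$\frac{h_w(\Gamma)}{2D} \;\le\; \textup{Ent}(X) \;\le\; \frac{h_w(\Gamma)}{2(D-D_0)}$$
for any $D > D_0$, and the analogous bound on the primed side. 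The lower inequality is immediate: $\|g\|_{\overline{\Sigma}_{2D}(x)} \le n$ forces $d(x, gx) \le 2Dn$, so $\beta_\Gamma(n) \le \#(\Gamma x \cap B(x, 2Dn))$, and passing to the limit via \eqref{defin:entropy-groups} yields the bound. The upper inequality follows from a standard $D_0$-geodesic sampling: for $g$ with $d(x,gx) \le R$ one picks $D_0$-close orbit points along $[x, gx]$ at spacing $2(D-D_0)$, writing $g$ as a product of at most $\lceil R/(2(D-D_0))\rceil + 1$ elements of $\overline{\Sigma}_{2D}(x)$, whence $\#(\Gamma x \cap B(x,R)) \le \beta_\Gamma(\lceil R/(2(D-D_0))\rceil + 1)$.

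The core step is to show that, for $\varepsilon$ below a threshold depending only on $s_0$ and $D$, the equivariant approximation $(f, \phi, \psi)$ yields an equivalence of marked groups $(\Gamma, \overline{\Sigma}_{2D}(x)) \cong (\Gamma', \overline{\Sigma}_{2D}(x'))$, so that $h_w(\Gamma) = h_w(\Gamma')$. Two local observations do most of the work. \emph{Injectivity:} if $\phi(g_1) = \phi(g_2)$, then unwinding the three $\varepsilon$-relations defining the equivariance gives $d(x, g_1^{-1}g_2 x) \le 3\varepsilon$, and the assumption $\textup{sys}(\Gamma, x) \ge s_0 > 3\varepsilon$ forces $g_1 = g_2$. \emph{Partial homomorphy:} whenever $g, h, gh$ all fall into the domain of $\phi$, combining the equivariance of $\phi(g)$ with the defining relations of $\phi(h)$ and $\phi(gh)$ yields $d(\phi(gh)x', \phi(g)\phi(h)x') \le 3\varepsilon$, and the systole of $\Gamma'$ forces $\phi(gh) = \phi(g)\phi(h)$. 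Together, these assertions imply that $\phi$ identifies the pointed labelled Cayley balls of radius $\approx 1/(4D\varepsilon)$ in $(\Gamma, \overline{\Sigma}_{2D}(x))$ and $(\Gamma', \overline{\Sigma}_{2D}(x'))$.

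To globalize this local identification into an actual marked-group equivalence with a uniform threshold $\varepsilon(s_0, D_0, \delta)$, I would invoke the finiteness Theorem~\ref{theo-finiteness}: within the ambient class of $D_0$-cocompact lattices in $(P_0, r_0)$-packed \textup{CAT}$(0)$-spaces with systole $\ge s_0$ (the standing assumptions in the application to Corollary~\ref{cor-entropy}), there are only finitely many marked groups $(\Gamma, \overline{\Sigma}_{2D}(x))$ up to equivalence. Distinct equivalence classes are separated by a labelled Cayley ball of some uniformly bounded radius $R_*$; so for $\varepsilon$ small enough that the local identification above covers radius $R_*$, the two marked groups must coincide, giving $h_w(\Gamma) = h_w(\Gamma')$. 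Choosing $D = D_0 + \lceil D_0/\delta \rceil$ makes $D/(D-D_0) \le 1+\delta$, so by the sandwich step both $\textup{Ent}(X)$ and $\textup{Ent}(X')$ lie in a common interval of multiplicative width $\le 1+\delta$, and \eqref{eq:1pmdelta} follows.

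I expect the globalization to be the main obstacle: the systole combined with the GH-approximation cheaply gives local agreement of Cayley balls, but turning this into a genuine marked-group isomorphism is qualitative, and crucially requires the finiteness Theorem~\ref{theo-finiteness}. This is also the step through which the ambient \textup{CAT}$(0)$-packing hypotheses (implicit from the context of the applications) silently enter the proof, despite the proposition being stated only in terms of proper, geodesic, simply connected spaces with a systole bound.
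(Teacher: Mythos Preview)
Your local observations---injectivity of $\phi$ via the systole bound, and the partial-homomorphy $\phi(gh)=\phi(g)\phi(h)$ whenever $g,h,gh$ lie in the domain---are exactly the ones the paper uses. The divergence is in the globalization step, and there your argument adds hypotheses that are not in the statement and are not needed.

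The paper does \emph{not} invoke the finiteness Theorem~\ref{theo-finiteness}. Instead it uses Serre's elementary presentation result \cite[App., Ch.~3]{Ser80}: any $D_0$-cocompact lattice in a \emph{simply connected} geodesic space admits a presentation $\Gamma=\langle \overline{\Sigma}_L\mid \mathcal{R}\rangle$ (for any $L>2D_0$) in which every relator has length at most~$3$. Your partial-homomorphy, applied to triples $s_1s_2s_3\in\mathcal{R}$, then shows directly that the free extension of $\phi$ kills every relator, so $\phi$ descends to a genuine homomorphism $\tilde{\phi}\colon\Gamma\to\Gamma'$; the same holds for $\psi$, and since they are mutual inverses on generators they are global inverses. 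This is fully quantitative and uses only simple connectedness---which \emph{is} in the hypotheses---yielding an explicit $\varepsilon=\varepsilon(s_0,D_0,\delta)$. Your route through Theorem~\ref{theo-finiteness} silently imports the \textup{CAT}$(0)$ and $(P_0,r_0)$-packing hypotheses, makes $\varepsilon$ depend on $P_0,r_0$ through the separation radius $R_\ast$, and therefore proves something strictly weaker than the proposition as stated, while resting on much heavier machinery. You correctly identified this step as the main obstacle; the missing idea is precisely Serre's length-$3$ presentation.

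A secondary difference: once $\tilde{\phi}$ is in hand, the paper does not pass through word growth. A direct geodesic-sampling estimate gives $d(x',\tilde{\phi}(g)x')\le(1+\delta)\,d(x,gx)+2(D_0+\varepsilon)$, and the orbit-counting entropies compare immediately. Your sandwich $h_w(\Gamma)/(2D)\le\textup{Ent}(X)\le h_w(\Gamma)/(2(D-D_0))$ together with $h_w(\Gamma)=h_w(\Gamma')$ is a valid alternative endgame, but it is slightly less direct and requires the marked-group isomorphism (to match generating sets), whereas the paper only needs the group isomorphism together with the orbit-metric quasi-isometry above.
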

 
	We divide the proof in several steps. We will first prove that the groups $\Gamma, \Gamma'$ have canonical generating sets which make them isomorphic as marked groups, and the marked isomorphism is given by the approximation morphisms. Then, we quantify the distortion of this isomorphism, showing that it is a $(1+\delta, C)$-quasi-isometry, which will imply \eqref{eq:1pmdelta}. 
	
\noindent The value of $\varepsilon$ appearing in the statement  can be taken equal to  $\frac{1}{3L}$, where	
$$L = \max \left\{ 8D_0, \frac{1}{s_0},  \frac{2}{\delta}  \left( D_0 + \sqrt{D_0^2 + \frac{\delta}{3} }  \,\right) \right\}.$$ 

\begin{proof}[Proof of Proposition \ref{prop:continuity_entropy}]${}$\\
Let $(f,\phi,\psi)$ be an equivariant $\varepsilon$-approximation between $(\Gamma,X,x)$ and $(\Gamma',X',x')$ for $\varepsilon=\frac{1}{3L}$, with $f(x)=x'$ according to  the Definitions   \ref{defin:equivariant_GH_approximation}\&\ref{defin:equivariant_GH_distance}.
Recall the subset $\overline \Sigma_r(\Gamma, X,x)$  defined in \eqref{defsigma}, and set, for short,
$$\overline \Sigma_{r}=\overline \Sigma_{r} (\Gamma, X,x), \hspace{1cm} \overline \Sigma_{r}'= \overline \Sigma_{r} (\Gamma', X',x') .$$ 
Notice that, $\overline \Sigma_{L}$ and $\overline \Sigma'_{L}$   are generating sets  for $\Gamma$ and $\Gamma'$ respectively, since $L>2D_0$ (see for instance \cite[Appendix to \S3]{Ser80}).

\begin{lemma}\label{lemma:bijection}
The map $\phi: \overline \Sigma_{3L} \rightarrow 	\overline \Sigma'_{3L}$ is a bijection with  inverse  $\psi$, and sends the identity element of $\Gamma$ in the identity of $\Gamma'$.
\end{lemma}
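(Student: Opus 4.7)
The plan is to use the systole assumption to promote the approximate identities coming from the equivariance of $\phi$ and $\psi$ to exact identities. Throughout we set $\varepsilon = 1/(3L)$ and note that our choice of $L$ guarantees $3\varepsilon = 1/L \leq s_0$, so any two distinct translates of $x$ (resp.\ $x'$) under $\Gamma$ (resp.\ $\Gamma'$) are separated by at least $s_0 > 3\varepsilon$. Also $L > 2D_0$, so the condition $g \in \overline{\Sigma}_{3L}$ places $gx$ inside $B(x,\frac{1}{\varepsilon})$ where $f$ is defined (using $3L = 1/\varepsilon$, closed inclusions being legitimate up to the usual slack in the definition of $\varepsilon$-approximation; if needed, replace $3L$ by $3L - \eta$ for small $\eta$, the proof being unaffected).

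First I will show that $\phi(\mathrm{id}_\Gamma) = \mathrm{id}_{\Gamma'}$. Applying the equivariance condition on $\phi$ with $x_1 = x$, which gives $gx_1 = x \in B(x,\tfrac{1}{\varepsilon})$, one obtains
\[
d\bigl(f(x),\ \phi(\mathrm{id}_\Gamma) f(x)\bigr) < \varepsilon,
\qquad\text{i.e.,}\qquad
d\bigl(x',\ \phi(\mathrm{id}_\Gamma)\, x'\bigr) < \varepsilon.
\]
Since $\phi(\mathrm{id}_\Gamma) \in \Gamma'$ and $\varepsilon < s_0 \leq \mathrm{sys}(\Gamma',x')$, this forces $\phi(\mathrm{id}_\Gamma) = \mathrm{id}_{\Gamma'}$.

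Next, I will show $\psi \circ \phi = \mathrm{id}$ on $\overline{\Sigma}_{3L}$. Given $g \in \overline{\Sigma}_{3L}$, set $g' := \phi(g) \in \overline{\Sigma}'_{3L}$ and $g'' := \psi(g') \in \overline{\Sigma}_{3L}$. The equivariance condition for $\phi$ at $x_1 = x$ yields
\[
d\bigl(f(gx),\ g' f(x)\bigr) = d\bigl(f(gx),\ g' x'\bigr) < \varepsilon,
\]
while the equivariance condition for $\psi$ applied to $g' \in \overline{\Sigma}'_{3L}$ at $x_1 = x$ yields
\[
d\bigl(f(g'' x),\ g' f(x)\bigr) = d\bigl(f(g'' x),\ g' x'\bigr) < \varepsilon.
\]
By the triangle inequality, $d(f(gx),\, f(g''x)) < 2\varepsilon$, and the $\varepsilon$-isometry property of $f$ then gives $d(gx, g''x) < 3\varepsilon \leq s_0$. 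Since both $g, g'' \in \Gamma$ and $\mathrm{sys}(\Gamma,x) \geq s_0$, this forces $g = g''$, i.e., $\psi(\phi(g)) = g$. A symmetric argument (swapping the roles of the two lattices and using $\mathrm{sys}(\Gamma',x') \geq s_0$) shows $\phi \circ \psi = \mathrm{id}$ on $\overline{\Sigma}'_{3L}$. Hence $\phi$ and $\psi$ are mutually inverse bijections.

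The only subtlety is administrative rather than mathematical: one must check that the points $x$, $gx$, $g''x$ all lie in the domain of $f$ (and similarly on the other side), which follows from $3L = 1/\varepsilon$ and $g, g'' \in \overline{\Sigma}_{3L}$, possibly after replacing $3L$ by a slightly smaller radius to respect open-ball conventions in the approximation. The core content, applied twice, is the single observation that an element of a group with $s_0$-separated orbit moving the basepoint by less than $s_0$ must be the identity; this is where the systole lower bound $s_0$ is used, and it is also the only place where the choice $L \geq 1/s_0$ enters.
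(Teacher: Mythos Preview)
Your proof is correct and follows essentially the same approach as the paper: both exploit the equivariance conditions together with the $\varepsilon$-isometry property of $f$ to obtain distance bounds of the form $d(\cdot,\cdot) < 3\varepsilon \leq s_0$, and then invoke the systole lower bound to force equality of group elements. The only cosmetic difference is that the paper organizes the argument as ``surjectivity'' (showing $\phi(\psi(g'))=g'$, which is your $\phi\circ\psi=\mathrm{id}$) and ``injectivity'' (showing $g_1=\psi(g')=g_2$, which amounts to your $\psi\circ\phi=\mathrm{id}$), whereas you state these directly as the two compositions being the identity; the estimates are line-for-line the same.
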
	
%
 
 \begin{proof}
To show the surjectivity, let $g' \in \overline \Sigma'_{3L}$. Then, setting $g=\psi(g')$, we have that $g \in \overline \Sigma_{3L}$ ,  $ d(f(gx), g' x' ) < \varepsilon$ and  $ d(f(gx), \phi(\psi(g')) x' ) < \varepsilon$, by Definition \ref{defin:equivariant_GH_approximation}. Hence
  $$ d(\phi(g)x', g'x') < 2\varepsilon < s_0$$
  which implies  that  $\phi(g)=g'$ since  $\textup{sys}(\Gamma',x') \geq s_0$. 
  Assume now that $g_1,g_2 \in \overline \Sigma_{3L}$ satisfy $\phi(g_1)=\phi(g_2)=g' \in  \Sigma'_{3L}$. 
 Then, by Definition \ref{defin:equivariant_GH_approximation}
  $$d(g_i x, \psi(g')x) < d(f(g_ix),f(\psi(g')x)) + \varepsilon, \mbox{ for } i=1,2.$$
On the other hand, by Definition  \ref{defin:equivariant_GH_approximation}, 
$ d(f(g_i x), \phi(g_i)f(x))=   d(f(g_i x), g'x') < \varepsilon$ and 
$ d(f(\psi(g') x), g'x') < \varepsilon$,   therefore we deduce that 
  $$d(g_i x, \psi(g')x) \le   d(f(g_i x), g'x')  + d(f(\psi(g') x), g'x')  +   \varepsilon < 3\varepsilon \le s_0,$$
 which implies   that $g_1= \psi(g')=g_2$ as  $\textup{sys}(\Gamma,x) \geq s_0$. So, $\phi$ is a bijection with inverse $\psi$.  
 Finally $d(f(e \cdot x),\phi(e)  f(x))= d(x',\phi(e)   x') < \varepsilon <s_0$, therefore $\phi(e)=e$ necessarily.
  \end{proof}

\begin{lemma}\label{lemma:isomorphism}
The maps $\phi,\psi$ induce isomorphisms $\tilde \phi: \Gamma \rightarrow \Gamma'$, $\tilde \psi: \Gamma' \rightarrow \Gamma$ inverse to each other (which coincide with $\phi$ and $\psi$ on  $\overline \Sigma_{L}$ and $ \overline \Sigma'_{L} $).
\end{lemma}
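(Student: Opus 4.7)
The plan is to produce the homomorphism $\tilde\phi$ by checking that $\phi$ is ``almost multiplicative'' on the generating set $\overline\Sigma_L$, then invoking a standard presentation theorem for cocompact actions on simply connected geodesic spaces.

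\medskip

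\textbf{Step 1: $\phi$ is multiplicative on $\overline\Sigma_L$.} For $g_1,g_2\in\overline\Sigma_L$ one has $d(x,g_1g_2x)\leq 2L$, so $g_1g_2\in\overline\Sigma_{2L}\subseteq\overline\Sigma_{3L}$ and $\phi(g_1g_2)$ is defined. Since $1/\varepsilon=3L$, the equivariance condition of Definition \ref{defin:equivariant_GH_approximation} can be applied to $g_2$ at $x$, to $g_1$ at $g_2x$, and to $g_1g_2$ at $x$; combined with the triangle inequality and the fact that $\phi(g_1)$ is an isometry of $X'$, this yields
\[
 d\bigl(\phi(g_1g_2)\,x',\,\phi(g_1)\phi(g_2)\,x'\bigr)\;<\;3\varepsilon\;=\;\tfrac1L\;\le\; s_0.
\]
The assumption $\textup{sys}(\Gamma',x')\ge s_0$ then forces $\phi(g_1g_2)=\phi(g_1)\phi(g_2)$. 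By the same argument (interchanging roles) $\psi$ is multiplicative on $\overline\Sigma'_L$.

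\medskip

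\textbf{Step 2: extension to a group homomorphism.} Since $X$ is simply connected, geodesic, and $\Gamma$ acts $D_0$-cocompactly, the classical Macbeath--Serre presentation theorem (cf.\ \cite[Appendix to \S3]{Ser80}) applies for $L\ge 2D_0$: the group $\Gamma$ admits the presentation
\[
\Gamma \;=\; \bigl\langle\,\overline\Sigma_L \;\bigm|\; g_1 g_2 = g_3 \ \text{ with }g_1,g_2,g_3\in\overline\Sigma_L\text{ such that the equation holds in }\Gamma\,\bigr\rangle.
\]
By Step 1, the map $\phi|_{\overline\Sigma_L}:\overline\Sigma_L\to\Gamma'$ sends every such triangle relation to a triangle relation of $\Gamma'$, hence extends uniquely to a homomorphism $\tilde\phi:\Gamma\to\Gamma'$. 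Symmetrically, $\psi|_{\overline\Sigma'_L}$ extends uniquely to a homomorphism $\tilde\psi:\Gamma'\to\Gamma$. By construction, $\tilde\phi$ agrees with $\phi$ on $\overline\Sigma_L$ and $\tilde\psi$ agrees with $\psi$ on $\overline\Sigma'_L$.

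\medskip

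\textbf{Step 3: $\tilde\phi$ and $\tilde\psi$ are inverse isomorphisms.} Lemma \ref{lemma:bijection} gives $\psi\circ\phi=\mathrm{id}$ on $\overline\Sigma_{3L}\supseteq\overline\Sigma_L$, so $\tilde\psi\circ\tilde\phi$ is the identity on a generating set of $\Gamma$, hence on all of $\Gamma$. Symmetrically $\tilde\phi\circ\tilde\psi=\mathrm{id}_{\Gamma'}$. This proves that $\tilde\phi$ is an isomorphism with inverse $\tilde\psi$.

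\medskip

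The only nontrivial point is Step 1, which is a careful $\varepsilon$-bookkeeping: one needs the equivariance errors to add up to something smaller than the systole $s_0$, and this is exactly what dictates the choice $\varepsilon=1/(3L)$ with $L\ge 1/s_0$. Step 2 is a routine application of the presentation theorem (valid since $L\ge 8D_0\ge 2D_0$), and Step 3 is immediate from Lemma \ref{lemma:bijection}.
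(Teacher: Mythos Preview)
Your approach is essentially the paper's: both verify multiplicativity of $\phi$ on short products, invoke the Serre presentation with length-$\le 3$ relations, and then use Lemma~\ref{lemma:bijection} to conclude. Step~1 and Step~2 are fine (indeed the paper checks the triple identity $\phi(s_1s_2s_3)=\phi(s_1)\phi(s_2)\phi(s_3)$, which is the same thing rephrased).

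There is, however, a small gap in Step~3. You write that since $\psi\circ\phi=\mathrm{id}$ on $\overline\Sigma_L$, the composite $\tilde\psi\circ\tilde\phi$ is the identity on $\overline\Sigma_L$. But for $g\in\overline\Sigma_L$ you only know $\tilde\phi(g)=\phi(g)\in\overline\Sigma'_{L+2\varepsilon}$, not $\overline\Sigma'_L$; hence you cannot directly invoke $\tilde\psi=\psi$ on $\phi(g)$. (The paper's one-line conclusion glosses over the same point.) The fix is immediate: since $L-2\varepsilon>2D_0$, the smaller set $\overline\Sigma_{L-2\varepsilon}$ still generates $\Gamma$, and for $g\in\overline\Sigma_{L-2\varepsilon}$ one has $d(x',\phi(g)x')\le d(x,gx)+2\varepsilon\le L$, so $\phi(g)\in\overline\Sigma'_L$ and $\tilde\psi(\phi(g))=\psi(\phi(g))=g$ as desired. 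With this one-line adjustment your argument is complete.
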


 \begin{proof}
 Let us first show that the map $\phi \colon \overline \Sigma_L \to \Gamma'$ satisfies 
 \begin{equation}\label{eq:hom}
 \phi(s_1s_2s_3) = \phi(s_1)\phi(s_2)\phi(s_3),  \mbox{ for every } s_1,s_2,s_3 \in \overline \Sigma_L 
 \end{equation}
and that the same holds for the map $\psi \colon \overline \Sigma'_{L} \to \Gamma$.
Indeed,  applying several times the Definition \ref{defin:equivariant_GH_approximation}  we find
\vspace{-5mm}

	$$ d(\phi(s_2s_3)f(x), f(s_2s_3x)) < \varepsilon,  $$ 
	$$ d(f(s_2s_3 x), \phi(s_2)f(s_3x)) < \varepsilon,  $$
	$$ d(\phi(s_2)f(s_3 x), \phi(s_2)\phi(s_3)f(x)) = d(f(s_3 x), \phi(s_3)f(x)) < \varepsilon$$
and by the triangular inequality we get  $d(\phi(s_2s_3)x', \phi(s_2)\phi(s_3)x') < 3\varepsilon \le s_0$.
 Therefore   $\phi(s_2s_3) = \phi(s_2)\phi(s_3)$, as  $\textup{sys}(\Gamma',x') \geq s_0$.  Arguing in a similar way we also deduce that $\phi(s_1s_2s_3) = \phi(s_1)\phi(s_2s_3) = \phi(s_1)\phi(s_2)\phi(s_3)$. \\
 The proof for the map $\psi$   is identical.\\
Let now   $\tilde{\phi}\colon {\mathbb F}( \overline  \Sigma_{L}) \to \Gamma'$ be the homomorphism from the free group with basis $ \overline  \Sigma_{L}$ to $\Gamma'$ defined by extension of $s_i \mapsto \phi(s_i)$ for every $s_i \in \overline  \Sigma_{L}$.  
Recall that by \cite[App., Ch.3]{Ser80}, the group	 $\Gamma$ admits a presentation $\Gamma = \langle  \overline  \Sigma_L \vert \mathcal{R} \rangle$
(that is, $\Gamma$ is isomorphic to the quotient of the free group ${\mathbb F}(\overline \Sigma_{L})$ by the normal closure of $\mathcal{R}$),  where every relator in $\mathcal{R}$ has length at most $3$ in the alphabet $ \overline  \Sigma_L$. \linebreak
	Then, if $w =s_1s_2s_3 \in \mathcal{R}$, with each $s_i \in \overline    \Sigma_L$, we deduce by \eqref{eq:hom} that
$$\tilde{\phi}(g) = \tilde{\phi}(s_1s_2s_3) = \phi(s_1)\phi(s_2)\phi(s_3) = \phi(s_1s_2s_3) = \phi(e)  $$
which is the identity in $\Gamma'$  by Lemma \ref{lemma:bijection}. This shows that the map $\tilde \phi$ induces a homomorphism  $\Gamma \to \Gamma'$, still denoted by $\tilde \phi$, which coincides with $\phi$ on $\overline{\Sigma}_L$.  
Similarly,   $\psi$ induces a  homomorphism $\tilde{\psi}\colon \Gamma' \to \Gamma$ such that $\tilde \psi=\psi$ on $\overline{\Sigma}'_L$.\\
To conclude that $\tilde \phi$ is an isomorphism with inverse $\tilde \psi$ is enough to remark that they coincide with $\phi$ and $\psi$ on the generating sets $\overline \Sigma_L$,  $\overline \Sigma'_L$ respectively, and that $\phi$ and $\psi$ are inverse to each other on these sets, by Lemma  \ref{lemma:bijection}. 
%
  \end{proof}

\begin{lemma}\label{lemma:quasi-isometry} 
If $d(x,gx) \geq L$ and  $d(x',g'x') \geq L$ we have:
$$d(x',\tilde{\phi}(g)x') \le d(x,gx)(1+\delta) + 2(D_0 + \varepsilon)$$
$$d(x,\tilde{\psi}(g)x) \le d(x',g'x')(1+\delta) + 2(D_0 + \varepsilon)$$
\end{lemma}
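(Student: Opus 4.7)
I will prove only the first inequality; the second is symmetric. Fix $g\in\Gamma$ with $d(x,gx)\ge L$ and let $c\colon[0,d(x,gx)]\to X$ be the geodesic from $x$ to $gx$. The plan is to decompose $g$ as a product $s_1\cdots s_N$ of short generators in $\overline\Sigma_L$ by picking intermediate group elements close to evenly spaced points along $c$, apply the homomorphism $\tilde\phi$, and then sum the $\varepsilon$-equivariant estimates along the word. The geometric parameter controlling the game is the ratio between the length of each piece and the additive cocompactness error $2D_0$; the value of $L$ is rigged precisely so that this ratio forces the multiplicative constant down to $1+\delta$.

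More precisely, set $N=\lceil d(x,gx)/(L-2D_0)\rceil$ and choose $0=t_0<t_1<\cdots<t_N=d(x,gx)$ uniformly, so that $t_i-t_{i-1}\le L-2D_0$. Using $D_0$-cocompactness of $\Gamma$, pick $g_i\in\Gamma$ with $g_0=e$, $g_N=g$, and $d(c(t_i),g_ix)\le D_0$. Define $s_i:=g_{i-1}^{-1}g_i$; by the triangle inequality
\[
d(x,s_ix)=d(g_{i-1}x,g_ix)\le 2D_0+(t_i-t_{i-1})\le L,
\]
so $s_i\in\overline\Sigma_L$, and in particular $s_i\in\overline\Sigma_{1/\varepsilon}$ since $L\le 1/\varepsilon=3L$. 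By Lemma \ref{lemma:isomorphism}, $\tilde\phi(g)=\phi(s_1)\cdots\phi(s_N)$. Now, using the definition of the $\varepsilon$-approximation twice (first that $f$ is an $\varepsilon$-isometry and then the $\varepsilon$-equivariance of $\phi$), for each $i$:
\[
d(x',\phi(s_i)x')\le d(x',f(s_ix))+d(f(s_ix),\phi(s_i)f(x))\le d(x,s_ix)+2\varepsilon\le (t_i-t_{i-1})+2D_0+2\varepsilon.
\]
Summing with the triangle inequality along the word $\phi(s_1)\cdots\phi(s_N)$ (invariance of $d$ under $\Gamma'$) gives
\[
d(x',\tilde\phi(g)x')\le\sum_{i=1}^N d(x',\phi(s_i)x')\le d(x,gx)+N(2D_0+2\varepsilon).
\]

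It remains to turn the estimate on $N$ into the announced $(1+\delta, 2(D_0+\varepsilon))$-quasi-isometric bound. Since $N\le d(x,gx)/(L-2D_0)+1$, one has
\[
d(x',\tilde\phi(g)x')\le d(x,gx)\left(1+\frac{2D_0+2\varepsilon}{L-2D_0}\right)+2(D_0+\varepsilon),
\]
so it suffices to verify that $(2D_0+2\varepsilon)/(L-2D_0)\le\delta$, equivalently $\delta L^2-2D_0(1+\delta)L\ge 2/3$ after substituting $\varepsilon=1/(3L)$. This is an immediate (routine) consequence of the lower bound $L\ge\tfrac{2}{\delta}(D_0+\sqrt{D_0^2+\delta/3})$ built into the definition of $L$, since the latter is precisely the positive root of the sharper identity $\delta L^2-4D_0 L=4/3$. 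The only mild subtlety in the whole argument is ensuring that $s_i$ lies in the domain $\overline\Sigma_{1/\varepsilon}$ where $\phi$ is defined and satisfies the equivariance estimate, which is guaranteed by the choice $\varepsilon=1/(3L)$ together with $s_i\in\overline\Sigma_L$; no deeper obstacle arises.
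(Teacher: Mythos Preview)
Your argument is correct and follows essentially the same route as the paper: decompose $g$ as a product of generators in $\overline\Sigma_L$ by choosing group elements $D_0$-close to evenly spaced points on the geodesic $[x,gx]$, apply the homomorphism $\tilde\phi$, use the $\varepsilon$-equivariance to bound each $d(x',\phi(s_i)x')$, and sum. The only cosmetic difference is the step size along the geodesic---you take pieces of length $L-2D_0$ while the paper takes pieces of length $L/2$---and your final quadratic verification that $(2D_0+2\varepsilon)/(L-2D_0)\le\delta$ follows from the choice of $L$ is valid (at least for $\delta\le 1$, which is the relevant range), since $\delta L^2-4D_0L\ge 4/3$ implies $\delta L^2-2D_0(1+\delta)L=\delta L^2-4D_0L+2D_0(1-\delta)L\ge 4/3\ge 2/3$.
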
	
	
 
 \begin{proof}	
 Let $d=d(x,gx) \ge L$, let $c = [x,gx]$ be a geodesic between $x$ and $gx$, and set $n := \lceil \frac{2d}{L} \rceil$. 
 Let us define $x_i := c(i\frac{L}{2})$ for $i=0,\ldots,n-1$ and $x_n := gx$. For every $i = 0,\ldots,n$ we choose $g_i\in \Gamma$ such that $d(g_ix, x_i) \le 2D_0$, with $g_0 = \text{id}$ and $g_n = g$. We define $h_i = g_{i-1}^{-1}g_i$ for  $i = 1,\ldots,n$ and we observe that $g = h_1\cdots h_n$. Notice that each $h_i$ belongs to $\overline \Sigma_L$ as 
	$$d(x,h_ix) \le \frac{L}{2} + 4D_0 \le L.$$
 By definition of $\tilde \phi$ we then have $\tilde{\phi}(g) =  \phi (h_1)\cdots  \phi (h_n)$, and the usual application of  Definition \ref{defin:equivariant_GH_approximation} yields
$$d(x', \phi(h_i)x') \le d(f(x), f(h_i x)) + \varepsilon \leq d(x,h_i x) + 2\varepsilon$$
therefore
	\begin{equation*}
		\begin{aligned}
			d(x',\tilde{\phi}(g)x') &\le d(x',\phi(h_1) x') +\ldots +d(x',\phi(h_n) x')  \\
			&\le d(x,h_1x) +\ldots + d(x,h_n x) + 2n\varepsilon \\
			&\le d(x,x_1) + \ldots + d(x_{n-1}, x_n) + 2n(D_0 + \varepsilon) \\
			&= d + 2n(D_0+\varepsilon) = d \left(1 + \frac{2n}{d}(D_0 +\varepsilon) \right)  \\
			&\le d(x,gx)\left(1+\frac{4}{L}(D_0 + \varepsilon) \right) + 2(D_0 +  \varepsilon) \\
			&\le d(x,gx)(1+\delta) + 2(D_0+\varepsilon).
		\end{aligned}
	\end{equation*}
since $n \leq \frac{2d}{L}+1$, 
and $\frac{4}{L}(D_0 + \varepsilon)  \le \delta$ by our choice of $L$. \\
The proof for $\tilde{\psi}$ is the same.
 \end{proof}

\vspace{3mm}
\noindent {\em End of proof of Proposition   \ref{prop:continuity_entropy}.}\\
Set $B^\ast(x,R)=B(x,R)- \overline B(x,L)$, and  observe that we can equivalently compute the entropy of $X$ as
\begin{equation*}
  \text{Ent}(X)  = \lim_{R\to +\infty}\frac{1}{R}  \log  \# \left( \Gamma x \cap  B^\ast(x,R) \right).
\end{equation*}
By Lemma \ref{lemma:quasi-isometry}, for every element $g\in \Gamma$ such that $gx \in B^\ast(x,R)$  we have $d(x',\tilde{\phi}(g)x') \le (1+\delta)R + 2(D_0+\varepsilon)$. Observe moreover that the points $\tilde{\phi}(g)x'$ are different for different $g$'s in $\Gamma$. Therefore
	$$\# \left( \Gamma x \cap  B^\ast(x,R) \right) \le \# \left( \Gamma' x' \cap  B(x',(1+\delta)R + 2(D_0+\varepsilon) \right)$$
	which yields
\begin{equation*}
\text{Ent}(X)  \le  (1+\delta) \cdot  \lim_{R\to +\infty} \frac{1}{R} \log  \# \left( \Gamma' x' \cap B(x',R) \right) = (1+\delta) \cdot \text{Ent}(X').
\end{equation*}
	Reversing the roles of $X$ and $X'$ we deduce that 
	$\text{Ent}(X') \le (1+\delta)\text{Ent}(X)$,  which proves \eqref{eq:1pmdelta} as $\frac{1}{1+\delta} > 1- \delta$.
\end{proof}

\bibliographystyle{alpha}
\bibliography{collapsing}

\end{document}